\definecolor{darkred}{rgb}{0.6, 0.1, 0.1}
\definecolor{darkblue}{rgb}{0.2, 0.2, 0.6}
\definecolor{darkgreen}{rgb}{0.2, 0.4,.1}
\definecolor{mellowyellow}{rgb}{1,.8,.2}
\definecolor{bettercyan}{rgb}{0.1, 0.4, 0.7}
\pgfplotsset{compat=1.13}
\renewcommand*{\bibnamedash}{%
	\leavevmode\raise +0.6ex\hbox to 5.5ex{\hrulefill}.\space\space}
\numberwithin{equation}{section}
\newenvironment{proposition}
{\pushQED{\qed}\propositionx}
{\popQED\endpropositionx}
\newenvironment{theorem}
{\pushQED{\qed}\theoremx}
{\popQED\endtheoremx}
\newenvironment{lemma}
{\pushQED{\qed}\lemmax}
{\popQED\endlemmax}
\theoremstyle{definition}
\newenvironment{example}
{\pushQED{\qed}\examplex}
{\popQED\endexamplex}
\theoremstyle{remark}
\newenvironment{remark}
{\pushQED{\qed}\remarkx}
{\popQED\endremarkx}
\newtheorem*{warning}{\textsc{\textbf{Warning}}}
\newcommand{\VVV}{\scalebox{.9}[1]{$V$}\hspace{-0.32em}W}
\newcommand{\dd}{\,\mathrm{d}}
\newcommand{\calctwo}{\natural\mathrm{2res}} %Two resolutions
\newcommand{\calc}{\natural\mathrm{res}} %Whatever we choose to name our calculus
\newcommand{\calczero}{\natural} %Previously called 
\newcommand{\calcshort}{\natural} %Previously called 
\newcommand{\cl}{\mathrm{cl}}
\newcommand{\NPcalctwo}{P_1}
\newcommand{\la}{\ensuremath{\langle}}
\newcommand{\ra}{\ensuremath{\rangle}}
\newcommand{\bbC}{\mathbb{C}}
\newcommand{\bbM}{\mathbb{M}}
\newcommand{\bbN}{\mathbb{N}}
\newcommand{\bbR}{\mathbb{R}}
\newcommand{\bbS}{\mathbb{S}}
\newcommand{\calA}{\mathcal{A}}
\newcommand{\calB}{\mathcal{B}}
\newcommand{\calC}{\mathcal{C}}
\newcommand{\calE}{\mathcal{E}}
\newcommand{\calF}{\mathcal{F}}
\newcommand{\calO}{\mathcal{O}}
\newcommand{\calR}{\mathcal{R}}
\newcommand{\calS}{\mathcal{S}}
\newcommand{\calV}{\mathcal{V}}
\newcommand{\calX}{\mathcal{X}}
\newcommand{\calY}{\mathcal{Y}}
\newcommand{\scrF}{\mathscr{F}}
\newcommand{\mk}{\ensuremath{\mathfrak}}
\newcommand{\frakD}{\mathfrak{D}}
\newcommand{\fraka}{\mathfrak{a}}
\newcommand{\frakm}{\mathfrak{m}}
\newcommand{\bfA}{\mathbf{A}}
\newcommand{\bfB}{\mathbf{B}}
\newcommand{\bfj}{\mathbf{j}}
\newcommand{\bfk}{\mathbf{k}}
\newcommand\WF{\operatorname{WF}}
\newcommand\ang[1]{\langle #1 \rangle}
\newcommand\aang[1]{\langle \! \langle #1 \rangle \! \rangle}
\newcommand\taun{\tau_{\natural}}
\newcommand\xin{\xi_{\natural}}
\newcommand\zetan{\zeta_{\natural}}
\newcommand{\pa}{\partial}
\newcommand{\CI}{C^\infty}
\newcommand{\supp}{\operatorname{supp}}
\renewcommand{\Box}{\square}
\newcommand{\ep}{\epsilon}
\newcommand{\Diff}{\mathrm{Diff}}
\newcommand{\Diffnatinf}{\mathrm{Diff}_{\natural,\infty}}
\newcommand{\RR}{\mathbb{R}}
\newcommand{\Cx}{\mathbb{C}}
\newcommand{\R}{\mathbb{R}}
\newcommand{\cS}{\mathcal S}
\newcommand{\be}[1]{\begin{equation}\label{#1}}
\newcommand{\ee}{\end{equation}}
\newcommand{\wavetime}{\mathfrak{t}}
\newcommand{\natdiff}{{}^\natural d}
\newcommand{\Tnat}{{}^{\natural}T}
\title[Microlocal analysis of the non-relativistic limit of the Klein--Gordon equation]{Microlocal analysis of the non-relativistic limit of the Klein--Gordon equation: Asymptotics}
\author{Andrew Hassell} 
\address{Department of Mathematics, Australian National University, Canberra ACT 0200, AUSTRALIA}
\email{Andrew.Hassell@anu.edu.au}
\author{Qiuye Jia}
\address{Mathematical Sciences Institute, Australian National University, Canberra ACT 2601, AUSTRALIA}
\email{Qiuye.Jia@anu.edu.au}
\author{Ethan Sussman}
\address{Department of Mathematics, Northwestern University, Evanston Illinois, USA}
\email{ethan.sussman@northwestern.edu}
\author{Andr\'as Vasy}
\address{Department of Mathematics, Stanford University, California, USA}
\date{November 10th, 2025.}
\subjclass[2020]{Primary 35L05, 35L15. Secondary 35B25, 35Q40, 58J47, 58J50.}
\begin{document}

\begin{abstract}
	This is the less technical half of a two-part work in which we introduce a robust microlocal framework for analyzing the non-relativistic limit of relativistic wave equations with time-dependent coefficients, focusing on the Klein--Gordon equation. 
	Two asymptotic regimes in phase space are relevant to the non-relativistic limit: one corresponding to what physicists call ``natural'' units, in which the PDE is approximable by the free Klein--Gordon equation, and a low-frequency regime in which the equation is approximable by the usual Schr\"odinger equation. As shown in the companion paper, combining the analyses in the two regimes gives global estimates which are uniform as the speed of light goes to infinity.
	In this paper, we derive  \emph{asymptotics} from those estimates.
	Our framework differs from those in previous works in that ours is based on spacetime phase-space analysis. 
\end{abstract}
\maketitle
\tableofcontents
\section{Introduction}
\label{sec:intro}

In this article, we study the Klein--Gordon operator with the speed of light, $c>0$, as a large parameter. In the main, we consider variable-coefficient metrics. One of the novelties of our approach is that fairly general asymptotically flat spacetimes, the sort that arises in gravitational physics, can be handled. For example, we prove, in a large amount of generality, that the mass present in the spacetime enters the non-relativistic limit as an effective Newtonian potential (outside of the region where the mass is present).\footnote{We treat gravity as a relativistic effect, so one that is suppressed as $c\to\infty$. See the precise assumptions in \S\ref{subsec:assumptions}. It is a particular $O(1/c^2)$ term in the metric that enters the non-relativistic limit as an effective Newtonian potential.}
However, for expository purposes, we consider in this introduction only the flat (i.e.\ Minkowski) metric, with time-dependent lower-order terms. 
Our operator $P = \{P(c)\}_{c>0}$, under these restrictions, will be a family of differential operators on $\bbR^{1,d}$ with coordinates $(t,x)$ taking the form 

\begin{equation}
P = -\frac{1}{c^2} \Big( \frac{\partial}{\partial t} + i V \Big)^2 - (i \nabla+\bfA )^2 - c^2 + W, 
\label{eq:P_intro}
\end{equation}
where $\nabla = (\partial_{x_1},\dots,\partial_{x_d})$, 
\begin{itemize}
	\item $c>0$ is the speed of light, which is assumed to be large,
	\item $V,W\in C^\infty(\bbR^{1+d};\bbR)$, 
	\item $\bfA=(A_1,\dots,A_d)\in C^\infty(\bbR^{1+d};\bbR^d)$,
	\item The coefficients of the PDE are all $O(\rho)$, where $\rho = 1/\langle r \rangle$. Thus, we do not allow potentials which are longer-range than Coulomb.
    \item The following \textit{technical} condition, which seems to reflect mainly the limitations of our mathematical tools: the coefficients of the PDE depend smoothly on the coordinates $\hat{t}=t/r $, $\rho=1/r$, and $\theta = \rho x$, all the way down to $\rho=0$: 
	\begin{equation}
	    W,V,A_1,\dots,A_d \in \rho C^\infty(\bbR_{\hat{t}} \times [0,\infty)_\rho \times \bbS^{d-1}_\theta ). 
		\label{eq:extendability}
	\end{equation}
    So, the coefficients of the operator $P$ admit full asymptotic expansions in increasing powers of $1/r$, with coefficients in $C^\infty(\bbR_{\hat{t}}\times \bbS^{d-1}_\theta)$.\footnote{\textit{For those readers familiar with the language of compactification}: we can combine \cref{eq:extendability} with the statement that $V,W,\bfA$ are smooth functions of Cartesian coordinates by writing
		\[
        W,V,A_1,\dots,A_d\in C^\infty(\bbR_{t/\langle r\rangle} ; \langle r \rangle^{-1}C^\infty(\overline{\bbR^{d}_x})).
        \]
        Here, $\overline{\bbR^d}= \bbR^d\sqcup \infty\bbS^{d-1}$ is the radial compactification $\overline{\bbR^d}\hookleftarrow \bbR^d$ of $\bbR^d$.
        A slightly stronger requirement is 
        \[ 
        W,V,A_1,\dots,A_d\in (1+t^2+r^2)^{-1/2} C^\infty(\bbM),
        \]
        where $\bbM=\overline{\bbR^{1+d}}$ is the radial compactification \emph{of spacetime}. This latter requirement is stronger because it specifies how the coefficients behave near the ``north/south poles'' $\partial \bbM\cap \operatorname{cl}_\bbM\{r=0\}$ of the spacetime. But this behavior is unimportant here, because our main concern is how solutions behave in bounded intervals of time. Causality guarantees that the behavior of the coefficients of the PDE near the north/south poles of $\bbM$ has no effect on the solution until $|t|\gg 1$. We will discuss this point more in \S\ref{sec:true_intro}.
    }
\end{itemize}
Thus, our operator $P$ constitutes what is usually called classical, possibly long-range (but not very long-range), perturbation of the free Klein--Gordon operator $P_0=\square - c^2 = -c^{-2} \partial_t^2 -\triangle - c^2$.\footnote{We use $\triangle$ to denote the positive semi-definite Euclidean Laplacian, $\triangle = -(\partial_{x_1}^2+\dots+\partial_{x_d}^2)$). See \S\ref{sec:notation} for an index of this and other notation.}
   
We will study the operator $P$ as a family $P=\{P(c)\}_{c>0}$ indexed by the parameter $c$, keeping everything else fixed, and we would like to understand the asymptotics of solutions of the Klein--Gordon equation $Pu=0$ in the ``non-relativistic limit,'' $c \to \infty$. 

See \Cref{rem:comparison} for a discussion of the physical interpretation of \cref{eq:P_intro}.

In this section, we aim to give a gentle introduction to the non-relativistic limit for the operator in \cref{eq:P_intro} and to some of the intricacies that arise in the analysis thereof. 
The rest of the manuscript is self-contained, which means that this section can technically be skipped. 
In particular, readers familiar with the non-relativistic limit or microlocal methods
may wish to skip directly to  \S\ref{sec:true_intro} (after reading the literature review
\S\ref{subsec:literature_only}), which functions as a second, more technical introduction, aimed at an audience familiar with microlocal analysis. It is there that our main results appear in full generality, together with an outline (see \S\ref{subsec:outline}) of the remainder of the manuscript. In comparison, the present section is more expository. Unlike the rest of the paper, it is intended to be readable by an audience not familiar with microlocal analysis.

One of the standard pieces of lore of the non-relativistic limit, going back \cite{KraghI, KraghII}\cite{Dirac} to the early years of quantum mechanics, is that (sufficiently slowly oscillating) solutions  of relativistic wave equations are, as $c\to\infty$, well-approximated by solutions of the Schr\"odinger equation. 
More precisely, we would like to approximate 
\begin{equation}
	u \approx e^{-i c^2 t} v_- + e^{ic^2 t} v_+, 
\end{equation}
where $v_\pm$ are solutions of some specific Schr\"odinger equations.
The most elementary of our main theorems, when applied to the operators at hand, \cref{eq:P_intro}, provides a version of this:
\begin{theorem}
	Let $\varphi,\psi \in \calS(\bbR^{d})$ \footnote{Here, $\calS(\bbR^d)$ is the space of Schwartz functions on $\bbR^d$.}. 
	Suppose that $u \in C^\infty(\bbR^{1,d})$ is the solution of the Cauchy problem 
	\begin{equation}
	\begin{cases}
	Pu=0, \\
	u|_{t=0} = \varphi, \\ 
	\partial_t u|_{t=0} = c^2\psi.
	\end{cases}
	\tag{IVP}
	\label{eq:IVP}
	\end{equation}
	For each sign $\pm$, let $v_\pm \in C^\infty(\bbR^{1,d})$ denote the solution of the Schr\"odinger initial-value problem 
	\begin{equation}
	\begin{cases}
	(\pm i   \partial_t +  2^{-1}(i\nabla+\bfA)^2 + V_{\mathrm{eff}} ) v_\pm = 0, \\ 
	v_\pm|_{t=0} = \varphi_\pm,
	\end{cases}
	\label{eq:Schrodinger_initial}
	\end{equation}
	where $V_{\mathrm{eff}}$ is given by $V_{\mathrm{eff}}=\mp V-2^{-1} W$ and the initial data is $\varphi_\pm = ( \varphi \mp i \psi)/2$.
	Then, letting $v=\exp(-ic^2 t) v_- + \exp(ic^2 t) v_+$,
	we have, for any $\varepsilon,T>0$ and $p\in [2,\infty]$, the estimate 
	\begin{equation}
	\lVert u - v \rVert_{\langle r \rangle^{3/2+\varepsilon} L^p( [-T,T]_t\times \bbR^d_{x}) } = O \Big( \frac{1}{c^2} \Big)
	\label{eq:misc_a06}
	\end{equation}
	as $c\to\infty$.
	Here, the constant on the right-hand side can depend on $\varepsilon,T$, and on the coefficients of the operator $P$, as well as on the initial data $\varphi,\psi$. So, $u-v\to 0$ as $c\to\infty$, uniformly in compact subsets of $\bbR_t\times \bbR^d_x$. 
    In fact, we also have control on derivatives: for any constant-coefficient differential operator  $L$ formed from the derivatives $c^{-2} \partial_t, \partial_{x_j}$, 
	\begin{equation}
	\lVert L(u - v )\rVert_{\langle r \rangle^{3/2+\varepsilon} L^p( [-T,T]_t\times \bbR^d_{x}) } = O \Big( \frac{1}{c^2} \Big)
    \label{eq:derivatives_control}
	\end{equation}
	as $c\to\infty$. 
	\label{thm:simplest}
\end{theorem}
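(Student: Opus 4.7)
The plan is the classical WKB-type argument: build $v$ as an explicit approximate solution, verify that $Pv$ and the initial-data mismatch are both $O(c^{-2})$, and then invoke the uniform-in-$c$ Cauchy estimates developed in the companion paper to control the error $e = u - v$. Setting $u_\pm = e^{\pm i c^2 t} v_\pm$ and expanding, the leading $+c^2 v_\pm$ term produced by $-c^{-2}(\partial_t + iV)^2 u_\pm$ cancels the $-c^2 v_\pm$ coming from $P$, yielding
\[
P u_\pm = e^{\pm i c^2 t}\bigl[\mp 2i \partial_t \pm 2V - c^{-2}(\partial_t + iV)^2 - (i\nabla + \bfA)^2 + W\bigr] v_\pm.
\]
The Schr\"odinger equations \eqref{eq:Schrodinger_initial} are equivalent to $\mp 2i\partial_t v_\pm = (i\nabla+\bfA)^2 v_\pm \mp 2V v_\pm - W v_\pm$, and substituting this into the bracket causes every $c$-independent term to cancel, leaving
\[
Pv = -c^{-2}\bigl[e^{-ic^2 t}(\partial_t+iV)^2 v_- + e^{+ic^2 t}(\partial_t+iV)^2 v_+\bigr].
\]
Since $v_\pm$ solve smooth-coefficient Schr\"odinger equations with Schwartz data, their space-time derivatives are Schwartz in $x$ uniformly on $[-T, T]$, so $Pv = O(c^{-2})$ in any weighted $L^p$ norm.

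Next I would verify that $v$ matches the Cauchy data of $u$. From $\varphi_\pm = (\varphi \mp i\psi)/2$ one gets $v|_{t=0} = \varphi_+ + \varphi_- = \varphi = u|_{t=0}$ exactly, while $\partial_t v|_{t=0} = ic^2(\varphi_+ - \varphi_-) + (\partial_t v_+ + \partial_t v_-)|_{t=0} = c^2 \psi + r(x)$, where $r\in\calS(\bbR^d)$ is a fixed, $c$-independent Schwartz remainder computed from $\varphi_\pm$ via \eqref{eq:Schrodinger_initial}. Hence $(u-v)|_{t=0} = 0$ and $c^{-2}\partial_t(u-v)|_{t=0} = O(c^{-2})$ when measured in the rescaled derivative $c^{-2}\partial_t$ natural to the Klein--Gordon problem. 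Feeding $Pe = -Pv = O(c^{-2})$ together with these $O(c^{-2})$ Cauchy data into the uniform-in-$c$ estimate from the companion paper delivers \eqref{eq:misc_a06}. The derivative bound \eqref{eq:derivatives_control} follows from the same estimate applied at higher regularity, using that $Pv$ and the initial-data mismatch remain $O(c^{-2})$ in all Schwartz-type norms, or equivalently by commuting $L$ through $P$: since $L$ is a constant-coefficient polynomial in $c^{-2}\partial_t$ and $\partial_{x_j}$, the commutator $[P,L]$ has $c$-independent, $O(\rho)$ coefficients of one lower order, so an inductive application of the estimate controls $Le$.

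The main obstacle is not in the algebraic steps above, which are routine, but in securing the uniform estimate in the weighted $L^p$ scale $\langle r\rangle^{3/2+\varepsilon} L^p$ with $p \in [2, \infty]$ and constants independent of $c$. The microlocal machinery of the companion paper is fundamentally $L^2$-based, so the $L^p$ statement requires a Sobolev embedding step -- which is why the weight $3/2 + \varepsilon$ sits just above $d/2$ (for $d = 3$) -- and the constant in that embedding must survive $c \to \infty$. Threading uniformity through both the ``natural units'' high-frequency regime (near frequency $c^2$) and the low-frequency Schr\"odinger regime is the core content of the companion paper's two-regime framework, of which \Cref{thm:simplest} is the simplest consequence.
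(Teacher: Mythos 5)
Your algebraic backbone is correct: the computation of $P(e^{\pm ic^2 t}v_\pm)$, the cancellation after substituting the Schr\"odinger equation, the resulting $Pv = -c^{-2}\sum_\pm e^{\pm ic^2 t}(\partial_t+iV)^2 v_\pm$, and the Cauchy-data matching $(u-v)|_{t=0}=0$, $\partial_t(u-v)|_{t=0}=-r(x)$ with $r$ Schwartz and $c$-independent all check out, and these are indeed the right quantities. But the strategy you propose is genuinely different from the paper's, and the place where you lean on ``the uniform-in-$c$ Cauchy estimate from the companion paper'' is a gap: no such estimate is stated there. The companion paper's main uniform result (\Cref{thm:inhomog}) is a \emph{scattering} estimate, phrased for the advanced/retarded/Feynman problems in terms of global-in-spacetime $H_{\calctwo}$ Sobolev norms; it is not a Cauchy-data estimate and cannot be applied to $e=u-v$ directly. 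The paper's actual route is to prove the global \Cref{thm:Cauchy} first, which is done by writing $w = \Theta w + (1-\Theta)w$, turning each half into an advanced/retarded problem with $\delta(t)$-type forcing (\cref{eq:def-u-varsigma}), handling the resulting wavefront set at the ``bad set'' $\calB$ by elliptic regularity (\Cref{prop:delta_WF}, \Cref{rem:worries}), and only then feeding the pieces into \Cref{thm:inhomog}; the finite-time \Cref{thm:simplest} is afterwards deduced by modifying the PDE coefficients outside $\{|t|<T\}$ so as to land in the scattering framework. The reduction and the $\calB$-argument are the substantive content of \S\ref{sec:Cauchy}, and they are precisely what your ``feed into the uniform estimate'' step glosses over.

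Your alternative -- a classical WKB ansatz plus uniform-in-$c$ weighted energy estimates on the slab $[-T,T]\times\bbR^d$ -- is a legitimate route to \Cref{thm:simplest} and is in some ways more elementary than what the paper does (no scattering compactifications, no variable decay orders). It buys locality in time for free, avoiding the need to modify the coefficients of $P$ outside $\{|t|<T\}$. What it costs is that you would have to actually develop those uniform weighted estimates: the plain Klein--Gordon energy $\int(c^{-2}|\partial_t e|^2+|\nabla e|^2+c^2|e|^2)$ gives an unweighted $L^2$ bound, so to reach $\langle r\rangle^{3/2+\varepsilon}L^p$ with $p$ up to $\infty$ you need commutator estimates for $[P,\langle r\rangle^s]$ and for iterated applications of $c^{-2}\partial_t,\partial_{x_j}$, all uniform in $c$, and a Sobolev embedding whose constant survives $c\to\infty$; none of this is available off the shelf. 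In contrast, the paper's machinery delivers the weighted, variable-order, global-in-spacetime estimate (\Cref{thm:Cauchy}) as the main product, with the finite-time version a corollary. Either route is defensible, but as written your proposal attributes to the companion paper an estimate that it does not contain, while omitting the advanced/retarded reduction that the paper actually uses to get there.
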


See \Cref{thm:Cauchy} for the full version of this theorem -- allowing variable metrics and containing a long-time result -- and see \Cref{thm:inhomog} for our main microlocal theorem, proven in \cite{NRL_I}. Let us emphasize that it is the latter theorem on which the novelty of our work hinges, but it appears to us that even \Cref{thm:simplest} is novel in the setting with time-dependent coefficients.

\begin{warning}
    It should be emphasized from the outset that the previous theorem does not provide a complete understanding of the non-relativistic limit. We will see that when the initial data is not smooth, then the non-relativistic limit receives high-frequency corrections. These may be $O(c^{-2})$ in $L^\infty$ norm, in which case they do not spoil the specific theorem statement above (though they will not be $O(c^{-\infty})$), but if the initial data is irregular enough, the corrections will be $\Omega(c^{-2})$.  
    In the geometric language of \cite{NRL_I}, these are located ``at the $\natural$-face $\natural\mathrm{f}$.'' See \S\ref{subsec:natural}, \S\ref{sec:inhomogeneous} for further discussion of this important conceptual point. 
\end{warning}

The statement on derivatives in \cref{eq:derivatives_control} allows us to rigorously verify the expected non-relativistic limit of the \emph{four current} $j=(\rho,\bfj)$ of $u$, defined\footnote{Physicists sometimes use different normalization conventions on the four-current in the relativistic and non-relativistic settings. Our convention, insofar as this can be called a convention, is such that the former agrees with the latter in the non-relativistic limit. 
Consequently, our convention differs from some in the physics literature by a factor of $2$.} by 
\begin{align}
	\begin{split} 
	\rho &= c^{-2}\Im (u^*\partial_t u) = (2ic^2)^{-1}(u^* \partial_t - u \partial_t u^*), \\
	\bfj &= \Im(u^* \nabla u) = (2i)^{-1}(u^* \nabla u - u\nabla u^*).
	\end{split} 
\end{align}
Indeed, the components of $j$ converge at an $O(1/c^2)$ rate, uniformly in compact subsets of spacetime, to the components of the four current of $v$, which is computed to be 
\begin{equation}
(|v_+|^2-|v_-|^2, \Im (v_+^* \nabla v_+)+\Im (v_-^* \nabla v_-) +  \underbrace{\Im (e^{-2ic^2t} v_+^* \nabla v_- + e^{2ic^2t}v_-^* \nabla v_+)}_{\text{Zitterbewegung term}})
\end{equation}
modulo $O(1/c^2)$ terms. That is,
\begin{equation}
\rho \to |v_+|^2-|v_-|^2
\end{equation}
and 
\begin{align}
\bfj - \Im ( e^{-2ic^2 t}v_+^* \nabla v_- + e^{2ic^2t}v_-^* \nabla v_+)&\to \Im (v_+^* \nabla v_+)+\Im (v_-^* \nabla v_-),
\end{align}
uniformly in compact subsets of spacetime. 
The quantity $|v_+|^2-|v_-|^2$ is the non-relativistic formula for charge density, and $\Im (v_+^* \nabla v_+)+\Im (v_-^* \nabla v_-)$ is the non-relativistic formula for current density. The necessity of subtracting off the highly oscillatory Zitterbewegung term 
from $\bfj$ is usually seen as surprising. Zitterbewegung is an interference effect between the particle and anti-particle parts of the wavefunction $u$.

\begin{example}\label{ex:FEM}
	We depict in Figs.\ \ref{fig:FEM}, \ref{fig:FEM2}, \ref{fig:FEM3}, \ref{fig:FEM4} 
	a numerically computed example of the non-relativistic limit in the setting where the coefficients of the PDE are time-dependent. Specifically, $W,\bfA=0$, and
	\begin{equation}
		V(t,x) = \frac{8}{1+(x-1+t)^2}.
        \label{eq:V_ex}
	\end{equation}
	For initial data, we took $\varphi(x)=e^{-x^2}$ to be Gaussian and $\psi=0$.  
	The factor of $8$ in the potential was chosen so as to make the potential's effects strong without having a negative impact on numerical precision.

	As can be seen in \Cref{fig:FEM}, the solution $u$ is highly oscillatory but well-approximated by the ansatz $v$, which depends on $c$ only through the oscillatory prefactors multiplying $v_\pm$. The error $u-v$ is decaying as $c\to\infty$, uniformly in compact subsets of spacetime. The numerics show that it is decaying at the expected $O(1/c^2)$ rate.
	
	This example shows clearly how the electric potential causes the ``particle'' and ``anti-particle'' parts of the wavefunction, which agree at $t=0$ as a consequence of the choice of the initial data $\varphi,\psi$, to separate as $|t|$ increases; the $v_+$ part is attracted to the potential, appearing to stick to its center $\{x=1-t\}$, while the $v_-$ part is strongly repelled, as seen in \Cref{fig:FEM2}.

    The non-relativistic limits of $\rho,\bfj$ are depicted in \Cref{fig:FEM3}, \Cref{fig:FEM4}, respectively.
\end{example}

\begin{figure}[t]
	\begin{subfigure}{\textwidth}
		\centering
		\includegraphics[width=.99\textwidth]{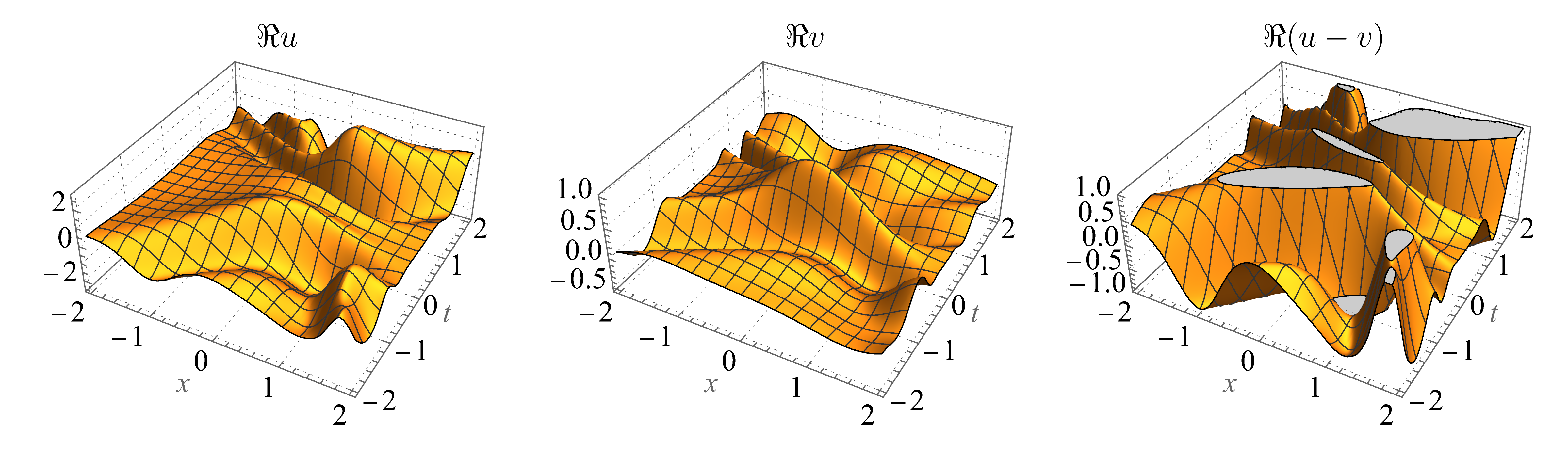}
		\caption{$c=1$. Note that $u\not\approx v$.}\vspace{1em}
	\end{subfigure}
	\begin{subfigure}{\textwidth}
		\centering
		\includegraphics[width=.99\textwidth]{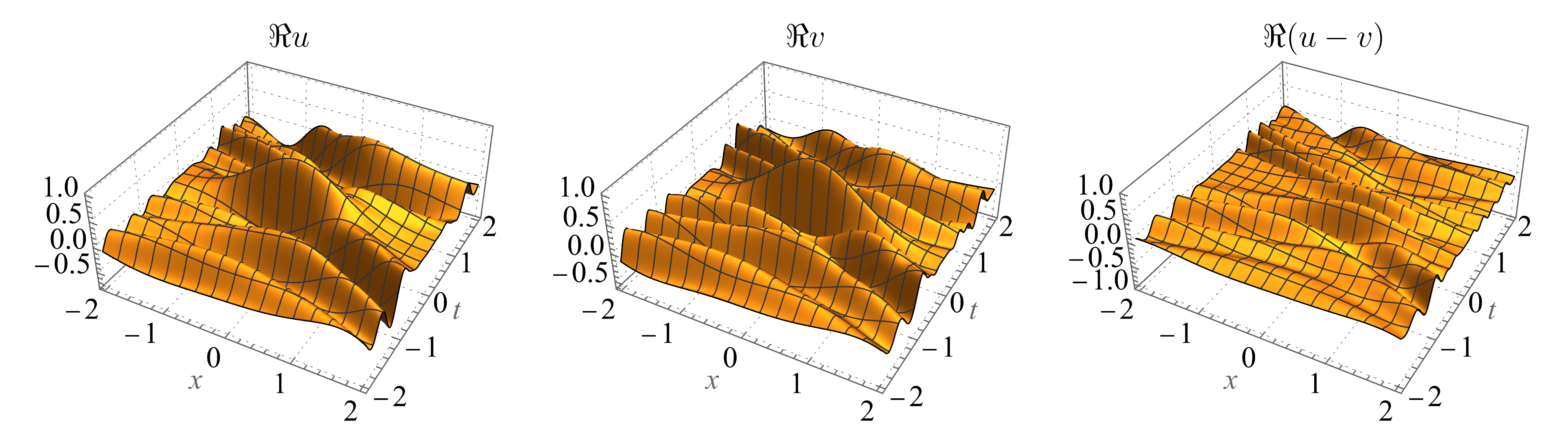}
		\caption{$c=3$. Now $u\approx v$.}\vspace{1em}
	\end{subfigure}
	\begin{subfigure}{\textwidth}
		\centering
		\includegraphics[width=.99\textwidth]{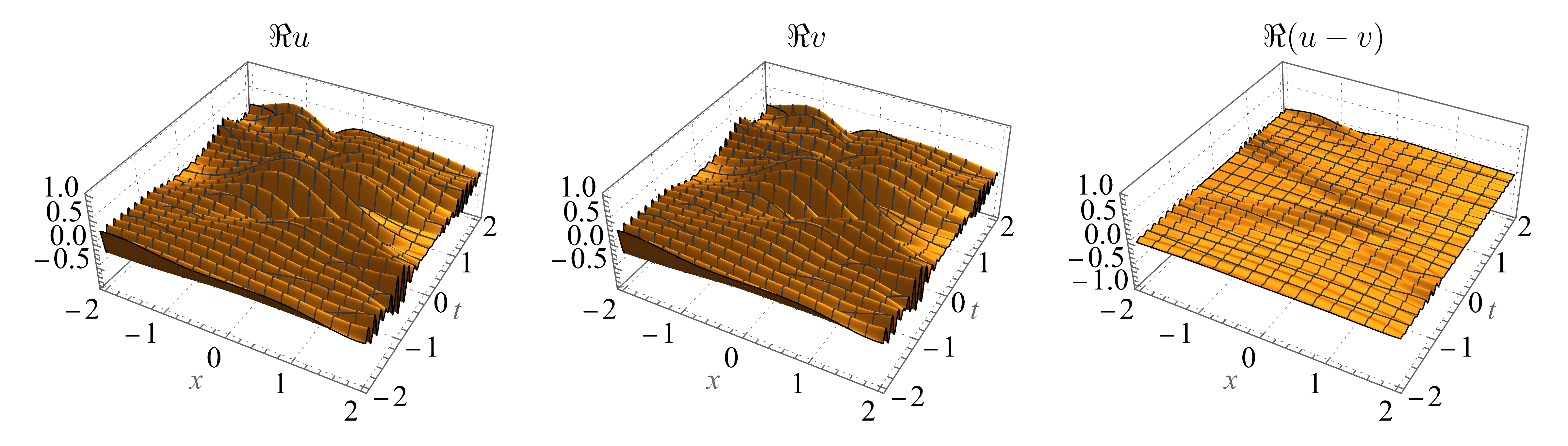}
		\caption{$c=6$. Now $u,v$ are almost indistinguishable. Notice the oscillations $\sim e^{\pm ic^2 t}$.}
	\end{subfigure}
	\caption{\Cref{ex:FEM}. The real parts of the functions $u,v,u-v$ are shown for three different values of $c$. Evidently, $u\approx v$ if $c\gg 1$. See \Cref{fig:FEMIm} for the imaginary parts.}
	\label{fig:FEM}
\end{figure}

\begin{figure}[t]
	\includegraphics[width=.65\textwidth]{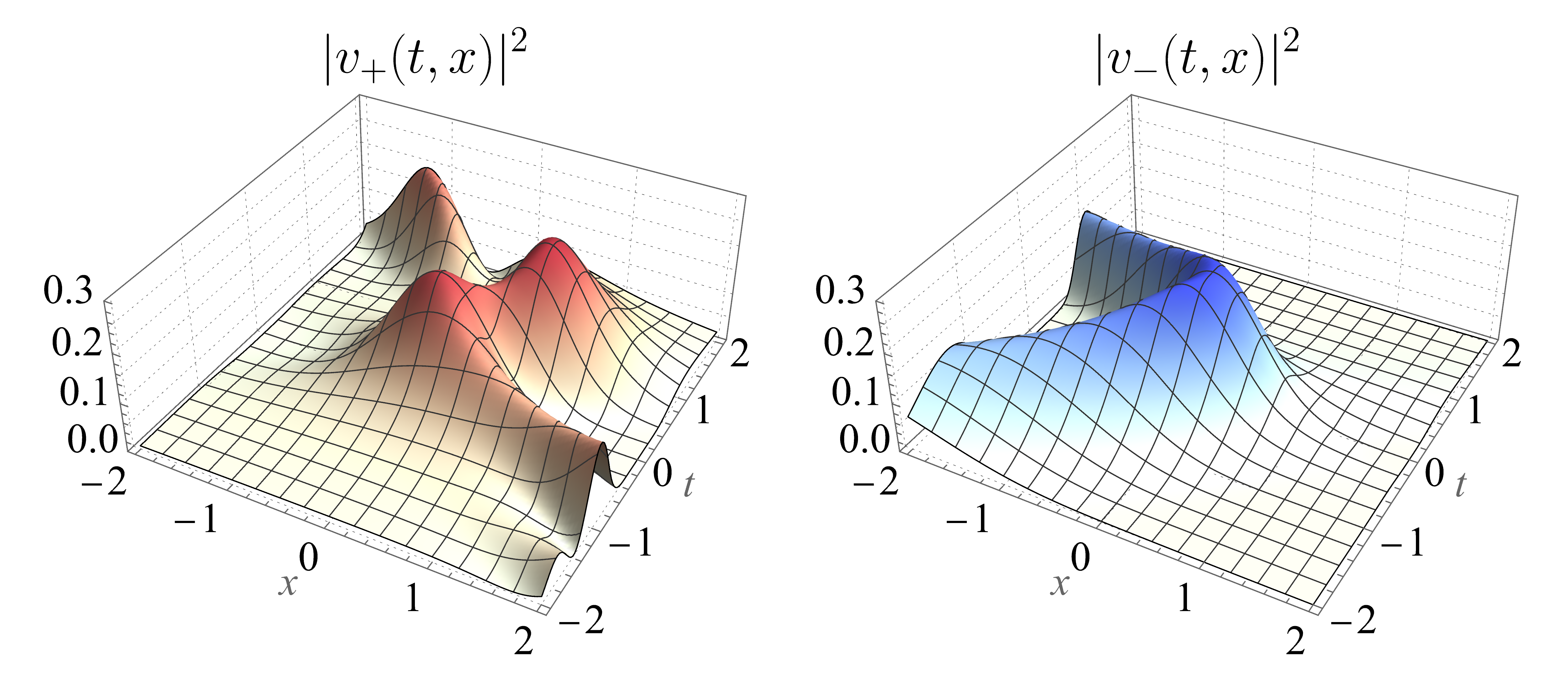} 
	\caption[Continuation of \Cref{ex:FEM}.]{Continuation of \Cref{ex:FEM}, showing the $L^2$-density of the Schr\"odinger solutions $v_\pm$. 
    The solutions are concentrated near the paths $\gamma_\pm$ followed by a classical particle moving in the force-field $F=\mp \nabla V$ generated by the potential $V$ defined by \cref{eq:V_ex}; see \Cref{fig:paths}.
    }
	\label{fig:FEM2}
\end{figure}

\begin{remark}[Optimality or lack thereof]
	We have chosen \Cref{thm:simplest} to present here because of its simplicity, but it is sub-optimal in several ways:
	\begin{itemize}
		\item (\textit{Regarding allowed data}.) Although we stated the theorem only for Schwartz initial data, the actual proof, which can be found in \S\ref{subsec:Cauchy}, requires only a finite amount of regularity and a finite amount of decay.
		\item (\textit{Regarding $L^p$-weight}.) The $\langle r \rangle^{3/2+\varepsilon}$ weight in \cref{eq:misc_a06} is likely not sharp. The loss enters in a few places, including lossy Sobolev embeddings.
		\item (\textit{Subleading terms.}) The $O(1/c^2)$ estimate in \cref{eq:misc_a06} is sharp in the sense that, even if we allow $\varphi_\pm$ to depend on $c$, then, typically, 
		\begin{equation}
			\lVert u - v \rVert_{\langle r \rangle^{3/2+\varepsilon} L^\infty( [-T,T]_t\times \bbR^d_{x}) } = \Omega\Big( \frac{1}{c^2} \Big) . 
			\label{eq:misc_08}
		\end{equation}
		In fact, $u$ will admit a full asymptotic expansion in powers of $c^{-2}$. The approximation $u\approx v$ is just the leading term in that expansion. 
		The next term typically has size $O(c^{-2})$, hence \cref{eq:misc_08}.

		We will not discuss subleading terms much in this paper. 
		For the free Klein--Gordon equation (i.e.\ the constant-coefficient case) we will present the computation of the first few subleading terms in \S\ref{sec:free}. See specifically \S\ref{subsub:freeerror}.
		For the variable-coefficient case, the tools we develop later do provide a means to compute the subleading terms.  We leave the details to the interested reader. 
		An interesting and related question, which we do not address and do not have the tools to address, involves analyticity in $1/c^2$. 
		\item (\textit{Uniformity in} $T$.) In the theorem, nothing is assumed about the $t\to\infty$ behavior of the coefficients of the PDE, so nothing is stated about the $t\to\infty$ behavior of $u$; the constant in the estimate is allowed to depend on the parameter $T$.
		Later, we will show that if the coefficients are well-behaved on the radial compactification 
		\begin{equation} 
			\overline{\bbR^{1+d}} = \bbR^{1+d}\sqcup \infty \bbS^{d}
			\label{eq:radial_comp_of_spacetime}
		\end{equation} 
		of \emph{spacetime} (which forces $V,\bfA,W$ to die away as $t\to\infty$), then the estimate can be improved to
		\begin{equation}
			\lVert u - v \rVert_{(1+r^2+t^2)^{3/4+\varepsilon} L^p( \bbR_t\times \bbR^d_{x}) } = O \Big( \frac{1}{c^2} \Big).
			\label{eq:misc_a07}
		\end{equation}
		In fact, the way we will prove the theorem above is by modifying the coefficients of the PDE in $\{|t|\geq 2 T\}$ so that they become well-behaved functions on the radial compactification of spacetime and then applying spacetime phase-space/microlocal methods; \cref{eq:misc_a06} will then be deduced from \cref{eq:misc_a07}.
		\item (\textit{Form of time-dependence}.) Above, we required \cref{eq:extendability} instead of the simpler 
		\begin{equation}
			W,V,A_1,\dots,A_d\in C^\infty(\bbR_{t} ; \langle r \rangle^{-1}C^\infty(\overline{\bbR^{d}_x})).
			\label{eq:extendability_2}
		\end{equation}
		We have no reason to believe that the theorem would fail to hold given only \cref{eq:extendability_2}, but, at the moment, our spacetime microlocal tools do not allow us to prove this.
		However, given any  bounded, open, nonempty $\Omega\subset \bbR^d_x$,
		any function of form $C^\infty(\bbR_t; C^\infty_{\mathrm{c}}(\Omega))$ satisfies \cref{eq:extendability}, so we allow arbitrary time-dependence in compact regions of space. For most physical applications of the non-relativistic limit, this suffices. 
		
	An example of the sort of time-dependent coefficient that we exclude would be $\chi(t) f(r)$ for $\chi\in C_{\mathrm{c}}^\infty(\bbR)$ and $f\notin \calS(\bbR^d)$, where the time-dependence appears in the $r\to\infty$ expansion. However, $\chi(t/\langle r \rangle ) f(r)$ would be fine.

		The geometric interpretation of 
		 \cref{eq:extendability} is that 
		 \begin{equation} 
		 	C^\infty(\bbR_{t/\langle r\rangle} ; \langle r \rangle^{-1}C^\infty(\overline{\bbR^{d}_x})) = \langle r \rangle^{-1} C^\infty(\overline{\bbR^{1+d}} \backslash (\mathrm{S}\sqcup \mathrm{N}))
		 \end{equation} 
		 consists of smooth, vanishing-at-the-boundary functions on the radial compactification of spacetime without the north/south poles $\mathrm{N},\mathrm{S} \in \infty \bbS^{d}$, where the time-axis $\{r=0\}$ hits the boundary of the compactification. Thus, the difference between what we are requiring in this section and what we stated above suffices to get uniformity in $T$ is well-behavedness at $\mathrm{N},\mathrm{S}$. 
	\end{itemize}
	The microlocal estimates in this paper are sharp (up to $\varepsilon$ losses) vis-\`a-vis the $L^2$-based Sobolev spaces they are stated in terms of. Insofar as those are considered our main results, our main results are essentially sharp. Moreover, as far as the microlocal analysis is concerned, \cref{eq:extendability} is more natural a requirement on the time-dependence of our coefficients than \cref{eq:extendability_2}, though we would like to be able to handle the latter sort in the future.
\end{remark}

\begin{figure}[t]
	\includegraphics[scale=.4]{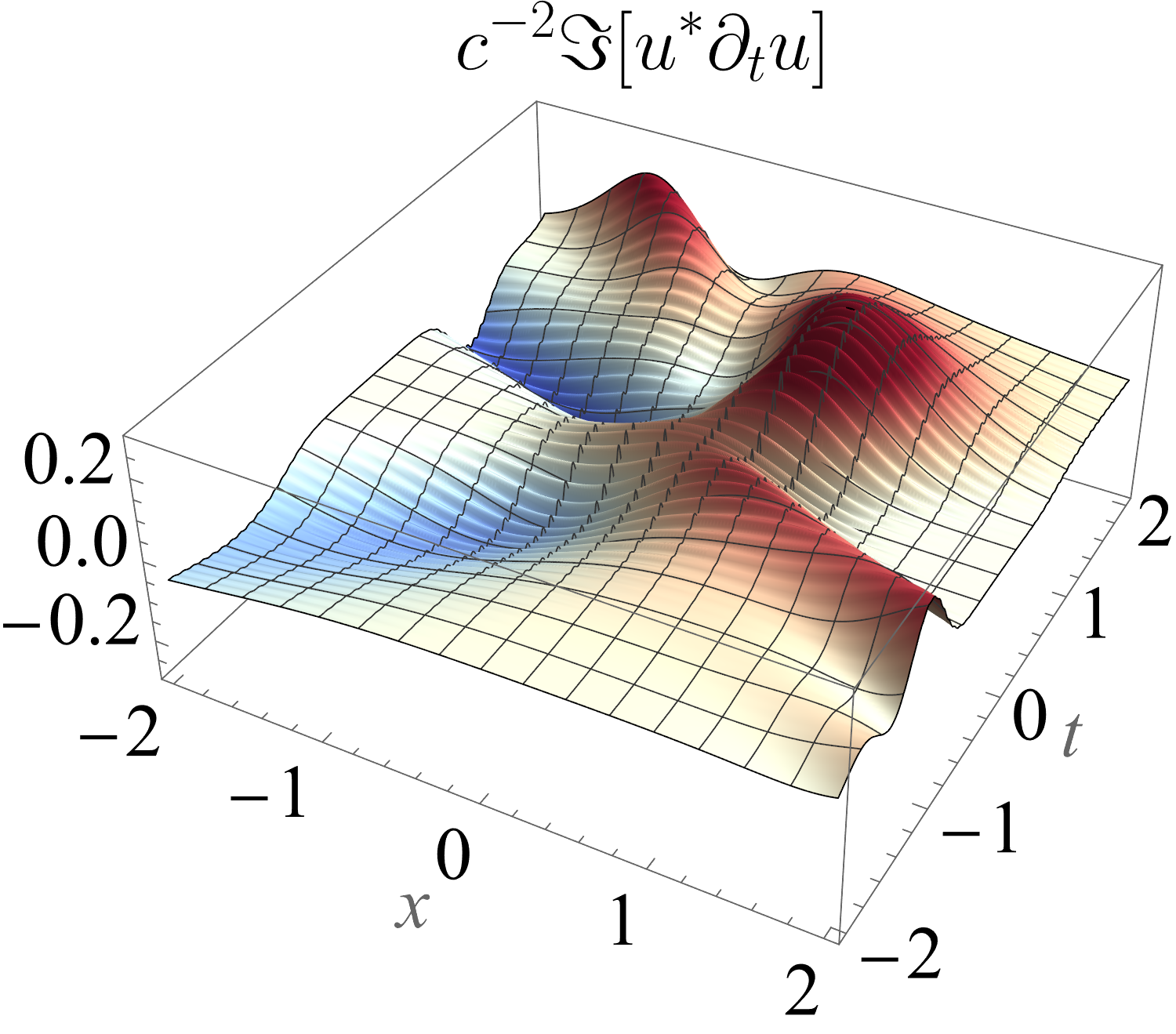}\quad 
	\includegraphics[scale=.4]{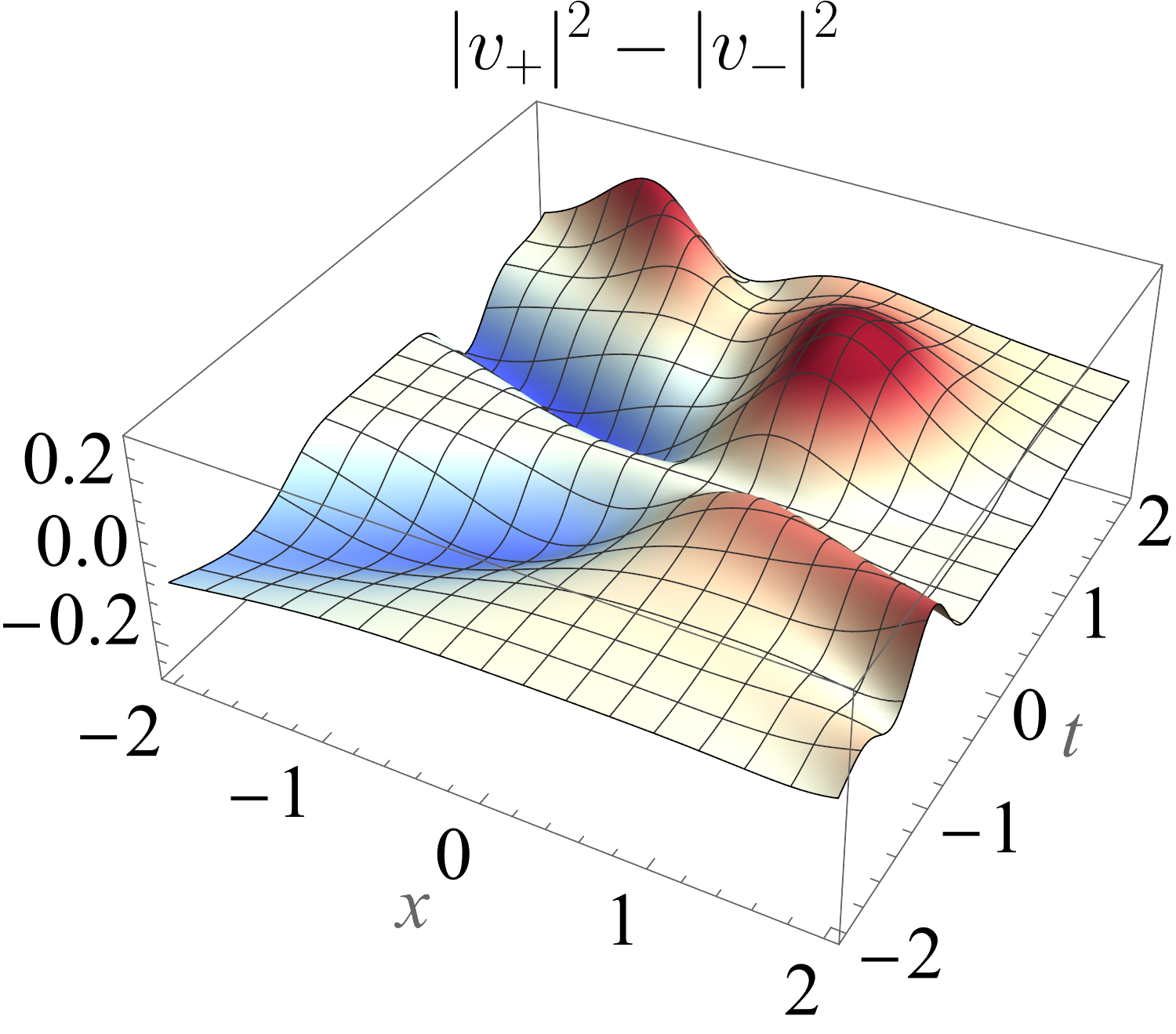}\quad
	\includegraphics[scale=.4]{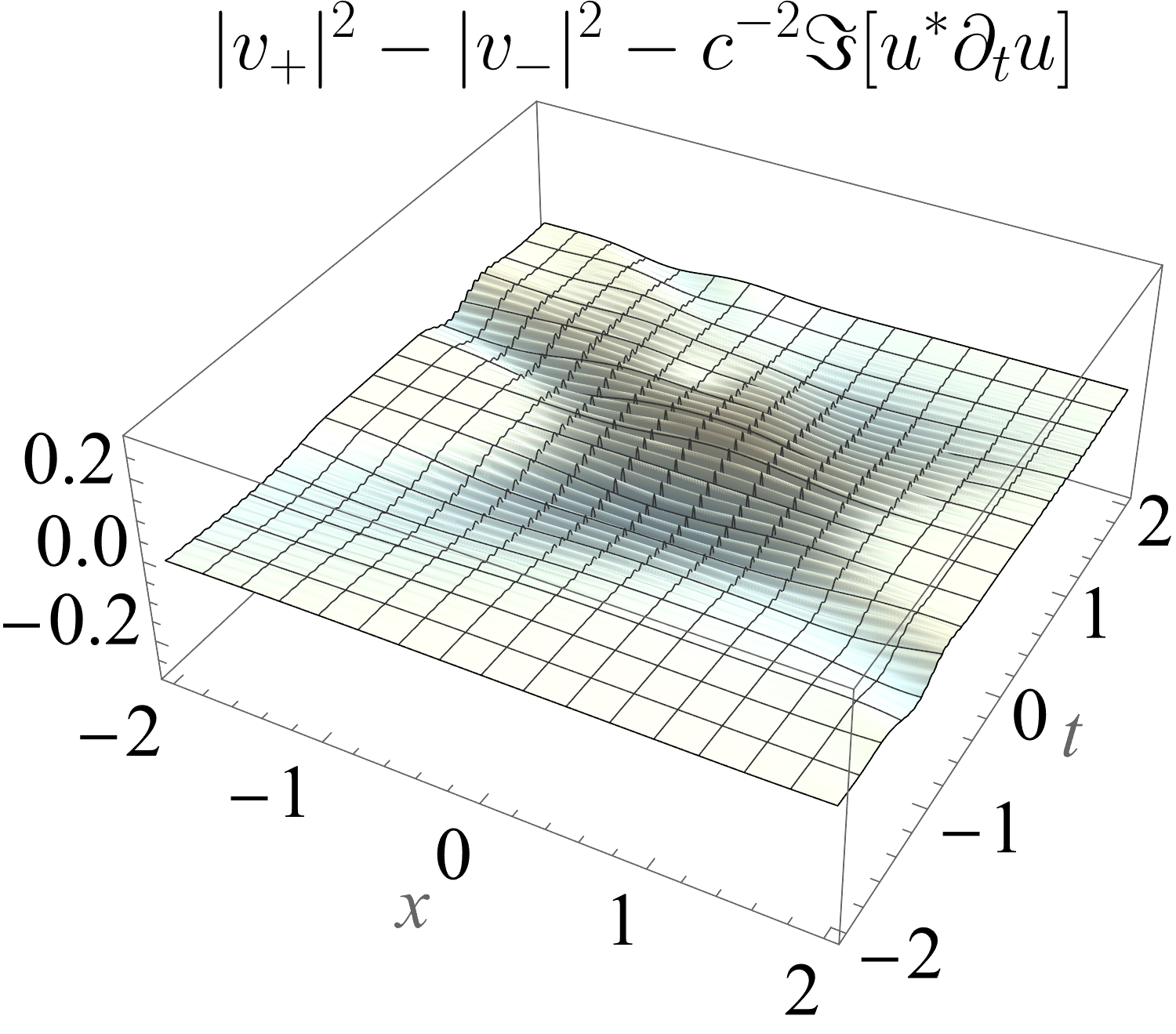}
	\caption{Continuation of \Cref{ex:FEM}, showing (\textit{left}) a plot of the charge density $\rho= c^{-2}\Im(u^*\partial_t u)$ of $u$, for $c=6$, and (\textit{middle}) the expected non-relativistic limit, $|v_+|^2-|v_-|^2$. This is just the difference of the two plots in \Cref{fig:FEM2}. The difference (\textit{right}) between $\rho$ and $|v_+|^2-|v_-|^2$ is highly oscillatory, but it is $O(1/c^2)$ as $c\to\infty$.} \label{fig:FEM3}
\end{figure}
\subsection{Literature review}
\label{subsec:literature_only}

Of course, we are not the first to rigorously study the non-relativistic limit of the linear Klein--Gordon equation with variable coefficients. Starting in the 1970s, mathematical physicists turned their attention to the problem, and a number of results followed. 
We are aware of the following works:
\begin{itemize}
	\item 
	Veseli{\'c} \cite{veselic1970spectral, veselic1971perturbation, veselic1983nonrelativistic} considered the electrostatic case, in which $\bfA,W=0$ and $V$ is independent of time. Under these severe restrictions, he showed that the frequencies and profiles of standing wave solutions of the Klein--Gordon/Dirac equations converge to those of the associated Schr\"odinger/Pauli equations, respectively. 
	Analyticity of the perturbation near $c=\infty$ was shown, first in $c^{-1}$ \cite{veselic1971perturbation}, and then in $c^{-2}$ \cite{veselic1983nonrelativistic}.  
	\item Another line of work, \cite{Kis}\cite{Smoller}\cite{Zlamal}\cite{OMalley}\cite{Friedman}\cite{SchoeneI, SchoeneII} treats the Cauchy problem for the Klein--Gordon and Dirac equations with time-independent coefficients (i.e.\ the \emph{static} case) using the framework of singular perturbation theory. 
	We highlight the work \cite{SchoeneII} of Schoene in particular, which contains the proof of the static case of \Cref{thm:simplest} (except the result is stated as an $L^2$-based bound rather than an $L^\infty$-bound). 
	\item  In \cite{CirincioneThesis,CC}, Cirincione and Chernoff apply the pseudoresolvent methods of Veseli{\'c} to the Cauchy problem via a Trotter--Kato theorem. In the process, they develop an abstract framework that is sufficiently general to allow the spatial slices of spacetime to be any complete Riemannian manifold. 
\end{itemize}

For other works regarding the non-relativistic limit of the (linear) Dirac equation that do not study the Klein--Gordon equation, see Titchmarsh \cite{Tit1, Tit2}, Hunziker \cite{Hunziker}, Thaler et.\ al.\ \cite{ThallerI, ThallerII, ThallerIII}, Grigore--Nenciu--Purice \cite{Grigore}, Kutzelnigg \cite{Kutz}, White \cite{White}, Ito--Yamada \cite{Yamada, ItoYamada, ItoYamada2}, and Bechouche--Mauser--Poupaud \cite{BechoucheDirac}. This list is likely incomplete.
We mention also the work of Ichinose \cite{Ichinose1, Ichinose2}, which concerns the ``imaginary time'' Klein--Gordon equation in the presence of an electromagnetic field and uses rigorous path-integral (Feynman-Kac) techniques.

In all of the works cited above, the coefficients of the PDE are not allowed to be time-dependent, with the sole exception of \cite{BechoucheDirac}, and even this only deals with the Dirac equation, not the Klein--Gordon equation. 
Even Cirincione--Chernoff, in \cite{CirincioneThesis,CC}, only consider ultra-static spacetimes. (Recall that an \emph{ultra-static} spacetime is one isometric to a product $\bbR_t\times X$ for some Riemannian manifold $X$ \cite{Fulling, FullingII}\cite{Verch}. Thus, the metric is independent of time and is lacking cross terms such as $\dd t \dd x$, and the coefficient of $\dd t^2$ is constant. Besides the flat case, astrophysical spacetimes are not ultra-static.)
It therefore appears to us that the existing literature does not adequately treat the non-relativistic limit for Klein--Gordon equations with time-dependent coefficients. This is especially true for non- ultra-static spacetimes. Certainly we are aware of no systematic attack. 

One reason is that the case of time-dependent coefficients lacks a canonical separation of the solution of the Cauchy problem into two components, each of which limits to a Schr\"odinger solution as $c\to\infty$. See \S\ref{subsec:splitting_diff} for a further discussion of this point. 
In \cite{BechoucheDirac}, Bechouche--Mauser--Poupaud settle on using certain pseudodifferential operators to split their spinor-valued solution into electron/positron components; see \cite[Eq.\ 1.21]{BechoucheDirac}. This seems natural. However, Bechouche--et al.\ use \emph{spatial} pseudodifferential operators, whereas we use \emph{spacetime} pseudodifferential operators. Spacetime pseudodifferential operators seem necessary when working with scalars, because one has more plane waves (two independent solutions to the free Klein--Gordon equation for each spatial frequency $\xi\in \mathbb{R}^d$) than components of the solution $u$ (namely one, since $u$ is a scalar). 
In contrast, different plane wave solutions of the free Dirac equation with the same spatial frequency can be distinguished by looking at the ratio of different components --- Dirac spinors in $(3+1)$D have four components, and, for each $\xi \in \bbR^3$, we have four independent plane waves solutions to the free Dirac equation with the same spatial frequency $\xi$.

Although we are postponing the discussion of non-flat spacetimes until \S\ref{sec:true_intro}, let us mention some motivation here. 
A natural heuristic
for the non-relativistic limit on astrophysical spacetimes, one that we are surely not the first to consider, is that
\begin{itemize}
	\item[($\star$)]  the mass $M\geq 0$ of the spacetime (i.e.\ the mass of whatever objects are ``generating'' the gravitational field) shows up in the limiting Schr\"odinger operators in the form of a Newtonian potential 
	$2M /r$. 
\end{itemize}
This is one manner in which the effects of Newtonian gravity are recovered in the non-relativistic limit.  If the list of citations above is complete, the heuristic ($\star$) does not appear to have been put on rigorous footing yet, despite its long history in the physics literature.\footnote{This issue is separate from the weak-gravity limit of the Einstein field equations.}
Later, we will state \Cref{thm:Cauchy} in sufficient generality to include astrophysical spacetimes --- see \Cref{ex:Kerr}. 

In summary, the mathematics literature is lacking a robust framework for handling the non-relativistic limit of the Klein--Gordon equation outside of the ultra-static case, in enough generality to deliver what physicists require of it. The goal of this manuscript is to provide such a framework. It is described at length in \S\ref{sec:true_intro}. We offer \Cref{thm:simplest} (and \Cref{thm:Cauchy}) as evidence of its sufficiency, as far as establishing the basic facts of the non-relativistic limit is concerned. However, some results are still out of reach --- see \S\ref{subsec:todo}.

For the non-relativistic limit of nonlinear equations see e.g.\ Nakanishi, \cite{nakanishi2002nonrelativistic}, Machihara, \cite{machihara2002nonrelativistic, machihara2002nonrelativistic, machihara2003NLDirac}, Masmoudi et.\ al.\ \cite{masmoudi2002nonlinear, masmoudi2003MKG, masmoudi2005KGZ, masmoudi2008energy-Zakhorav, masmoudi2010KGZ-singular}, Bechouche et.\ al.\ \cite{BechoucheMaxwell}, Pasquali \cite{Pasquali}, Bao--Lu \cite{bao-lu-zhang2023convergence}, and Candy--Herr \cite{candy2023massless}.
In \cite{BechoucheMaxwell}, Bechouche, Mauser, and Selberg study the initial value problem for the Klein--Gordon--Maxwell system, and they establish convergence in the non-relativistic limit. In their work, no background fields are present, so the only force felt by the Klein--Gordon equation is that due to the dynamical electromagnetic fields. However, since the electromagnetic fields are dynamic, they are likely time-dependent. It is therefore natural to expect that the tools in \cite{BechoucheMaxwell} (or perhaps \cite{BechoucheDirac}) would be sufficient to prove \Cref{thm:simplest}.

\begin{figure}[t!]
	\includegraphics[scale=.4]{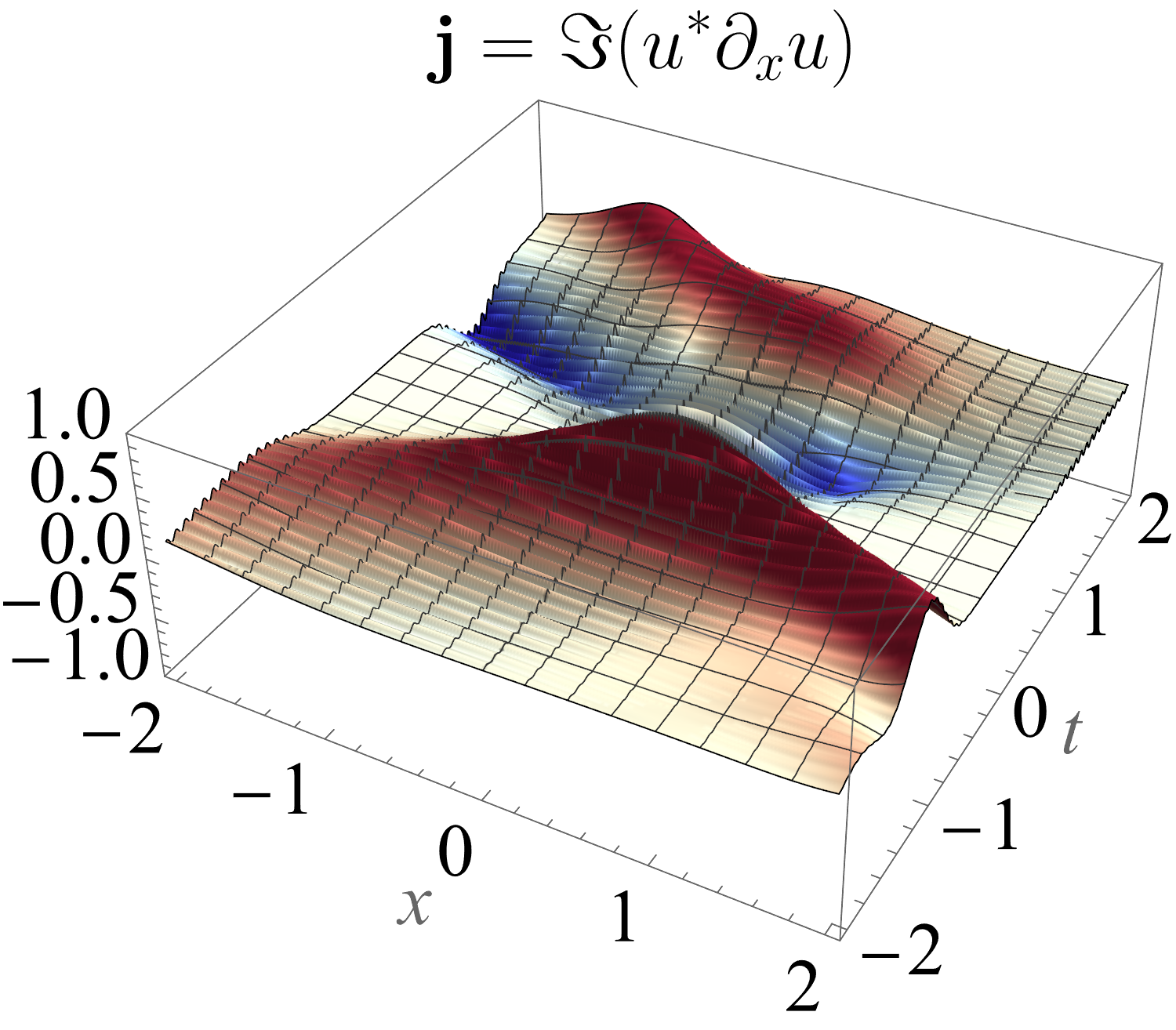}
	\includegraphics[scale=.4]{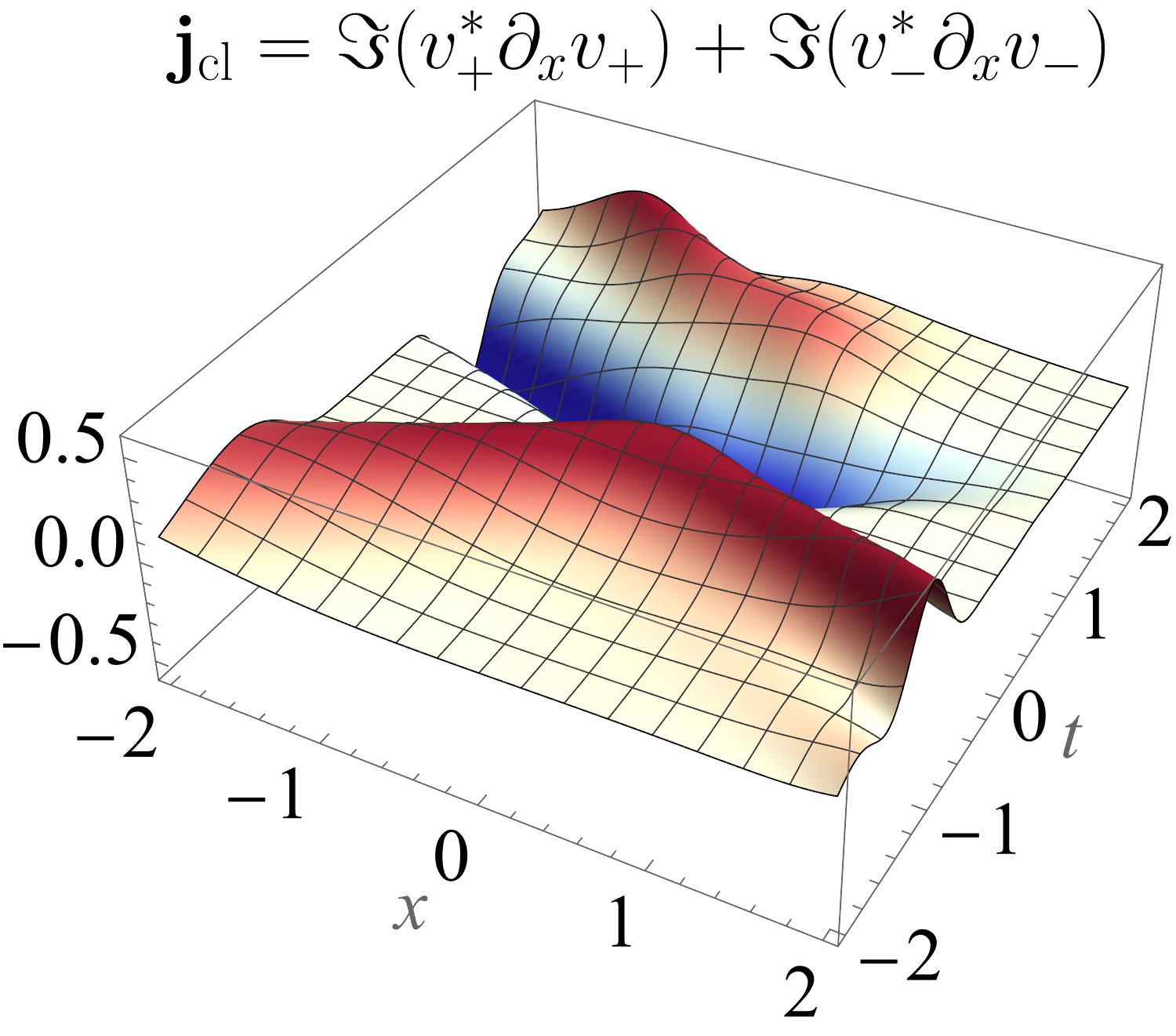}
	\includegraphics[scale=.4]{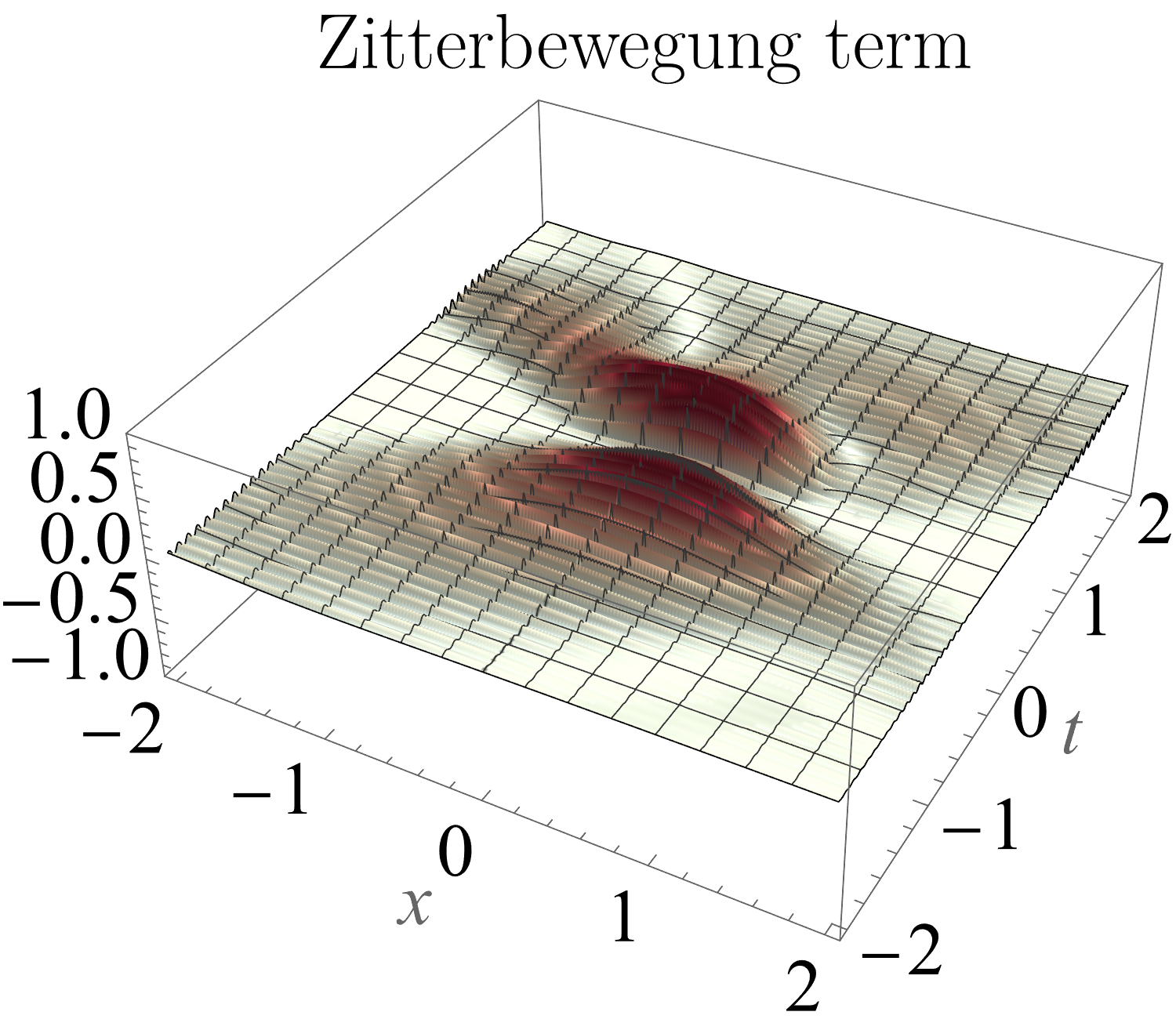}
	\includegraphics[scale=.4]{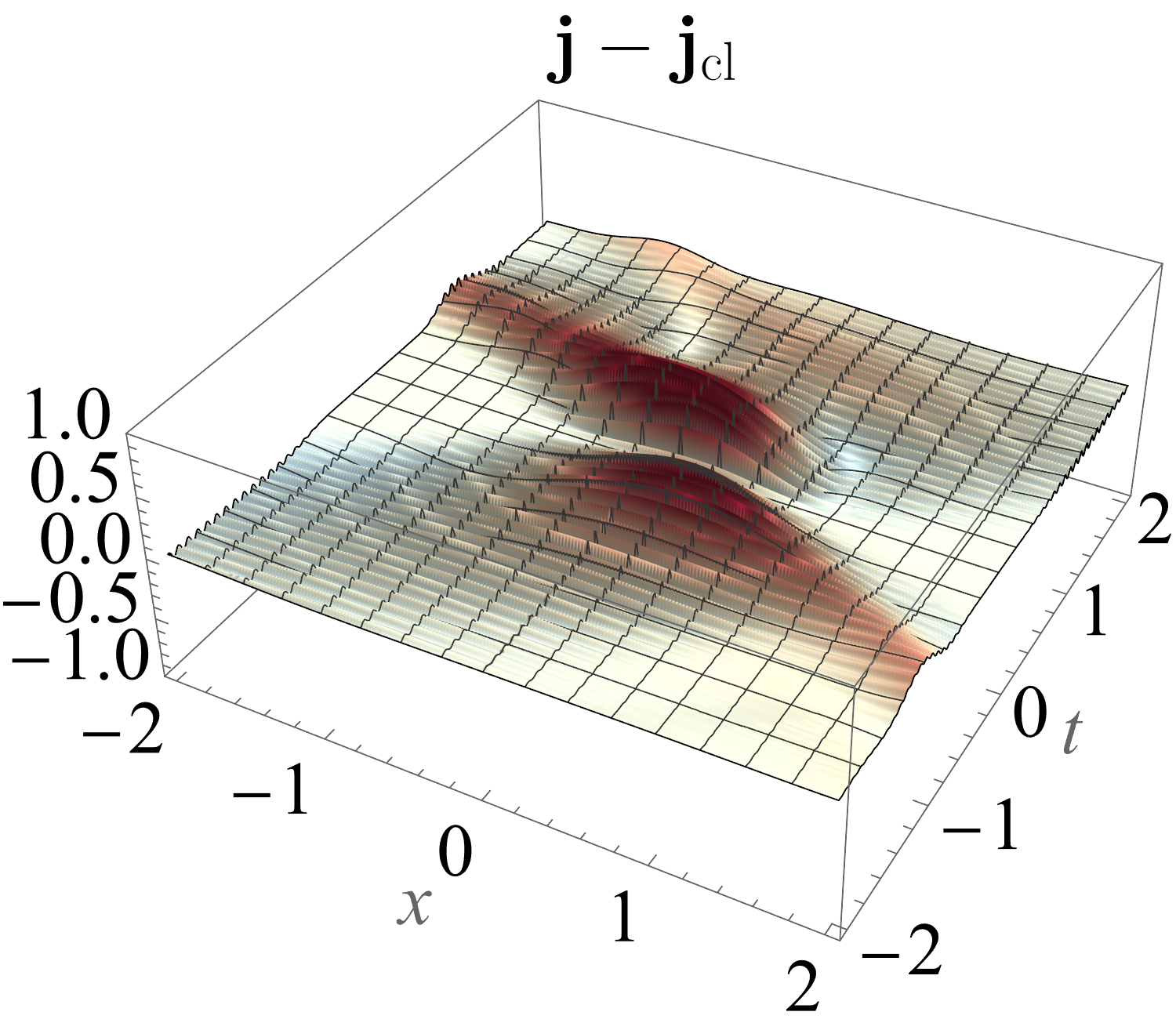}
	\includegraphics[scale=.4]{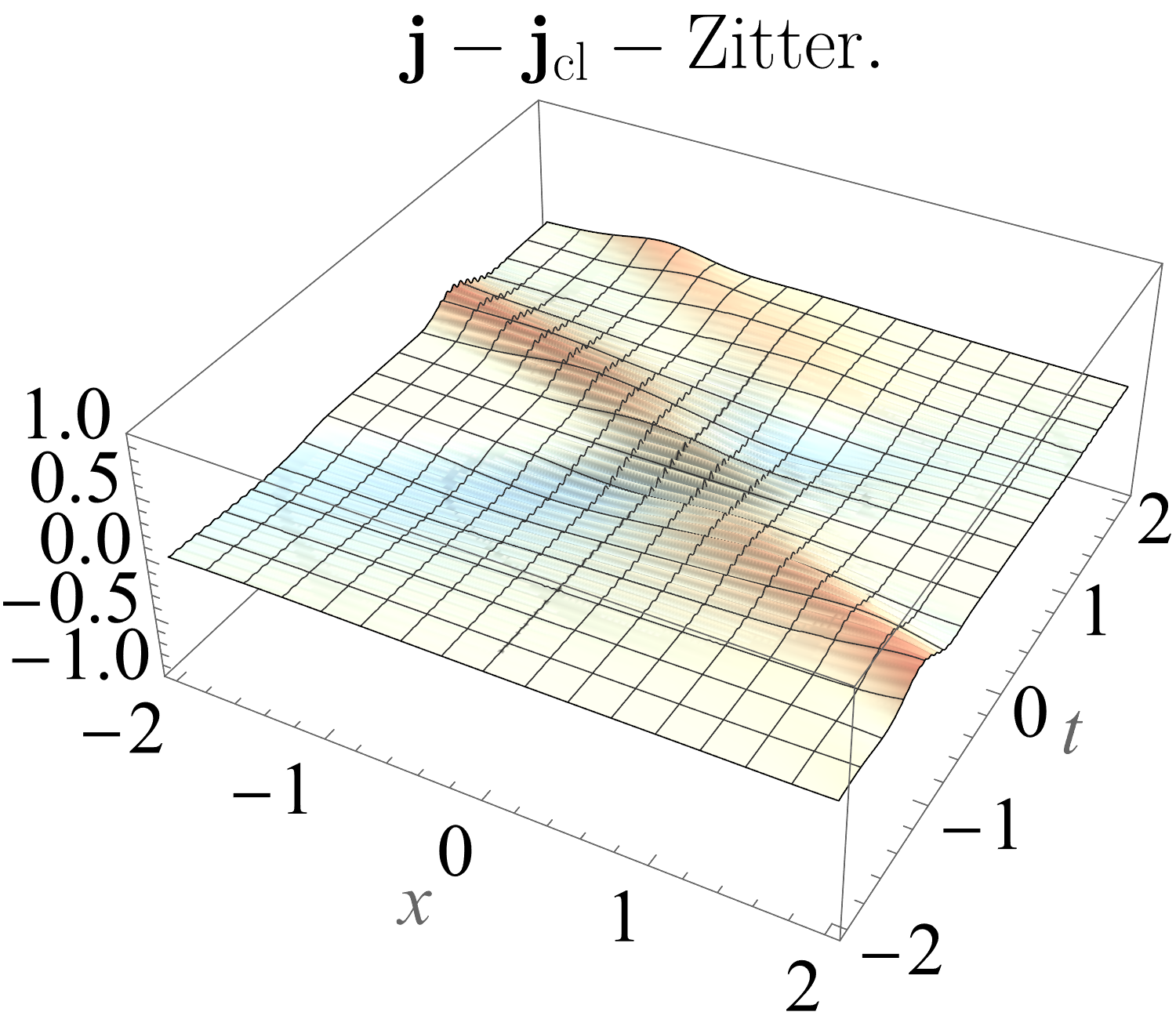}
	\caption{Continuation of \Cref{ex:FEM}, now showing the non-relativistic limit of the current density (still at $c=6$). In this example, $d=1$, so we can view $\bfj$ as a scalar. It is not true that $\bfj$ agrees with the non-relativistic prediction $\mathbf{j}_{\mathrm{cl}}=\Im(v_+^*\partial_x v_+) + \Im(v_-^*\partial_x v_-)$ to leading order. Instead, the highly oscillatory, $\Omega(1)$ Zitterbewegung term needs to be subtracted from $\bfj$ before it converges in the $c\to\infty$ limit to $\mathbf{j}_{\mathrm{cl}}$.} \label{fig:FEM4}
\end{figure}
\begin{remark}[Comparison with the physics literature]
	\label{rem:comparison}
	In the physics literature, the Klein--Gordon equation for a charged particle in the presence of an electric potential $V$ and magnetic potential $\bfA$ and coupled to an external scalar field $W$ reads $Pu=0$ for 
	\begin{equation}
		P = - \frac{1}{c^2} \Big( \hbar \frac{\partial}{\partial t} - ieV \Big)^2 - (i \hbar \nabla - e \bfA)^2  - m^2 c^2 +W  
		\label{eq:P_intro_physics}
	\end{equation}
	\cite[Eq. 4.1]{CC}, where $\hbar \approx 6.6\times 10^{-34} \mathrm{J}\cdot \mathrm{s}$ is Planck's constant, $m$ is the mass of the particle whose wavefunction is under consideration, and $e$ denotes its charge.
	In the non-relativistic limit, the parameters $\hbar,m,e$ are all to be fixed. By rescaling $P$ and the spacetime coordinates, we may set $\hbar,m=1$, without loss of generality.  
	Physicists sometimes use the phrase ``non-relativistic limit'' to refer to the $m\to\infty$ limit. This is not equivalent to the $c\to\infty$ limit, unless one also scales $t$. (The $m\to\infty$ limit is more like a traditional semiclassical limit, in which the semiclassical parameter is $h=1/m$ instead of Planck's constant.) We will only use the phrase ``non-relativistic'' when referring to $c\to\infty$. 
	
	In addition, by rescaling $V,\bfA$, we may assume that $e= \mp 1$ without loss of generality. Then, \cref{eq:P_intro_physics} becomes \cref{eq:P_intro}.
	A confusing point is that most references in the physics literature, see e.g.\ \cite[Eq. 7]{Gordon}\cite[Eq.\ 42.10]{Schiff}\cite[Eq. 2.15]{Villars}\cite[Eq. 1.1.2]{WeinbergQTF1}\cite[\S1.9]{Greiner}, have $e\bfA/c$ in place of $e\bfA$ in \cref{eq:P_intro_physics}. This is because their definition of the vector potential differs the usual definition in magnetostatics by a factor of $c$; what they call ``$\bfA$'' we would call ``$c \bfA$.'' The factors in \cref{eq:P_intro_physics} are correct (as can be checked by dimensional analysis) for the usual definition of the vector potential $\bfA$, whose SI units are Tesla-meters or kilogram-meters-per-coulombs-per-second, $\mathrm{kg}\,\mathrm{m}\, \mathrm{C}^{-1} \mathrm{s}^{-1}$. 
	In contrast, the units of $\bfA_{\mathrm{cgs}}=c\bfA$ are Tesla-seconds, which is just volts. Then, the four-potential $A=(V,\bfA_{\mathrm{cgs}})$ has units of volts, which physicists find appealing. 
	The distinction between $\bfA_{\mathrm{cgs}}$ and $\bfA$, while unimportant if $c$ is fixed, matters when taking the $c\to\infty$ limit. The mathematical question is how the $\partial_x$ terms in \cref{eq:P_intro} are to scale with $c$; we have taken them to be $O(1)$, but fixing $\bfA_{\mathrm{cgs}}$ means taking $\bfA=O(1/c)$, in which case the non-relativistic limit is also a \emph{weak magnetic field limit}.
	Thus, it is unclear whether works in the mathematical literature which fix $\bfA_{\mathrm{cgs}}$ (see \cite[Eq.\ 3]{SchoeneII}\cite[Eq.\ 4]{BechoucheMaxwell}) are studying the most natural limit. It seems to us that fixing the ordinary vector potential $\bfA$ (as Cirincione--Chernoff do in \cite{CC}) is most natural when studying the Klein--Gordon equation in the presence of a macroscopic magnetic field.
	It may make sense to fix $\bfA_{\mathrm{cgs}}$ when the magnetic field is generated by a few moving electric charges. There are no magnetic monopoles in nature, so all microscopic magnetic fields are of this sort. That moving electric charges generate magnetic fields can be interpreted as a relativistic effect, since only an electric field is present in the rest frame of the particle. It is then natural to expect that magnetic fields be suppressed  in the non-relativistic limit. For example, the Biot--Savart law \cite[Chp.\ 5]{OldHickory}, which describes the magnetic field generated by a constant current moving through a wire, gives a magnetic potential whose dependence on $c$ is through the magnetic constant $\mu_0$. The magnetic constant is related to the vacuum permittivity $\varepsilon_0$ by $\varepsilon_0\mu_0 = 1/c^2$. So, if we fix $\varepsilon_0$, then $\mu_0 = O(1/c^2)$. Then, $\bfA = O(1/c^2)$ is weak in the $c\to\infty$ limit, but it is even weaker than the $O(1/c)$ that results from fixing $\bfA_{\mathrm{cgs}}$.
	For these reasons, we have, in this section, chosen to fix $\bfA$ instead of $\bfA_{\mathrm{cgs}}$. However, our general theorem, \Cref{thm:Cauchy}, is formulated broadly enough to include both cases. 
\end{remark}

\subsection{Natural units}
\label{subsec:natural}
A key fact is that the non-relativistic limit depends on the scale on which the initial data, forcing, etc.\ varies; the Klein--Gordon equation is only well-approximated by the Schr\"odinger equation when the various pieces of data involved vary
on scales which are larger than what physicists call \emph{natural}\footnote{Natural units are ``natural'' in that they are adapted to the scale on which solutions of the PDE oscillate. They do not reflect the scale on which the coefficients of the PDE vary.} \emph{units}, 
\begin{equation}
	t_\natural = mc^2 t/\hbar, \quad x_\natural = mc x / \hbar,
	\label{eq:nat}
\end{equation}
where $\hbar$ is Planck's constant and $m>0$ is the particle mass. (Elsewhere in the paper, we are setting $\hbar,m=1$, which, as discussed above, we can do without loss of generality. Here, it is clarifying to have explicit factors. Note that, if we assign $m,c,\hbar,t,x$ SI units in the usual way, then $t_\natural,x_\natural$ are \textit{dimensionless}.)

The easiest way to see this is to consider the free Klein--Gordon operator $P_0$ (which we write with factors of $\hbar,m$ in the usual places, as in \cref{eq:P_intro_physics}). 
In terms of the coordinates $t_\natural,x_\natural$, 
\begin{equation}
	P_0(c) = -\frac{\hbar^2}{c^2} \frac{\partial^2}{\partial t^2} - \hbar^2 \triangle -  m^2 c^2  = -m^2c^2 \Big(  \frac{\partial^2}{\partial t_\natural^2} + \triangle_\natural + 1 \Big), 
	\label{eq:P_nat}
\end{equation}
where $\triangle_\natural$ is the Laplacian in $x_\natural$. So, up to a factor of $c^2$ (which is irrelevant when solving the homogeneous problem $P_0u=0$, and only relevant to the scaling of the inhomogeneity otherwise), the operator $P_0(c)$ is, in natural units, just $P_0(1)$. At first glance, this should be confusing: how can the Klein--Gordon equation be well-approximated in the non-relativistic limit by the Schr\"odinger equation if the $c>0$ case of the free Klein--Gordon equation can be reduced to the $c=1$ case? This would seem to suggest that the $c\to\infty$ limit should be completely uninteresting.

To make this more precise, suppose that, instead of considering the Cauchy problem, \cref{eq:IVP}, in which the initial data is \emph{fixed} as $c\to\infty$, we consider the closely related problem 
\begin{equation}
\begin{cases}
Pu=0 \\
u(0,x)\, = \varphi(x_\natural) \\ 
\partial_{t} u|_{t=0} = c^2\psi(x_\natural),
\end{cases}
\tag{IVP$\natural$}
\label{eq:IVP_2}
\end{equation}
in which the initial data is dilated naturally. (Here, $P$ is as in \cref{eq:P_intro}.) 
Then, $u=u(-;c)$ is just given by $u(t,x;c) = u(t_\natural,x_\natural;1)$. That is, the solution for each $c>0$ is given by dilating (in the $t$-direction more than the $x$-direction) the solution for $c=1$. So, $u(t,x;c)$ should not be well-approximated in the $c\to\infty$ limit by any solution of the free Schr\"odinger equation. This appears in tension with the usual heuristic.

The same paradox applies, with slight modifications, even if $V,\bfA,W\neq 0$. Indeed, we can still write 
\begin{multline}
P(c) = -\frac{1}{c^2} \Big(\hbar^2\frac{\partial}{\partial t}+i V\Big)^2 - (i\hbar\nabla+\bfA)^2 - m^2c^2 +W 
\\ = -m^2c^2 \Big[  \Big(\frac{\partial}{\partial t_\natural} + \frac{iV}{mc^2}\Big)^2 + \Big(i \nabla_\natural + \frac{\bfA}{mc} \Big)^2+1 +  \frac{W}{m^2c^2}\Big].
\end{multline}
So, the solution of \cref{eq:IVP_2} is given by 
\begin{equation} 
	u(t,x;c,V,\bfA,W) = u(t_\natural,x_\natural;1,m^{-1}c^{-2} V,m^{-1} c^{-1} \bfA,m^{-2}c^{-2} W),
\end{equation} 
where $u(-;c,V,\bfA,W)$ is the solution of \cref{eq:IVP}. If $c\gg 1$, then $c^{-2} V,c^{-1} \bfA,c^{-2} W \approx 0$, so we should have\footnote{This heuristic fails when considering the original initial-value problem, \cref{eq:IVP}. It has to, because otherwise \Cref{thm:simplest} would be false and the non-relativistic limit would not involve $V,\bfA,W$ at all. In the present setting, that of solving \cref{eq:IVP_2} instead of \cref{eq:IVP}, the heuristic \emph{can} be rigorously justified, using a simple energy estimate. This is what \S\ref{sec:naturalCauchy} does.}
\begin{equation}
 u(t_\natural,x_\natural;1,m^{-1}c^{-2} V,m^{-1} c^{-1} \bfA,m^{-2}c^{-2} W) \approx u_0(t_\natural,x_\natural;1),
\end{equation} 
where $u_0(-;c)= u(-;c,0,0,0)$ is the solution in the free  case. So, rather than being approximated by a solution of the Schr\"odinger equation, $u$ is approximated by a solution of the \emph{free} Klein--Gordon equation, written in natural units. Apparently, it is the latter PDE relevant to studying the $c\to\infty$ limit of the problem \cref{eq:IVP_2}, not the Schr\"odinger equation. So, we have two different model problems governing the two different initial-value problems \cref{eq:IVP}, \cref{eq:IVP_2} in the non-relativistic limit. If our heuristic is that the Schr\"odinger equation governs the non-relativistic limit, then our heuristic requires modification.

The appropriate modification is suggested by the observation that the initial data in \cref{eq:IVP_2}, 
$\varphi(x_\natural)= \varphi(mcx/\hbar)$ and $\psi(x_\natural) = \psi(mcx/\hbar)$, is varying on the scale of natural units, whereas, in the original Cauchy problem, the initial data is varying on the original scale. The former scale is much smaller than the latter if $c\gg 1$. Therefore, the following dichotomy appears to hold:
\begin{itemize}
	\item on the laboratory scale, that of the original variables $t,x$, the Klein--Gordon equation is well-approximated by the Schr\"odinger equation, with an electric/magnetic potential,
	\item on the small (i.e.\ natural) scale, namely that of natural units, $t_\natural,x_\natural$, the Klein--Gordon equation is well-approximated by the free Klein--Gordon equation, even if a potential is present.
\end{itemize}
If our initial data is approximately constant on neighborhoods of size $\hbar/(mc)$, then we should expect the non-relativistic limit to be governed by the Schr\"odinger equation. Otherwise, if the data is concentrated on that scale, then we should expect the free Klein--Gordon equation. 
One of the purposes of the microlocal tools we introduce later is to separate these two scales, so that we may analyze the non-relativistic limit in each without the troubles caused by the other.

In order to keep the discussion as elementary as possible, we have chosen to present this introduction using Schwartz initial data, but it is often of interest to study the PDE for data having only  a \emph{finite} amount of regularity. We should then expect the analyses on \emph{both} the laboratory scale and the natural scale to contribute to the analysis of the non-relativistic limit. 
For example, initial data having a jump discontinuity can be thought of as varying over arbitrarily small scales, unlike the data in \cref{eq:IVP}, but not concentrated on some small neighborhood like the data in \cref{eq:IVP_2}. In this case, it should be possible to decompose the solution to the Cauchy problem into a regular component obeying the usual Schr\"odinger-approximated non-relativistic limit (with a full asymptotic expansion in powers of $c$) and another, rapidly oscillating part, which is approximated by a solution of the free Klein--Gordon equation (but suppressed by some power of $c$). Our investigations into the Cauchy problem do not go into this depth, but our microlocal results in \cite{NRL_I} do investigate the non-relativistic limit for irregular solutions of the PDE. They describe how energy is propagated according to the free Klein--Gordon equation on the natural scale and according to the Schr\"odinger equation on the laboratory scale. For irregular solutions of the PDE, energy is present on both scales, and so both modes of propagation are important to understand.

See \S\ref{subsec:main} for our discussions on the inhomogeneous problem with irregular forcing.

Are the laboratory and natural scales the only scales relevant? The answer is yes, according to the analysis that comprises the rest of our manuscript.
At no point is it necessary to consider a third scale. No intermediate scale poses a problem. Neither does any scale smaller than natural units,  nor any scale larger than the original. (This paragraph is made precise in \S\ref{subsec:description}.)
In conclusion, there are only two PDE governing the non-relativistic limit: the Schr\"odinger equation, and the free Klein--Gordon equation, and we know the scales on which each is relevant.

\begin{example}
	If $m$ is the mass of an electron, then 
	\begin{equation}
		x_\natural \approx (2.59 \times 10^{12} \,\mathrm{m}^{-1}) x
		\label{eq:misc_018}
	\end{equation}
	in SI units. For applications to atomic physics (e.g.\ the hydrogen atom), the wavefunctions of interest are varying on atomic scales. A measure for the size of a hydrogen atom is the Bohr radius $a_0=4\pi \varepsilon_0 \hbar^2 / (e^2 m) \approx 5.29\times 10^{-11} \,\mathrm{m}$, where $\varepsilon_0$ is the vacuum permitivity constant and $e$ is the charge of an electron. Substituting this into \cref{eq:misc_018}, we get that, in natural units, the hydrogen atom is 
	\begin{equation}
		x_\natural \approx 137.036
	\end{equation}
	units ``long.'' The fact that $x_\natural \gg 1$ means that, e.g.\ if one were to solve the Klein--Gordon equation (in the positive energy space) with initial data given by the ground state of the non-relativistic hydrogen atom, then the solution can be expected to be close to that of the Schr\"odinger equation with the same initial data. 
	
	The quantity $\alpha=1/x_\natural\approx 1/137$ is better known as the ``fine structure constant,'' because it governs the size of the corrections to atomic spectra due to relativistic effects. The fact that $\alpha\approx 100$ is why calculations of atomic spectra using the non-relativistic Schr\"odinger calculation are accurate to about one part in one-hundred. 
\end{example}

\begin{remark}
	The discussion in this section shows that the constants in the estimate \cref{eq:misc_a06} depend crucially on the initial data. The smaller the length scale on which our initial data vary, the worse the constant in the estimate is. In other words, our results can be phrased in terms of the convergence of certain solution operators in a suitable strong operator topology, not a uniform operator topology.
\end{remark}

Our focus in this paper is on the laboratory Cauchy problem, not the natural Cauchy problem, but we do study the latter in \S\ref{sec:naturalCauchy}; see \Cref{thm:naturalCauchy}.

\subsection{The ultra-static setting}
\label{subsec:time_independent}

In the constant-coefficient case, one can use the spacetime Fourier transform to analyze the non-relativistic limit --- see \S\ref{sec:free}.
This does not work in the variable-coefficient case. In this subsection, we discuss the case where $V=0$ and $\bfA,W$ are independent of time. Then, $P$ becomes 
\begin{equation}
P = -\frac{1}{c^2}\frac{\partial^2}{\partial t^2} - D^2 - c^2 + W 
= - \frac{1}{c^2} \frac{\partial^2}{\partial t^2} - (i\nabla + \bfA(x))^2 - c^2 + W(x),
\label{eq:P_ultrastatic}
\end{equation}
where $D= i \nabla + \bfA(x)$. 
We call this the \emph{ultra-static} case (``static'' because the coefficients are time-independent, and ``ultra'' because of the absence of an term linear in $\partial_t$). It is very classical; it is essentially a special case of \cite[\S4]{CC}, but it was surely considered folklore earlier.\footnote{In \cite{CC}, Cirincione--Chernoff have $W=0$, so \cref{eq:P_ultrastatic} is not quite a special case of their setup, but this is an inessential point.}

Our discussion here will not generalize to the setting with time-dependent coefficients. Since our later methods will include the ultra-static case as a special case, we do not include full proofs here, or even sketches.
Our purpose in discussing this special case is three-fold: 
\begin{enumerate}
	\item it provides intuition as to how the Schr\"odinger equation arises in the non-relativistic limit, 
	\item in \S\ref{subsec:time_dependent}, we will reference the half-Klein--Gordon operators (defined below) in explaining why the time-dependent case is more difficult, and
	\item it will serve as a first, brief, illustration of the key idea that the scale-dependence of the non-relativistic limit (discussed in \S\ref{subsec:natural}) can be dealt with using phase space methods, of which microlocal analysis is one sort.
\end{enumerate}
In this subsection, the use of microlocal/pseudodifferential tools will be kept at a minimum. However, one amenable feature of the ultra-static case is that the relevant microlocal tools are standard. So, it would not be difficult to make the discussion below rigorous.
This is unlike the case with time-dependent coefficients, for which the required pseudodifferential calculi did not exist before we developed them in \cite{NRL_I}. (See \S\ref{sec:calculus} for a summary of these calculi.) 

\subsubsection{Reduction to the half-Klein--Gordon equation}

The operator $P$ defined by \cref{eq:P_ultrastatic} is formally a difference of squares, leading to the factorization 
\begin{equation} 
P= -c^{-2} D_+D_- = -c^{-2} D_- D_+,
\label{eq:KG_factorization}
\end{equation} 
where $D_\pm$ are the commuting (pseudodifferential) operators  
\begin{equation}
D_\pm = \pm i\frac{\partial}{\partial t} +  c^2 \sqrt{1+\frac{D^2 }{c^2}  - \frac{W}{c^2} }.
\label{eq:Dpm_def}
\end{equation}
These are the two \emph{half}- Klein--Gordon operators.
(The square root can be made sense of using the functional calculus for unbounded self-adjoint operators, at least as long as $c$ is sufficiently large so as to ensure that the operator under the square root has non-negative spectrum.)

Let $\ker D_\pm, \ker P\subset C^\infty(\bbR_t; \mathcal{S}(\bbR^d_x))$ denote the kernels of $D_\pm, P$ in $C^\infty(\bbR_t; \mathcal{S}(\bbR^d_x))$.\footnote{One can talk more broadly about the kernels of $D_\pm,P$ in $\calS'(\bbR^{1,d})$, which we don't pursue here. This way, we can make use of more basic functional analytic tools.}
Evidently, \cref{eq:KG_factorization} implies that $\ker D_\pm \subseteq \ker P$.
In fact, $\ker P$ is a direct sum of $\ker D_\pm$: 
\begin{equation} 
\ker P = \ker D_+ \oplus \ker  D_-,
\label{eq:misc_027}
\end{equation}
as can be proven using the temporal Fourier transform.\footnote{For this, it may be necessary to take $c$ to be a little bit larger, so that $c^2 + D^2+W$ has strictly positive spectrum.}
So, every solution $u$ of $Pu=0$ in $C^\infty(\bbR_t; \mathcal{S}(\bbR^d))$ can be decomposed into a sum $u=u_-+u_+$ of elements $u_\pm$ of $\ker D_\pm$. 
So, if we want to understand the $c\to\infty$ limit, it suffices to analyze each of the half-Klein--Gordon operators $D_\pm$ individually.

The initial values of $u_\pm$ can be solved for in terms of the initial data of $u$. Indeed, letting
\begin{equation}
    \varphi_\pm(x)=u_\pm(0,x),\quad \varphi(x) = u(0,x), \quad \psi(x) = c^{-2} \partial_t u(0,x), 
\end{equation}
the relation 
\begin{equation}
\varphi_\pm = \frac{1}{2}\Big( \varphi -i \Big(1+\frac{D^2}{c^2}-\frac{W}{c^2} \Big)^{1/2} \psi \Big)
\label{eq:misc_026}
\end{equation}
holds. So, $\varphi_\pm$ is a smooth function of $1/c^2$ valued in Schwartz functions: $\varphi_\pm \in C^\infty([0,\infty)_{1/c^2};\calS(\bbR^d_x) )$.

\subsubsection{Non-relativistic limit for the half-Klein--Gordon equation}

By the Stone--von Neumann theorem, 
\begin{equation}
	\ker D_\pm  = \{ e^{\pm i ct  \sqrt{c^2 + D^2 - W} } f : f\in \mathcal{S}(\bbR^d_x) \} .
\end{equation}
If $c\gg 1$, then we should be able to approximate the exponent, when applied to amenable functions, by formally expanding the square-root to first order:
\begin{equation}
	c^2\sqrt{1+\frac{1}{c^2} ( D^2-W) } = c^2 + \frac{D^2}{2} - \frac{W}{2} + O\Big( \frac{\langle \triangle \rangle^2 }{c^2} \Big).
	\label{eq:misc_031}
\end{equation}
It is reasonable to ignore the error term in \cref{eq:misc_031}, since it is suppressed by a factor of $c^{-2}$.
So, we should have 
\begin{align}
\begin{split} 
	\ker D_\pm  &= \{ e^{\pm i ct  \sqrt{c^2+D^2-W} } f : f\in \mathcal{S}(\bbR^d_x) \} \\
    &\approx  \{ e^{\pm i t (c^2+(D^2-W)/2)} f : f\in \mathcal{S}(\bbR^d_x) \} = \ker \Big\{ \underbrace{\pm i \frac{\partial}{\partial t} + c^2 + \frac{D^2}{2} - \frac{W}{2}}_{N(D_\pm)} \Big\}.
    \end{split}
	\label{eq:misc_033}
\end{align}
The operator $N (D_\pm)$ on the right-hand side is exactly the operator appearing in the Schr\"odinger equation, except with an additional additive term $ c^2$. The effect of this  constant is to multiply solutions by a factor of $\exp(\pm i t  c^2)$, which physically corresponds to the rest energy. So, 
\begin{equation}
	\ker \Big\{ \pm i \frac{\partial}{\partial t} + c^2 + \frac{D^2}{2} - \frac{W}{2} \Big\} =    e^{\pm i t c^2}\ker \Big\{ \pm i \frac{\partial}{\partial t} + \frac{D^2}{2} - \frac{W}{2} \Big\}.
	\label{eq:modulation in t}
\end{equation}

Let us briefly comment on the use of the notation ``$N$,'' which stands for ``normal.'' There is a tradition in geometric microlocal analysis of using the term \emph{normal operator} to refer to the (usually exactly solvable) model problems that arise when studying PDE in some asymptotic limit, in this case $c\to\infty$. One says that $N(D_\pm)$ is the normal operator of $D_\pm$ in this regime. This is terminology we will use later (for different operators).

\subsubsection{The key approximation}
\label{subsec:misc_key}
When can we expect the error in \cref{eq:misc_031}, 
\begin{equation}
	E = c^2\sqrt{1+\frac{1}{c^2} ( D^2-W) } - c^2 - \frac{D^2}{2} + \frac{W}{2}, 
	\label{eq:E}
\end{equation}
to be negligible? Note that $E$ is a pseudodifferential operator, which means that we can analyze it in phase space, 
\begin{equation} 
	(0,\infty)_c \times T^* \bbR^d = (0,\infty)_c \times \bbR^d_x\times \bbR^d_\xi.
\end{equation} 
Replacing $D$ by the frequency coordinate $\xi$, the approximation $c^{-2} \langle \triangle \rangle \ll 1$ needed to justify \cref{eq:misc_033} is only plausible in the region of phase space where the frequency coordinates $\xi_1,\ldots,\xi_d$ all satisfy $|\xi_j| \ll c$. 
When physicists speak of the ``nonrelativistic limit,'' this is what they typically mean, rather than $c\to\infty$ per se.

However, unless  the Fourier transform of the initial data is compactly supported, a full analysis of the $c\to\infty$ limit requires analyzing the region of phase space where where $\lVert \xi \rVert \not \ll c$. 
In fact, we already saw this in \S\ref{subsec:natural}, using position-space language. The large frequency regime in which $\lVert \xi\rVert\sim c$ corresponds to the natural unit $x_\natural = cx$. Indeed, if $\varphi \in \calS(\bbR^d)$ is fixed, then $\varphi(x_\natural)$, which varies on the scale of natural units, has its Fourier transform spread out on scales $\lVert \xi \rVert \sim c$. In line with the heuristic discussed in the previous subsection, we should expect the analysis of the Klein--Gordon equation on these scales to be similar to the analysis of the \emph{free} Klein--Gordon equation, for which one has as good a theory as could be desired.

Assuming that the large-frequency regime $\lVert \xi \rVert \not\ll c$ (which does not just include the natural frequency regime $\lVert \xi \rVert \sim c$, but also $\lVert \xi \rVert \gg c$) is under even a modicum of control, then, as long as the Fourier transform of our initial data is concentrated on $\lVert \xi \rVert \ll c$, then we should expect the contribution of $E$ to the propagator 
\begin{align}
	e^{\pm i ct  \sqrt {c^2 + D^2-W} } &= e^{\pm i t(c^2 + 2^{-1}(D^2-W) + E) },
	\intertext{when applied to amenable initial data,
		to be negligible: }
	e^{\pm i ct  \sqrt {c^2 + D^2-W} } &\approx  \underbrace{e^{\pm i t(c^2 + 2^{-1}(D^2-W)) }}_{\text{Schr{\"o}dinger propagator}}.
\end{align}
Thus, \cref{eq:misc_033} is justified.
This is essentially the argument in \cite{CC}, which they made rigorous by using the Trotter--Kato formula.

\subsection{Challenges in the non- ultra-static or time-dependent settings}
\label{subsec:time_dependent}

Having discussed the ultra-static case in \S\ref{subsec:time_independent}, let us discuss what makes the non- ultra-static case, and especially the time-dependent case, more difficult.

\subsubsection{The lack of half-Klein--Gordon operators}

According to \Cref{thm:simplest}, the solution $u$ of the Cauchy problem \cref{eq:IVP} is approximated by a specific linear combination of solutions of the two possible Schr\"odinger equations. No other linear combination would work.
The reason there are ``two'' possible Schr\"odinger equations is the choice of sign of the $\pm i \partial_t$ term in \cref{eq:Schrodinger_initial}. Usually, both possibilities contribute to the non-relativistic limit.

In \S\ref{subsec:time_independent}, we saw how, in the ultra-static case, the factorization of the Klein--Gordon operator $P$ into the product $P=-D_+D_-$ of half-Klein--Gordon operators $D_\pm$ can be used to decompose $u$ into the portions approximated by each of the two possible Schr\"odinger equations. Specifically, 
\begin{equation}
	u= u_+ + u_- 
\end{equation} 
where $u_\pm$ solved the half-Klein--Gordon equation $D_\pm u_\pm = 0$, and, in the $c\to\infty$ limit, is approximated by $\exp(\pm i  c^2 t)$ times a solution $v_\pm$ of the Schr\"odinger equation in which the time derivative is term is $\pm i \partial_t$.

If $W$ is time-\emph{dependent}, then the factorizations $P=-D_+D_-$, $P=-D_-D_+$ no longer hold, because $D_+D_-$, $D_-D_+$ contain some terms in which a time derivative falls on the potential. (A similar issue arises if $V\neq 0$ in \cref{eq:P_intro}.) One could hope that such a term is ``lower order,'' in a suitable sense, so that $P=-D_+D_-,-D_-D_+$ still holds to leading order in $1/c^2$. We will not pursue this further, as, when we consider variable-coefficient metrics in the rest of the manuscript, it is not even clear how $D_\pm$ should be defined. So, the half-Klein--Gordon operators $D_\pm$ are of little use below.

\subsubsection{Partitioning into positive/negative energy solutions}
\label{subsec:splitting_diff}
Even without the half-Klein--Gordon operators, we could try using the temporal Fourier transform to split $u$ into positive/energy components. 
The idea would be that, if $u=u_++u_-$ for $u_\pm \approx \exp(\pm i c^2 t) v_\pm$ for $v_\pm$ independent of $c$, then we can extract $u_\pm$ from $u$ by looking at those components with large positive/negative temporal frequency. However, this does not work when the coefficients of the PDE are time-dependent, for the usual reason that the Fourier transform is of limited utility for variable-coefficient PDE.

Naturally, the solution will be to use 
spacetime microlocal methods. 
Thus, unlike the purely spatial microlocal methods useful in the time-independent case, our pseudodifferential calculi  will be constructed using the spacetime Fourier transform in an appropriate way. There exist pseudodifferential calculi for this purpose -- for example, the Parenti--Shubin (sc-) calculus $\Psi_{\mathrm{sc}}(\bbR^{1,d})$ on spacetime. Unfortunately, this calculus 
is ill-suited for analyzing the non-relativistic limit because, if $c\gg 1$, then $u_\pm\approx \exp(\pm i c^2 t)v_\pm$ is oscillating very fast in $t$, and these oscillations are getting faster as $c\to\infty$, whereas $\Psi_{\mathrm{sc}}$ is designed to detect, distinguish, and analyze oscillations with \emph{finite} frequency.

Instead, we will define in \S\ref{subsec:calczero} another pseudodifferential calculus, $\Psi_{\calczero}\subset C^\infty((0,\infty)_c;\Psi_{\mathrm{sc}})$, the \emph{natural} ($\natural$-) calculus, which can detect, distinguish, and analyze oscillations like $\exp(\pm i c^2 t)$. The simplest way of describing this pseudodifferential calculus is that it is the sc-calculus in natural units. Indeed, using the natural unit $t_\natural = c^2 t$, 
\begin{equation}
e^{\pm i  c^2 t} = e^{\pm i  t_\natural}
\end{equation}
has $t_\natural$-frequency independent of $c$. So, $\Psi_{\calczero}$ can be used to distinguish and analyze $u_\pm$. We can find $Q\in \Psi_{\calczero}$ such that $u_+= Qu$ and $u_- = (1-Q)u$, perhaps up to terms which are suppressed by a factor of $1/c$.  So, while $u_+$ is not exactly positive-energy, it is modulo lower order terms, where in this context ``lower order'' means suppressed as $c\to\infty$; the analogous statement applies to $u_-$. 

The calculus $\Psi_{\calczero}$ is, by itself, not sufficient to analyze the non-relativistic limit. We will need to define more sophisticated calculi $\Psi_{\calc},\Psi_{\calctwo}$ for this purpose (see \S\ref{sec:calculus}). But, $\Psi_{\calczero}$ is sufficient for splitting $u$ into its components $u_\pm$. 

\subsubsection{The lack of local energy estimates}

In \S\ref{subsec:time_independent}, the Stone--von Neumann theorem was used in order to establish a  a priori control on solutions of the half- Klein--Gordon equations. In the time-dependent setting, this is not available.

For individual $c>0$, the usual way to proceed is to use energy estimates. 
Unfortunately, standard, local energy estimates are ill-suited for the non-relativistic limit for reasons having to do with infinite speed of propagation. The issue is that solutions of the Schr\"odinger equation have supports/singular supports/wavefront sets which spread infinitely quickly. For each individual $c>0$, the solution of the Klein--Gordon equation \cref{eq:IVP} propagates at finite speed, but as $c\to\infty$, the speed grows.
If $c\gg 1$, then a solution  will quickly probe the large-$r$ structure of the potential (or, more generally, the large-scale structure of the spacetime under consideration). It will therefore not be possible to prove local energy estimates. Any estimate will, by necessity, depend on the global-in-space structure of the PDE.

It turns out that microlocal propagation estimates are general enough to control the Schr\"odinger equation; this was done by Gell-Redman, Gomes, and Hassell in \cite{Parabolicsc}, using a pseudodifferential calculus $\Psi_{\mathrm{par}}$ (the parabolic sc-calculus) in place of the ordinary pseudodifferential calculi used by H\"ormander. The global-in-space nature of the problem is not an issue but is reflected in the manner in which singularity/regularity is propagated.

As we will see, our new pseudodifferential calculi $\Psi_{\calc},\Psi_{\calctwo}$ allow one to interpolate between the usual energy estimates for the Klein--Gordon equation (which can be thought of as propagation estimates in $\Psi_{\mathrm{sc}}$) and the propagation estimates of Gell-Redman--Gomes--Hassell. This is one core part of our treatment of the non-relativistic limit.

\section{Main theorem and summary of methods}
\label{sec:true_intro}

Based on the discussion in \S\ref{sec:intro}, it is necessary to study at least two regimes in the spacetime phase space. Using $\tau$ to denote the frequency coordinate dual to $t$ and $\xi_j$ to denote the frequency coordinate dual to $x_j$, we have:
\begin{enumerate}[label=(\Roman*)]
	\item A regime in which the Klein--Gordon operator, conjugated by $e^{\pm i c^2t}$, is well-approximated by the time-dependent Schr\"odinger operator. In this regime, $|\xi|\ll c$.  
	In this paper, this regime appears associated with names such as ``pf,'' ``$\mathrm{pf}_\pm$,'' the `p' standing for ``parabolic'' (and `f' for ``face'').
	\label{it:I}
	\item The regime $|\xi|/c\geq \varepsilon > 0$, in which one should remove the $c$-dependence in the main part of the operator by working in natural units and in which the operator is well-approximated by the \emph{free} Klein--Gordon operator (times $c^{-2}$), as in \S\ref{subsec:natural}. In this paper, this regime appears associated with the word ``natural,'' which we abbreviate with the symbol $\natural$; for instance, `$\natural\mathrm{f}$' stands for ``natural face.''
	\label{it:II}
\end{enumerate}
One of the upshots of this manuscript, and of the companion paper \cite{NRL_I}, is that these are the \emph{only} two asymptotic regimes that need to be distinguished to understand the nonrelativistic limit, at least as far as basic solvability theory is concerned. Intermediate scales are encoded, in a precise way, as the corner between the main regimes.

\subsection{Microlocal methods}
\label{subsec:description}
In order to do microlocal analysis in these two asymptotic frequency regimes, we develop in \cite{NRL_I} pseudodifferential calculi $\Psi_{\calczero},\Psi_{\calc},\Psi_{\calctwo}$ (the members of which are one-parameter families of pseudodifferential operators on spacetime) suited to the task. 
The calculi are defined via quantizing symbols certain symbols on $T^* \bbR^{1+d}\times (0,\infty)_c$, namely those that are well-behaved with respect to particular compactifications\footnote{This is technically not a compactification because the $c\to 0$ direction is non-compact, but this is not our regime of interest.  Whenever we speak of ``compactifications,'' we will always mean compactifications of the $c\to\infty$ portion of phase space.}
\begin{equation}\label{eq:Planck,res intro}
	{}^{\calczero}\overline{T}^* \bbM,{}^{\calc}\overline{T}^* \bbM,	{}^{\calctwo}\overline{T}^* \bbM \hookleftarrow T^* \bbR^{1,d}\times (0,\infty)_c, \quad \bbM=\overline{\mathbb{R}^{1,d}}.
\end{equation} 
One says, in short, that such symbols are symbols on ${}^{\calc}\overline{T}^* \bbM,	{}^{\calctwo}\overline{T}^* \bbM$. (Note that what we call ``phase space'' includes the parameter $c$ as a degree-of-freedom, so it is actually $2d+3$-dimensional.)
Each of the $\calc$-, $\calctwo$- phase spaces is obtained by blowing up a certain submanifold of 
\begin{equation}
	{}^{\calczero}\overline{T}^* \bbM\hookleftarrow T^* \bbR^{1,d} \times (0,\infty)_c,
\end{equation}
the $\natural$-phase space, 
which can be described as a product
\begin{equation}\label{eq:Planck intro}
	\overline{\bbR^{1,d}_{t,x}} \times \overline{\bbR^{1,d}_{\tau_{\natural},\xi_{\natural} }}  \times (0,\infty]_c,
\end{equation} 
where $\tau_\natural = c^{-2} \tau$ and $\xi_{\natural} = c^{-1} \xi$ are the ``natural frequencies'' (dual to the natural position-space coordinates $t_\natural,x_\natural$ discussed in \S\ref{subsec:natural}).

Quantizing symbols on the $\calczero$-phase space yields the pseudodifferential calculus $\Psi_{\calczero}$, which we discuss in \S\ref{subsec:calczero}. Typical differential operators in it are polynomials in 
\begin{equation}
	\partial_{t_\natural} = c^{-2} \partial_t,\quad \partial_{x_{\natural,j}} = c^{-1} \partial_{x_j} 
\end{equation}
with coefficients that are symbolic functions of $(t,x)$. The calculus of $\calczero$-pseudodifferential operators is very similar to the semiclassical calculus with $h=1/c$, with the difference that each time derivative is paired with $h^2=c^{-2}$ instead of $h=c^{-1}$. The `natural face' $\natural\mathrm{f}=\{c = \infty\}$ corresponds to analysis on the scale of natural units. In total, the $\calczero$-phase space has three boundary hypersurfaces. In addition to $\natural\mathrm{f}$, we have fiber infinity, $\mathrm{df} = \{ |(\tau_{\natural},\xi_{\natural})| = \infty \}$, and base infinity, $\mathrm{bf}=\{|(t,x)|=\infty\}$.

The slightly more complicated phase space  $\smash{{}^{\calc}\overline{T}^* \bbM}$ (defined precisely in \cite{NRL_I}; see \S\ref{subsec:calc} for a sketch)  is the simplest phase space that allows us to study both $c\to\infty$ frequency regimes, and such that each slice of constant $c$ is just the phase space 
\begin{equation} 
	{}^{\mathrm{sc}}\overline{T}^* \bbM = \overline{\bbR^{1,d}_{t,x}}\times \overline{\bbR^{1,d}_{\tau,\xi}}
	\label{eq:misc_041}
\end{equation} 
of the scattering, or Parenti--Shubin (sc-) calculus $\Psi_{\mathrm{sc}}$ usually used to analyze the Klein--Gordon equation, such as by Vasy in \cite{VasyGrenoble} and Baskin--Doll--Gell-Redman in \cite{BDGR}. 
For the reader's convenience, we review the sc-calculus in \S\ref{subsec:sc}. 

At $c=\infty$, the $\calc$-phase space ${}^{\calc}\overline{T}^* \bbM$ has \emph{two} boundary hypersurfaces, $\natural\mathrm{f}$ and the `parabolic face' pf. As the name implies, $\natural \mathrm{f}$ is what is left from the face with the same name in the $\calczero$-phase space and thus corresponds to the regime \ref{it:II}. The other one, pf, admits the original frequency variables $\tau,\xi$ as smooth coordinates on its interior. Thus, this corresponds to \cref{it:I}.  
Both asymptotic regimes are present in our calculus, each corresponding to some boundary hypersurface of our compactified phase space. In addition, the way they fit together is encoded precisely as well, as the corner $\mathrm{pf}\cap\natural\mathrm{f}$. 

Concretely, $\smash{{}^{\calc}\overline{T}^* \bbM}$ will be defined by performing a ``parabolic'' blowup of the portion of the zero section of the $\calczero$-phase space over $\{c=\infty\}$. 
This process is sometimes known as \emph{second microlocalization}; $\Psi_{\calc}$ is the pseudodifferential calculus that results from second microlocalizing $\Psi_{\calczero}$ at the portion of the zero section over $\{c=\infty\}$. An alternative take is that $\Psi_{\calc}$ arises from second- microlocalizing the calculus $\Psi_{\mathrm{par,I}}$ (see \S\ref{subsec:parI}) of smooth one-parameter families of pseudodifferential operators valued in a pseudodifferential calculus known as $\Psi_{\mathrm{par}}$, about which we will say more below; see \S\ref{subsec:par} for a summary of its properties.

In total, the $\calc$-phase space has four boundary hypersurfaces. In addition to the three boundary hypersurfaces of the $\calczero$-phase space, we have the front face $\mathrm{pf}$ of the desingularizing blowup.  
The boundary hypersurface $\mathrm{pf}$ of the $\calc$-phase space can be identified with the phase space 
\begin{equation}
	{}^{\mathrm{par}}\overline{T}^* \bbM \hookleftarrow T^* \bbR^{1,d} 
\end{equation}
used by Gell-Redman--Gomes--Hassell to study the Schr\"odinger equation in \cite{Parabolicsc}. This is the phase space associated with $\Psi_{\mathrm{par}}$. One way of thinking about ${}^{\calc}\overline{T}^* \bbM$ is that it interpolates, in a natural way, between
\begin{itemize}
	\item ${}^{\mathrm{sc}}\overline{T}^* \bbM$ for $c<\infty$ and
	\item ${}^{\mathrm{par}}\overline{T}^* \bbM$ at $c=\infty$.
\end{itemize}
Similarly, the $\calc$-calculus $\Psi_{\calc}$ provides a framework to interpolate between the Klein--Gordon analysis carried out using the sc-calculus $\Psi_{\mathrm{sc}}$ and the Schr\"odinger analysis carried out using $\Psi_{\mathrm{par}}$.

Recall the functions $u_\pm$ referred to in \Cref{thm:simplest}. The $\calc$-calculus is really only sufficient to analyze each of these two halves of the Cauchy problem's solution $u$. (In the ultra-static case, these were solutions of the half-Klein--Gordon equations.)
It is in order to treat the full Klein-Gordon equation that we define the calculus $\Psi_{\calctwo}$. 
The corresponding phase space, 
\begin{equation} 
	{}^{\calctwo}\overline{T}^* \bbM\hookleftarrow T^* \bbR^{1,d}\times (0,\infty)_c,
\end{equation} 
is defined very similarly to  ${}^{\calc}\overline{T}^* \bbM$, but instead of second microlocalizing at the zero section, we second microlocalize at each of the points in $\natural \mathrm{f}$ corresponding to the oscillations $\exp(\pm i c^2 t)$ present in solutions of  the PDE. Recalling that multiplication by an oscillatory function corresponds to a translation in frequency space, if we call the front faces of the blowup 
\begin{equation}
	\mathrm{pf}_\pm \subset {}^{\calctwo}\overline{T}^* \bbM
\end{equation}
(there are two because we are second microlocalizing at two different points),
a neighborhood of $\mathrm{pf}_\pm$ in ${}^{\calctwo}\overline{T}^* \bbM$ is canonically identifiable with a neighborhood of $\mathrm{pf}$ in ${}^{\calc}\overline{T}^* \bbM$.

There is a corresponding $\calctwo$-pseudodifferential calculus with $5$ orders, one for each boundary hypersurface of phase space:
\begin{equation}
	\Psi_{\calctwo} = \bigcup_{m,\ell,q_-,q_+\in \bbR}\bigcup_{\mathsf{s}}  \Psi_{\calctwo}^{m,\mathsf{s},\ell;q_+,q_-}.
\end{equation}
We use $m$ for the differential order, $\mathsf{s}$ for the spacetime order, which will be a variable order, $\ell$ for the order at $\natural\mathrm{f}$, and orders $q_\pm$ at $\mathrm{pf}_\pm$. 
Corresponding to this pseudodifferential calculus is a scale of ($L^2$-based) Sobolev spaces 
\begin{equation} 
	H_{\calctwo}^{m,\mathsf{s},\ell;q_+,q_-} =  
	\{ H_{\calctwo}^{m,\mathsf{s},\ell;q_+,q_-}(c)\}_{c>0}\subset \calS'(\bbR^{1+d}).
\end{equation} 
For each individual $c$, 
\begin{equation} 
	H_{\calctwo}^{m,\mathsf{s},\ell;q_+,q_-}(c) = H_{\mathrm{sc}}^{m,\mathsf{s}}
\end{equation} 
at the level of sets, but the key point is that we have a $c$-dependent norm which captures the regularity and decay as measured using $\Psi_{\calctwo}$.

As is standard in microlocal analysis, the main consideration is the microlocal regularity of solutions; here, this takes place in the $\calctwo$-phase space. Since microlocal regularity is straightforward in the elliptic region of this phase space, we concentrate on the characteristic set, i.e.\ the complement of the elliptic region. Here, propagation of regularity holds along integral curves of the (suitably rescaled) Hamiltonian flow, 
which has a smooth extension to the boundary faces of the $\calctwo$-phase space. This is the microlocal analogue of a hyperbolic energy estimate.
A key role is played by the sources/sinks of the flow within the characteristic set, called
\emph{radial sets}. Each of the two components of the characteristic set have two radial sets, one corresponding to ``incoming'' waves and another to ``outgoing'' waves.

\subsection{Main results}
Our main result in \cite{NRL_I} is the uniform invertibility, in the non-relativistic limit $c \to \infty$, of the Klein--Gordon operator $P$ between suitable function spaces based on the $\calctwo$-Sobolev spaces just introduced. To state this introduce, for orders $m,\ell, q \in \bbR$ and variable order $\mathsf{s}$, the spaces 
\begin{equation}\begin{aligned}
\mathcal{X}^{m,\mathsf{s},\ell,q} &= \big\{ u \in H_{\calctwo}^{m,\mathsf{s},\ell;q,q} : Pu \in H_{\calctwo}^{m-1,\mathsf{s}+1,\ell-1;q,q} \big\}, \\
\mathcal{Y}^{m,\mathsf{s},\ell,q} &= H_{\calctwo}^{m,\mathsf{s},\ell;q,q}.
\end{aligned}\end{equation}
We prove:
\begin{theorem}[\cite{NRL_I}]
	\label{thm:inhomog}
	Let $P$ be as in \S\ref{subsec:assumptions} (see \cref{eq:P_def}).
	Let $\mathsf{s} \in C^\infty({}^{\calctwo}\overline{T}^* \bbM)$
	denote a variable order 
	satisfying the following conditions: 
	\begin{itemize}
		\item $\mathsf{s}$ is monotonic under the $\calctwo$-Hamiltonian vector field $\mathsf{H}_p$ near each component of the characteristic set, and 
		\item (threshold condition) on each component of the characteristic set, $\mathsf{s}>-1/2$ on one radial set and $\mathsf{s}<-1/2$ on the other.
	\end{itemize}
	(It is allowed that $\mathsf{s}$ be decreasing on one component of the characteristic set and increasing on the other.)	
	 We impose two more technical conditions: 
	\begin{itemize}
		\item $\mathsf{s}$ is constant near each radial set,
		\item $|\mathsf{H}_p \mathsf{s} |^{1/2}$ is smooth near the characteristic set.
	\end{itemize}
	Then, $P$ is an invertible operator 
	\begin{equation}
		P : \mathcal{X}^{m,\mathsf{s},\ell} \to \mathcal{Y}^{m-1,\mathsf{s}+1,\ell-1}
	\end{equation}
	for $c$ large enough.
	Moreover, the invertibility of $P$ is uniform in the sense that the norm of the inverse is uniformly bounded as $c \to \infty$; equivalently, 
	there exist $c_0,C>0$ such that the estimate
	\begin{equation}%\label{eq:uniform inv}
		\lVert   u \rVert_{H_{\calctwo}^{m,\mathsf{s},\ell;0,0}} \leq C \lVert  Pu \rVert_{H_{\calctwo}^{m-1,\mathsf{s}+1,\ell-1;0,0}}
	\end{equation} 
	holds for any $c>c_0$ and 
	$u \in H_{\mathrm{sc}}^{m,\mathsf{s}(c)}$ such that $Pu \in H_{\mathrm{sc}}^{m-1,\mathsf{s}(c)+1}$.
\end{theorem}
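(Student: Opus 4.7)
The plan is to prove \Cref{thm:inhomog} via the standard microlocal scheme (elliptic regularity, real principal type propagation, radial point estimates, Fredholm + triviality), but carried out in the $\calctwo$-calculus developed in \cite{NRL_I}, with all constants tracked uniformly in $c$. The main point is that once the $\calctwo$-symbol calculus, the $\calctwo$-Hamiltonian vector field $\mathsf{H}_p$, its extension to the boundary faces $\mathrm{pf}_\pm,\natural\mathrm{f},\mathrm{df},\mathrm{bf}$, and the radial set structure have been set up, the classical Vasy/Melrose scheme applies verbatim, and the main work is in checking uniformity of constants.

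First, I would establish the microlocal \emph{a priori} estimate. Off the characteristic set $\Sigma=\{p=0\}\subset {}^{\calctwo}\overline{T}^*\bbM$, the principal symbol of $P$ is elliptic in the $\calctwo$-sense, so for any cutoff $B\in\Psi_{\calctwo}^{0,0,0;0,0}$ microlocally supported in the elliptic set of $P$, there is a uniform-in-$c$ elliptic estimate
\begin{equation}
\lVert Bu\rVert_{H_{\calctwo}^{m,\mathsf{s},\ell;0,0}}\le C\bigl(\lVert Pu\rVert_{H_{\calctwo}^{m-1,\mathsf{s}+1,\ell-1;0,0}}+\lVert u\rVert_{H_{\calctwo}^{-N,-N,-N;-N,-N}}\bigr).
\end{equation}
Inside $\Sigma$, away from the radial sets, I would prove a real-principal-type propagation estimate along the integral curves of $\mathsf{H}_p$: this is the microlocal positive-commutator argument with commutant $A^*A$, $A\in\Psi_{\calctwo}$, chosen so that $i[P,A^*A]$ has the appropriate sign along $\mathsf{H}_p$, plus error terms controlled by $\mathsf{H}_p\mathsf{s}$ (whose smoothness near $\Sigma$ is exactly the technical condition on $|\mathsf{H}_p\mathsf{s}|^{1/2}$). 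Because the Hamilton vector field and the calculus are tame up to every boundary face of ${}^{\calctwo}\overline{T}^*\bbM$, this propagation holds uniformly in $c$; in particular both the sc-propagation (at finite $c$) and the parabolic-sc propagation of \cite{Parabolicsc} at $c=\infty$ are special cases.

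The key step is the radial point estimate at each of the four radial sets (two per characteristic component, corresponding to incoming/outgoing asymptotics). Here the threshold $-1/2$ appears: if $\mathsf{s}>-1/2$ (above threshold), the positive commutator argument gives an estimate that controls $u$ microlocally at the radial set from $Pu$ alone, provided $u$ is known to lie in some a priori weaker space; if $\mathsf{s}<-1/2$ (below threshold), one gets the estimate from microlocal control on a punctured neighborhood. The assumption that $\mathsf{s}$ is constant near each radial set is precisely what permits the standard commutant construction. Since $\mathsf{s}$ has the correct sign relative to the threshold on the two radial sets of each characteristic component (and these threshold signs may be opposite on the two components, reflecting a ``forward'' choice on one and ``backward'' on the other), stitching the radial point estimates together with the propagation estimate along $\mathsf{H}_p$ yields, after integrating along every bicharacteristic in $\Sigma$, the semi-global estimate
\begin{equation}
\lVert u\rVert_{H_{\calctwo}^{m,\mathsf{s},\ell;0,0}}\le C\bigl(\lVert Pu\rVert_{H_{\calctwo}^{m-1,\mathsf{s}+1,\ell-1;0,0}}+\lVert u\rVert_{H_{\calctwo}^{-N,-N,-N;-N,-N}}\bigr),
\end{equation}
with $C$ independent of $c\ge c_0$. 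The hard part is verifying that the commutants and error estimates at the radial sets can be chosen uniformly across the full one-parameter family, i.e.\ that the positive-commutator constants do not degenerate as $c\to\infty$; this is where the structure of ${}^{\calctwo}\overline{T}^*\bbM$ at the corner $\mathrm{pf}_\pm\cap\natural\mathrm{f}$ matters, because one must verify the estimates \emph{on} the boundary faces (sc at $c<\infty$, parabolic-sc at $\mathrm{pf}_\pm$, $\natural$-model at $\natural\mathrm{f}$) and that they glue.

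Next, I would run the same argument on $P^*$ (with the dual variable order $-\mathsf{s}$, which flips above/below threshold on each radial set, hence also satisfies the hypotheses) to obtain the adjoint estimate. Together these show that $P:\calX^{m,\mathsf{s},\ell}\to\calY^{m-1,\mathsf{s}+1,\ell-1}$ is Fredholm of index $0$, with norm of any generalized inverse bounded uniformly in $c\ge c_0$ modulo the finite-dimensional kernel/cokernel. Finally, to upgrade to genuine invertibility, I would argue that the kernel and cokernel are trivial for $c$ large by perturbation from the free Klein--Gordon operator $P_0$: at the model faces $\mathrm{pf}_\pm$ and $\natural\mathrm{f}$ one can compute the normal operators of $P$ explicitly (they are, respectively, the two Schr\"odinger operators and the free Klein--Gordon operator in natural units), each of which is invertible between the appropriate spaces. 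A standard argument (invertibility of normal operators plus the uniform semi-Fredholm estimate) rules out nontrivial kernel/cokernel for $c\gg 1$, giving the stated invertibility and the desired uniform norm bound on $P^{-1}$.
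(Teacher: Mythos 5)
Your proposal reconstructs the standard Vasy--Melrose microlocal scheme (ellipticity, real principal type propagation, radial point estimates at threshold, Fredholm, then triviality of kernel/cokernel via normal operators), carried out in the $\calctwo$-calculus with constants tracked uniformly in $c$. The paper under review does not itself prove \Cref{thm:inhomog} --- it is imported from the companion paper \cite{NRL_I} --- so there is no ``paper's own proof'' here to compare against line by line, but the strategy you describe is the one that \cite{NRL_I} follows (and that the present paper's usage of the theorem presupposes: propagation and radial point estimates in the $\calctwo$-calculus, two Schr\"odinger normal operators at $\mathrm{pf}_\pm$, the free Klein--Gordon model at $\natural\mathrm{f}$). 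Your outline is essentially the right approach, and I would accept it as a faithful blind reconstruction, modulo the two small issues below.

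First, in the adjoint step you write ``dual variable order $-\mathsf{s}$, which flips above/below threshold.'' This is off by a shift: if $\mathsf{s}>-1/2$ at a radial set, then $-\mathsf{s}<1/2$, which does \emph{not} put the dual order below the threshold $-1/2$. The correct dual order is something like $-\mathsf{s}-1$ (the precise shift depends on the normalization of the $\calctwo$-Sobolev indices; here $P:\calX^{m,\mathsf{s},\ell}\to\calY^{m-1,\mathsf{s}+1,\ell-1}$, so dualizing gives $P^*:H_{\calctwo}^{-m+1,-\mathsf{s}-1,-\ell+1;0,0}\to H_{\calctwo}^{-m,-\mathsf{s},-\ell;0,0}$, and $-\mathsf{s}-1>-1/2\iff\mathsf{s}<-1/2$). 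The mechanism is right, but the stated formula would not yield the flipped threshold conditions as asserted.

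Second, you treat both $\mathrm{pf}_\pm$ and $\natural\mathrm{f}$ as carrying invertible normal operators. In the $\calctwo$-calculus, $\natural\mathrm{f}$ is a \emph{symbolic} face --- the principal symbol captures operators modulo lower order at $\mathrm{df},\mathrm{bf},\natural\mathrm{f}$, but not at $\mathrm{pf}_\pm$ --- so the free Klein--Gordon model there enters as symbolic ellipticity/propagation/radial point estimates, not as a normal operator whose invertibility on fixed function spaces is checked. The genuine noncommutative normal operators whose invertibility must be verified are the Schr\"odinger operators $N(P_\pm)$ at $\mathrm{pf}_\pm$. Conflating the two risks a circular Fredholm-to-invertibility argument, since ``invertibility of the free Klein--Gordon model at $\natural\mathrm{f}$'' is not a statement available independently of the propagation/radial estimates you are trying to prove. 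The cleaner upgrade from uniform a priori estimate to uniform invertibility goes through: (i) for each fixed large $c$, $P(c)$ is invertible by the sc-theory (standard), and (ii) the $\calctwo$-estimates plus invertibility of $N(P_\pm)$ bound the inverse uniformly.
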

\begin{remark}
    In the theorem above, the pair $(0,0)$ of $\mathrm{pf}_\pm$ orders can be replaced by $(q,q)$ for any $q\in \bbR$. Indeed, apply the theorem above to $h^{-q} u$ (for shifted $\ell$). Thus, 
    \begin{equation}%\label{eq:uniform inv}
		\lVert   u \rVert_{H_{\calctwo}^{m,\mathsf{s},\ell;q,q}} \leq C \lVert  Pu \rVert_{H_{\calctwo}^{m-1,\mathsf{s}+1,\ell-1;q,q}}
	\end{equation} 
    holds for $\mathsf{s}$ as above (and some other $C>0$).
\end{remark}

While this result is precise, it has the disadvantage that it is stated in terms of unfamiliar and somewhat intricate function spaces. 
The next theorem shows that our analysis implies results that can be easily stated in terms of simple and familiar function spaces. As in \S\ref{sec:intro}, we consider the Cauchy problem, for simplicity with Schwartz initial data:
\begin{equation}
\begin{cases}
Pu=0, \\ 
u|_{t=0} = \varphi \in \calS(\bbR^d), \\ 
u_t|_{t=0} = c^{2} \psi \in \calS(\bbR^d).
\end{cases}
\label{eq:Cauchy-intro}
\end{equation}
Here $P$ is a variable-coefficient Klein--Gordon operator, which is asymptotic, at spacetime infinity, to the standard constant-coefficient Klein--Gordon operator; see \S\ref{subsec:assumptions} for details. (If the reader would prefer, they make take $P$ to be as in \S\ref{sec:intro}, \textit{with} loss of generality. They may also skip ahead to \S\ref{subsec:examples} for examples.)
The well-posedness of the Cauchy problem tells us that there exists a unique classical solution $u$ to this problem. 

We want to approximate $u$ by a linear combination $v=e^{-ic^2 t} v_- + e^{ic^2 t} v_+$, where $v_\pm$ are solutions of the Schr\"odinger initial value problem
\begin{equation}
\begin{cases}
N(P_\pm) v_\pm = 0 \\
v_\pm(0,x) = \varphi_\pm 
\end{cases}
\label{eq:Schrodinger_Cauchy_intro}
\end{equation}
for some suitable $\varphi_\pm\in  \calS(\bbR^d)$. Here, the $N(P_\pm)$ are the normal operators of $P$ at the two parabolic faces $\mathrm{pf}_\pm$. In the case of the standard Klein--Gordon operator these take (up to normalization) the form $\pm 2i \partial_t + \triangle$; see Prop.\ \ref{prop:normal} for the general case, which provides a formula for $N(P_\pm)$ in terms of the coefficients of our operator $P$. 

We choose the $\varphi_\pm$ so that the Cauchy data of $v$ matches the desired Cauchy data of $u$ as in \eqref{eq:Cauchy-intro}. This requires 
\begin{equation} 
	\varphi_\pm = (\varphi\mp i \psi)/2.
\end{equation} 
Then we prove: 
\begin{theorem}\label{thm:Cauchy}
	As $c\to\infty$, the difference $u-v$ between the solution $u$ of the Cauchy problem, \cref{eq:Cauchy-intro}, and the ansatz $v = \exp(-ic^2 t) v_- + \exp(ic^2 t) v_+$ defined above converges to $0$ uniformly in compact subsets of spacetime. Indeed, for any $\varepsilon>0$, $p\in [2,\infty]$,
	\begin{equation} 
	\lVert u-v \rVert_{(1+r^2+t^2)^{3/4+\varepsilon}L^p(\bbR^{1,d}) } =O(1/c)
	\label{eq:misc_ab7}
	\end{equation} 
	as $c\to\infty$. Moreover, the same estimate applies to $L(u-v)$, for any (constant coefficient) combination of the derivatives $c^{-2} \partial_t, \partial_{x_j}$. 

    Under some assumptions discussed in \Cref{prop:advanced/retarded_asymp_main}, we can improve $O(1/c)$ to $O(1/c^2)$. For instance, this applies if the functions $\alpha,w,h,\beta,B,W$ discussed in \S\ref{subsec:assumptions} are smooth functions of $1/c^2$, all the way down to $c=\infty$.
\end{theorem}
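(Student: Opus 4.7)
The strategy is to view $v = e^{-ic^2 t}v_- + e^{ic^2 t}v_+$ as an approximate solution of $Pu = 0$ whose Cauchy data matches that of $u$, quantify the defect $Pv$ and the residual Cauchy mismatch of $u-v$, and then apply \Cref{thm:inhomog} to bound the error globally in spacetime. The first step is to compute $Pv$. Conjugating $P$ by $e^{\pm i c^2 t}$ identifies a neighborhood of $\mathrm{pf}_\pm$ in the $\calctwo$-phase space with a neighborhood of the parabolic face of the $\calc$-phase space, and the construction of the normal operators in \Cref{prop:normal} gives
\begin{equation}
    e^{\mp i c^2 t} P e^{\pm i c^2 t} = N(P_\pm) + R_\pm
\end{equation}
for residuals $R_\pm$ vanishing at $\mathrm{pf}_\pm$. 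Since $N(P_\pm) v_\pm = 0$ by construction,
\begin{equation}
    Pv = e^{-i c^2 t}R_- v_- + e^{i c^2 t}R_+ v_+,
\end{equation}
which lies in $c^{-1}\mathcal{Y}^{m-1,\mathsf{s}+1,\ell-1,0}$ in the general setting and in $c^{-2}$ times the same space under the smooth-in-$1/c^2$ hypothesis.

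To convert the Cauchy data into a globally posed problem, direct calculation using $\varphi_\pm = (\varphi\mp i\psi)/2$ yields $(u-v)|_{t=0} = 0$ and $\partial_t(u-v)|_{t=0} = -(\partial_t v_+(0)+\partial_t v_-(0))$, the latter being bounded uniformly in $c$ since $v_\pm$ satisfies a $c$-independent Schr\"odinger equation with fixed initial data. Choose a temporal cutoff $\chi(t)$ equal to $1$ for $t\geq 0$ and $0$ for $t\leq -1$, and set $\tilde e = \chi(t)(u-v)$. Then
\begin{equation}
    P\tilde e = -\chi P v + [P,\chi](u-v),
\end{equation}
where the first term inherits the $O(c^{-1})$ (resp.\ $O(c^{-2})$) smallness of $Pv$, and the commutator is a temporal differentiation scaled by the $c^{-2}$ coefficient of the $\partial_t^2$ piece of $P$, supported on the compact slab $\{-1\leq t\leq 0\}$. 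Classical Klein--Gordon energy estimates, exploiting the fact that the Klein--Gordon energy of $u-v$ at $t=0$ is $O(c^{-2})$, yield uniform-in-$c$ control of $u-v$ and $\partial_t(u-v)$ on this slab, making $[P,\chi](u-v)$ also small in the same $\mathcal{Y}$-space.

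With these preparations, \Cref{thm:inhomog}, applied with a variable order $\mathsf{s}$ whose radial-set thresholds select the retarded (causal) solution, produces
\begin{equation}
    \lVert \tilde e \rVert_{H_{\calctwo}^{m,\mathsf{s},\ell;0,0}} \leq C\lVert P\tilde e\rVert_{H_{\calctwo}^{m-1,\mathsf{s}+1,\ell-1;0,0}} = O(c^{-1})
\end{equation}
uniformly in $c$ (with $O(c^{-2})$ in the smooth-in-$1/c^2$ setting). Because $\tilde e$ is supported in $\{t>-1\}$, it coincides with the retarded solution of $Pz = P\tilde e$, and restricting to $\{t\geq 0\}$ transfers the bound to $u - v$. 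A Sobolev embedding from the $\calctwo$-Sobolev space into the weighted $L^p$ space yields \cref{eq:misc_ab7}: the weight exponent $3/4+\varepsilon$ is the standard Sobolev trade-off over the $(1+d)$-dimensional spacetime, with $\varepsilon$ absorbing a borderline endpoint. The derivatives $c^{-2}\partial_t$ and $\partial_{x_j}$ are zeroth-order $\calctwo$-pseudodifferential operators (the natural-unit coordinate derivatives), so they preserve $\calctwo$-Sobolev regularity and hence the $L^p$ estimate, giving the stated derivative bound.

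The main obstacle is the commutator step: uniform-in-$c$ control of $u-v$ and $\partial_t(u-v)$ in a compact time slab at the level of $\calctwo$-Sobolev spaces, starting from Cauchy data $(0,O(1))$, requires a careful reconciliation of standard Klein--Gordon energy estimates (taken in natural units) with the $\calctwo$-regularity framework. Equally delicate is the control of $R_\pm$ needed to verify that $Pv$ lies in the appropriate weighted $\mathcal{Y}$-space, which must be performed at the $\mathrm{pf}_\pm$ faces of the $\calctwo$-phase space and is what accounts for the $O(c^{-1})$-versus-$O(c^{-2})$ dichotomy between the two regimes.
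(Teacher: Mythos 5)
Your computation of the Cauchy data mismatch is correct ($v|_{t=0}=\varphi$, $\partial_t v|_{t=0}=c^2\psi + \partial_t v_-(0)+\partial_t v_+(0)$, so $(u-v)|_{t=0}=0$ and $\partial_t(u-v)|_{t=0}=O(1)$), and the structural computation of $Pv$ via the normal operators is also correct. However, the commutator step is where your argument diverges from the paper and where there is a genuine gap.

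You replace the paper's sharp Heaviside decomposition $u=\Theta(t)u+(1-\Theta(t))u$ by a smooth cutoff $\chi(t)$, incurring the term $[P,\chi](u-v)$ supported in $\{-1\leq t\leq 0\}$. To make this term $O(1/c)$ in the $\calY$-space you invoke ``classical Klein--Gordon energy estimates.'' But the naive energy identity does \emph{not} give smallness of $u-v$ on the slab: writing $w=u-v$ and $E[w](t)=\tfrac12\int c^{-2}|\partial_t w|^2+|\nabla w|^2+c^2|w|^2$, one has $E[w](0)=O(c^{-2})$ but $\frac{d}{dt}\sqrt{E}\lesssim c\,\|Pw\|_{L^2}$, so over a $t$-interval of length $O(1)$ (which is $O(c^2)$ oscillation periods), the forcing $\|Pw\|_{L^2}=O(1/c)$ contributes $c\cdot O(1/c)=O(1)$ to $\sqrt{E}$. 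The resulting bound $E[w]=O(1)$ is not small. Your conclusion then rests entirely on the fact that $[P,\chi]$ carries an explicit $c^{-2}$ in front, which barely cancels the $\|\partial_t w\|_{L^2}=O(c)$ coming from $E=O(1)$ to leave $O(1/c)$. This is a razor-thin scaling balance, it is only established at base $L^2$ level, and it says nothing about the variable-order $\mathsf{s}$, the $\natural\mathrm{f}$ order $\ell$, or the $\mathrm{pf}_\pm$ orders that make up the $\calY$-norm; nor does it give spatial weights (the slab is spatially noncompact because of the growing speed of propagation). Upgrading the energy estimate to what you need, uniformly in $c$, is effectively a re-derivation of the paper's advanced/retarded microlocal estimates, which is exactly what the Heaviside decomposition allows one to cite directly.

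The paper's route is materially different: it cuts sharply at $t=0$, so that $\Theta u$ and $(1-\Theta)u$ are exact forward/backward solutions with $\delta$-type forcing; it then shows that the $\calctwo$-wavefront set of $\delta(t)f(x)$ lies in $\calB$, which is \emph{disjoint} from the characteristic set of $P$, so the $\delta$-singularity never enters the propagation argument; the advanced/retarded asymptotics (\Cref{prop:advanced/retarded_asymp_main}, via \Cref{lemma:NRLI-PropC.3-calctwo-version}) do all the heavy lifting away from $\calB$, and an elliptic estimate handles the region near $\calB$ because $Pw$ is already small there. If you want to keep the smooth-cutoff route, you must either (i) replace the classical energy estimate by a $\calctwo$-Sobolev propagation estimate on the slab that tracks all $c$-weights (which is close to reproving \Cref{prop:advanced/retarded_asymp_main}), or (ii) abandon the smooth cutoff for the Heaviside one and exploit the $\calB$-disjointness from the characteristic set, which is the paper's approach. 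A smaller point: you also need the backward-in-time analogue (your $\tilde e$ only controls $t\geq 0$), and the radial-set thresholds for $\mathsf{s}$ must select the retarded problem in the future component and the advanced problem in the past component, which is not achieved by a single ``retarded (causal)'' choice.
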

\Cref{thm:simplest} is a corollary of \Cref{thm:Cauchy}. 
The argument is that, given the setup of \Cref{thm:simplest}, it is possible, owing to \cref{eq:extendability}, to modify the PDE's coefficients outside of $\{|t|<T\}$ to ones which satisfy the requirements in \S\ref{subsec:assumptions}; simply multiply each of $V,A_j,W$ by $\chi_T(t/\langle r \rangle)$, where $\chi \in C_{\mathrm{c}}^\infty(\bbR;\bbR)$ is identically $1$ on some neighborhood of $[-T,+T]$ (see \Cref{fig:modification_zone}). The requirement \cref{eq:extendability} is what gives us \cref{eq:decay-perturbations-generic} in \S\ref{subsec:assumptions}, and the fact that $V,A_j,W$ were all assumed to be real gives us the other requirements \cref{eq:decay-skew-generic} and \cref{eq:decay-skew-generic-2}. Moreover, the coefficients $\beta,B,W$ (which are related to $V,A,W$ in \cref{ex:EM}) are independent of $c$, and $\alpha,w,h$ (which are related to the metric perturbation) are all zero in this example, so the hypothesis of the last clause of \Cref{thm:Cauchy} holds. 
Modifying the PDE outside of $\{|t|<T\}$ does not modify the solutions of the Cauchy problem \emph{within} $\{|t|<T\}$, so the desired estimate, \cref{eq:misc_a06}, follows from \cref{eq:misc_ab7}. 

\begin{figure}
	\begin{tikzpicture}
		\filldraw[fill=lightgray!20] (0,0) circle (2);
		\begin{scope}
			\clip (0,0) circle (2);
			\fill[lightgray!50] (2,0) to[out=200, in=-20] (-2,0) to[out=20,in=160] (2,0);
			\fill[darkred!50] (0,.7) to[out=45, in=180] (2,1.5) to[out=90, in=0] (0,2.2) to[out=180, in=90] (-2,1.5) to[out=0,in=135] cycle;
			\fill[darkred!50] (0,-.7) to[out=-45, in=-180] (2,-1.5) to[out=-90, in=0] (0,-2.2) to[out=-180, in=-90] (-2,-1.5) to[out=0,in=-135] cycle;
			\node[darkred] () at (0,1.5) {$U$};
			\node () at (0,0) {$\{|t|<T\}$};
		\end{scope}
		\draw (0,0) circle (2);
		\node () at (1.7,1.7) {$\bbM$};
	\end{tikzpicture}
	\caption{The region $U=\operatorname{supp}(1-\chi_T(t/\langle r \rangle))$ in $\bbM$ where we are modifying the coefficients $V,A,W$ of the PDE in \Cref{thm:simplest} in order to deduce that theorem from \Cref{thm:Cauchy}. This does not intersect the region $\{|t|<T\}$ (dark gray).}
	\label{fig:modification_zone}
\end{figure}
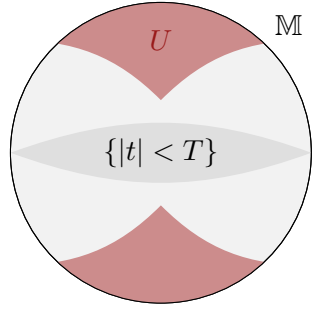

\subsection{Analytic and geometric assumptions}
\label{subsec:assumptions}
Let us now repeat the setup of \cite{NRL_I}, noting any additional hypotheses assumed here but not in our previous paper.

For each $c>0$, let $\eta(c) = -c^2 \dd t^2 + \dd x^2$ denote the exact Minkowski metric on $\bbR^{1,d}$ with speed of light $c$. (We will usually suppress the $c$-dependence in the notation.) Let $g=\{g(c)\}_{c>1}$ denote a family of Lorentzian metrics on $\bbR^{1,d}$ of the form 
\begin{equation} 
	g(c) = \eta(c) +  \alpha \dd t^2 + \sum_{j=1}^d \frac{w_j}{c}\mathrm{d} t\, \mathrm{d} x_j +\frac{1}{c^2} \sum_{j,k=1}^d h_{j,k} \dd x_j \dd x_k
	\label{eq:metric_form}
\end{equation}
for 
\begin{align} 
\begin{split}
	h_{j,k}=h_{k,j},\alpha,w_j     &\in C^\infty([0,\infty)_{1/c};  S^{-1}(\bbR^{1,d};\bbR)) \\ 
	&= C^\infty([0,\infty)_{1/c}; (1+t^2+r^2)^{-1/2} S^0(\bbR^{1,d};\bbR)) ,
	\label{eq:misc_055}
\end{split}
\end{align} 
where $j,k\in \{1,\dots,d\}$ and $S^0(\bbR^{1,d};\bbR)$ is the set of real-valued symbols on $\bbR^{1,d}$. So, $(1-\alpha/c^2)^{1/2}$ is the \emph{lapse function} of the time-function $t$, and $w_j/c^2$ is the spacetime \emph{wind}. Note that each term in $g-\eta$ is $O(1/c^2)$, in the sense that if we rewrite the metric in terms of $\mathrm{d}(ct)$ and $\mathrm{d} x_j$, 
\begin{equation}
    g(c) - \eta(c) = \frac{1}{c^2} \Big( \alpha \dd (ct)^2   +\sum_{j=1}^d w_j \mathrm{d} (ct)\, \mathrm{d} x_j +  \sum_{j,k=1}^d h_{j,k} \dd x_j \dd x_k\Big)
\end{equation}
then the coefficients are all $O(1/c^2)$. Thus, $g$ is a small perturbation of $\eta$ when $c\gg 1$. As stated in the introduction, we consider gravity a relativistic effect which is to be suppressed, by two factors of $1/c$ it turns out, in the $c\to\infty$ limit.

We also recall a few other dynamical assumptions stated in \cite{NRL_I}.
We assume that the metric $g$ is non-trapping for each $c>0$. For applications to the Cauchy problem, we will also assume that $g$ is globally hyperbolic with the Minkowski coordinate $t$ a time-function for $g$ and $\{t=0\}$ a Cauchy surface for the spacetime. Consequently, the Cauchy problem with initial data on the spacelike hypersurface $\{t=0\}$ is globally well-posed. Under the stated assumptions on $g$, these conditions are automatically satisfied as long as $c$ is sufficiently large. Since we are concerned with the $c\to\infty$ limit, the assumptions in this paragraph do not limit applicability.

The d'Alembertian $\square_{g}$ is defined using the sign convention
\begin{equation}
	\square_g = \frac{1}{\sqrt{|g|}} \sum_{i,j=0}^d \frac{\partial}{\partial z^i} \Big( \sqrt{|g|} g^{ij} \frac{\partial}{\partial z^j} \Big) =  \sum_{i,j=0}^d \Big( g^{ij} \frac{\partial^2}{\partial z^i \partial z^j} + \frac{\partial g^{ij}}{\partial z^i} \frac{\partial}{\partial z^j} + \frac{g^{ij}}{2|g|} \frac{\partial |g|}{\partial z^i} \frac{\partial}{\partial z^j} \Big),
	\label{eq:4hk6nv}
\end{equation}
where $z^0 = t$ and $\smash{z^j=x_j}$, and where $g^{ij}$ are the entries of the inverse of the matrix $\smash{\{g_{i,j}\}_{i,j=0}^d}$. 
We think of $\square_g$ as a one-parameter family of differential operators on $\bbR^{1,d}$, parametrized by $c$ (suppressing this $c$-dependence in the notation). 

It must be remembered that our Minkowski d'Alembertian $\square$ is also a one-parameter family of operators, depending on $c$.

The $O(c^{-4})$ term in the coefficient of $\partial_t^2$ in $\square_g - \square$ (this is the leading $\partial_t^2$ term of the difference $\square_g-\square$, two factors of $1/c$ suppressed compared to the leading term $c^{-2}\partial_t^2$ in $\square_g,\square$) will end up being important.  We call this $\aleph$; the precise definition is 
\begin{multline} 
\square_g -\square - c^{-4} \aleph \partial_t^2 \in \operatorname{span}_{C^\infty([0,1)_{1/c} ; S^{-1}(\bbR^{1,d};\bbR)) } \Big\{ \frac{1}{c^5} \frac{\partial^2}{\partial t^2}, \frac{1}{c^3} \frac{\partial^2}{\partial t \partial x_j} , \frac{1}{c^2} \frac{\partial}{\partial x_j x_k}\Big\} 
\\ + \operatorname{span}_{C^\infty([0,1)_{1/c} ; S^{-2}(\bbR^{1,d};\bbR)) } \Big\{ \frac{1}{c^3}\frac{\partial}{\partial t},\frac{1}{c^2} \frac{\partial}{\partial x_j} \Big\}.
\label{eq:aleph_def}
\end{multline} 
For black hole spacetimes, the large-$r$ behavior of $\aleph$  determines the mass $\frakm$ of the black hole, via  
\begin{equation} 
	\aleph = \Big(1+O\Big(\frac{1}{r}\Big)\Big) \frac{\frakm}{r}, \quad r\to\infty,\quad t\text{ fixed}.
\end{equation} 
See \Cref{ex:Kerr}.
More generally, $\aleph$ measures the mass enclosed in large ``spheres'' in spacetime \cite{SchoenYauI, SchoenYauII}\cite{Witten}. It is only this feature of the metric that will end up contributing to the non-relativistic limit.

We now specify the class of differential operators treated in this paper. 
We define the class of $m$-th order classical symbols to be
\begin{equation} \label{eq:classical-symbol-base}
S^{m}_{\cl}(\bbR^{1,d};\bbR)=(1+t^2+r^2)^{m/2} C^\infty(\overline{\bbR^{1,d}};\bbR).
\end{equation}
Let $P = \{P(c)\}_{c>1}$ denote a family of differential operators on $\bbR^{1,d}$ of the form 
\begin{equation}
	P = \square_{g} -  c^2 + \frac{i\beta}{c^2} \frac{\partial}{\partial t} + \sum_{j=1}^d iB_j \frac{\partial}{\partial x_j} + W,
	\label{eq:P_def}
\end{equation}
where  
\begin{equation} 
	\beta ,B_j,W\in C^\infty([0,1)_{1/c}; S_{\cl}^{-1}(\bbR^{1,d})).
	\label{eq:decay-perturbations-generic}
\end{equation}  
Note that, by assumption, these functions (as well as the coefficients of $\square_g$) can be restricted to $c=\infty$, i.e. there is a well-defined limit, in $S_{\cl}^{-1}(\bbR^{1,d})$, as $c \to \infty$. 

A requirement that we made in \cite{NRL_I} (for reasons of simplicity) is that 
\begin{equation}
	\Im \beta , \Im B_j, \Im W \in C^\infty([0,1)_{1/c}; S_{\cl}^{-2}(\bbR^{1,d})),
	\label{eq:decay-skew-generic}
\end{equation}
and
\begin{equation}
	 \Im B_j|_{c=\infty} = 0.
	\label{eq:decay-skew-generic-2}
\end{equation}
Then, $P$ is $L^2(\bbR^{1,d},g)$-symmetric to leading order in $c$.

In lieu of the conjugated half-Klein--Gordon $D_\pm$ operators discussed in \S\ref{subsec:time_independent}, which are difficult to define  in the time-dependent setting, we will find it more convenient to work with the operators that result from conjugating $P$ by the oscillations seen in solutions of the PDE. These ``conjugated Klein--Gordon operators'' are defined by
\begin{equation}\label{eq:P conjugated}
	P_\pm(c) = e^{\mp i c^2 t } P e^{\pm i c^2   t},
\end{equation}
meaning the maps $\calS'\ni u\mapsto  e^{\mp i c^2 t} P ( e^{\pm i c^2 t} u)$. The explicit computation of $P_\pm$ can be found in \cite{NRL_I}, but the formula is not important here. 
\begin{remark}[The conjugated perspective]
	From the microlocal point of view, the point of conjugating by the exponentials is that, in frequency space, this translates the frequencies of interest to the zero section of the appropriate cotangent bundle, and ``second microlocalization'' at the zero section is a particularly simple and concrete instance of second microlocal analysis, which is usually considered somewhat esoteric, even by experts. This ``conjugated'' perspective has been used in \cite{VasyLA, VasyN0L}\cite{SussmanACL} to analyze the Schr\"odinger--Helmholtz equation.
\end{remark}

We define the notion of ($\calc$-)normal operator in \cite{NRL_I}. Roughly, this is the leading part of the considered operator at the parabolic face pf in our compactified phase space (corresponding to regime (I) at the beginning of this section). Since the normal operator $N(P_\pm)$ of $P_\pm$ appears prominently in \Cref{thm:Cauchy} --- it is the particular Schr\"odinger operator governing the non-relativistic limit in that theorem --- we record here:
\begin{proposition}[\cite{NRL_I}] \label{prop:normal}
	The normal operators of $P_\pm$ are given by 
	\begin{equation}\label{eq:Ppm normal}
		N(P_\pm) = \mp 2 i \partial_t - \triangle \pm  \beta|_{c=\infty} + \sum_{j=1}^d i B_j|_{c=\infty}\partial_{x_j} + W|_{c=\infty} - \aleph,  
	\end{equation}	
	where $\beta,B,W,\aleph$ are as above.
\end{proposition}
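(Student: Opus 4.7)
The plan is to establish the formula by direct computation: I would conjugate $P$ by $e^{\pm ic^2 t}$, expand each term using the shift identity $e^{\mp ic^2 t}\partial_t e^{\pm ic^2 t} = \partial_t \pm ic^2$, and then extract the piece that survives restriction to the parabolic face $\mathrm{pf}_\pm$, namely the part that remains bounded as $c\to\infty$ after the rest-energy cancellation.

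First I would decompose $\square_g = \square + c^{-4}\aleph\,\partial_t^2 + R$, where $R$ is the remainder described by \cref{eq:aleph_def}, and apply the shift to each summand of $P$. The free d'Alembertian $\square = -c^{-2}\partial_t^2 - \triangle$ conjugates to $-c^{-2}\partial_t^2 \mp 2i\partial_t + c^2 - \triangle$, so the $+c^2$ cancels the mass term $-c^2$ in $P$, leaving the expected $\mp 2i\partial_t$ and $-\triangle$. The correction $c^{-4}\aleph\partial_t^2$ conjugates to $c^{-4}\aleph\partial_t^2 \pm 2ic^{-2}\aleph\partial_t - \aleph$, of which only $-\aleph$ survives at $\mathrm{pf}_\pm$. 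The sub-principal term $(i\beta/c^2)\partial_t$ produces a bounded scalar contribution from the $\pm ic^2$ part of the shift, together with a vanishing $(i\beta/c^2)\partial_t$ piece. The $\partial_t$-free terms $iB_j\partial_{x_j}$ and $W$ pass through the conjugation unchanged.

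Second, I would invoke the definition of the normal operator at $\mathrm{pf}_\pm$ from \cite{NRL_I}: by construction, this face is a parabolic blowup of (a frequency-translate of) the zero section over $\{c=\infty\}$, and conjugation by $e^{\pm ic^2 t}$ arranges precisely that the translate becomes the zero section itself, so computing $N(P_\pm)$ reduces to restricting the conjugated operator to $\{c=\infty\}$ after discarding positive powers of $c^{-1}$. Taking the boundary values of the coefficients $\aleph,\beta,B_j,W$ at $c=\infty$ and collecting what remains then yields the claimed formula.

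The main obstacle is verifying that every piece produced by conjugating the remainder $R$ truly vanishes at $\mathrm{pf}_\pm$. After the shift, each summand of $R$ carries at least one surviving factor of $c^{-1}$ -- for instance $c^{-3}\partial_t\partial_{x_j} \mapsto c^{-3}\partial_t\partial_{x_j} \pm ic^{-1}\partial_{x_j}$, and similarly for the $S^{-2}$-weighted pieces in \cref{eq:aleph_def}. The bookkeeping is routine but requires matching these surviving $c^{-k}$ and symbolic-decay factors against the boundary-defining functions of $\mathrm{pf}_\pm$ inside the $\calctwo$-phase space recalled in \S\ref{subsec:description}, and confirming that each term lies in the ideal cutting out $\mathrm{pf}_\pm$.
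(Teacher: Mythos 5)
Your approach --- conjugate $P$ by $e^{\pm ic^2 t}$ via the shift identity $e^{\mp ic^2 t}\partial_t e^{\pm ic^2 t}=\partial_t\pm ic^2$, decompose $\square_g=\square+c^{-4}\aleph\,\partial_t^2+R$, and read off what survives at the parabolic face --- is the right one and is what the definition of the normal operator in the $\calc$/$\calctwo$ calculi reduces to. Your treatment of $\square$ (the $+c^2$ produced by the shift cancelling the mass term and leaving $\mp 2i\partial_t-\triangle$), of $c^{-4}\aleph\,\partial_t^2$ (surviving as $-\aleph$), of $iB_j\partial_{x_j}$ and $W$ (commuting through, then restricted to $c=\infty$), and of the remainder $R$ (each summand in \eqref{eq:aleph_def} retaining a surviving factor of at least $c^{-1}$ after the shift, so lying in the ideal cutting out $\mathrm{pf}_\pm$) is correct, and the bookkeeping you flag as outstanding is indeed routine.

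What you should not leave implicit is the sign of the $\beta$-contribution. Carrying your own shift through explicitly,
\[
e^{\mp ic^2 t}\,\frac{i\beta}{c^2}\,\partial_t\, e^{\pm ic^2 t}
= \frac{i\beta}{c^2}\bigl(\partial_t \pm ic^2\bigr)
= \frac{i\beta}{c^2}\partial_t \,\mp\, \beta,
\]
so the scalar surviving at $\mathrm{pf}_\pm$ is $\mp\beta|_{c=\infty}$, which is \emph{opposite} to the $\pm\beta|_{c=\infty}$ appearing in \eqref{eq:Ppm normal}. A proof must confront this rather than hide it behind the phrase ``bounded scalar contribution'': either the displayed formula carries a sign typo (inherited from the convention in \cite{NRL_I}) or the conjugation is being misapplied, and one has to decide which. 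Note that the same inconsistency already appears in Example~\ref{ex:EM}: matching the expansion \eqref{eq:misc_479} against the template \eqref{eq:P_def} forces $\beta=-2V$ and $B_j=-2A_j$, not $\beta=2V$, $B_j=2A_j$ as claimed there. Pinning this sign down is the one nontrivial step in an otherwise mechanical computation, so make it explicit.
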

Notice that the term $\aleph$, whose physical interpretation we described above, appears in this normal operator. This is the mechanism via which gravity contributes an effective Newtonian potential to the non-relativistic limit.

%%%%%%%%%%%%%%%%%%%%%%%%%%%%evenness assumption
\begin{remark}
    Above, we required that the coefficients of the PDE be smooth functions of $1/c$. However, the examples arising in physics (see the examples in \S\ref{subsec:examples}) often involve coefficients that are \emph{even} in $1/c$, which means that they are smooth functions of $1/c^2$. Thus, no $O(1/c)$, $O(1/c^3)$, $O(1/c^5)$ terms, and so on, are present in their $c\to\infty$ expansion. We do not state any estimates that depend on the absence of $O(1/c^k)$ terms for $k\geq 2$, but it is worth noting that, when the coefficients of the PDE have no $O(1/c)$ terms, then we can often improve $O(1/c)$ estimates to $O(1/c^2)$; see e.g.\ \Cref{thm:Cauchy}. This is plausible, since no $O(1/c)$ terms should arise in the proofs of such estimates (assuming that the forcing, initial data, etc.\ also have no $O(1/c)$ terms).

    To be precise, suppose that $\calO$ is a quantity (a coefficient in the PDE, piece of initial data, etc.) and that our basic assumptions include
    \begin{equation}
    \mathcal{O} \in C^\infty([0,1)_{1/c};\bullet),
\end{equation}
with $\bullet$ being a Fr\'echet or Banach space depending on the setting.
Then, 
\begin{equation} \label{eq:def-general-quadratic-convergence}
    \mathcal{O} - \mathcal{O}|_{1/c=0} \in c^{-2}C^\infty([0,1)_{1/c};\bullet)
\end{equation}
makes precise the requirement that $\calO$ have no $O(1/c)$ term in its $c\to\infty$ expansion. 

For example, for the PDE coefficients in \cref{eq:decay-perturbations-generic}, the condition \cref{eq:def-general-quadratic-convergence} says
\begin{equation} 
	\beta - \beta|_{1/c=0},\; B_j-B_j|_{1/c=0},\; W-W|_{1/c=0}\in c^{-2}C^\infty([0,1)_{1/c}; S_{\cl}^{-1}(\bbR^{1,d})).
	\label{eq:quadratic-converge-1}
\end{equation}
If these hold, we will just say that $P$ itself satisfies \cref{eq:def-general-quadratic-convergence}.

We do \emph{not} need an additional assumption stating that $g$ or the coefficients in $\square_g$ satisfy \cref{eq:def-general-quadratic-convergence},  because this is built into the assumptions on $g$ that we have already imposed. Indeed, in \cref{eq:metric_form}, each perturbation term is $O(1/c^2)$ relative to the corresponding term in the Minkowski metric. 
Correspondingly, 
\begin{equation} \label{eq:Boxg-Box-membership}
    \Box_g - \Box \in \mathrm{Diff}_{\calctwo}^{2,-1,0;-2,-2};
\end{equation}
this is what \cite[Prop.\ 3.1]{NRL_I} says. 
So, compared to $\square,\square_g\in \operatorname{Diff}_{\calctwo}^{2,0,2;0,0}$, the difference $\square_g-\square$ is already automatically suppressed by \emph{two} orders at each of $\natural\mathrm{f},\mathrm{pf}_\pm$, not just one. 
\end{remark}

\begin{remark}
The condition \cref{eq:def-general-quadratic-convergence} is usually insensitive to the function space that $\bullet$ is a placeholder for. For example, together with \cref{eq:decay-skew-generic} and \cref{eq:decay-skew-generic-2}, it implies
\begin{equation}
	\Im \beta - \Im \beta|_{1/c=0}, \; \Im B_j, \; \Im W - \Im W|_{1/c=0} \in c^{-2}C^\infty([0,1)_{1/c}; S_{\cl}^{-2}(\bbR^{1,d})).
	\label{eq:quadratic-converge-2}
\end{equation}
Consequently, it is not necessary to specify $\bullet$ precisely. 
\end{remark}

An example where the condition \cref{eq:def-general-quadratic-convergence} does not hold is discussed as part of \Cref{rem:comparison}; fixing the magnetic field \emph{in cgs units} results in a magnetic field which is $O(1/c)$ as $c\to\infty$.

\subsection{Examples}\label{subsec:examples}
We now present two examples, which serve to clarify the notation and demonstrate that the assumptions imposed above are reasonable and do not exclude the traditional cases of interest. 

\begin{example}[Scalar electrodynamics in Minkowski]
	Consider the setup of \S\ref{sec:intro}, 
	\begin{equation}
		-\frac{1}{c^2}\Big(  \frac{\partial}{\partial t} + iV \Big)^2u  - (i \nabla  + \bfA)^2 u  +  c^2 u + \VVV u = 0.
	\end{equation}
	As discussed in \Cref{rem:comparison}, this is the PDE proffered by physics texts for describing the time-evolution of a classical scalar field (with mass $m=1$ and charge $e=1$) coupled to a background scalar field $\VVV$ and an electromagnetic four-potential $A = (c^{-1} V,\bfA)\in C^\infty(\bbR^{1,3};\bbR^4)$.
	Here, $\bfA \in C^\infty(\bbR^{1,3};\bbR^3)$ is the vector (a.k.a.\ magnetic) potential, measured in units of momentum per unit charge, and $V\in C^\infty(\bbR^{1,3};\bbR)$ is the scalar (a.k.a.\ electric) potential, measured in volts.

	Expanding this out, 
	\begin{equation}
		-\frac{1}{c^2} \frac{\partial^2 u}{\partial t^2} - \triangle u - 2i\Big( \frac{V}{c^2} \frac{\partial}{\partial t} +  \bfA\cdot \nabla\Big) u - \Big( - \frac{V^2}{c^2} +  \bfA^2 +\frac{i\dot{V}}{c^2} + i \nabla\cdot \bfA - \VVV\Big) u - c^2 u = 0,
		\label{eq:misc_479}
	\end{equation}
	where $\dot{V}=\partial_t V$. 
	So, $Pu=0$ for $P$ the operator defined in \cref{eq:P_def} for $g$ the Minkowski metric, $\beta=2 V$, $B_j = 2 A_j$ for $j=1,\dots,d$, and 
 	$W= \VVV+i \nabla \cdot \bfA + \bfA^2 + c^{-2}(i\dot{V}-V^2)$. 
	All of the terms in $W$ involving the electromagnetic potential $V$ are suppressed by at least one power of $1/c$ and therefore do not contribute to $N(P_\pm)$; the contribution to $N(P_\pm)$ involving $V$ comes from $\beta$. 
	\Cref{prop:normal} tells us that, in this case, 
	\begin{align}
		\begin{split} 
			N(P_\pm) & = \mp 2 i \frac{\partial}{\partial t} -\triangle + \Big( \pm \beta + \sum_{j=1}^d i B_j \partial_j + W \Big)\Big|_{c=\infty} \\
			&=\mp 2 i \frac{\partial}{\partial t} -\triangle \pm 2 V - 2i \bfA\cdot \nabla - i\nabla\cdot \bfA - \bfA^2 +\VVV.
		\end{split} 
	\end{align}
	That is, $2^{-1} N(P_\pm)  = \mp i \partial_t - 2^{-1}(i\nabla + \bfA)^2  \pm V +2^{-1}\VVV$
	is the conventionally normalized Schr\"odinger operator describing a non-relativistic particle with unit charge and mass $m=1$ in the presence of a background potential $V$ and vector potential $\bfA$.  Note that, if $V\geq 0$, then the term $V$ it contributes to $N(P_+)$ is purely repulsive and the term $-$ contributes to $N(P_-)$ is purely attractive. The physical interpretation of this fact is that if a scalar particle has charge $e$, its anti-particle has charge $-e$, just as the electron and positron have opposite charges. 
	
	A comment on the range of the four-potential is in order.
	Our assumption on the decay of $\beta,B_j,W$ as $(t,r)\to\infty$ requires $V,A_j \in (1+t^2+r^2)^{-1/2} L^\infty$.
	So, this allows Coulomb potentials which fade away as $t\to\infty$. Longer range potentials, such as asymptotically constant potentials or the linear potentials generated by uniform electric fields are excluded. These have a serious effect on the asymptotic theory of the PDE. 
	\label{ex:EM}
\end{example}

\begin{example}[Astrophysical Kerr-like spacetimes] \label{ex:Kerr}
	First, we recall the definition of the exact Kerr exterior. For simplicity, we restrict attention in this example to $c\geq 1$. The Kerr exterior is a 4-dimensional Lorentzian manifold $(\bbR_t \times \bbS^2 \times \bbR_{r> r_+}, g_{\frakm,\fraka} )$ with an explicit metric $g_{\frakm,\fraka}$. This has parameters $\frakm>0$ (the mass of the black hole) and $\fraka\in (-\frakm,\frakm)$ (the angular momentum).
	\begin{itemize}
		\item First, recall the Boyer--Lindquist coordinate system, in which one parametrizes the sphere $\bbS^2$ in $\bbR_t \times \bbS^2 \times \bbR_{r> r_+}$ using the polar angle $\theta \in (0,\pi)$ and the azimuthal angle $\phi \in \bbR$. (Thus, the black hole's axis of rotation $\{\theta=0,\pi\}$ is omitted from this coordinate chart.) In Boyer--Lindquist coordinates, 
		\begin{equation}
			g_{\mk{m},\mk{a}} = -\Big(1-\frac{r_{\mathrm{s}}r}{\Sigma}\Big)c^2 \dd t^2 + \frac{\Sigma}{\mathfrak{D}} \dd r^2 +\Sigma \dd \theta^2 
			+ \Big(r^2+\mk{a}^2+\frac{r_{\mathrm{s}} r \mk{a}^2}{\Sigma} \sin^2\theta\Big)\sin^2\theta \dd \phi ^2
			- \frac{2r_{\mathrm{s}}r\mk{a}\sin^2\theta}{\Sigma}c\dd t \dd \phi,
			\label{eq: kerr metric}
		\end{equation}
		where $r_{\mathrm{s}} = 2 G \frakm/c^2$ is the Schwarzschild radius (with $G$ the gravitational constant), $\Sigma = r^2 + \fraka^2 \cos^2 \theta$, and $\mathfrak{D} = r^2 - r_{\mathrm{s}} r + \fraka^2$.
	The outer radius $r_+$ is the largest zero of $\mathfrak{D}(r)$, if $\frakD(r)$ has a zero in $r>0$, or $0$ otherwise. 
		
		Since we are ultimately concerned with the $c\to\infty$ limit, let us point out that the parameter $\fraka$ is related to the ``angular momentum'' $J$ of the black hole by $\fraka = J/\frakm c$. So, $\fraka$ has units of length. In the non-relativistic limit, it is most natural to fix the physically significant parameters $J$ and $\frakm$ and therefore let $\fraka\to 0^+$. However, we allow $J \in C^\infty([0,\infty)_{1/c})$ to be a any smooth function of $1/c$. The case $J=0$ is already interesting.
        If $J$ is fixed and $c$ is small enough, then $\frakD$ is non-vanishing; the metric is the exterior of an ``over extremal'' black hole, with a naked singularity. Physicists do not like the naked singularity, but as a model for spacetime outside of some astrophysical body, there is no issue with the over extremal case. We are going to modify the metric near the origin anyways.

        Newton's constant $G$ should stay fixed.
		\item The Kerr--Schild coordinate system is useful because it covers a neighborhood of the axis of rotation of the black hole and presents the metric as the Minkowski metric plus a perturbation. In this coordinate system, the spacetime is a subset of $\bbR_t\times \bbR^3_{x,y,z}$, and the metric has the form 
		\begin{equation}
			g_{\frakm,\fraka} = \eta + f k_\mu k_\nu \dd x^\mu \dd x^\nu  
			\label{eq:misc_190}
		\end{equation}
		for $f= 2 G \frakm r^3_0  c^{-2}  (r^4_0 + \fraka^2 z^2)^{-1}$, $k=(1,\bfk)$, 
		\begin{equation}
			{\bfk} = \Big(\frac{r_0 x + \fraka y}{r_0^2 + \fraka^2}, \frac{r_0 y - \fraka x}{r^2_0 + \fraka^2}, \frac{z}{r_0 } \Big),
			\label{eq:KerrSchild_k}
		\end{equation}
		where $x^0=ct$ and $r_0$ is the solution to $(x^2+y^2) (r^2_0+\fraka^2)^{-1} + z^2/r_0^2 = 1$. 
	\end{itemize}
	
	The Kerr exterior does not fit directly into our framework. 
	One reason is the presence of the horizon, $\{r=r_+\}$, which is an additional boundary on the spacetime with a large effect on the long-time asymptotics of solutions of the Klein--Gordon equation. We do not want to study the $r\to r_+$ limit. The natural course of action is to consider a stationary metric on $\bbR^{1,3}$ isometric to Kerr, in Kerr--Schild coordinates, outside of some spatially compact set. Such metrics are of astrophysical interest because they describe the gravitational field generated by astrophysical bodies lacking the sufficient density to form a black hole. Indeed, the gravitational field generated by the Earth is an example of such a spacetime. Since we are interested in the $c\to\infty$ limit, we must also specify how the metric is behaving in this limit.
	
	Let us define an \emph{astrophysical Kerr-like spacetime} $(\bbR^{1,3},\tilde{g}_{\frakm,\fraka})$ to be a Lorentzian spacetime of the form 
	\begin{equation}
		\tilde{g}_{\frakm,\fraka} = \eta + \tilde{f} \tilde{k}_\mu \tilde{k}_\nu \dd x^\mu \dd x^\nu + c^{-2} \delta g 
	\end{equation}
	for 
	\begin{itemize}
		\item $\tilde{f},\tilde{k}$ of the form above except defined with $\tilde{r}_0$ in place of $r_0$ for 
		\begin{equation} 
			\tilde{r}_0 \in C^\infty( [0,\delta)_\fraka; C^\infty(\bbR^3_{x,y,z} ; (1,\infty) ) )
		\end{equation}
		which agrees with $r_0$ outside of a large Euclidean spatial ball,
		\item  $\delta g$ a spatially compactly supported symmetric two-tensor depending smoothly on $1/c$, all the way down to $1/c=0$. That is, for some compact $K \subset \bbR^3$, 
		\begin{equation}
			\delta g \in C^\infty( [0,1)_{1/c}; C^\infty( \bbR^{1,3} ; \operatorname{Sym}^2 T^* \bbR^{1,3} )  ) , \quad \supp \delta g \subset \bbR_t \times K,
		\end{equation}
		\item and we require that $\eta + \varepsilon(\tilde{f} \tilde{k}_\mu \tilde{k}_\nu \dd x^\mu \dd x^\nu + c^{-2} \delta g) $ is a Lorentzian metric for all $\varepsilon \in [0,1]$.
	\end{itemize}
	Thus, any astrophysical Kerr-like spacetime is isometric to the Kerr exterior outside of a spatially compact region, with the isometry provided by endowing the Kerr exterior with Kerr--Schild coordinates. The gravitational field configurations generated by sufficiently low-density astrophysical objects should satisfy the assumptions above.

	Even after the concession that $\tilde{g}_{\frakm,\fraka}$ be modified near the horizon, it does \emph{not} fall into the framework of the rest of the paper, because it is stationary. Rather, we want our metrics to be asymptotically Minkowski both as $r\to\infty$ (which is true for Kerr) and as $t\to\infty$.
	While this is not true for Kerr, we can modify $\tilde{g}_{\frakm,\fraka}$ so as to make it true. The resulting spacetimes will be rather unphysical. However, as long as the modification is done only in the region $|t|>T$ for some large $T$, we do not change the solution of the Klein--Gordon Cauchy problem for $|t|<T$. Consequently, any uniform-in-spacetime bound for solutions of the Cauchy problem on the modified spacetime yields a uniform-in-$[-T,+T]_t\times \bbR^3_x$ bound on solutions on the original astrophysical spacetime $(\bbR^{1,3},\tilde{g}_{\frakm,\fraka})$. This is the same argument used to deduce \Cref{thm:simplest} from \Cref{thm:Cauchy}.
	
	So, let 
	\begin{equation}
		\hat{g}_{\frakm,\fraka} = \eta + \varepsilon(t,x) (\tilde{f} \tilde{k}_\mu \tilde{k}_\nu \dd x^\mu \dd x^\nu + c^{-2} \delta g) 
	\end{equation}
	for $\varepsilon \in C^\infty(\bbM;[0,1])$ such that $\varepsilon = 1$ identically if $|t|<T$, for some $T>0$, $\varepsilon$ is vanishing if $|t| > \Upsilon_0 + \Upsilon_1 |r|$, where $\Upsilon_0>T,\Upsilon_1>0$. This is a Lorentzian metric. 
	
	Let us now check that its coefficients have the required form. Because $\varepsilon$ is vanishing near the north/south pole of $\bbM$, it suffices to prove that the Kerr metric $g_{\frakm,\fraka}$ has the required form on $\bbM$ in a neighborhood of the closure of the temporal axis chosen sufficiently large so as to include the event horizon for all $c\geq 1$. Since we are restricting attention away from the temporal axis, we can use $1/\langle r \rangle$ as a boundary-defining-function. The function $f$ in the Kerr--Schild formula  \cref{eq:misc_190} is given by
	\begin{equation}
		f = \frac{2G \frakm r_0^3}{c^2(r_0^4 + \fraka^2 z^2)} = \frac{2G \frakm r_0^3}{c^2(r_0^4 + J^2  z^2 / \frakm^2 c^2)}.
	\end{equation}
	From the discussion above, this lies in $\langle r \rangle^{-1} C^\infty((c_0,\infty]_c ; C^\infty(\overline{\bbR^3_{x,y,z}} \cap \{x^2+y^2+z^2 > R \}) ) $ for $c_0,R$ sufficiently large. On the other hand, the vector $\bfk$ defined in \cref{eq:KerrSchild_k} satisfies 
	\begin{equation}
		{\bfk} \in C^\infty((c_0,\infty]_c ; C^\infty(\overline{\bbR^3_{x,y,z}} \cap \{x^2+y^2+z^2 > R \} ;\bbR^3 ) )
	\end{equation}
	for $c_0,R$ sufficiently large.  Therefore, the Kerr--Schild metric has the required form. 
	
	These spacetimes are similar to the notion of radiating Schwarzschild (a.k.a.\ Vaidya) spacetimes, except here $\frakm,J$ are essentially functions of $t/r$ rather than $t-r$. The latter is more natural from the point-of-view of gravitational physics, but let us repeat that the spacetime $(\bbR^{1,3},\hat{g}_{\frakm,\fraka})$ is playing a purely technical role. There is therefore no requirement that $\hat{g}_{\frakm,\fraka}$ have a sensible physical significance. See  \cite[\S9.5]{griffiths2009exact} for details on the Vaidya spacetimes. 
\end{example}

\subsection{Open problems}
\label{subsec:todo}

\begin{itemize}
	\item \textit{When the coefficients of the PDE are analytic in $1/c$, can it also be shown that}
    \begin{equation*} 
        u = e^{-ic^2 t} u_-+e^{ic^2 t} u_+
    \end{equation*}
  \textit{for $u_\pm$ analytic in $1/c$} (say, pointwise in spacetime)? This would be the analogue for the Cauchy problem of Veseli{\'c}'s spectral-theoretic results in \cite{veselic1971perturbation, veselic1983nonrelativistic}. One could hope to apply the estimates in \S\ref{sec:Cauchy} to the terms in the $c\to\infty$ expansion in order to establish bounds sufficiently good to guarantee that the power series in $1/c$ has a nonzero radius of convergence. We do not know if this is possible.
	\item 
	It is known that solutions of the initial-value problems for the Klein--Gordon and Schr\"odinger equations (with, say, Schwartz initial data), admit full asymptotic expansions at infinity. We would like to understand \emph{joint} asymptotics as $c\to\infty$ \emph{and} $1+r^2+t^2 \to \infty$, whereas in this work we only analyzed $c\to\infty$ asymptotics in compact subsets of spacetime. For example, it would be of interest to understand the non-relativistic limit of the scattering matrix for the Klein--Gordon equation. We cannot say anything about this using our existing tools.

	Joint asymptotics are expected to be somewhat subtle. For example, for individual $c>0$, it was proven in \cite{desc} that solutions of the Cauchy problem admit full asymptotic expansions at spacetime infinity, but this required a blowup of null infinity and microlocal analysis on the blown up space. It was essentially proven in \cite{gell2023scattering} that solutions of the Schr\"odinger initial-value problem admit full asymptotic expansions at spacetime infinity, but this also required a blowup of the equator of the radial compactification in spacetime (however, only in the last step; no corresponding microlocal analysis was required). So, corresponding blowups are likely required to understand the non-relativistic limit.

    To make matters worse, we have seen in \cite[Pf.\ of Prop.\ 5.11]{NRL_I} indications that a blowup of the whole corner of $[0,\infty)_{1/c}\times \bbM$ is required. It is unclear how this meshes with the blowups described in the previous paragraph.
	
	\item We have assumed that the coefficients of our PDE are well-behaved on the radial compactification of the spacetime away from the north/south poles. This seems unnecessarily restrictive as far as the Cauchy problem is concerned. 
    In particular, we cannot handle general coefficients of type
    \begin{equation*} 
        C^\infty(\bbR_t;C^\infty(\overline{\mathbb{R}^d}_x))
    \end{equation*}
    (with the suitable amount of decay), as these are singular at the equator of the boundary $\partial \bbM$ of the radial compactification of the spacetime. 
    To handle these using our methods, it would be necessary to blow up the equator and analyze the PDE using a variant of $\Psi_{\calczero},\Psi_{\calc},\Psi_{\calctwo}$ adapted to that blowup. This has not even been done for the Schr\"odinger equation; however, if one restricts attention to bounded times (this being the same thing as working near points in the deep interior of the front face of the aforementioned blowup),   there is extensive literature, and microlocal tools have been developed by Wunsch \cite{Wunsch}, following earlier work of Craig, Kappeler, and Strauss \cite{Craig}.
	
	\item As already mentioned in \S\ref{subsec:literature_only}, there have been many works investigating the non-relativistic limit for nonlinear Klein-Gordon equations with e.g.\ power type nonlinearlities $N[u]$, like $|u|^{p-1}u$ or $u^p$. Unfortunately, the $\calctwo$-Sobolev spaces here do not suffice for the analysis of the nonlinear PDE; for example, we cannot prove estimates on the $L^\infty$-norm of $u$ with a sharp spatial weight, and the sharp $L^2$-estimates we do prove are not enough to control products like $u^p$.
    
    Future work may overcome this by employing function spaces which measure, in addition to $\calctwo$-Sobolev regularity, a suitable sort of so-called ``module regularity,'' as in \cite{gell2023scattering}. Hopefully, estimates for the linear equation, of the sort proven here except phrased using these module-regularity spaces, would suffice to analyze the nonlinear equation (with small data/forcing) as a perturbation of the linear equation. This may be as difficult as the problem, described above, of understanding joint asymptotics.
\end{itemize}

\subsection{Outline for remainder of manuscript}
\label{subsec:outline}

The sections in the rest of the manuscript are as follows:
\begin{itemize}
	\item In \S\ref{sec:calculus}, we will cover the pseudodifferential calculi out of which $\Psi_{\calc},\Psi_{\calctwo}$ are built, namely $\Psi_{\mathrm{sc}},\Psi_{\mathrm{par}},\Psi_{\calczero}$, in addition to $\Psi_{\calc},\Psi_{\calctwo}$. 
	\item In \S\ref{sec:inhomogeneous}, we apply the main estimates from \cite{NRL_I} to the production of $c\to\infty$ asymptotics for the four basic inhomogeneous problems, the advanced/retarded and Feynman/anti-Feynman problems. 
	\item In \S\ref{sec:Cauchy}, we study the Cauchy problem in the non-relativistic limit. It is here that we prove the main theorems of this paper. 
\end{itemize}

In addition, we have a handful of short appendices:
\begin{itemize}
	\item \S\ref{sec:notation} is an index of notation. 
	\item \S\ref{sec:free} contains a discussion of the non-relativistic limit of the free Klein--Gordon equation using the spacetime Fourier transform. This appendix is expository, but hopefully helpful nonetheless. In some sense, our microlocal methods are built to ``microlocalize'' what one can do for the constant-coefficient PDE using the spacetime Fourier transform, so the free case is useful to keep in mind. 
	\item \S\ref{sec:relations} contains a few elementary lemmas relating various Sobolev norms used in this paper. 
    \item In \S\ref{sec:naturalCauchy}, we study the ``natural'' Cauchy problem, in which the initial data is dilated with $c$. Here, we prove that the non-relativistic limit is governed by the free Klein--Gordon equation, in concordance with the discussion in \S\ref{subsec:natural}. 
\end{itemize}

\section{
\texorpdfstring{Summary of pseudodifferential calculi: $\Psi_{\mathrm{sc}},\Psi_{\mathrm{par}},\Psi_{\calczero}$, $\Psi_{\calc}$, $\Psi_{\calctwo}$}{Summary of pseudodifferential calculi}  }
\label{sec:calculus}
%{\color{orange}[TODO: define wavefront sets!!!]}

The goal of this section is to discuss the various pseudodifferential calculi which form the building blocks of our analysis. This section is expository, as the relevant calculi are all defined and developed elsewhere. In particular, we refer to \cite{NRL_I} for the development of the three new calculi 
$\Psi_{\calczero},\Psi_{\calc},\Psi_{\calctwo}$. The telegraphic presentation below is the minimum required to make our exposition here self-contained.

We begin by reviewing two existing calculi, $\Psi_{\mathrm{sc}},\Psi_{\mathrm{par}}$, in \S\ref{subsec:sc} and \S\ref{subsec:par}, respectively. 
The ``natural-res'' calculus $\Psi_{\calc}$ introduced in \S\ref{subsec:calc} can be considered as interpolating between them. More precisely, an element $A\in \Psi_{\calc}$ is a one-parameter family $A=\{A(h)\}_{h>0}$ of $A(h)\in \Psi_{\mathrm{sc}}$ that degenerates in a controlled way to an element of $\Psi_{\mathrm{par}}$ as one approaches $\mathrm{pf}$. 

The calculus of smooth families of elements of $\Psi_{\mathrm{par}}$ is covered briefly in \S\ref{subsec:parI}. This is called $\Psi_{\mathrm{par,I}}$. 

The natural-res calculus $\Psi_{\calc}$ is built by ``second-microlocalizing'' $\Psi_{\calczero}$. In \S\ref{subsec:calczero}, we introduce this calculus and record a few lemmas which will be required later. 
As the reader will see, $\Psi_{\calczero}$ is a variant of the semiclassical calculus $\Psi_\hbar$ \cite{Zworski}; it is even more closely related to the semiclassical foliation calculus of \cite{Vasy:Semiclassical-X-ray}.  The main difference is that in the natural calculus the vector fields that are considered $O(1)$ as $h\to 0$ are $h^2 \partial_t,h \partial_{x_j}$, whereas in the ordinary semiclassical calculus they are $h\partial_t,h\partial_{x_j}$. (Recall that $h=1/c$.) The extra factor of $h$ on the time derivatives is important for the intended application to the non-relativistic limit, and $\Psi_{\calczero}\neq \Psi_\hbar$, but the two calculi are structurally similar.\footnote{In \cite{Vasy:Semiclassical-X-ray} a similar scaling is used, but $t$ is replaced by a boundary defining function of the radial compactification.}  Symbolic constructions -- in particular, the microlocalized elliptic parametrix construction and the commutant constructions at the heart of propagation/radial point estimates -- work similarly in both. Thus, our exposition in \S\ref{subsec:calczero} will be brief. 

Unlike the calculi $\Psi_{\calc},\Psi_{\calctwo}$, the calculi $\Psi_{\mathrm{sc}},\Psi_{\mathrm{par}},\Psi_{\calczero}$ are ``fully symbolic''. This means that, for each calculus $\Psi_\bullet$, there exists 
a principal symbol map $\sigma_\bullet$ that assigns to elements of $\Psi_\bullet$ certain (equivalence classes of) smooth functions on the phase space $T^* \bbR^{1,d}$ or $T^* \bbR^{1,d}\times (1,\infty)_c$, such that operators in these calculi are captured by their principal symbols modulo ``small'' errors. 
Here ``small'' means -- as applied to operators of order 0 in every index, for ease of explanation -- operators of negative order in every index. In the differential index, this means the ``small'' error is a smoothing operator; while in the spacetime index, it means decay-inducing. Such operators are compact on $L^2$. In addition, for $\Psi_{\calczero}$, a negative semiclassical index means that the $L^2$-operator norm tends to zero as $h \to 0$. 

In \S\ref{subsec:calc}, we discuss $\Psi_{\calc}$, and in \S\ref{subsec:calctwo}, we discuss $\Psi_{\calctwo}$. To reiterate, more detailed presentations of the theory of these calculi can be found in \cite{NRL_I}.

\begin{remark}[Equality in the sense of compactifications]
	In order to avoid needing to keep track of natural diffeomorphisms between different compactifications of phase space, 
	we employ the notion of equality in the \emph{sense of compactifications}. That is, we consider two manifolds-with-corners (mwc) equal if we have a canonical identification of their interiors extending to a diffeomorphism, including the boundary and corners. 
	When working with two different partial compactifications $X\hookrightarrow X_1,X_2$
	we will assume without loss of generality that 
	the mwc-theoretic interiors of $X_1,X_2$ are literally equal.
	So, we write $X_1=X_2$ if there exists a diffeomorphism $\phi$ such that the diagram 
	\begin{equation} 
		\xymatrix{ 
			X_1 \ar[r]^\phi & X_2 \\ 
			X \ar@{^{(}->}[ur] \ar@{^{(}->}[u]
		}
	\end{equation} 
	commutes, in which case $\phi$ is necessarily unique. 
	
	For example, we will view $\overline{\bbR^d}\backslash \{0\}$ and $[0,\infty)_{1/r}\times \bbS^{d-1}_\theta$ as literally equal. Likewise, $(0,\infty]_r= [0,\infty)_{1/r}$.
\end{remark}

\subsection{
\texorpdfstring{Scattering calculus $\Psi_{\mathrm{sc}}$}{Scattering calculus}} (Cf.\ \cite{MelroseSC}\cite{VasyGrenoble})
\label{subsec:sc}
We denote the radial compactification of Minkowski space $\bbR^{1,d}$ by $\bbM$, and the radial compactification of the dual space $(\bbR^{1,d})^*$ by $\bbM^*$. The scattering, or Parenti--Shubin, calculus $\Psi_{\mathrm{sc}}$ on Minkowski space $\bbR^{1,d}$ takes place on the compactified phase space  \footnote{This is compactifying the base and the fibres respectively, hence has different smooth structure to the radial compactification of $T^* \bbR^{1,d}$ as a whole. }
\begin{equation} 
	{}^{\mathrm{sc}}\overline{T}^*\bbM \overset{\mathrm{def}}{=} \bbM \times \bbM^*.
\end{equation} 
Coordinates on the interior of the phase space are $z = (t,x)$ and $\zeta = (\tau, \xi)$, where $t \in \bbR$, $x \in \bbR^d$ and $(\tau, \xi)$ are the dual coordinates. 

For $m,s \in \bbR$, the class of ``scattering symbols'' $S_{\mathrm{sc}}^{m,s}(\bbM)$ with differential order $m$ and spacetime order $s$,  
is defined to be the set of smooth functions $a(z,\zeta)$ on $\bbR^{1,d}_z\times (\bbR^{1,d})^*_\zeta$ such that there exist constants $C_{\alpha \beta}$ such that
\begin{equation} 
	\label{eq: sc symbol}
	|\partial_z^\alpha\partial_{\zeta}^\beta a(z,\zeta)| \leq C_{\alpha\beta} \la z \ra^{ s -|\alpha|} \la \zeta \ra^{m-|\beta|}
\end{equation}
for all multi-indices $\alpha,\beta\in \bbN^{1+d}$.
The topology on $S_{\mathrm{sc}}^{m,s}(\bbM)$ is the Fr\'echet topology induced by the countably many seminorms determined by \eqref{eq: sc symbol}.
When $s$ is replaced by $\mathsf{s} \in C^\infty({}^{\mathrm{sc}}\overline{T}^*\bbM)$, we have a definition similar to \cref{eq: sc symbol} but with extra, minor logarithmic losses in $\la z \ra$ introduced by differentiating $\mathsf{s}$ in the exponent; we denote this symbol class by $S_{\mathrm{sc}}^{m,\mathsf{s}}(\bbM)$.

Then the space of scattering pseudodifferential operators of order $(m,\mathsf{s})$, which is denoted by $\Psi_{\mathrm{sc}}^{m,\mathsf{s}}(\bbM)$, consists of operators acting by the standard (left) quantization 
\begin{equation}
	\operatorname{Op}(a) f(z)  = \frac{1}{(2\pi)^{1+d}} \int_{\bbR^{1+d}} \int_{\bbR^{1+d}}   e^{i \zeta\cdot (z-z')} a(z,\zeta) f(z') \dd^{1+d} z' \dd^{1+d} \zeta,
	\label{eq:quant_sc}
\end{equation}
with $a \in S_{\mathrm{sc}}^{m,\mathsf{s}}(\bbM)$. 

The map between (left-reduced) symbols $a \in S_{\mathrm{sc}}^{m,\mathsf{s}}(\bbM)$ and operators $A = \operatorname{Op}(a) \in \Psi_{\mathrm{sc}}^{m,\mathsf{s}}(\bbM)$ is bijective; we write $a = \sigma(A)$ to denote the full (left) symbol of $A$. We can thus define the principal symbol $\sigma_{\mathrm{sc}}^{m,\mathsf{s}}(A)$ of $A$ to be the equivalence class 
\begin{equation}
	\sigma_{\mathrm{sc}}^{m,\mathsf{s}}(A) = a \bmod \cap_{\delta > 0}S_{\mathrm{sc}}^{m-1,\mathsf{s}-1+\delta}(\bbM)
\end{equation}
of 
$a = \sigma(A)$ in $S_{\mathrm{sc}}^{m,\mathsf{s}}(\bbM) / \cap_{\delta > 0} S_{\mathrm{sc}}^{m-1,\mathsf{s}-1+\delta}(\bbM)$. Here, the intersection over $\delta > 0$ is required for variable orders due to the incurred logarithmic loss relative to  \eqref{eq: sc symbol}; we can take $\delta = 0$ for constant orders. 

We will often work with a smaller class of operators, which are quantizations of classical symbols. Classical symbols of order $(m,\mathsf{s})$, denoted by
\begin{equation} 
	S_{\mathrm{sc, cl}}^{m,\mathsf{s}}(\bbM) = \ang{\zeta}^m \ang{z}^\mathsf{s} C^\infty({}^{\mathrm{sc}}\overline{T}^*\bbM) \subset S_{\mathrm{sc}}^{m,\mathsf{s}}(\bbM),
\end{equation} 
are those of the form $\ang{\zeta}^m \ang{z}^\mathsf{s}$ times a smooth function on the compactified phase space $\bbM\times \bbM^*$; it can readily be checked that all such functions satisfy the required estimates. 
If $a$ is classical, then $A=\operatorname{Op}(a)$ is called classical. 
The principal symbol of a classical operator $A$ of order $(m,\mathsf{s})$ can then be identified with the restriction of 
\begin{equation}
	\ang{\zeta}^{-m} \ang{z}^{-\mathsf{s}} a_0, \qquad a_0 \in \sigma_{\mathrm{sc}}^{m,\mathsf{s}}(A)
\end{equation}
to the boundary of ${}^{\mathrm{sc}}\overline{T}^*\bbM$, which exists by assumption. 

Similarly, given a classical operator $P$  with \emph{constant} orders $(m,s)$,   its Hamiltonian vector field $H_p$, multiplied by $\ang{\zeta}^{-m+1} \ang{z}^{-s+1}$, is a smooth vector field 
\begin{equation} 
	{}^{\mathrm{sc}}\mathsf{H}_p = \ang{\zeta}^{-m+1} \ang{z}^{-s+1} H_p
\end{equation} 
on ${}^{\mathrm{sc}}\overline{T}^*\bbM$ which is tangent to the boundary. It therefore defines a flow on each boundary hypersurface of phase space. As observed by Melrose \cite{MelroseSC}, microlocal propagation for solutions of $Pu = f$ takes place on the whole boundary of the compactified phase space, with respect to this rescaled Hamilton vector field ${}^{\mathrm{sc}}\mathsf{H}_p$. 
This includes not just fiber-infinity over $\bbR^{1,d}$, the usual stage for microlocal propagation (as in H\"ormander's basic theory), but also the fiber (the \emph{whole} fiber) over points at spacetime infinity. It is important to understand that \emph{all} frequencies, not just high frequencies, are involved at spacetime infinity. This is a consequence of the stronger estimates \eqref{eq: sc symbol} obeyed by spacetime derivatives of symbols in the scattering calculus, where each spacetime derivative produces additional decay.

The usual properties of pseudodifferential calculus are valid:
\begin{itemize}
	\item The composition of $A \in \Psi_{\mathrm{sc}}^{m,\mathsf{s}}$ with $B \in \Psi_{\mathrm{sc}}^{m',\mathsf{s}'}$ is an operator $C \in \Psi_{\mathrm{sc}}^{m+m',\mathsf{s}+\mathsf{s}'}$. Moreover, the symbol of $C$ satisfies an asymptotic expansion 
	\begin{equation}
		c(z,\zeta) \sim  \sum_{\alpha\in \bbN^D} \frac{i^{|\alpha|}}{ \alpha! } \big(D_\zeta^\alpha a(z,\zeta) \big) D_{z}^{\alpha} b(z,\zeta)  
		\label{eq:moyal_explicit_sc}
	\end{equation}
	(the \emph{Moyal} expansion)
	in which subsequent terms decrease in order both at fiber-infinity and at spacetime infinity. In this way, it is an asymptotic expansion that holds over the whole of the boundary of compactified phase space -- not just at fiber-infinity. 
	
	\item The Moyal expansion, \cref{eq:moyal_explicit_sc}, directly implies that the principal symbol of $AB$ is the product of the principal symbols of $A$ and $B$, and the principal symbol of the commutator $i[A, B]$ is the Poisson bracket of the principal symbols of $A$ and $B$. 
	\item An operator $A \in \Psi_{\mathrm{sc}}^{m,\mathsf{s}}$ is said to be (totally) elliptic if its symbol $a = \sigma_{\mathrm{sc}}^{m,\mathsf{s}}(A)$ satisfies 
	\begin{equation} 
		|a| \geq C^{-1} \ang{\zeta}^m \ang{z}^\mathsf{s}
	\end{equation} 
	whenever either $\ang{\zeta} \geq C$ or $\ang{z}\geq C$, for sufficiently large $C$. Then if $A$ is elliptic, there is an elliptic parametrix $B \in \Psi_{\mathrm{sc}}^{-m,-\mathsf{s}}$, i.e. an operator $B$ such that $AB - 1 \in \Psi_{\mathrm{sc}}^{-\infty,-\infty}$. In particular an elliptic operator is Fredholm (as a consequence of the compact embedding in \cref{eq:sc op compact} below).
    
	\item We define standard weighted Sobolev spaces on $\bbR^{1,d}$, for constant $l, r \in \bbR$, by 
	\begin{equation}
		H^{l, r}(\bbR^{1,d}) \overset{\mathrm{def}}{=} \ang{z}^{-r} \ang{D}^{-l} L^2(\bbR^{1,d}).
	\end{equation}
	We also define Sobolev spaces with variable spacetime order as follows: given orders $(l, \mathsf{r})$, with $l$ constant,  we choose a constant $R < \inf \mathsf{r}$ and choose an elliptic $A \in  \Psi_{\mathrm{sc}}^{l,\mathsf{r}}$. We then define 
	\begin{equation} 
		H^{l, \mathsf{r}}(\bbR^{1,d}) \overset{\mathrm{def}}{=} \{ u \in H^{l, R}(\bbR^{1,d}) : Au \in L^2 \},
	\end{equation} 
	with squared norm $\| u \|^2_{H^{l, R}} + \| Au \|_{L^2}^2$. 
	It is straightforward to check that a different choice of $R$ or $A$ leads to an equivalent norm. 
	
	Then operators in $\Psi_{\mathrm{sc}}^{m,\mathsf{s}}$ act boundedly between such spaces: if $A \in \Psi_{\mathrm{sc}}^{m,\mathsf{s}}$ then 
	\begin{equation}\label{eq:sc op bounded}
		A : H^{l,\mathsf{r}}(\bbR^{1,d}) \to H^{l-m,\mathsf{r}-\mathsf{s}}(\bbR^{1,d}) 
	\end{equation}
	boundedly.
	Moreover, if $\mathsf{r}' < \mathsf{r}-\mathsf{s}$ and $l' < l-m$ then 
	\begin{equation}\label{eq:sc op compact}
		A : H^{l,\mathsf{r}}(\bbR^{1,d}) \to H^{l',\mathsf{r}'}(\bbR^{1,d}) 
	\end{equation}
	is compact.
	
\item The $\mathrm{sc}$-operator wavefront set \begin{equation} 
	\operatorname{WF}'_{\mathrm{sc}}(A) \subset \partial ({}^{\mathrm{sc}} \overline{T}^*\bbM)
\end{equation} 
of $A=\operatorname{Op}(a)$ is defined to be the essential support of $a$, that is, the closed subset of $\partial ({}^{\mathrm{sc}} \overline{T}^*\bbM)$ whose complement is 
\begin{multline}
\{ p \in \partial ({}^{\mathrm{sc}} \overline{T}^*\bbM) : \exists \text{ neighbourhood $U$ of $p$ in } {}^{\mathrm{sc}} \overline{T}^*\bbM \\
\text{ such that } \forall \, N \in \mathbb{N}, \  \exists \, C_N \text{ with }  |\la z \ra^{N} \la \zeta \ra^N a| \leq C_N \text{ in } U \}.
\end{multline}
Equivalently, the essential support is the complement of the set of points near which $a$ is Schwartz.
\item 
The $(m,\mathsf{s})$-order $\mathrm{sc}$-wavefront set of a tempered distribution $u$ is the closed set $\operatorname{WF}_{\mathrm{sc}}^{m,\mathsf{s}}(u) \subset \partial ({}^{\mathrm{sc}} \overline{T}^*\bbM)$ defined by
\begin{equation}
	p \notin \operatorname{WF}_{\mathrm{sc}}^{m,\mathsf{s}}(u) \iff \exists A \in \Psi_{\mathrm{sc}}^{m,\mathsf{s}} \text{ that is elliptic at } p \text{ such that } Au \in L^2(\bbM).
\end{equation}
Then we set $\operatorname{WF}_{\mathrm{sc}}(u)$ to be the closure of the union of these sets over $(m, s) \in \bbN \times \bbN$:

\begin{equation}
    \operatorname{WF}_{\mathrm{sc}}(u) = \overline{\bigcup_{m,s \in \bbN} \operatorname{WF}_{\mathrm{sc}}^{m,s}(u)}.
\end{equation}
     
\end{itemize}

\subsection{\texorpdfstring{Parabolic calculus $\Psi_{\mathrm{par}}$}{Parabolic calculus}}\label{subsec:par}
The parabolic calculus on Minkowski space $\bbR^{1,d}$, denoted $\Psi_{\mathrm{par}}$, is a variant of the scattering calculus  in which we treat time and space on unequal footing. More precisely, the differential operator $\partial_t$ is \emph{second} order in the calculus, while, as usual, spatial derivatives $\partial_{x_j}$ are first order. This ensures that the time-derivative in the Schr\"odinger operator $\pm i \partial_t + \triangle$ is principal. This is required for the operator to be of real principal type (away from radial sets), so that it can be analyzed using microlocal propagation estimates. Anisotropic\footnote{Note that this usage of the term `anisotropic' has little to do with the sense in which Sobolev spaces with variable decay order are anisotropic.} calculi, more general than the particular parabolic anisotropy we consider here, were introduced on local patches of $\bbR^{1,d}$ by Lascar \cite{Lascar}. The global compactification of phase space considered here was introduced by Gell-Redman--Gomes--Hassell in \cite{Parabolicsc}.

The (compactified) parabolic phase space is defined by
\begin{equation} 
	{}^{\mathrm{par}}\overline{T}^* \bbM = \bbM \times \overline{(\bbR^{1,d}_{\tau,\xi  })}_{\mathrm{par}},
\end{equation} 
where $\smash{\overline{(\bbR^{1,d}_{\tau,\xi  })}_{\mathrm{par}}}$
is the parabolic compactification of $\bbR^{1,d}$. 
So, 
\begin{equation} 
	\overline{(\bbR^{1,d}_{\tau,\xi  })}_{\mathrm{par}} \cong \underbrace{\overline{\bbR^{1,d}_{\tau,\xi}}}_{\text{radial compactification}}
\end{equation} 
%as mwcs 
on the topological level --- they are both just $(1+d)$-dimensional balls --- but the canonical identification of their interiors does not extend smoothly, or even continuously, to the boundary. Indeed, the boundary of the mwc on the right can be identified with the (linear) rays through the origin, while the boundary of the mwc on the left, with the parabolic compactification, can be identified with the ``parabolic'' rays   
\begin{equation} 
	\{ (\tau_0 s^2, v s) :s\in \bbR^+  \} ,\quad \tau_0\in \bbR,v\in \bbR^d, \quad \lVert v \rVert \in \{0,1\}
\end{equation} 
where $\tau_0,v$ are not both zero.
An explicit atlas is:
\begin{equation} 
	\overline{(\bbR^{1,d}_{\tau,\xi  })}_{\mathrm{par}} =  \bbR^{1,d}_{\tau,\xi }  \cup \Big( \bigcup_\pm  (\overline{(\bbR^{1,d}_{\tau,\xi  })}_{\mathrm{par}} \cap \{\pm \tau>0\}) \Big) \cup \Big( \bigcup_{\pm, k\in \{1,\ldots,d\}} (\overline{(\bbR^{1,d}_{\tau,\xi  })}_{\mathrm{par}} \cap \{\pm \xi_k>0\}) \Big)\;\,
	\label{eq:misc_005}
\end{equation} 
where, in the sense of equality of compactifications,  
\begin{align}
	&\overline{(\bbR^{1,d}_{\tau,\xi  })}_{\mathrm{par}} \cap \{\pm\tau>0\} = [0,\infty)_{1/(\pm \tau)^{1/2}} \times \bbR^d_{\xi/(\pm\tau)^{1/2}}, \label{eq:misc_006} \\
	&\overline{(\bbR^{1,d}_{\tau,\xi  })}_{\mathrm{par}} \cap \{\pm \xi_k>0\} =  [0,\infty)_{\pm 1/\xi_k} \times \bbR_{\tau/\xi_k^2}\times  \bbR^{d-1}_{\hat{\xi}_k}, \label{eq:misc_007}
\end{align}
where $\hat{\xi}_k$ is the $(d-1)$-tuple $\{\xi_j/\xi_k\}_{j\neq k}$.

\begin{figure}[h!]
	\begin{tikzpicture}[scale=.85]
	\filldraw[fill=lightgray!20] (0,0) circle (2.5);
	\draw[dashed, lightgray] (0,1) ellipse (64pt and 3pt);
	\draw[dashed, lightgray] (0,2) ellipse (43pt and 3pt);
	\draw[dashed, lightgray] (0,-1) ellipse (64pt and 3pt);
	\draw[dashed, lightgray] (0,-2) ellipse (43pt and 3pt);
	\draw[darkred, ->] (-2.4,0) to[out=90, in=250] (-2.28,.7) node[left] {$\tau/\xi^2_1$\;};
	\draw[darkred, ->] (-2.4,0) -- (-1.7,0) node[below] {$|\xi_1|^{-1}$ };
	\node () at (0,0) {$\overline{(\bbR^{1,2}_{\tau,\xi  })}_{\mathrm{par}}$};
	\draw[darkred, ->] (0,2.4) -- (0,1.7) node[left] {$1/\tau^{1/2}$};
	\draw[darkred, ->] (0,2.4) to[out=0, in=160] (.8,2.25) node[above right] {$\xi_1/\tau^{1/2}$};
	\end{tikzpicture}
	\caption{ The parabolic compactification of $\bbR^{1,2}_{\tau,\xi}$. The longitudinal coordinate is not shown.}
\end{figure}
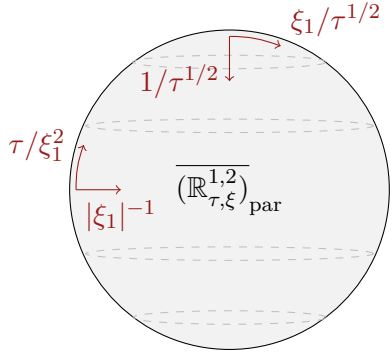

\begin{figure}[t]
	\begin{tikzpicture}[scale=.75]
		\draw[dashed] (-3,-3) rectangle (3,3.25);
		\node () at (-2.5,2.75) {(a)};
		\filldraw[fill=lightgray!20]
		(0,0) circle (2.5);
		\begin{scope}
			\clip (0,0) circle (2.52);
			\fill[fill=white] (2.5,0) circle (.75);
			\fill[fill=white] (-2.5,0) circle (.75);
			\draw[dashed, darkred] (0,0) -- (-2.5,0);
			\draw[dashed, darkred] (0,0) to[out=145, in=45] (-2.5,0);
			\draw[dashed, darkred] (0,0) to[out=-145, in=-45] (-2.5,0);
			\draw[dashed, darkred] (0,0) to[out=0, in=225] (1.76,1.76);
			\draw[dashed, darkred] (0,0) to[out=0, in=245] (1,2.3);
			\draw[dashed, darkred] (0,0) to[out=0, in=200] (2.15,1.1);
		\end{scope}
		\begin{scope}
			\clip (0,0) circle (2.5);
			\filldraw[fill=white] (2.5,0) circle (.75);
			\filldraw[fill=white] (-2.5,0) circle (.75);
		\end{scope}
		\begin{scope} 
			\clip (-3,-3) rectangle (3,3.25);
		\fill[fill=white] (2.5,0) circle (.74);
		\fill[fill=white] (-2.5,0) circle (.74);
		\end{scope} 
		\node () at (-2.35, 0) {$\mathrm{radf}$};
		\node () at (-2.1, -2.3) {$\mathrm{parf}_-$};
		\node () at (2.15, 2.35) {$\mathrm{parf}_+$};
		\node () at (0, -1) {$X$};
	\end{tikzpicture}
	\begin{tikzpicture}[scale=.75]
		\draw[dashed] (-3,-3) rectangle (5,3.25);
		\node () at (-2.5,2.75) {(b)};
		\begin{scope}[scale=.5, shift={(-2.5,0)}]
			\filldraw[fill=lightgray!20]
			(0,0) circle (2.5);
			\begin{scope}
				\clip (0,0) circle (2.55);
				\fill[fill=white] (2.5,0) circle (.75);
				\fill[fill=white] (-2.5,0) circle (.75);
				\draw[dashed, darkred] (0,0) -- (-2.5,0);
				\draw[dashed, darkred] (0,0) to[out=145, in=45] (-2.5,0);
				\draw[dashed, darkred] (0,0) to[out=-145, in=-45] (-2.5,0);
				\draw[dashed, darkred] (0,0) to[out=0, in=225] (1.76,1.76);
				\draw[dashed, darkred] (0,0) to[out=0, in=245] (1,2.3);
				\draw[dashed, darkred] (0,0) to[out=0, in=200] (2.15,1.1);
			\end{scope}
			\begin{scope}
				\clip (0,0) circle (2.5);
				\filldraw[fill=white] (2.5,0) circle (.75);
				\filldraw[fill=white] (-2.5,0) circle (.75);
			\end{scope}
			\begin{scope} 
			\clip (-3,-3) rectangle (3,3.25);
			\fill[fill=white] (2.5,0) circle (.74);
			\fill[fill=white] (-2.5,0) circle (.74);
			\end{scope} 
			\node () at (.5,-1) {$X$};
		\end{scope}
		\begin{scope}[scale=.5, shift={(5.75,3)}]
			\filldraw[fill=lightgray!20]
			(0,0) circle (2.5);
			\node () at (-3.5,2) {$\overline{\bbR^{1,d}_{\tau,\xi}}$};
			\begin{scope}
				\clip (0,0) circle (2.5);
				\draw[dashed, darkred] (0,0) -- (-2.5,0);
				\draw[dashed, darkred] (0,0) -- (-2.5,1);
				\draw[dashed, darkred] (0,0) -- (-2.5,-1);
				\draw[dashed, darkred] (0,0) to[out=0, in=-90] (0,2.5);
				\draw[dashed, darkred] (0,0) to[out=0, in=-45] (.9,1.5) to[out=135,in=-60] (0,2.5);
				\draw[dashed, darkred] (0,0) to[out=0, in=-45] (1.6,1.5) to[out=135,in=-20] (0,2.5);
			\end{scope} 
			\fill[black] (0,2.5) circle (4pt);
			\fill[black] (0,-2.5) circle (4pt);
		\end{scope}
		\begin{scope}[scale=.5, shift={(5.75,-3)}]
			\filldraw[fill=lightgray!20]
			(0,0) circle (2.5);
			\node () at (-4,-1.5) {$\overline{(\bbR^{1,d}_{\tau,\xi})}_{\mathrm{par}}$};
			\fill[black] (2.5,0) circle (4pt);
			\fill[black] (-2.5,0) circle (4pt);
			\clip (0,0) circle (2.5);
			\draw[dashed, darkred] (0,0) -- (-2.5,0);
			\draw[dashed, darkred] (0,0) to[in=45, out=135] (-2.5,0);
			\draw[dashed, darkred] (0,0) to[out=225,in=-45] (-2.5,0);
			\draw[dashed, darkred] (0,0) to[out=0, in=225] (1.76,1.76);
			\draw[dashed, darkred] (0,0) to[out=0, in=245] (1,2.3);
			\draw[dashed, darkred] (0,0) to[out=0, in=200] (2.15,1.1);
		\end{scope}
		\draw[->] (0,1) -- (1.25,1.5);
		\draw[->] (0,-1) -- (1.25,-1.5);
	\end{tikzpicture}
	\caption{(a) The blown up mwc $X$, with some linear rays (hitting radf) and some parabolic rays (hitting $\mathrm{parf}_+$) depicted. (b) The recovery of the radial and parabolic compactifications from blowing down either radf or $\mathrm{parf}_\pm$. }
    \label{fig:double_blowup}
\end{figure}
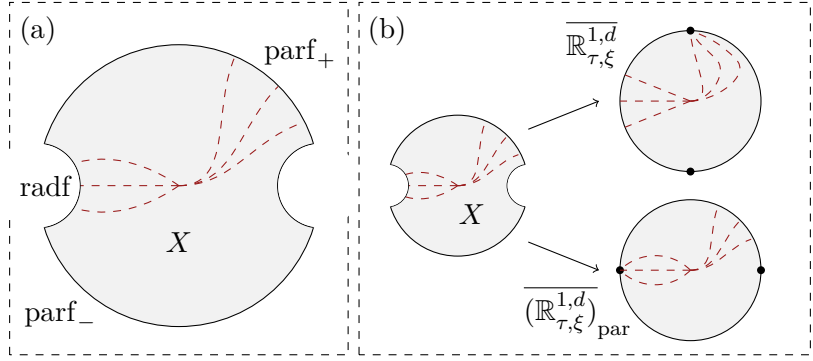

We can observe that $\ang{z}^{-1}$ is a boundary defining function for spacetime infinity, and $\aang{\zeta}^{-1}$ is a boundary defining function for fiber-infinity, where 
\begin{equation} 
	\aang{\zeta} = (1 + \tau^2 + |\xi|^4)^{1/4}, \quad \zeta = (\tau, \xi).
\end{equation} 

One way to comprehend the relation between the radial and parabolic compactifications is to begin with a compactification that ``refines'' both: 
\begin{equation}
	X=[\overline{(\bbR^{1,d}_{\tau,\xi  })}_{\mathrm{par}}  ;\partial \overline{(\bbR^{1,d}_{\tau,\xi  })}_{\mathrm{par}} \cap \mathrm{cl}_{\overline{(\bbR^{1,d}_{\tau,\xi  })}_{\mathrm{par}}}\{\tau=0\}]. 
\end{equation}
That is, take the parabolic compactification and then blow up the equator, resulting in a new boundary hypersurface, $\mathrm{radf}$, separating the original boundary hypersurface into two components $\mathrm{parf}_\pm$. 
Alternatively, the same space can also be constructed from the radial compactification by blowing up the north/south pole parabolically --- see \Cref{fig:double_blowup}. 
This indicates that the radial compactification can be reconstructed from $X$ by \emph{blowing down} the two faces $\mathrm{parf}_\pm$; so, $\mathrm{radf}^\circ$ is identifiable with the boundary of the radial compactification minus the north/south poles. On the other hand, the parabolic compactification is recovered by blowing back down $\mathrm{radf}$.

For $m,s\in \bbR$, the space of parabolic symbols with differential order $m$ and spacetime order $s$, denoted  $S_{\mathrm{par}}^{m,s}(\bbM)$, is the set of smooth functions $a(z, \zeta)$ on $\bbR^{1,d}_z\times (\bbR^{1,d})^*_\zeta$ satisfying 
\begin{equation} \label{eq: par symbol}
	|\partial_z^\alpha \partial_\tau^k \partial_{\xi}^\beta a(z,\zeta)| \leq C_{\alpha k \beta} \ang{z}^{s-|\alpha| } \aang{\zeta}^{m-2k-|\beta|}. 
\end{equation}
The best constants in \cref{eq: par symbol} define a sequence of seminorms that give a Fr\'echet space structure to $S_{\mathrm{par}}^{m,\mathsf{s}}(\bbM)$. 

When $s$ is replaced by $\mathsf{s}\in C^\infty({}^{\mathrm{par}}\overline{T}^* \bbM)$\footnote{Of course, this is not the same as being a smooth function on ${}^{\mathrm{sc}}\overline{T}^* \bbM$.}, the symbol class $S_{\mathrm{par}}^{m,\mathsf{s}}(\bbM)$ is defined in almost the same way except for the usual logarithmic loss in $\la z \ra$.

Then, the space of parabolic pseudodifferential operators of order $(m,\mathsf{s})$, which is denoted by $\Psi_{\mathrm{par}}^{m,\mathsf{s}}(\bbM)$, consists of operators acting by the standard (left) quantization 
\begin{equation}
	\operatorname{Op}(a) f(z)  = \frac{1}{(2\pi)^{1+d}} \int_{\bbR^{1+d}} \int_{\bbR^{1+d}}   e^{i \zeta\cdot (z-z')} a(z,\zeta) f(z') \dd^{1+d} z' \dd^{1+d} \zeta,
	\label{eq:quant, par}
\end{equation}
where $a \in S_{\mathrm{par}}^{m,\mathsf{s}}(\bbM)$. 
Note that this is the \emph{same} formula as \cref{eq:quant_sc}.

The map between (left-reduced) symbols $a \in  S_{\mathrm{par}}^{m,\mathsf{s}}(\bbM)$ and operators $A = \operatorname{Op}(a) \in \Psi_{\mathrm{par}}^{m,\mathsf{s}}(\bbM)$ is bijective; we write $a = \sigma(A)$. We can thus define the principal symbol of $A$ to be the equivalence class 
\begin{equation}
	\sigma_{\mathrm{par}}^{m,\mathsf{s}}(A) = a \bmod \cap_{\delta > 0} S_{\mathrm{par}}^{m-1,\mathsf{s}-1+\delta}(\bbM)
\end{equation}
of $a = \sigma(A)$ in $S_{\mathrm{par}}^{m,\mathsf{s}}(\bbM) / \cap_{\delta > 0} S_{\mathrm{par}}^{m-1,\mathsf{s}-1+\delta}(\bbM)$.

The parabolic calculus works similarly to the scattering calculus. Like the scattering calculus, microlocal propagation takes place on the whole of the boundary of the compactified phase space. (Indeed, away from fiber-infinity, the parabolic calculus is identical to the scattering calculus.) 

We have analogous properties to those listed in the previous subsection:
\begin{itemize}
	\item The composition of $A \in \Psi_{\mathrm{par}}^{m,\mathsf{s}}$ with $B \in \Psi_{\mathrm{par}}^{m',\mathsf{s}'}$ is an operator $C \in \Psi_{\mathrm{par}}^{m+m',\mathsf{s}+\mathsf{s}'}$. Moreover, the symbol of $C$ satisfies an asymptotic expansion 
	\begin{equation}
		c(z,\zeta) \sim  \sum_{\alpha\in \bbN^D} \frac{i^{|\alpha|}}{ \alpha! } \big(D_\zeta^\alpha a(z,\zeta) \big) D_{z}^{\alpha} b(z,\zeta)  
		\label{eq:moyal_explicit_par_0}
	\end{equation}
	in which subsequent terms decrease in order both at fiber-infinity and at spacetime infinity. In this way, it is an asymptotic expansion that holds over the whole of the boundary of compactified phase space. 
	
	\item The principal symbol of $AB$ is the product of the principal symbols of $A$ and $B$, and the principal symbol of the commutator $i[A, B]$ is the Poisson bracket of the principal symbols of $A$ and $B$.

	We remark that at fiber-infinity -- but not at spacetime infinity -- the term $\partial_t a \partial_\tau b - \partial_\tau a \partial_t b$ in the Moyal expansion \cref{eq:moyal_explicit_par_0} can be (but does not need to be) discarded from the principal symbol of $i[A, B]$ as it is lower order.
	
	\item Define  weighted parabolic Sobolev spaces on $\bbR^{1,d}$ with constant orders by 
	\begin{equation}
		H^{l, r}_{\mathrm{par}}(\bbR^{1,d}) \overset{\mathrm{def}}{=} \ang{z}^{-r} \aang{D}^{-l} L^2(\bbR^{1,d}). 
	\end{equation}
	Here $\aang{D}$ is the Fourier multiplier with symbol $\aang{\zeta} = (1 + \tau^2 + |\xi|^4)^{1/4}$. 
	The corresponding norm is just defined by
    \begin{equation} \label{eq:def-par-norm-const-order}
    \lVert u \rVert_{H^{l, r}_{\mathrm{par}}(\bbR^{1,d})}
    = \lVert   \ang{z}^{r} \aang{D}^{l} u \rVert_{L^2(\bbR^{1,d})}.
    \end{equation}
	
	The weighted parabolic Sobolev space with variable spacetime order $(l, \mathsf{r})$ is defined in a similar way as above: we choose a constant $R < \inf \mathsf{r}$ and choose an elliptic $A \in  \Psi_{\mathrm{par}}^{l,\mathsf{r}}$. We then define 
	\begin{equation} 
		H^{l, \mathsf{r}}_{\mathrm{par}}(\bbR^{1,d}) \overset{\mathrm{def}}{=} \{ u \in H^{l, R}_{\mathrm{par}}(\bbR^{1,d}) : Au \in L^2 \},
	\end{equation} 
	with squared norm
\begin{equation} \label{eq:par-squared-norm}
    \| u \|^2_{H_{\mathrm{par}}^{l, R}} + \| Au \|_{L^2}^2.
\end{equation}

	Then operators in $\Psi_{\mathrm{par}}^{m,\mathsf{s}}$ act boundedly between such spaces: if $A \in \Psi_{\mathrm{par}}^{m,\mathsf{s}}$ then 
	\begin{equation}\label{eq:par op bounded}
		A : H^{l,\mathsf{r}}_{\mathrm{par}}(\bbR^{1,d}) \to H^{l-m,\mathsf{r}-\mathsf{s}}_{\mathrm{par}}(\bbR^{1,d}) 
	\end{equation}
	is bounded.
	Moreover, if $l' < l-m$ and $\mathsf{r}' < \mathsf{r}-\mathsf{s}$ then 
	\begin{equation}\label{eq:par op compact}
		A : H^{l,\mathsf{r}}_{\mathrm{par}}(\bbR^{1,d}) \to H^{l',\mathsf{r}'}_{\mathrm{par}}(\bbR^{1,d}) 
	\end{equation}
	is compact.
	
	\item An operator $A \in \Psi_{\mathrm{par}}^{m,\mathsf{s}}$ is said to be (totally) elliptic if its symbol $a = \sigma_{\mathrm{par}}^{m,\mathsf{s}}(A)$ satisfies 
	\begin{equation} 
		|a| \geq C^{-1} \aang{\zeta}^m \ang{z}^\mathsf{s}
	\end{equation} 
	whenever either $\aang{\zeta} \geq C$ or $\ang{z}\geq C$, for sufficiently large $C$. Then if $A$ is elliptic, there is an elliptic parametrix $B \in \Psi_{\mathrm{par}}^{-m,-\mathsf{s}}$, i.e. an operator $B$ such that $AB - 1 \in \Psi_{\mathrm{par}}^{-\infty,-\infty}$. In particular an elliptic operator is Fredholm mapping as in \eqref{eq:par op bounded} (as a consequence of \eqref{eq:par op compact}). 

    \item The $\mathrm{par}$-operator wavefront set \begin{equation} 
	\operatorname{WF}'_{\mathrm{par}}(A) \subset \partial ({}^{\mathrm{par}} \overline{T}^*\bbM)
\end{equation} 
of $A=\operatorname{Op}(a)$ is defined to be the essential support of $a\in S_{\mathrm{par}}^{m,\mathsf{s}}$, that is, the closed subset of $\partial ({}^{\mathrm{sc}} \overline{T}^*\bbM)$ whose complement is 
\begin{multline}
\{ p \in \partial ({}^{\mathrm{par}} \overline{T}^*\bbM) : \exists \text{ neighbourhood $U$ of $p$ in } {}^{\mathrm{par}} \overline{T}^*\bbM \\
\text{ such that } \forall \, N \in \mathbb{N}, \  \exists \, C_N \text{ with }  |\la z \ra^{N} \aang{\zeta}^N a| \leq C_N \text{ in } U \}.
\end{multline}

\item 
The $(m,\mathsf{s})$-order $\mathrm{par}$-wavefront set of a tempered distribution $u$ is the closed set $\operatorname{WF}_{\mathrm{par}}^{m,\mathsf{s}}(u) \subset \partial ({}^{\mathrm{par}} \overline{T}^*\bbM)$ defined by
\begin{equation}
	p \notin \operatorname{WF}_{\mathrm{par}}^{m,\mathsf{s}}(u) \iff \exists A \in \Psi_{\mathrm{par}}^{m,\mathsf{s}} \text{ that is elliptic at } p \text{ such that } Au \in L^2(\bbM).
\end{equation}
Then we set $\operatorname{WF}_{\mathrm{par}}(u)$ to be 
\begin{equation}
    \operatorname{WF}_{\mathrm{par}}(u) = \overline{\bigcup_{m,s \in \bbN} \operatorname{WF}_{\mathrm{par}}^{m,s}(u)}.
\end{equation}

\end{itemize}

\subsection{\texorpdfstring{Parametrized parabolic calculus  $\Psi_{\mathrm{par, I}}$}{Parametrized parabolic calculus}}
\label{subsec:parI}

The phase space that will be used in our analysis involves a smooth parameter $h \in [0, 1)$. The phase space is thus 
\begin{equation} 
	{}^{\mathrm{par,I}}\overline{T}^* \bbM = {}^{\mathrm{par}}\overline{T}^* \bbM \times [0,1)_h.
\end{equation}  
The `I' superscript in ${}^{\mathrm{par,I}}\overline{T}^* \bbM$ stands for the interval $[0,1)_h$. 

For $m \in \bbR, \mathsf{s} \in C^\infty({}^{\mathrm{par,I}}\overline{T}^* \bbM)$, denoting the restriction of $\mathsf{s}$ at fixed $h$ by $\mathsf{s}(h)$.
The symbol class $S_{\mathrm{par,I}}^{m,\mathsf{s},0}(\bbM)$ is the space of $C^\infty$ functions of $h \in [0, 1)$ with values in $S_{\mathrm{par}}^{m,\mathsf{s}}(\bbM)$.\footnote{When working with the par,I-calculus, the variable order $\mathsf{s}$ will always be independent of $h$.} More generally, we define 
\begin{equation} 
	S_{\mathrm{par,I}}^{m,\mathsf{s},q}(\bbM) = h^{-q} S_{\mathrm{par,I}}^{m,\mathsf{s},0}(\bbM).
\end{equation}  
When $q=0$, the corresponding operators are just smooth functions of $h$ valued in $\Psi_{\mathrm{par}}^{m,\mathsf{s}}(\bbM)$.

The principal symbol of $A = \operatorname{Op}(a)$, $a \in S_{\mathrm{par,I}}^{m,\mathsf{s},q}(\bbM)$, is its image 
\begin{equation}
	\sigma_{\mathrm{par,I}}^{m,\mathsf{s},q}(A) = a \bmod  \cap_{\delta > 0} S_{\mathrm{par,I}}^{m-1,\mathsf{s}-1+\delta,q}(\bbM)
\end{equation}
in $S_{\mathrm{par,I}}^{m,\mathsf{s},q}(\bbM) / \cap_{\delta > 0} S_{\mathrm{par,I}}^{m-1,\mathsf{s}-1+\delta,q}(\bbM)$.
We also define the normal operator of $A$, $N(A) \in \Psi_{\mathrm{par}}^{m,\mathsf{s}}$,  to be the operator $\lim_{h \to 0} h^q A(h)$, which exists by the definition of the symbol class --- it is the quantization of 
\begin{equation} 
	\lim_{h \to 0^+} h^q a(z, \zeta, h) \in S_{\mathrm{par}}^{m,\mathsf{s}}.
\end{equation}

Notice that this calculus is \emph{not} ``fully symbolic'' in the sense of the discussion at the beginning of this section: the restriction to $h=0$ is an operator, not a function, and while the map sending pseudodifferential operators to their normal operators is a homomorphism of algebras, it takes values in a noncommutative algebra.

Similar to other calculi above:
\begin{itemize}
	\item The composition of $A \in \Psi_{\mathrm{par,I}}^{m,\mathsf{s},q}$ with $B \in \Psi_{\mathrm{par,I}}^{m',\mathsf{s}',q'}$ is an operator $C \in \Psi_{\mathrm{par,I}}^{m+m',\mathsf{s}+\mathsf{s}',q+q'}$. Moreover, the symbol of $C$ satisfies an asymptotic expansion 
	\begin{equation}
		c(z,\zeta,h) \sim  \sum_{\alpha\in \bbN^D} \frac{i^{|\alpha|}}{ \alpha! } \big(D_\zeta^\alpha a(z,\zeta,h) \big) D_{z}^{\alpha} b(z,\zeta,h)  
		\label{eq:moyal_explicit_parI}
	\end{equation}
	in which subsequent terms decrease in order both at fiber-infinity and at spacetime infinity, but \emph{not} at $h=0$.

	\item The principal symbol of $AB$ is the product of the principal symbols of $A$ and $B$, and the principal symbol of the commutator $i[A, B]$ is the Poisson bracket of the principal symbols of $A$ and $B$. 
	
	\item The normal operator is multiplicative: $N(A \circ B) = N(A) \circ N(B)$.

	\item An operator $A \in \Psi_{\mathrm{par,I}}^{m,\mathsf{s},q}$ is said to be elliptic if its symbol $a = \sigma_{\mathrm{par,I}}^{m,\mathsf{s},q}(A)$ satisfies 
	\begin{equation} 
		|a| \geq C^{-1} \aang{\zeta}^m \ang{z}^{\mathsf{s}} h^{-q}
	\end{equation} 
	whenever either $\ang{\zeta} \geq C$ or $\ang{z}\geq C$, for sufficiently large $C$ and all $h$, and in addition if $N(A)$ is invertible as a map from $L^2$ to $H^{-m, -\mathsf{s}}_{\mathrm{par}}(\bbR^{1,d})$. Then if $A$ is elliptic, there is an inverse $B \in \Psi_{\mathrm{par, I}}^{-m,-\mathsf{s}, -q}(\bbR^{1,d})$ for sufficiently small $h$.
\end{itemize}

We also have a version $\tilde{S}_{\mathrm{par,I}}^{m,\mathsf{s},\ell}$ of $S_{\mathrm{par,I}}^{m,\mathsf{s},\ell}$ in which symbols are only conormal at $h=0$.\footnote{This notation is different from that used in \cite{NRL_I}; there, $S_{\mathrm{par,I}}$ was used to refer to the symbol classes with merely conormal behavior. The $\mathrm{par,I}$-calculus plays a minor role here, so this should not cause confusion.}

\subsection{\texorpdfstring{The resolved parametrized parabolic calculus $\Psi_{\mathrm{par,I,res}}$}{The resolved parametrized parabolic calculus}}
\label{subsec:par-I-res}
In the next two subsections, we discuss two calculi that can be ``glued together'' (both at the level of the phase space, and at the operator level) to form the calculus discussed in Section~\ref{subsec:calc}, the ``resolved natural calculus,'' where most of the analysis takes place. This calculus, in turn, forms the building block for the ``twice-resolved natural calculus'' discussed in Section~\ref{subsec:calctwo}, whose phase space consists of two copies of the phase space of the resolved natural calculus, ``glued'' together after each copy is translated in frequency. 

We begin by discussing the resolved parametrized parabolic calculus $\Psi_{\mathrm{par,I,res}}$.
The resolved parametrized phase space ${}^{\mathrm{par,I}}\overline{T}^*\bbM={}^{\mathrm{par,I,res}}\overline{T}^* \bbM$ is obtained from ${}^{\mathrm{par}}\overline{T}^* \bbM \times [0,1]_h$ by blowing up the intersection of $\{ h = 0 \}$ and the face at fiber-infinity, that is, $\aang{\zeta}^{-1} = 0$:
\begin{equation}
{}^{\mathrm{par,I,res}}\overline{T}^* \bbM = \Big[ {}^{\mathrm{par}}\overline{T}^* \bbM \times [0,1]_h; \{ h = 0, \aang{\zeta}^{-1} = 0 \} \Big]. 
\end{equation}
This ``resolved'' phase space has four boundary hypersurfaces: the differential face $\mathrm{df_1}$ at fiber-infinity (the lift of fiber-infinity from ${}^{\mathrm{par,I}}\overline{T}^* \bbM$); the spacetime face $\mathrm{bf}$ (the lift of spacetime-infinity from ${}^{\mathrm{par,I}}\overline{T}^* \bbM$); the parabolic face $\mathrm{pf}$ (the lift of $\{ h = 0 \}$ from ${}^{\mathrm{par,I}}\overline{T}^* \bbM$); and the new face created by blowup, which we call the front face or the natural face, and denote $\natural\mathrm{f}$ (or ff). We have boundary defining functions 
\begin{equation}
\label{eq:bdfs-par-I-res}
\begin{aligned}
\rho_{\mathrm{df}_1} &= \frac{\aang{\zeta}^{-1}}{h +  \aang{\zeta}^{-1}}, &\qquad 
\rho_{\mathrm{bf}} &= \ang{z}^{-1}, \\
\rho_{\mathrm{par}} &= \frac{h}{h +  \aang{\zeta}^{-1}} , &\qquad
\rho_{\natural\mathrm{f}} &= h + \aang{\zeta}^{-1}.
\end{aligned}\end{equation}

As usual, we define the symbol classes on ${}^{\mathrm{par,I,res}}\overline{T}^* \bbM$ as consisting of functions obeying the same $L^\infty$ bounds under repeated applications of some particular vector fields. In order to figure out the appropriate vector fields here, it is useful to recast the symbol estimates defining the sc- and par-calculi in geometric language. In these estimates, the relevant vector fields are those tangent to the boundary (b-vector fields) of the scattering, resp. parabolic, compactification of the cotangent bundle. Indeed, in the scattering case, $\ang{z} \partial_{z_i}$ and $\ang{\zeta}\partial_{\zeta_i}$ furnish a basis for the b-vector fields on the compactified sc-cotangent bundle. Similarly, in the parabolic case, the vector fields $\ang{z} \partial_{z_i}$ and $\aang{\zeta}\partial_{\xi_i}$, $\aang{\zeta}^2 \partial_\tau$ furnish a basis for the b-vector fields on the compactified par-cotangent bundle. 
Thus, let  $\calV_{\mathrm{b}}({}^{\mathrm{par,I,res}}\overline{T}^* \bbM)$ be the smooth vector fields on this compactified cotangent bundle tangent to the boundary.

For $m,s,\ell,q \in \bbR$, the symbol classes $\tilde S_{\mathrm{par, I,res}}^{m,\mathsf{s},\ell,q}$ are defined by:
\begin{multline}\label{eq: parIres symbol}
\tilde S_{\mathrm{par, I,res}}^{m,s,\ell,q} = \Big\{ a \in \rho_{\mathrm{df}_1}^{-m} \rho_{\mathrm{bf}}^{-s} \rho_{\natural\mathrm{f}}^{-\ell}   \rho_{\mathrm{par}}^{-q} L^\infty({}^{\mathrm{par, I}}\overline{T}^* \bbM) 
: \big( \calV_{\mathrm{b}}({}^{\mathrm{par, I,res}}\overline{T}^* \bbM) \big)^M a \\ \in \rho_{\mathrm{df}_1}^{-m} \rho_{\mathrm{bf}}^{-s} \rho_{\natural\mathrm{f}}^{-\ell}   \rho_{\mathrm{par}}^{-q} L^\infty({}^{\mathrm{par,I}}\overline{T}^* \bbM) \text{ for all } M \in \mathbb{N} \Big\}.
\end{multline}
This definition is analogous to \cref{eq: par symbol} except that $h$ is taken into consideration as well.

When $s$ is replaced by $\mathsf{s} \in C^\infty({}^{\mathrm{par,I,res}}\overline{T}^* \bbM)$, the symbol class $\tilde{S}_{\mathrm{par, I,res}}^{m,\mathsf{s},\ell,q}$ is defined in almost the same way but allowing the usual logarithmic loss at $\mathrm{bf}$ upon differentiation. See \cite[\S2.2]{NRL_I} for details.

Due to \cite[Lem.\ 2.10]{NRL_I}, which is the analogue of the general principle that ``conormal functions lifts to a conormal one when blowing-up a corner'' in this variable order setting, we have
\begin{equation} \label{eq:par-I-par-I-res-symbol-equal}
    \tilde{S}_{\mathrm{par,I}}^{m,\mathsf{s},q}(\bbM) = \tilde{S}_{\mathrm{par, I,res}}^{m,(\beta_{\mathrm{par,I,res}}^*\mathsf{s}),m+q,q},
\end{equation}
where $\mathsf{s} \in C^\infty({}^{\mathrm{par, I}}\overline{T}^* \bbM)$ and 
\begin{equation}
  \beta_{\mathrm{par,I,res}}: \;  {}^{\mathrm{par, I, res}}\overline{T}^* \bbM \to {}^{\mathrm{par, I}}\overline{T}^* \bbM
\end{equation}
is the blow-down map.

More generally, for $\mathsf{s} \in C^\infty({}^{\mathrm{par, I, res}}\overline{T}^* \bbM)$ we have
\begin{equation}  \label{eq:par-I-par-I-res-symbol-contain}
\tilde{S}_{\mathrm{par,I, res}}^{m, \mathsf{s}, l, q}(\bbM) \subset \tilde{S}_{\mathrm{par,I}}^{M, s, q}(\bbM), \, \text{ for } \,	M \geq \max(m, l-q), \, s \geq \max \mathsf{s}.
 \end{equation}

We will work with a slightly smaller class of $\mathrm{par, I, res}$-symbols that are classical, that is, smooth up to an overall power, at the parabolic face $\mathrm{pf}$. This will allow us to define a normal operator at $\mathrm{pf}$. 
To this end, we define the symbol class $S_{\mathrm{par,I,res}}^{m,\mathsf{s},\ell,q}$ (with no tilde) by 
\begin{equation}
S_{\mathrm{par,I,res}}^{m,\mathsf{s},\ell,q} = \Big\{ a \in \tilde{S}_{\mathrm{par,I,res}}^{m,\mathsf{s},\ell,q}: \Big(\frac{\mathrm{d}}{\mathrm{d}h}\Big)^k a \in \tilde{S}_{\mathrm{par,I,res}}^{m,\mathsf{s} + \delta,\ell+k,q} 
\text{ for all } \delta>0, k \in \mathbb{N} \Big\}, 
\end{equation}
where $\mathrm{d}/\mathrm{d} h$ is the partial derivative with respect to $h$ while fixing the spatial and frequency variables $\tau,\xi$.
This condition enforces smoothness at  $\mathrm{pf}$ but not at the natural face, due to the increase in order by $k$ at the natural face when we differentiate $k$ times  in $h$.

Quantizing, we get $\Psi_{\mathrm{par,I,res}}^{m,\mathsf{s},\ell,q} = \operatorname{Op}( S^{m,\mathsf{s},\ell,q}_{\mathrm{par,I,res}} )$, 
\begin{equation} 
\Psi_{\mathrm{par,I,res}} = \bigcup_{m,s,\ell,q \in \mathbb{Z}} \Psi_{\mathrm{par,I,res}}^{m,\mathsf{s},\ell,q},
\end{equation}
which is a multi-graded algebra. The normal operator $N(A)$, is defined for $A \in \Psi^{m, \mathsf{s}, \ell, 0}_{\mathrm{par,I,res}}$. Roughly, 
\begin{equation}
N(A) = A|_{\mathrm{pf}}.
\end{equation} 
More precisely, this is given by restricting the full (left-reduced) symbol of $A$ to $\mathrm{pf}$, and then quantizing to obtain $N(A) \in \Psi_{\mathrm{par}}^{\ell, \mathsf{s}}$. This is well-defined, since the symbol is by definition smooth at $\mathrm{pf}$ and so can be restricted to this face, and the restriction belongs in the parabolic symbol class of order $(\ell, \mathsf{s})$. 

The principal symbol of $A = \operatorname{Op}(a)$, $a \in S_{\mathrm{par,I,res}}^{m,\mathsf{s},\ell, q}(\bbM)$, is its image 
\begin{equation}
	\sigma_{\mathrm{par,I,res}}^{m,\mathsf{s}, \ell, q}(A) = a \bmod  \cap_{\delta > 0} S_{\mathrm{par,I,res}}^{m-1,\mathsf{s}-1+\delta, \ell-1, q}(\bbM)
\end{equation}
in $S_{\mathrm{par,I, res}}^{m,\mathsf{s}, \ell, q}(\bbM) / \cap_{\delta > 0} S_{\mathrm{par,I}}^{m-1,\mathsf{s}-1+\delta,\ell-1,q}(\bbM)$. This definition reflects the fact that this calculus is ``symbolic'' at the front face (``natural face,'' $\natural\mathrm{f}$) as well as $\mathrm{df}_1,\mathrm{bf}$ (like the $\mathrm{par,I}$-calculus). As before, one could set $\delta = 0$ for constant spacetime orders, but for variable orders, logarithmic losses from differentiating $\mathsf{s}$ force an arbitrarily small $\delta$ loss in the spacetime index.

The properties of the $\mathrm{par, I, res}$-calculus is easily deduced from $\mathrm{par, I}$-calculus and the asymptotic expansion of the symbol of the composition, together with \cref{eq:par-I-par-I-res-symbol-contain}. One has 
\begin{itemize}
	\item The composition of $A \in \Psi_{\mathrm{par,I,res}}^{m,\mathsf{s},\ell,q}$ with $B \in \Psi_{\mathrm{par,I,res}}^{m',\mathsf{s}',\ell',q'}$ is an operator $C \in \Psi_{\mathrm{par,I,res}}^{m+m',\mathsf{s}+\mathsf{s}',\ell+\ell',q+q'}$. Moreover, the symbol of $C$ satisfies an asymptotic expansion 
	\begin{equation}
		c(z,\zeta,h) \sim  \sum_{\alpha\in \bbN^D} \frac{i^{|\alpha|}}{ \alpha! } \big(D_\zeta^\alpha a(z,\zeta,h) \big) D_{z}^{\alpha} b(z,\zeta,h)  
		\label{eq:moyal_explicit_parI_res}
	\end{equation}
	in which subsequent terms decrease in order at fiber-infinity, at spacetime infinity, and at the front face, but \emph{not} at $\mathrm{pf}$.

	\item The principal symbol of $AB$ is the product of the principal symbols of $A$ and $B$, and the principal symbol of the commutator $i[A, B]$ is the Poisson bracket of the principal symbols of $A$ and $B$. 
	
	\item The normal operator is multiplicative in the sense that for $A \in \Psi^{m, \mathsf{s}, \ell, 0}_{\mathrm{par,I,res}}$, $B \in \Psi^{m', \mathsf{s}', \ell', 0}_{\mathrm{par,I,res}}$ we have 
\begin{equation}\label{eq:mult normal op parIres}
N(AB) = N(A) N(B) \in \Psi_{\mathrm{par}}^{\ell + \ell', \mathsf{s} + \mathsf{s}'}.
\end{equation}
See \cite[Prop.\ 2.11]{NRL_I} for the proof of \eqref{eq:mult normal op parIres}.
\end{itemize}

For $a \in S_{\mathrm{par,I,res}}^{m,\mathsf{s},\ell,q}$, we say that it is \emph{elliptic} at $q \in \mathrm{df}_1 \cup \mathrm{bf} \cup \natural\mathrm{f} \subset \partial({}^{\mathrm{par,I,res}}\overline{T}^*\bbM)$ if 
\begin{equation} \label{eq:def-par-I-res-elliptic}
    |\rho_{\mathrm{df}_1}^{m}\rho_{\mathrm{bf}}^{\mathsf{s}} \rho_{\natural\mathrm{f}}^{\ell} \rho_{\mathrm{pf}}^{q}a|>c
\end{equation}
for a constant $c>0$ in a neighborhood of $q$, where $\rho_{\bullet}$ are as in \cref{eq:bdfs-par-I-res}.
If $a$ is elliptic at every point of $\mathrm{df}_1 \cup \mathrm{bf} \cup \natural\mathrm{f}$, then we say it is (totally) elliptic. If $a$ is elliptic and $N(A):L^2\to H_{\mathrm{par}}^{\ell,s}$ is invertible, then we say that  $A = \operatorname{Op}(a)$ is elliptic.

%Let $\mathsf{s} \in C^\infty({}^{\mathrm{par,I,res}}\overline{T}^*\bbM)$ and $m,\ell,q \in \bbR$, 
%then the $\mathrm{par,I,res}$-Sobolev spaces $H_{\mathrm{par,I,res}}^{m,\mathsf{s},\ell,q}$ of those orders,
%	is defined to be the set of families of tempered distributions on $\bbM$ parametrized by $h$ and being in $H_{\mathrm{par}}^{m,\mathsf{s}(h)}$ for each fixed $h$ but equipped with an $h$-dependent norm, required to be controlled by some $h^{-N}$ as $h \to 0$.
The squared $H_{\calc}^{m,\mathsf{s},\ell,q}$-norm is defined by
\begin{equation} \label{eq:parIres-squared-norm}
 h^{-2q}\| u \|^2_{H_{\mathrm{par}}^{l, s}} + \| Au \|_{L^2}^2,
\end{equation}
where $s < \inf \mathsf{s}$ is a constant, $l<\min(\ell-q,m)$ and $A \in  \Psi_{\mathrm{par,I,res}}^{m,\mathsf{s},\ell,q}$ is a fixed elliptic operator. 
Then, 
\begin{equation} \label{eq:def-Sobolev-parIres}
	H_{\mathrm{par,I,res}}^{m,\mathsf{s},\ell,q}(h) = (H_{\mathrm{par}}^{m,\mathsf{s}(h)}(\bbR^{1,d}),\lVert - \rVert_{H_{\mathrm{par,I,res}}^{m,\mathsf{s},\ell,q}} ),
\end{equation}
is a family of Sobolev spaces equivalent to $H_{\mathrm{par}}^{m,\mathsf{s}(h)}$ for each individual $h>0$ but equipped with an $h$-dependent norm.

Now we define the wavefront set in $\mathrm{par,I,res}$-calculus.
The $\mathrm{par,I,res}$-operator wavefront set \begin{equation} 
	\operatorname{WF}'_{\mathrm{par,I,res}}(A) \subset \mathrm{df}_1 \cup \mathrm{bf} \cup \natural\mathrm{f}
\end{equation} 
of $A=\operatorname{Op}(a)$ is defined to be the essential support of $a\in S_{\mathrm{par,I,res}}^{m,\mathsf{s},\ell,q}$, that is, the closed subset whose complement is 
\begin{multline*}
    \{ p \in \mathrm{df}_1 \cup \mathrm{bf} \cup \natural\mathrm{f} : \, \exists \text{ neighbourhood $U$ of $p$ such that} \\
    \text{ for all $N \in \bbN$ } \exists C_N \text{ such that } |\rho_{\mathrm{df}}^{-N} \rho_{\mathrm{bf}}^{-N}\rho_{\mathrm{zf}}^{-N}a| \leq C_N \text{ in  } U \}. 
\end{multline*}

The $(m,\mathsf{s}, \ell)$-order $\mathrm{par,I,res}$-wavefront set of a family of tempered distributions $u$, denoted $\operatorname{WF}_{\mathrm{par,I,res}}^{m,\mathsf{s},\ell}(u)$, is the closed subset of  $\mathrm{df}_1 \cup \mathrm{bf} \cup \natural\mathrm{f}$ defined by
\begin{multline}
	p \notin \operatorname{WF}_{\mathrm{par,I,res}}^{m,\mathsf{s},\ell}(u) \iff \exists N\in \bbR, A \in \Psi_{\mathrm{par,I,res}}^{m,\mathsf{s},\ell,0} \text{  elliptic at } p \text{ such that } Au \in L^2(\bbM) \\ \text{ with $\| Au \|_{L^2}, \lVert u \rVert_{H_{\mathrm{sc}}^{-N,-N}}$ are uniformly bounded in } h.
\end{multline}
Then we set $\operatorname{WF}_{\mathrm{par,I,res}}(u)$ to be 
\begin{equation}
    \operatorname{WF}_{\mathrm{par,I,res}}(u) = \overline{\bigcup_{m,s,\ell \in \bbN} \operatorname{WF}_{\mathrm{par,I,res}}^{m,s,\ell}(u)}.
\end{equation}

\subsection{\texorpdfstring{The natural calculus $\Psi_{\calczero}$}{The natural calculus}}
\label{subsec:calczero}
Next, we discuss the second ingredient used in the construction of the resolved natural calculus in \S\ref{subsec:calc}, namely the natural calculus $\Psi_{\calczero}$. This is based on taking natural frequency coordinates as discussed in \S\ref{subsec:natural}. That is, we take the phase space as 
\begin{equation}
{}^{\calczero} \overline{T}^* \bbM = \overline{\bbR^{1,d}_{t,x}}\times  \overline{\bbR^{1,d}_{\tau_\natural,\xi_\natural}} \times [0,\infty)_h, 
\end{equation}
where $\tau_\natural = h^2 \tau$ the natural frequency variable dual to $t_\natural$ and $\xi_\natural = h\xi$ is the natural frequency variable dual to $x_\natural$.

Next, we define our $\calczero$-symbol classes and we refer to \cite[\S2]{NRL_I} for more details. 
For $m,s \in \bbR$, the symbol class $S^{m,s,0}_{\calczero}(\bbM)$ is set of conormal functions of $h$ with values in $S^{m,\mathsf{s}}_{\mathrm{sc}}$ symbols \emph{in the $\taun, \xin$ variables}. That is, we have estimates 
\begin{equation} \label{eq:nat_symbol}
	|\partial_z^\alpha\partial_{\zetan}^\beta (h\partial_h)^j a(z,\zetan,h) | \leq C_{\alpha\beta j} \la z \ra^{s-|\alpha| } \la \zetan \ra^{m-|\beta|}, \quad \zetan = (\taun, \xin), \quad z=(t,x), 
\end{equation}
for all $j\in \bbN$, multi-indices $\alpha,\beta\in \bbN^{1+d}$.
This is provided with the Fr\'echet topology induced by the seminorms given by the best constants $C_{\alpha\beta j}$ in the inequality above. 
Noticing that $\partial_{\tau_\natural}=h^{-2}\partial_{\tau},\partial_{\xi_\natural}=h^{-1}\partial_{\xi}$, \cref{eq:nat_symbol} in fact gives $h$-improvement when we differentiate in $\zeta = (\tau,\xi)$, which is what is actually used when we consider asymptotic expansions like \cref{eq:moyal_explicit_calczero}.

For the more general case in which $s$ is allowed to be $\mathsf{s} \in C^\infty({}^{\calczero} \overline{T}^* \bbM )$, we allow the usual logarithmic losses, similar to the calculi previously discussed --- see \cite[\S2.3]{NRL_I} for details.

Then, we define 
\begin{equation}
	S_{\calczero}^{m,\mathsf{s},\ell} = h^{-\ell} S_{\calczero}^{m,\mathsf{s},0}.
\end{equation}

The class of $\calczero$-pseudodifferential operators $\Psi_{\calczero}^{m,\mathsf{s},\ell}$ is the set of all $\operatorname{Op}(a)$ acting as in \cref{eq:quant_sc}  with $a\in S_{\calczero}^{m,\mathsf{s},\ell}$.  
The principal symbol map 
\begin{equation}
\sigma_{\calczero}^{m,\mathsf{s},\ell} : \operatorname{Op}(a) \mapsto a \bmod  \cap_{\delta > 0} S_{\calczero}^{m-1,\mathsf{s}-1+\delta,\ell-1 } 
\end{equation}
is valued in 
\begin{equation} 
S_{\calczero}^{m,\mathsf{s},\ell}/ \cap_{\delta > 0} S_{\calczero}^{m-1,\mathsf{s}-1+\delta,\ell-1} .
\end{equation} 
So, \emph{principal symbols capture operators modulo terms suppressed as $h\to 0^+$}. This is a key property of $\Psi_{\calczero}$ (not shared by the $\mathrm{par,I}$- or $\mathrm{par,I,res}$-calculi). 

We record major properties of $\Psi_{\calczero}$ below and refer readers to \cite[\S2]{NRL_I} for full details. 
\begin{itemize}
	\item The composition of $A \in \Psi_{\calczero}^{m,\mathsf{s},\ell}$ with $B \in \Psi_{\calczero}^{m',\mathsf{s}',\ell'}$ is an operator $C \in \Psi_{\calczero}^{m+m',\mathsf{s}+\mathsf{s}',\ell+\ell'}$. Moreover, the symbol of $C$ satisfies an asymptotic expansion 
	\begin{equation}
	c(z,\zeta,h) \sim  \sum_{\alpha\in \bbN^D} \frac{i^{|\alpha|}}{ \alpha! } \big(D_\zeta^\alpha a(z,\zeta,h) \big) D_{z}^{\alpha} b(z,\zeta,h)  
	\label{eq:moyal_explicit_calczero}
	\end{equation}
	in which subsequent terms decrease in order at fiber-infinity, spacetime infinity, \emph{and} $h=0$.

	\item The principal symbol of $AB$ is the product of the principal symbols of $A$ and $B$, and the principal symbol of the commutator $i[A, B]$ is the Poisson bracket of the principal symbols of $A$ and $B$. 
	
	\item The $\calczero$-Sobolev spaces
\begin{equation} 
		H_{\calczero}^{m,\mathsf{s},\ell}(h) = (H_{\mathrm{sc}}^{m,\mathsf{s}(h)}(\bbR^{1,d}),\lVert - \rVert_{H_{\calczero}^{m,\mathsf{s},\ell} } )
	\end{equation}
        are one-parameter families of Sobolev spaces, equipped with an $h$-dependent norm, the $\lVert - \rVert_{H_{\calczero}^{m,\mathsf{s},\ell} }$-norm (defined using $\Psi_{\calczero}$ in the usual way).
	Then, 
	\begin{equation}
	A(h): H_{\calczero}^{m,\mathsf{s},\ell}(h) \to H_{\calczero}^{m-m',\mathsf{s}-\mathsf{s}',\ell-\ell'}(h) 
	\end{equation}
	is bounded uniformly as $h\to 0^+$, for any $A\in \Psi_{\calczero}^{m',\mathsf{s}',\ell'}$. Our conventions on the indexing are that 
	\begin{equation}
		H_{\calczero}^{0,0,0} = L^2(\bbR^{1+d}_{t,x}), 
	\end{equation}
	not $H_{\calczero}^{0,0,0} = L^2(\bbR^{1+d}_{t_\natural,x_\natural})$.
	\item An operator $A \in \Psi_{\calczero}^{m,\mathsf{s},\ell }$ is said to be (totally) elliptic if its symbol $a = \sigma_{\calczero}^{m,\mathsf{s},\ell}(A)$ satisfies 
	\begin{equation} 
		|a| \geq C^{-1} \ang{\zeta_\natural}^m \ang{z}^{\mathsf{s}} h^{-\ell}
	\end{equation} 
	whenever either $\ang{\zeta_\natural} \geq C$ or $\ang{z}\geq C$, and $h\in (0,1/C)$, for sufficiently large $C$. Then if $A$ is elliptic, there is an inverse $B \in \Psi_{\calczero}^{-m,-\mathsf{s}, -\ell}$ for sufficiently small $h$. 
    \item The $\calczero$-operator wavefront set \begin{equation} 
	\operatorname{WF}'_{\calczero}(A) \subset \partial ({}^{\calczero} \overline{T}^*\bbM)
\end{equation} 
of $A=\operatorname{Op}(a)$ is defined to be the essential support of $a\in S_{\calczero}^{m,\mathsf{s},\ell}$. More concretely, for $p \in \partial ({}^{\calczero} \overline{T}^*\bbM)$, 
\begin{equation}
	p \notin \operatorname{WF}'_{\calczero}(A)  \iff  |\rho_{\mathrm{df}}^{-N} \rho_{\mathrm{bf}}^{-N}\rho_{\mathrm{zf}}^{-N}a| \leq C_N \text{ near } p, \text{ for all } N \in \bbR ,\text{ for some }C_N.
\end{equation}
Note that $\operatorname{WF}'_{\calczero}(A)$ is always a closed subset of $\partial ({}^{\calczero} \overline{T}^*\bbM)$.

\item 
The $(m,\mathsf{s},\ell)$-order $\calczero$-wavefront set of a one-parameter family of tempered distributions $u$ is the closed set $\operatorname{WF}_{\calczero}^{m,\mathsf{s},\ell}(u) \subset \partial ({}^{\calczero} \overline{T}^*\bbM)$ defined by
\begin{multline}
	p \notin \operatorname{WF}_{\calczero}^{m,\mathsf{s},\ell}(u) \iff \exists N\in \bbR,   A \in \Psi_{\calczero}^{m,\mathsf{s},\ell} \text{ that is elliptic at } p \text{ such that } Au \in L^2(\bbM), \text{ with } \\
	\| Au(h) \|_{L^2(\bbM)}, h^N \lVert u \rVert_{H_{\mathrm{sc}}^{-N,-N}}  \text{ uniformly bounded in } h. 
\end{multline}
Then we set $\operatorname{WF}_{\calczero}(u)$ to be 
\begin{equation}
    \operatorname{WF}_{\calczero}(u) = \overline{\bigcup_{m,s,\ell \in \bbN} \operatorname{WF}_{\calczero}^{m,s,\ell}(u)}.
\end{equation}

\end{itemize}

\subsection{\texorpdfstring{The resolved natural calculus, $\Psi_{\calc}$}{The resolved natural calculus}}
\label{subsec:calc}

The calculus $\Psi_{\calc}$ is based upon the compactification 
\begin{equation}
	{}^{\calc}\overline{T}^* \bbM \hookleftarrow T^* \bbR^{1,d} 
\end{equation}
obtained by gluing together parts of the $\mathrm{par,I,res}$-phase space with the natural phase space, as depicted in \Cref{fig:gluing}. Another way to think of it is that it is obtained from the natural phase space by blowing up parabolically the portion of the zero section 
\begin{equation} 
	{}^{\calczero} o^* \bbM = \mathrm{cl}_{{}^{\calczero}\overline{T}^* \bbM} \{\xi=0=\tau\} = \{\xi_{\calcshort} = 0 = \tau_{\calcshort} \}
\end{equation} 
in $\{h=0\}$. That is, we replace ${}^{\calczero} o^* \bbM \cap \{h=0\}$ (over each point in $\bbM$) by a face that is parametrized by
\begin{equation}
\tau = \frac{\tau_{\calczero}}{h^2}, \quad \xi = \frac{\xin}{h};
\end{equation}
we thereby recover the standard frequency coordinates, $\tau,\xi$. 
We reproduce from \cite{NRL_I} a figure, \Cref{fig:gluing}, indicating how the gluing works. This gluing makes sense as the natural face in the $\mathrm{par,I,res}$-phase space and the natural face in the $\calczero$-phase space can be canonically identified, at least away from $\zetan = 0$ and $|\zetan| = \infty$. One can think of ${}^{\calc}\overline{T}^* \bbM$ as the output of gluing the part of ${}^{\mathrm{par,I,res}}\overline{T}^* \bbM$ near $\mathrm{pf}$ and the part of ${}^{\calczero}\overline{T}^* \bbM$ near $\mathrm{df}$.

\begin{figure}
	\includegraphics[scale=.95]{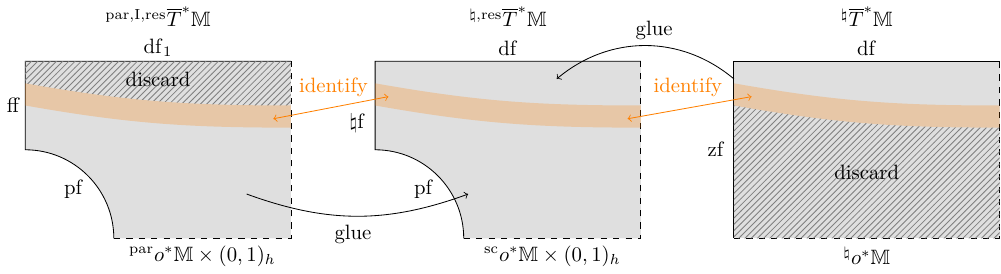}
	\caption{(From \cite{NRL_I}.) Illustration of the gluing procedure used to construct ${}^{\calc} \overline{T}^* \bbM$. A neighborhood of $\mathrm{df}$ in  ${}^{\calczero} \overline{T}^* \bbM$  replaces a neighborhood of $\mathrm{df}_1$ in ${}^{\mathrm{par,I,res}}\overline{T}^* \bbM$. In order to accomplish this, it is important to observe that one may identify neighborhoods of compact subsets of the interior of $\mathrm{ff}$ and $\mathrm{zf}$, e.g.\ the orange set in the figure.   }
    \label{fig:gluing}
\end{figure}

Consequently, ${}^{\calc}\overline{T}^* \bbM$ has four boundary hypersurfaces, fiber infinity $\mathrm{df}$, $\mathrm{bf}$ over $\partial \bbM$, $\natural\mathrm{f}$ (what is left of what we called `$\natural \mathrm{f}$' of the $\calczero$-phase space), and $\mathrm{pf}$. It turns out that 
\begin{equation}
	\mathrm{pf}\cong {}^{\mathrm{par}}\overline{T}^* \bbM 
	\label{eq:pf_cong}
\end{equation}
canonically.

Now we turn to the construction of the $\calc$-symbol class. Let $\mathcal{V}_{\calc}$ be the Lie algebra of smooth vector fields on ${}^{\calc} \overline{T}^* \bbM$ that are tangent to all boundary faces except for $\mathrm{pf}$. This means that we are imposing smoothness at $\mathrm{pf}$ but only conormal regularity at the other faces. 
Then we define 
\begin{equation} 
	S^{0,0,0,0}_{\calc} \subset C^\infty(T^* \bbR^{1,d}\times (0,\infty)_h) 
\end{equation} 
to denote the locally convex topological vector space of smooth functions on $T^* \bbR^{1,d}\times (0,1)_h$ that remain uniformly bounded under iterated applications of vector fields in $\mathcal{V}_{\calc}$.
For $m,s,\ell,q\in \bbR$, 
we set
\begin{equation} 
 S^{m,s,\ell,q}_{\calc} = \rho_{\mathrm{df}}^{-m} 
 \rho_{\mathrm{bf}}^{-s} \rho_{ \calczero\mathrm{f} }^{-\ell} \rho_{\mathrm{pf}}^{-q}  S^{0,0,0,0}_{\calc}.
\end{equation} 
When we want to allow $s$ to be replaced by $\mathsf{s} \in C^\infty({}^{\calc}\overline{T}^* \bbM)$, the usual $\delta$ loss in the spacetime exponent is incurred whenever the symbol is differentiated.  See \cite[\S2.4]{NRL_I} for more details.

The corresponding class of $\calc$-pseudodifferential operators, which is denoted by $\Psi_{\calc}^{m,\mathsf{s},\ell,q}$ consists of 
all $\operatorname{Op}(a)$ acting as in \cref{eq:quant_sc} but with $a\in S_{\calc}^{m,\mathsf{s},\ell,q}$.

Because the $h>0$ slices of ${}^{\calc}\overline{T}^* \bbM$ are canonically diffeomorphic to ${}^{\mathrm{sc}}\overline{T}^* \bbM$, we can think of elements of  $\Psi_{\calc}$ as being families of elements of $\Psi_{\mathrm{sc}}$, depending smoothly on $h>0$ and degenerating in some controlled way as $h\to 0^+$. 
Microlocally, an element in $\Psi_{\calc}$ is in $\Psi_{\mathrm{par,I,res}}$ near $\mathrm{pf}$ while in $\Psi_{\calczero}$ away from $\mathrm{pf}$.

Actually, \cref{eq:pf_cong} tells that elements of $\Psi_{\calc}$  `restrict' at $\mathrm{pf}$ to an element of $\Psi_{\mathrm{par}}$, and we can correspondingly define a normal operator 
\begin{equation}
	N(A) = \operatorname{Op}(h^q a|_{\mathrm{pf}}) \in \Psi_{\mathrm{par}}^{\ell,\mathsf{s}|_{\mathrm{pf}}}
\end{equation}
for $A=\operatorname{Op}(a)$, $a\in S_{\calc}^{m,\mathsf{s},\ell,q}$.

The principal symbol map 
\begin{equation}
	\sigma_{\calc}^{m,\mathsf{s},\ell,q} : \operatorname{Op}(a) \mapsto a \bmod  \cap_{\delta > 0} S_{\calc}^{m-1,\mathsf{s}-1+\delta,\ell-1 ,q} 
\end{equation}
is valued in 
\begin{equation} 
	S_{\calc}^{m,\mathsf{s},\ell,q}/ \cap_{\delta > 0} S_{\calc}^{m-1,\mathsf{s}-1+\delta,\ell-1,q } .
\end{equation} 
So, \emph{principal symbols capture operators modulo terms suppressed at each of $\mathrm{df},\mathrm{bf},\natural\mathrm{f}$}, but not $\mathrm{pf}$.

Now we list properties of $\Psi_{\calc}$ below and refer readers to \cite[Section~2.4]{NRL_I} for details.
\begin{itemize}
	\item The composition of $A \in \Psi_{\calc}^{m,\mathsf{s},\ell,q}$ with $B \in \Psi_{\calc}^{m',\mathsf{s}',\ell',q}$ is an operator $C \in \Psi_{\calc}^{m+m',\mathsf{s}+\mathsf{s}',\ell+\ell',q+q'}$. Moreover, the symbol of $C$ satisfies an asymptotic expansion 
	\begin{equation}
		c(z,\zeta,h) \sim  \sum_{\alpha\in \bbN^D} \frac{i^{|\alpha|}}{ \alpha! } D_\zeta^\alpha a(z,\zeta,h) D_{z}^{\alpha} b(z,\zeta,h),
		\label{eq:moyal_explicit_calc}
	\end{equation}
   in which subsequent terms and the error (that is, the difference between $c(z,\zeta,h)$ and the expansion) decrease in order at $\mathrm{df},\mathrm{bf},\natural\mathrm{f}$, but not at $\mathrm{pf}$.
Thus the asymptotic expansion only captures $c(z,\zeta,h)$ modulo a smooth family times $h^{-q-q'}$ (instead of smoothing and $O(h^{\infty})$-family) of residual operators:
	\begin{equation}
		h^{-q-q'} C^\infty([0,\infty)_h; \Psi_{\mathrm{sc}}^{-\infty,-\infty}(\bbM)).
	\end{equation}
	
	\item As usual, the principal symbol of $AB$ is the product of the principal symbols of $A$ and $B$, and the principal symbol of the commutator $i[A, B]$ is the Poisson bracket of the principal symbols of $A$ and $B$. 
	
	\item The Sobolev space 
	\begin{equation} \label{eq:calc-Sobolev-definition}
	H_{\calc}^{m,\mathsf{s},\ell,q}(h) = (H_{\mathrm{sc}}^{m,\mathsf{s}(h)}(\bbR^{1,d}),\lVert - \rVert_{H_{\calc}^{m,\mathsf{s},\ell,q}} )
    \end{equation}
	is defined in the usual way.
	Then, 
	\begin{equation}
		A(h): H_{\calc}^{m,\mathsf{s},\ell,q}(h) \to H_{\calczero}^{m-m',\mathsf{s}-\mathsf{s}',\ell-\ell',q-q'}(h) 
	\end{equation}
	is bounded uniformly as $h\to 0^+$, for any $A\in \Psi_{\calc}^{m',\mathsf{s}',\ell',q'}$. 
	\item The normal operator is multiplicative: $N(A\circ B)=N(A)\circ N(B)$.
	\item An operator $A \in \Psi_{\calc}^{m,\mathsf{s},\ell ,q}$ is said to be elliptic if its principal symbol $a = \sigma_{\calc}^{m,\mathsf{s},\ell,q}(A)$ satisfies 
	\begin{equation} \label{eq:elliptic-calc}
		|a| \geq C \rho_{\mathrm{df}}^{-m} \rho_{\mathrm{bf}}^{-\mathsf{s}}\rho_{\natural\mathrm{f}}^{-\ell}\rho_{\mathrm{pf}}^{-q} 
	\end{equation}
	in some neighborhood of $\mathrm{df}\cup\mathrm{bf}\cup\natural\mathrm{f}$
	for some $C>0$, and \emph{elliptic} if in addition $N(A)$ is elliptic and invertible. Then if $A$ is elliptic, there is an inverse $B \in \Psi_{\calc}^{-m,-\mathsf{s}, -\ell,-q}$ for sufficiently small $h$. 
\item The $\calc$-operator wavefront set \begin{equation} 
	\operatorname{WF}'_{\calc}(A) \subset \mathrm{df} \cup \mathrm{bf} \cup \natural\mathrm{f}
\end{equation} 
of $A=\operatorname{Op}(a)$ is defined to be the essential support of $a\in S_{\calc}^{m,\mathsf{s},\ell,q}$, that is, the closed subset whose complement is 
\begin{multline*}
    \{ p \in \mathrm{df} \cup \mathrm{bf} \cup \natural\mathrm{f} : \, \exists \text{ neighbourhood $U$ of $p$ such that} \\
    \text{ for all $N \in \bbN$ } \exists C_N \text{ such that } |\rho_{\mathrm{df}}^{-N} \rho_{\mathrm{bf}}^{-N}\rho_{\mathrm{zf}}^{-N}a| \leq C_N \text{ in  } U \}. 
\end{multline*}

\item The $(m,\mathsf{s}, \ell)$-order $\calc$-wavefront set of a family of tempered distributions $u$, denoted $\operatorname{WF}_{\calc}^{m,\mathsf{s},\ell}(u)$, is the closed subset of  $\mathrm{df} \cup \mathrm{bf} \cup \natural\mathrm{f}$ defined by 
\begin{multline}
	p \notin \operatorname{WF}_{\calc}^{m,\mathsf{s},\ell}(u) \iff \exists N\in \bbR, A \in \Psi_{\calc}^{m,\mathsf{s},\ell,0} \text{  elliptic at } p \text{ such that } Au \in L^2(\bbM) \\ \text{ with $\| Au \|_{L^2},\lVert u \rVert_{H_{\mathrm{sc}}^{-N,-N}}$ uniformly bounded in } h.
\end{multline}
Then we set $\operatorname{WF}_{\calc}(u)$ to be 
\begin{equation}
    \operatorname{WF}_{\calc}(u) = \overline{\bigcup_{m,s,\ell \in \bbN} \operatorname{WF}_{\calc}^{m,s,\ell}(u)}.
\end{equation}
\end{itemize}

\subsection{\texorpdfstring{The twice-resolved natural calculus, $\Psi_{\calctwo}$}{The twice-resolved natural calculus}}
\label{subsec:calctwo}
This calculus is based on the compactification ${}^{\calctwo}\overline{T}^* \bbM\hookleftarrow T^* \bbR^{1,d}$
given by blowing up, in the non-standard way that we saw in the discussion of the $\calc$-phase space, \emph{two} points in frequency, namely the points $\{ \taun = \pm 1, \xin = 0, h = 0 \}$ (which are actually submanifolds, as the spatial variables are unrestricted here) instead of the point $\{ \taun = 0, \xin = 0, h=0 \}$.
In other words, we perform the same sort of blowup used to construct ${}^{\calc}\overline{T}^* \bbM$, just this time at the loci $(\tau_\natural,\xi_\natural,h) = (\pm 1,0,0)$ instead of the zero section $(\tau_\natural,\xi_\natural,h) = (0,0,0)$.
The resulting compactification ${}^{\calctwo}\overline{T}^*$ has five boundary hypersurfaces, analogous to those of the $\calc$-phase space, except we now have two front faces $\mathrm{pf}_\pm$. 

Quantizing symbols on this phase space as \cref{eq:quant_sc} yields 
\begin{equation} 
\Psi_{\calctwo}=\bigcup_{m,s,\ell,q_-,q_+\in \bbR}\Psi_{\calctwo}^{m,s,\ell;q_-,q_+}.
\end{equation}
As above, we allow variable spacetime orders $\mathsf{s} \in C^\infty({}^{\calctwo}\overline{T}^* \bbM)$. 
We now have two $q$ orders, $q_\pm$, one at each of $\mathrm{pf}_\pm$.
Every $\calctwo$-operator has a unique left-reduced symbol on ${}^{\calctwo}\overline{T}^* \bbM$. We define ellipticity in terms of this left-reduced symbol, analogously to \cref{eq:elliptic-calc} but with two defining functions and orders associated to $\mathrm{pf}_\pm$ now.

The properties of this calculus are analogous to those of $\Psi_{\calc}$. More precisely, 
\begin{itemize}
	\item The composition of $A \in \Psi_{\calctwo}^{m,\mathsf{s},\ell,q+,q_-}$ with $B \in \Psi_{\calctwo}^{m',\mathsf{s}',\ell',q_+',q_-'}$ is an operator $$C = A \circ B  \in \Psi_{\calctwo}^{m+m',\mathsf{s}+\mathsf{s}',\ell+\ell',q_++q_+',q_- + q_-'}.$$ Moreover, the symbol of $C$ satisfies an asymptotic expansion 
	\begin{equation}
		c(z,\zeta,h) \sim  \sum_{\alpha\in \bbN^D} \frac{i^{|\alpha|}}{ \alpha! } D_\zeta^\alpha a(z,\zeta,h) D_{z}^{\alpha} b(z,\zeta,h),
		\label{eq:moyal_explicit_calctwo}
	\end{equation}
   in which subsequent terms and the error (that is, the difference between $c(z,\zeta,h)$ and the expansion) decrease in order at $\mathrm{df},\mathrm{bf},\natural\mathrm{f}$, but not at $\mathrm{pf}_\pm$.

	\item As usual, the principal symbol of $AB$ is the product of the principal symbols of $A$ and $B$, and the principal symbol of the commutator $i[A, B]$ is the Poisson bracket of the principal symbols of $A$ and $B$. 

    \item There are normal operators $N_\pm$ at $\mathrm{pf}_\pm$, which are multiplicative:
\begin{equation}
    N_\pm(AB) = N_\pm(A) \circ N_\pm(B). 
\end{equation}
The principal symbol of $A \in \Psi_{\calctwo}^{m,\mathsf{s},\ell,q+,q_-}$, together with the two normal operators $N_\pm(A)$, determine $A$ up to an operator in $\Psi_{\calctwo}^{m-1,\mathsf{s}-1+\delta,\ell-1,q+-1,q_--1}$ for every $\delta > 0$. 
\end{itemize}

A useful perspective is that elements of $\Psi_{\calctwo}$ are formed by ``gluing together'' two elements of $\Psi_{\calc}$, each conjugated by the multiplication operator $\exp(\pm i c^2 t) = \exp(\pm i t/h^2)$. At the operator level, this can be implemented using a partition of the identity, $\mathrm{Id} = Q_+ + Q_-$, where 
\begin{equation} 
    Q_\pm \in \Psi_{\calczero}^{0,0,0}
\end{equation}
are such that $Q_\mp$ is microlocally trivial at $\mathrm{pf}_\pm$. Then every $A \in \Psi_{\calctwo}^{m,\mathsf{s},\ell,q+,q_-}$ can be written 
$$
A = e^{it/h^2} A_+ e^{-it/h^2} Q_+ + e^{-it/h^2} A_- e^{it/h^2} Q_-
$$
where $A_\pm \in \Psi_{\calc}^{m, \mathsf{s}_\pm, \ell, q_\pm}$. (There is a technical inconvenience that the variable order $\mathsf{s}$ must be modified to $\mathsf{s}_\pm$ so as to be smooth on the $\natural$-res phase space, but this modification only happens away from the microsupport of $A_\pm$, so this is of little consequence.)

The associated $\calctwo$-Sobolev spaces  are denoted 
\begin{equation} \label{eq:def-calcatwo-Sobolev}
H_{\calctwo}^{m,\mathsf{s},\ell;q_-,q_+}(h) = (H_{\mathrm{sc}}^{m,\mathsf{s}(h)}(\bbR^{1,d}),\lVert - \rVert_{H_{\calctwo}^{m,\mathsf{s},\ell;q_-,q_+}} ).
\end{equation}
This norm can be defined, using any choice of the $Q_\pm$ and $\mathsf{s}_\pm$ as above, by
\begin{equation} \label{eq:definition_2res norm}
\lVert u \rVert_{H_{\calctwo}^{m,\mathsf{s},\ell;q_-,q_+}}
 =   \lVert e^{-it/h^2}Q_+u\rVert_{H_{\calc}^{m,\mathsf{s}_+,\ell,q_+}} +  \lVert e^{it/h^2} Q_-u\rVert_{H_{\calc}^{m,\mathsf{s}_-,\ell,q_-}}  .\end{equation}
The modulation $e^{\mp it/h^2}$ in the first term has the effect of 
translating $\mathrm{pf}_\pm$ to $\mathrm{pf}$, the parabolic face in the $\calc$-phase space.
See \cite[\S2.10]{NRL_I} for more details.

 The $(m,\mathsf{s}, \ell)$-order $\calctwo$-wavefront set of a one-parameter family of tempered distributions $u$, denoted $\operatorname{WF}_{\calctwo}^{m,\mathsf{s},\ell}(u)$, is the closed subset of  $\mathrm{df} \cup \mathrm{bf} \cup \natural\mathrm{f}$ defined by 
\begin{multline}
	p \notin \operatorname{WF}_{\calctwo}^{m,\mathsf{s},\ell}(u) \iff \exists N\in \bbR, A \in \Psi_{\calctwo}^{m,\mathsf{s},\ell,0,0} \text{  elliptic at } p \text{ such that } Au \in L^2(\bbM) \\ \text{ with $\| Au \|_{L^2}, \lVert e^{\mp it/h^2} Q_\pm u \rVert_{H_{\mathrm{sc}}^{-N,-N}}$ uniformly bounded in } h.
\end{multline}
Then we set $\operatorname{WF}_{\calctwo}(u)$ to be 
\begin{equation}
    \operatorname{WF}_{\calctwo}(u) = \overline{\bigcup_{m,s,\ell \in \bbN} \operatorname{WF}_{\calctwo}^{m,s,\ell}(u)}.
    \label{eq:calctwo-WF-def}
\end{equation}

\section{Asymptotics for the inhomogeneous problem}
\label{sec:inhomogeneous}
As already recalled in \S\ref{sec:true_intro} (\Cref{thm:inhomog}), the main results in \cite{NRL_I} are estimates for the solutions of the Klein--Gordon equation's 
four fundamental inhomogeneous problems 
\begin{equation} 
\begin{cases} Pu=F \\
u\in H_{\mathrm{sc}}^{m,\mathsf{s}}
\end{cases} 
\end{equation}
(the advanced/retarded problems and the Feynman/anti-Feynman problems, these being distinguished by the variable decay order $\mathsf{s}$).
The goal of this section is to apply these estimates to produce a leading order asymptotic for $u$ in the $c\to\infty$ limit, for suitable $F$.
First, in \S\ref{subsec:warmup}, we consider forcing $F$ of the form 
\begin{equation}
	F = e^{-ic^2 t} f_- + e^{ic^2 t} f_+ 
	\label{eq:Fform}
\end{equation}
for $f_\pm \in H_{\mathrm{sc}}^{\infty,S}(\bbR^{1,d})$ for $S>3/2$ (independent of $h=1/c$). This warm-up subsection will prepare us to tackle general irregular forcing
\begin{equation}
	F\in H_{\calctwo}^{m-1,\mathsf{s}+1,\ell-1;0,0}
	\label{eq:irreg_forcing}
\end{equation}
in the next subsection, \S\ref{subsec:main} (where $\mathsf{s}$ satisfies the usual threshold conditions, except with $\mathsf{s}>1/2$ on two radial sets, so that $\mathsf{s}-1$ also satisfies the usual threshold conditions). Note that $F$ can be very non-smooth.

In \S\ref{subsec:warmup}, our goal is to find an approximation to our solution $u$ of the form $u\approx v$ for 
\begin{equation}
	v = e^{-ic^2 t} v_- + e^{ic^2 t} v_+
\end{equation}
for $v_\pm$ solving the non-relativistic Schr\"odinger equation, with the sign in the subscript signifying the branch of the Schr\"odinger equation. The natural guess for $v_\pm$ is the solution of the advanced/retarded problem for the Schr\"odinger equation with forcing $f_\pm$. 
Whether $v_-$ solves the \emph{advanced} problem or the \emph{retarded} problem will depend on which of the four possible problems $u$ solves, and likewise for $v_+$. 
If $u$ solves the advanced problem, then the advanced problem is the relevant one for both of $v_\pm$. The retarded problem is analogous. If instead $u$ solves the Feynman/anti-Feynman problem, then one of $v_-,v_+$ will solve the advanced problem and the other will satisfy the retarded problem.

When we turn to \S\ref{subsec:main}, we will need to broaden our approximation to $u\approx v$ for  
\begin{equation}
	v=e^{-ic^2 t} v_- + e^{ic^2 t} v_++u_0 
	\label{eq:misc_139}
\end{equation}
for $v_\pm$ as above, for some choice of inhomogeneity extracted from $f_\pm$, and where $u_0$ (the novel feature of \cref{eq:misc_139}) is a solution of the \emph{free} Klein--Gordon equation with specified inhomogeneity. The reason we need to allow $u_0$  will be that the forcing $F$ (and therefore $u$) may have substantial wavefront set at large frequency. We will need to use the ``model problem'' of $P$ at $\natural\mathrm{f}$, the free Klein--Gordon equation, to solve this away.  This is closely related to the discussion in \S\ref{subsec:natural}, which demonstrates the \emph{necessity} of including a $u_0$ term in \cref{eq:misc_139}. This section demonstrates the sufficiency.

\begin{comment}
For the reader's convenience, we replicate here the main estimate of \cite{NRL_I}. It is stated in terms of the function spaces
\begin{equation}\begin{aligned}
		\mathcal{X}^{m,\mathsf{s},\ell} &= \big\{ u \in H_{\calctwo}^{m,\mathsf{s},\ell;0,0} : Pu \in H_{\calctwo}^{m-1,\mathsf{s}+1,\ell-1;0,0} \big\}, \\
		\mathcal{Y}^{m,\mathsf{s},\ell} &= H_{\calctwo}^{m,\mathsf{s},\ell;0,0}.
	\end{aligned}
\end{equation}
\begin{proposition}[\cite{NRL_I}]
	Let $\mathsf{s} \in C^\infty({}^{\calctwo}\overline{T}^* \bbM)$ denote a variable order 
	\begin{itemize}
		\item that is monotonic under the $\calctwo$-Hamiltonian flow near each component of the characteristic set, and
		\item satisfies
		the following \emph{threshold condition}: within each component of the characteristic set, 
		$\mathsf{s}>-1/2$ on one radial set and $\mathsf{s}<-1/2$ on the other.
	\end{itemize}
	Then, $P$ is an invertible operator $P : \mathcal{X}^{m,\mathsf{s},\ell} \to \mathcal{Y}^{m-1,\mathsf{s}+1,\ell-1}$.
	Moreover, the invertibility of $P$ is uniform in the sense that the norm of the inverse is uniformly bounded as $c \to \infty$; equivalently, 
	there exist $c_0,C>0$ such that the estimate
	\begin{equation}
		\lVert   u \rVert_{H_{\calctwo}^{m,\mathsf{s},\ell;0,0}} \leq C \lVert  Pu \rVert_{H_{\calctwo}^{m-1,\mathsf{s}+1,\ell-1;0,0}}
	\end{equation} 
	holds for any $c>c_0$ and 
	$u \in H_{\mathrm{sc}}^{m,\mathsf{s}(c)}$ such that $Pu \in H_{\mathrm{sc}}^{m-1,\mathsf{s}(c)+1}$.
	\label{prop:main_estimate}
\end{proposition}
\end{comment}

\subsection{Warm-up: smooth forcing}
\label{subsec:warmup}
To repeat the setup indicated above: we consider in this subsection the PDE $Pu=F$ for $F$ of the form \cref{eq:Fform} for $f_\pm \in H_{\mathrm{sc}}^{\infty,S}(\bbR^{1,d})$, where $S\in (3/2,\infty]$. 
Pick one of the advanced/retarded and Feynman/anti-Feynman problems, and let $u$ denote the solution. We can apply \Cref{thm:inhomog} to estimate $u$.
Which of the four problems we are considering is encoded in the allowed variable order $\mathsf{s}$ appearing in that theorem. (Of course, $u$ does not depend on the choice of $\mathsf{s}$, except insofar as it determines which of the four problems we are solving.) There are two radial sets on which $\mathsf{s}$ is required to be below $-1/2$ and two radial sets on which it is required to be above $-1/2$. Let $\calR_{\mathrm{below}}$ denote the union of the two radial sets on which $\mathsf{s}$ is required to be $<-1/2$, and let $\calR_{\mathrm{above}}$ denote the union of the other two. Then, \Cref{thm:inhomog} tells us that 
\begin{equation}
	u \in \bigcap_{\substack{\mathsf{s}|_{\calR_{\mathrm{below}}}<-1/2  \\  \mathsf{s}< S-1 } } H_{\calctwo}^{\infty,\mathsf{s},\infty;0,0}, 
\end{equation}
where the intersection is over all $\mathsf{s} \in C^\infty({}^{\calctwo}\overline{T}^* \bbM )$ allowed in the theorem and satisfying the additional condition $\mathsf{s}< S-1$ (which is required because $f_\pm$ may have only a finite amount of decay). 

Let us now fix such an $\mathsf{s}$ that satisfies the above-threshold condition on $\calR_{\mathrm{above}}$ with one order to spare: 
\begin{equation} 
	\mathsf{s}|_{\calR_{\mathrm{above}}} - 1> -1/2.
	\label{eq:misc_142}
\end{equation}
Because $S>3/2$, this condition is compatible with $\mathsf{s}<S-1$. 
Let $\mathsf{s}_0=\mathsf{s}-1$. Then, \cref{eq:misc_142} tells us that $\mathsf{s}_0$ also satisfies the above-threshold condition $\mathsf{s}_0|_{\calR_{\mathrm{above}}}>-1/2$.
So, (since subtracting $1$ from $\mathsf{s}$ does not interfere with the below-threshold and monotonicity conditions on the order in \Cref{thm:inhomog}) $\mathsf{s}_0$ also satisfies the hypotheses in \Cref{thm:inhomog}. This will be useful later.

Recalling that $\mathrm{pf}_\pm \cong {}^{\mathrm{par}}\overline{T}^* \bbM$ canonically, we can define $\overline{\mathsf{s}}_\pm \in C^\infty({}^{\mathrm{par}}\overline{T}^* \bbM)$ by 
\begin{equation}
	\overline{\mathsf{s}}_\pm = (\mathsf{s}+\varepsilon)|_{\mathrm{pf}_\pm},
\end{equation}
where $\varepsilon>0$ will be taken arbitrarily small.
Then, by \cite[Thm.\ 1.1]{Parabolicsc}\footnote{The theorem in \cite{Parabolicsc} is stated in less generality than we take as our setup here, but it is shown in the appendix of \cite{NRL_I} that the proof goes through in the present generality.}
there exist unique solutions 
\begin{equation} 
	v_\pm \in H^{\infty,\overline{\mathsf{s}}_\pm }_{\mathrm{par}}
\end{equation} 
to $N(P_\pm) v_\pm = f_\pm$ (assuming that $\varepsilon$ is sufficiently small).
Define $v$ by the usual formula $e^{-ic^2 t} v_- + e^{ic^2 t} v_+$.

What we want to prove is that $u\approx v$ as $c\to\infty$. The strategy will be to apply \Cref{thm:inhomog} with the difference $w=u-v$ in place of $u$. In order to do this, we need to know that there exists $m,\ell\in \bbR$ and a variable order $\mathsf{s}_1$ satisfying the required threshold conditions such that
\begin{itemize}
	\item $w$ lies, for each $c>1$ sufficiently large, in the domain $\calX^{m,\mathsf{s}_1,\ell}$,
	\item the function $\tilde{F}$ defined by $Pw=\tilde{F}$ lies in $\calY^{m-1,\mathsf{s}_1+1,\ell-1}$.
\end{itemize}
Then, we get an estimate 
\begin{equation}
	\lVert w \rVert_{\calX^{m,\mathsf{s}_1,\ell}} \lesssim \lVert \tilde{F} \rVert_{\calY^{m-1,\mathsf{s}_1+1,\ell-1}},
	\label{eq:misc_533}
\end{equation}
where the constant is uniform as $c\to\infty$. 
If we can further show that the $\calY^{m-1,\mathsf{s}_1+1,\ell-1}(c)$-norm of $\tilde{F}(c)$ is decaying as $c\to\infty$, then we can conclude that the $\calX^{m,\mathsf{s}_1,\ell}(c)$-norm of $w(c)$ is decaying as $c\to\infty$.  This is the goal. (And, of course, we would like to take $m,\mathsf{s}_1,\ell$ as large as possible in order to get an optimal result.) In terms of 
\begin{equation} 
\calY^{m-1,\mathsf{s}_1+1,\ell,1} = c^{-1} \calY^{m-1,\mathsf{s}_1+1,\ell-1},
\end{equation} 
the goal is to bound $\lVert \tilde{F} \rVert_{\calY^{m-1,\mathsf{s}_1+1,\ell,1}}$ uniformly as $c\to\infty$. Then, we get a bound on $\lVert w \rVert_{\calX^{m,\mathsf{s}_1,\ell+1}}$.

First, $v_\pm \in H_{\mathrm{par}}^{\infty,\overline{\mathsf{s}}_\pm}$ implies, by \Cref{lem:par_comp_better} 
that $v_\pm \in H_{\calc}^{\infty,\mathsf{s},\infty,0}$, so 
\begin{equation}
	v = e^{-ic^2t} v_- + e^{ic^2t} v_+ \in  H_{\calctwo}^{\infty,\mathsf{s},\infty;0,0}.
\end{equation}
Consequently, the difference $w=u-v$ satisfies $w\in  H_{\calctwo}^{\infty,\mathsf{s},\infty;0,0}$.

Let us now compute $\tilde{F}=Pw$. 
\begin{align}
	\begin{split}
		Pw = & Pu - e^{ic^2t}P_+ v_+ - e^{-ic^2t}P_-v_-
		\\ = & Pu - e^{ic^2t}N(P_+) v_+ - e^{-ic^2t}N(P_-)v_- \\ 
		&\qquad\qquad + e^{ic^2t}(N(P_+)-P_+)v_+ + e^{-ic^2t}(N(P_-)-P_-)v_-
		\\ = & F - e^{i c^2t}f_+ - e^{-ic^2t}f_- + e^{ic^2 t}(N(P_+)-P_+)v_+ + e^{-ic^2t}(N(P_-)-P_-)v_-
		\\ = & e^{ic^2 t}(N(P_+)-P_+)v_+ + e^{-ic^2t}(N(P_-)-P_-)v_-.
	\end{split}
\end{align}
So, 
\begin{equation} 
	\tilde{F}= e^{ic^2 t}(N(P_+)-P_+)v_+ + e^{-ic^2t}(N(P_-)-P_-)v_-.
\end{equation} 
By the definition of the normal operator, 
\begin{equation}
	N(P_\pm) - P_\pm \in \Psi_{\calc}^{2,0,2,-1}.
\end{equation}
So, from $v_\pm \in H^{\infty,\overline{\mathsf{s}}_\pm}_{\mathrm{par}}$, \Cref{lem:par_comp_better}, it follows that
\begin{equation} 
	(N(P_\pm)-P_\pm)v_\pm \in H^{\infty,\mathsf{s},\infty,1}_{\calc} = c^{-1} H^{\infty,\mathsf{s},\infty,0}_{\calc},
\end{equation} 
and therefore
\begin{equation}
	e^{ic^2 t}(N(P_+)-P_+)v_+ , e^{-ic^2t}(N(P_-)-P_-)v_-
	\in H^{\infty,\mathsf{s},\infty;1,1}_{\calctwo}.
\end{equation}
Thus, 
\begin{equation} 
	\tilde{F}\in H^{\infty,\mathsf{s},\infty;1,1}_{\calctwo} = H^{\infty,\mathsf{s}_0+1,\infty;1,1}_{\calctwo} = c^{-1}H^{\infty,\mathsf{s}_0+1,\infty;0,0}_{\calctwo}.
\end{equation} 

So, we can apply \cref{eq:misc_533} with $m,\ell$ arbitrarily large and $\mathsf{s}_1=\mathsf{s}_0$; the requisite hypotheses have been checked. We conclude that
\begin{equation}
	\lVert w \rVert_{H_{\calctwo}^{m,\mathsf{s}_0,\ell;0,0}} \lesssim
     c^{-1} \lVert  \tilde{F} \rVert_{H^{m,\mathsf{s}_0+1,\ell+1;1,1}_{\calctwo}} = O\Big(\frac{1}{c} \Big).
\end{equation}

In particular, we have 
\begin{equation}
	\|u-v\|_{H_{\calctwo}^{m,s_0,\ell;0,0}} = O\Big( \frac{1}{c} \Big)
	\label{eq:misc_542} 
\end{equation}
for every $s_0<-3/2$. 

Taking $m,\ell$ large enough and using \Cref{prop:Sobolev_embedding}, we have:
\begin{proposition}  \label{prop: inhomogeneous_warmup}
	Given the setup above, 
	\begin{equation}
		\lVert u - v \Vert_{(1+r^2+t^2)^{3/4+\varepsilon}L^\infty(\bbR^{1,d}) } = O(1/c)
	\end{equation}
	holds in the $c\to\infty$ limit, for any $\varepsilon>0$. 
\end{proposition}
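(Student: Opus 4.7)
The strategy is to combine the $\calctwo$-Sobolev estimate \cref{eq:misc_542}, already established in the paragraphs immediately above, with a $c$-uniform weighted Sobolev embedding (namely \Cref{prop:Sobolev_embedding}) to convert the $L^2$-based $\calctwo$-Sobolev bound into the weighted $L^\infty$ estimate in the statement.

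Concretely, given $\varepsilon > 0$, I would set $s_0 = -3/2 - 2\varepsilon$, so that $\langle z \rangle^{-s_0} = (1+t^2+r^2)^{3/4 + \varepsilon}$ matches the weight in the statement. With this choice, \cref{eq:misc_542} gives $\|u-v\|_{H_{\calctwo}^{m,s_0,\ell;0,0}} = O(1/c)$ for any $m,\ell$, uniformly as $c \to \infty$.

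Next, I would take $m > (1+d)/2$ and, say, $\ell = 0$, and invoke \Cref{prop:Sobolev_embedding} to produce a $c$-uniform continuous inclusion
\[
H_{\calctwo}^{m,s_0,0;0,0} \hookrightarrow \langle z \rangle^{-s_0} L^\infty(\bbR^{1,d}) = (1+t^2+r^2)^{3/4+\varepsilon}L^\infty(\bbR^{1,d}).
\]
Composing the two bounds yields the proposition.

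The only subtle point is the $c$-uniformity of the embedding constant; this is precisely the content of \Cref{prop:Sobolev_embedding}. At fixed $c$, the space $H_{\calctwo}^{m,s_0,0;0,0}(c)$ is just $H_{\mathrm{sc}}^{m,s_0}$ at the set level, and its norm dominates the ordinary sc-norm with a constant independent of $c$, so standard weighted Sobolev embedding on $\bbR^{1,d}$ applies. No real obstacle remains beyond what was already resolved in deriving \cref{eq:misc_542}.
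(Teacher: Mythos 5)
Your overall strategy is exactly the paper's: combine the $\calctwo$-Sobolev bound in \cref{eq:misc_542} with the $\natural$-Sobolev embedding lemma, \Cref{prop:Sobolev_embedding}. However, your concrete execution has a genuine error in the choice of $\ell$.

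You propose $\ell = 0$, but \Cref{prop:Sobolev_embedding} requires $\ell \geq (d+2)/2$ (part (a), for the $\calczero$-norm) or $\ell > d+1$ (part (b), for the $\calc$-norm). With $\ell = 0$ the hypotheses fail and the embedding constant is \emph{not} uniform in $c$: tracing the proof of \Cref{prop:Sobolev_embedding}(a), one picks up a factor of $h^{-(d+2)/2}$, which would overwhelm the $O(1/c)$ smallness in \cref{eq:misc_542}. Your fallback justification is also incorrect: you claim that the $H_{\calctwo}^{m,s_0,0;0,0}$-norm dominates the ordinary sc-norm with a $c$-uniform constant, but for $m>0$ the inequality goes the \emph{other} way. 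The $\calctwo$-spaces are built from the rescaled derivatives $h^2\partial_t, h\partial_x$, which become weaker as $h\to 0$; compensating for this requires $\natural\mathrm{f}$-order $\ell \geq 2m$ (see \Cref{lem:sc_comp}: $\lVert u \rVert_{H_{\mathrm{sc}}^{m,s}} \lesssim \lVert u \rVert_{H_{\calc}^{m,s,2m,0}}$). With $\ell = 0$ and $m > (d+1)/2 > 0$, the uniform domination of the sc-norm simply does not hold.

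The fix is easy and is what the paper intends: since \cref{eq:misc_542} holds for \emph{every} $m,\ell$, one should take $m > (d+1)/2$ and $\ell > d+1$, and then apply \Cref{prop:Sobolev_embedding}(b). Concretely, using the decomposition \cref{eq:definition_2res norm}, write $w = Q_+ w + Q_- w$ so that $\lVert w \rVert_{(1+r^2+t^2)^{3/4+\varepsilon}L^\infty}$ is bounded by the sum of $\lVert e^{\mp it/h^2} Q_\pm w\rVert_{(1+r^2+t^2)^{3/4+\varepsilon}L^\infty} \lesssim \lVert e^{\mp it/h^2} Q_\pm w \rVert_{H_{\calc}^{m,s_0,\ell,0}}$, which in turn is controlled by $\lVert w \rVert_{H_{\calctwo}^{m,s_0,\ell;0,0}} = O(1/c)$.
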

\begin{proof}
	Follows from \cref{eq:misc_542} using the $\natural$-Sobolev embedding lemma, \Cref{prop:Sobolev_embedding}.
\end{proof}

\subsection{Irregular forcing}
\label{subsec:main}
We now turn to $F$ as in \cref{eq:irreg_forcing}, where  $m,\ell\in \bbR$ are given and $\mathsf{s}\in C^\infty({}^{\calctwo}\overline{T}^* \bbM)$ is as in the previous subsection. (In particular, $\mathsf{s}_0=\mathsf{s}-1$ also satisfies the hypotheses of \Cref{thm:inhomog}.)
So, according to \Cref{thm:inhomog}, there exists a unique 
\begin{equation}
	u\in H_{\calctwo}^{m,\mathsf{s},\ell;0,0}
\end{equation}
solving $Pu=F$, and our goal is to approximate $u\approx v$ for some $v$. The strategy is still to estimate $u-v$ using \Cref{thm:inhomog}, but the details now become tedious.

Let $\overline{\mathsf{s}}_\pm = \mathsf{s}|_{\mathrm{pf}_\pm}$. Fix $\varepsilon\in (0,1)$ small enough such that $\mathsf{s}-2\varepsilon$ satisfies the same threshold hypotheses (in particular, \cref{eq:misc_142}) as $\mathsf{s}$, and let $U_\pm$ denote a neighborhood of $\mathrm{pf}_\pm$ on which $\overline{\mathsf{s}}_\pm$ (pulled back via the par,I-blowdown map) is bounded below by  $\mathsf{s}-\varepsilon$.

Fix $O\in \Psi_{\calczero}^{-\infty,0,0}$ satisfying the following:
\begin{itemize}
	\item $O=1$ microlocally near $\mathrm{pf}_-\cup \mathrm{pf}_+$: $
	\operatorname{WF}'_{\calczero}(1-O)\cap (\mathrm{pf}_-\cup \mathrm{pf}_+) = \varnothing$, 
	\item  $\operatorname{WF}'_{\calczero}(O)$ is disjoint from $\mathrm{df}$ and the zero-energy subspace $\mathrm{cl}_{{}^{\calczero}\overline{T}^* \bbM}\{\tau_\natural=0\}$,
	\begin{equation}
		\operatorname{WF}'_{\calczero}(O)\cap (\mathrm{df}\cup \mathrm{cl}_{{}^{\calczero}\overline{T}^* \bbM}\{\tau_\natural=0\})=\varnothing,
	\end{equation}
	\item $\operatorname{WF}'_{\calczero}(O)$ is contained in $U_-\cup U_+$. 
\end{itemize}
Let $O_\pm\in  \Psi_{\calc}^{-\infty,0,0}$ as well, such that
\begin{itemize}
	\item 
	$O_\pm=1$ on the portion of the essential support of $O$ in $\{\pm \tau_\natural >0\}$, 
	\begin{equation}
		\operatorname{WF}'_{\calczero}(1-O_\pm ) \cap ( \operatorname{WF}'_{\calczero}(O) \cap  \mathrm{cl}_{{}^{\calczero}\overline{T}^* \bbM}\{\pm \tau_\natural>0\} )  = \varnothing ,
	\end{equation}
	\item $O_\pm$ is microsupported away from $\mathrm{df}$ and the other portion of the microsupport of $O$: 
	\begin{equation}
		\operatorname{WF}'_{\calczero}(O_\pm) \cap (\mathrm{df}\cup( \operatorname{WF}'_{\calczero}(O) \cap  \mathrm{cl}_{{}^{\calczero}\overline{T}^* \bbM}\{\mp \tau_\natural>0\} )) = \varnothing , 
	\end{equation}
	\item $O_\pm$ is also microsupported within $U_\pm$. 
\end{itemize}

The microsupports of the various microlocal cutoffs $O,O_\pm$ are depicted in \Cref{fig:O_my_O}. 
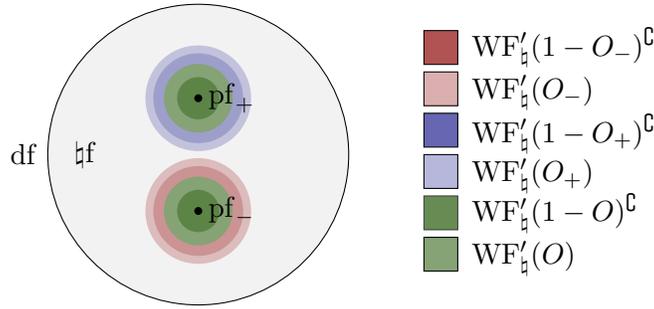
\begin{figure}[h!]
	\begin{tikzpicture}
		\filldraw[fill=lightgray!20] (0,0) circle (2);
		\fill[darkblue, fill opacity=.25] (0,.75) circle (20pt);
		\fill[darkblue, fill opacity=.25] (0,.75) circle (17pt);
		\fill[darkred, fill opacity = .25] (0,-.75) circle (20pt);
		\fill[darkred, fill opacity=.25] (0,-.75) circle (17pt);
		\fill[darkgreen!60] (0,-.75) circle (13pt);
		\fill[darkgreen, fill opacity=.5] (0,-.75) circle (8pt);
		\fill[darkgreen!60] (0,.75) circle (13pt);
		\fill[darkgreen, fill opacity=.5] (0,.75) circle (8pt);
		\fill[black] (0,.75) circle (1.5pt) node[right] {$\mathrm{pf}_+$};
		\fill[black] (0,-.75) circle (1.5pt) node[right] {$\mathrm{pf}_-$};
		\node at (-1.5,0) {$\natural\mathrm{f}$};
		\node[left] at (-2,0) {$\mathrm{df}$};
		\filldraw[fill= darkred, fill opacity=.75] (3,1.2) rectangle (3.45,1.65); 
		\filldraw[fill= darkred, fill opacity=.35] (3,.65) rectangle (3.45,1.1); 
		\filldraw[fill= darkblue, fill opacity=.75] (3,.1) rectangle (3.45,0.55); 
		\filldraw[fill= darkblue, fill opacity=.35] (3,-.45) rectangle (3.45,0); 
		\filldraw[fill=darkgreen, opacity=.75] (3,-1) rectangle (3.45,-.55); 
		\filldraw[fill=darkgreen!60] (3,-1.55) rectangle (3.45,-1.1); 
		\node[right] at (3.5,1.425) {$\operatorname{WF}'_{\calczero}(1-O_-)^\complement$};
		\node[right] at (3.5,-1.375) {$\operatorname{WF}'_{\calczero}(O)$};
		\node[right] at (3.5,-.775) {$\operatorname{WF}'_{\calczero}(1-O)^\complement$};
		\node[right] at (3.5,-.275) {$\operatorname{WF}'_{\calczero}(O_+)$};
		\node[right] at (3.5,.825) {$\operatorname{WF}'_{\calczero}(O_-)$};
		\node[right] at (3.5,.275) {$\operatorname{WF}'_{\calczero}(1-O_+)^\complement$};
	\end{tikzpicture}
	\caption{The fibers of ${}^{\calczero}\overline{T}^* \bbM$, showing the microsupports of $O,O_\pm$ and related operators. The regions are supposed to be  drawn convex; they are only drawn as annular because we are drawing multiple disks on top of each other. }
	\label{fig:O_my_O}
\end{figure}

We now define the building blocks of the ansatz $v$ used to approximate $u$. Define $g_\pm$ by $e^{\pm i c^2 t} g_\pm = O_\pm F$. By construction, 
	\begin{equation}
	g_\pm \in H_{\mathrm{par,I,res}}^{\infty,\overline{\mathsf{s}}_\pm+1-\varepsilon,\ell-1 ,0}.
	\label{eq:g-pm-membership}
	\end{equation}
 So, there exists, for each $c$, a unique (assuming $\varepsilon$ is sufficiently small) $v_{\pm,0}(c) \in H_{\mathrm{par}}^{\infty, \overline{\mathsf{s}}_\pm-\varepsilon}$  solving
    \begin{equation} \label{eq:def-v-pm-0}
        N(P_\pm)v_{\pm,0} = g_\pm.
    \end{equation}
This is appealing to the analysis in \cite[Appendix B]{NRL_I} \emph{for each individual $c$}. The operator, $N(P_\pm)$, does not depend on $c$. However, the forcing $g_\pm$ does, and so it is still nontrivial to understand how $v_{\pm,0}$ behaves as $c\to\infty$. 
    
%%%%%%%%%%%%%%%%%%%%%%%%%%
In order to understand this, we discuss some estimates in $\Psi_{\mathrm{par,I,res}}$, the pseudodifferential calculus discussed in \Cref{subsec:par-I-res}, that we will need for estimating $v_{\pm,0}$ and that are not explicitly spelled out in \cite{NRL_I}. 
All of the estimates are analogous to those in \cite[Prop. 5.2--5.5]{Parabolicsc}, except that our estimates are in terms of $\mathrm{par,I,res}$-norms now. Their proofs go through almost verbatim. Because $N(P_\pm)$ does not depend on $c$, the Hamiltonian flow of $N(P_\pm)$ on the $\mathrm{par,I}$-phase space is independent of $c$. This means that it has a good source-to-sink structure. Blowing up the corner of the phase space (recall this is what the `res' in `par,I,res' refers to) does not change this. 
So, the argument leading up to \cite[Prop.\ 5.5]{Parabolicsc} applies, \emph{mutatis mutandis}. We can almost use the same symbols in the proof (for instance, the same $c$-independent quadratic-defining-function of the radial sets $\calR_{\mathrm{Schr}}$) except that when constructing the commutant at the level of symbols, we now have access to (and need to make use of) a larger class of (microlocal) weights:  powers of the boundary-defining-functions $\rho_{\mathrm{f}}$ for $\mathrm{f}$ a boundary hypersurface of the $\mathrm{par,I,res}$-phase space. This allows us to prove estimates using the $\mathrm{par,I,res}$-Sobolev norms, instead of just the $\mathrm{par}$-Sobolev norms.
So, we arrive at the estimate
\begin{equation} \label{eq:v_pm_0_global-parIres-est}
    \lVert \tilde{v} \rVert_{H_{\mathrm{par,I,res}}^{m,\tilde{\mathsf{s}}_\pm,\ell,q}}  \lesssim \lVert N(P_\pm)\tilde{v}\rVert_{H_{\mathrm{par,I,res}}^{m-1,\tilde{\mathsf{s}}_\pm+1,\ell-1,q}}
    + \lVert \tilde{v} \rVert_{H_{\mathrm{par,I,res}}^{-N,-N,-N,q}} 
\end{equation}
for any variable orders $\tilde{\mathsf{s}}_\pm \in C^\infty({}^{\mathrm{par,I,res}} \overline{T}^* \bbM )$ satisfying the relevant threshold conditions \emph{on the $\mathrm{par,I,res}$-phase space}.\footnote{For example, the pullback to the $\mathrm{par,I,res}$-phase space of any variable order on the $\mathrm{par}$-phase space satisfying the threshold conditions from \cite{Parabolicsc} works. This, namely $\tilde{\mathsf{s}}_\pm$ the pullback of $\overline{\mathsf{s}}_\pm-\varepsilon$, is the case that is actually applied.}
This estimate, and \cref{eq:g-pm-membership}, gives
\begin{equation}
    v_{\pm,0}\in H_{\mathrm{par,I,res}}^{\infty,\overline{\mathsf{s}}_\pm-\varepsilon,\ell,0}.
\end{equation} 
    Define $v_\pm$ by 
	\begin{equation}
	v_\pm = e^{\mp ic^2 t} O  (e^{\pm ic^2 t} v_{\pm,0} ).
    \label{eq:misc_sdf}
	\end{equation}
The cutoff here is just for later technical convenience. 
%%%%%%%%%%%%%%%%%%%%%%%%%%

Next, we discuss what to do with the remaining part of the forcing, $(1-O_--O_+)F$. 
Next, we solve the inhomogeneous \emph{free} Klein--Gordon equation
	\begin{equation}
	P_0 u_{
    0} = (1-O(O_-+O_+)) F - [P,O] (e^{- ic^2 t} v_{-,0} ) - [P,O] (e^{ ic^2 t} v_{+,0} );
    \label{eq:S4_complicated_KG_part}
	\end{equation} 
    this somewhat complicated choice of forcing essentially consists of 
    \begin{itemize}
        \item $(1-O_--O_+)F$, which was to be expected, except we have thrown in an extra cutoff $O$ for later convenience,  
        \item ``error terms'' $[P,O] (e^{- ic^2 t} v_{-,0} ) $, $[P,O] (e^{ ic^2 t} v_{+,0} )$ arising because we decided to define $v_\pm$ from $v_{\pm,0}$ by throwing in an extra cutoff $O$ (see \cref{eq:misc_sdf}) instead of $v_\pm = v_{\pm,0}$.
    \end{itemize}
	The operators $[P,O] \in \Psi_{\calctwo}^{-\infty,-1,1;-\infty,-\infty}$ and $(1-O(O_-+O_+)) \in \Psi_{\calctwo}^{0,0,0;-\infty,-\infty}$
	have essential supports away from $\mathrm{df}$ and $\mathrm{pf}_-\cup \mathrm{pf}_+$, so 
	\begin{equation}
	[P,O] (e^{\pm ic^2 t} v_{\pm ,0} ) \in H_{\calctwo}^{\infty,\mathsf{s}-2\varepsilon+1,\ell-1;\infty,\infty},
	\quad (1-O(O_-+O_+)) F \in H_{\calctwo}^{m-1,\mathsf{s}+1,\ell-1;\infty,\infty}.
	\end{equation}
	Thus, \Cref{thm:inhomog}, applied to the free Klein--Gordon operator $P_0$ in place of $P$, gives 
 \begin{equation} \label{eq:u0-membership}
    u_{0} \in H_{\calctwo}^{m,\mathsf{s}-2\varepsilon,\ell;\infty,\infty} 
 \end{equation}    
satisfying \cref{eq:S4_complicated_KG_part}.

Now let 
\begin{equation} 
v=  e^{-ic^2 t} v_- +  e^{ic^2 t} v_+ + u_0 = O(e^{-i c^2 t} v_{-,0} + e^{ic^2 t} v_{+,0}) + u_0. 
\end{equation} 
We will check that $u\approx v$.

The strategy to show that $u\approx v$ is identical to that employed in the previous subsection: use \Cref{thm:inhomog} to estimate the difference $w=u-v$. So, we will not repeat the exposition here --- we will just begin the argument. The first thing to note is that
\begin{equation}
	u,v \in H_{\calctwo}^{\infty,\mathsf{s}-2\varepsilon,\ell;0,0}.
\end{equation}
So, $w=u-v$ satisfies $w \in  H_{\calctwo}^{\infty,\mathsf{s}-2\varepsilon,\ell;0,0}$ as well.
A direct computation shows:

\begin{align}
\begin{split}
Pw &= Pu-Pv = F- P(O(e^{-i c^2 t} v_{-,0} + e^{ic^2 t} v_{+,0}) + u_0) \\ 
&= F - P_0 u_0 + (P_0-P)u_0 - O P (e^{-i c^2 t} v_{-,0} + e^{ic^2 t} v_{+,0}) \\ &\qquad\qquad\qquad\qquad- [P,O] (e^{-i c^2 t} v_{-,0} + e^{ic^2 t} v_{+,0}) \\
&= F - P_0 u_{0}  - O  ( O_- + O_+ )F  -  [P,O] (e^{-i c^2 t} v_{-,0} + e^{ic^2 t} v_{+,0})  
\\ &\qquad\qquad-O(e^{-i c^2 t} (P_--N(P_-)) v_{-,0} + e^{ic^2 t} (P_+ -N(P_+))v_{+,0}) +(P_0-P)u_{0}\\ 
&= -O(e^{-i c^2 t} (P_--N(P_-)) v_{-,0} + e^{ic^2 t} (P_+ -N(P_+))v_{+,0}) +(P_0-P)u_{0}  ,
\end{split}
\end{align}
where in the second to the third equation we rearranged orders of terms and used $e^{\pm ic^2t} N(P_\pm)v_{\pm,0}=e^{\pm ic^2t} g_\pm = O_\pm F$.

The final terms on the right-hand side are controlled using: 
\begin{equation}
P_\pm - N(P_\pm) \in \Psi_{\mathrm{par,I,res}}^{4,0,2,-1} ,\quad P-P_0 \in \Psi_{\calctwo}^{2,0,1;0,0}, \;  
\end{equation}
and \cref{eq:u0-membership}.

Altogether, we have
\begin{equation}
Pw \in H_{\calctwo}^{m-2, \mathsf{s}-2\varepsilon,\ell-2;1,1 } =  c^{-1} H_{\calctwo}^{m-2, \mathsf{s}-2\varepsilon,\ell-3;0,0 }.
\end{equation}
So, \cref{eq:misc_533} yields that $\lVert u - v \rVert_{H_{\calctwo}^{m-1,\mathsf{s}-1-\varepsilon,\ell-2;0,0 } } = O(c^{-1})$.
More precisely: 
\begin{theorem}
\label{thm:inhomogeneous-NRL2}
	Given the setup above, we have, for each $\varepsilon>0$, the estimate
	\begin{equation}
	\lVert u - v \rVert_{H_{\calctwo}^{m-1,\mathsf{s}-1-\varepsilon,\ell-2;0,0 } } \lesssim c^{-1}\lVert F \rVert_{H_{\calctwo}^{m-1,\mathsf{s}+1,\ell-1;0,0}}
    \label{eq:misc_yu1}
	\end{equation}
	as $c\to\infty$, where the implicit constant is independent of $F$.
\end{theorem}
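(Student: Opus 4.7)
The plan is that this theorem is already essentially proved by the explicit construction preceding it, and the remaining task is simply to convert the qualitative $O(c^{-1})$ statement into the quantitative operator bound \eqref{eq:misc_yu1}. Concretely, the work above has exhibited $w = u - v$ with
\begin{equation*}
Pw = -O\bigl(e^{-ic^2 t}(P_- - N(P_-))v_{-,0} + e^{ic^2 t}(P_+ - N(P_+))v_{+,0}\bigr) + (P_0 - P)u_0,
\end{equation*}
living in $H_{\calctwo}^{m-2,\mathsf{s}-2\varepsilon,\ell-2;1,1} = c^{-1} H_{\calctwo}^{m-2,\mathsf{s}-2\varepsilon,\ell-3;0,0}$. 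Applying \Cref{thm:inhomog} with the shifted order $\mathsf{s}_0 = \mathsf{s}-1-\varepsilon$ (which still obeys the threshold and monotonicity conditions by the initial slack built into $\mathsf{s}$) yields
\begin{equation*}
\lVert w\rVert_{H_{\calctwo}^{m-1,\mathsf{s}-1-\varepsilon,\ell-2;0,0}} \lesssim \lVert Pw\rVert_{H_{\calctwo}^{m-2,\mathsf{s}-\varepsilon,\ell-3;0,0}} \lesssim c^{-1}\lVert Pw \rVert_{H_{\calctwo}^{m-2,\mathsf{s}-\varepsilon,\ell-2;1,1}},
\end{equation*}
with $c$-uniform constants from the uniform invertibility guaranteed by that theorem.

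The plan for the remaining step is to chase the linear dependence on $F$ through every stage of the construction of $v$. First, the Schr\"odinger forcing obeys $\lVert g_\pm\rVert_{H_{\mathrm{par,I,res}}^{\infty,\overline{\mathsf{s}}_\pm+1-\varepsilon,\ell-1,0}} \lesssim \lVert F\rVert_{H_{\calctwo}^{m-1,\mathsf{s}+1,\ell-1;0,0}}$ uniformly in $c$, since $O_\pm \in \Psi_{\calczero}^{-\infty,0,0}$ is a $c$-uniformly bounded family of smoothing operators and $g_\pm = e^{\mp ic^2 t}O_\pm F$ is its modulation. Next, the inverse $N(P_\pm)^{-1}$ from \cite[App.~B]{NRL_I}, together with the parametrized source-to-sink estimate \eqref{eq:v_pm_0_global-parIres-est}, gives $\lVert v_{\pm,0}\rVert$ in $H_{\mathrm{par,I,res}}^{\infty,\overline{\mathsf{s}}_\pm-\varepsilon,\ell,0}$ linearly in $\lVert g_\pm\rVert$, with a $c$-independent constant because $N(P_\pm)$ and its Hamilton flow are $c$-independent. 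Finally, the free Klein--Gordon inversion producing $u_0$ is itself an instance of \Cref{thm:inhomog} applied to $P_0$, hence also $c$-uniform, and its forcing in \cref{eq:S4_complicated_KG_part} is linear in $F$ (via $(1 - O(O_-+O_+))F$ and the commutator terms $[P,O](e^{\mp ic^2 t}v_{\pm,0})$, which are linear in $v_{\pm,0}$ and hence in $F$).

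Combining these ingredients, the residual $Pw$ is controlled by $c^{-1}\lVert F\rVert_{H_{\calctwo}^{m-1,\mathsf{s}+1,\ell-1;0,0}}$: the $c^{-1}$ improvement comes from $P_\pm - N(P_\pm) \in \Psi_{\calc}^{2,0,2,-1}$ (which carries a built-in order at $\mathrm{pf}$) and from $P_0 - P \in \Psi_{\calctwo}^{2,0,1;0,0}$ acting on the $\mathrm{pf}_\pm$-smoothing $u_0 \in H_{\calctwo}^{m,\mathsf{s}-2\varepsilon,\ell;\infty,\infty}$. Feeding this back through the displayed $\calctwo$-estimate on $w$ gives \eqref{eq:misc_yu1}.

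The main obstacle, and the place where the argument requires the most care, is the bookkeeping of the variable spacetime orders across the four calculi $\Psi_{\calctwo}$, $\Psi_{\calc}$, $\Psi_{\mathrm{par,I,res}}$, and $\Psi_{\mathrm{par}}$: the cutoff $O$ can only track $\mathsf{s}$ up to an $\varepsilon$-slack dictated by the neighborhoods $U_\pm$ on which $\overline{\mathsf{s}}_\pm$ is defined, and the $\mathrm{par,I,res}$-to-$\calc$ embedding costs an additional $\varepsilon$. These losses accumulate to the $1+\varepsilon$ drop in the spacetime order appearing on the left side of \eqref{eq:misc_yu1}, but none obstructs the desired $c^{-1}$ rate because $\varepsilon$ may be fixed at the outset arbitrarily small subject only to the threshold condition on $\mathsf{s}-2\varepsilon$.
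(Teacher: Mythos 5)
Your proposal is correct and follows the same approach as the paper's own (very terse) proof, which simply asserts that the construction of $v$ preceding the theorem depends linearly and quantitatively on $F$. You fill in the linearity bookkeeping at each stage (forming $g_\pm$ from $F$ via the $c$-uniform cutoff $O_\pm$, inverting the $c$-independent $N(P_\pm)$, solving the free Klein--Gordon problem for $u_0$), and correctly identify that the $c^{-1}$ gain comes from $P_\pm - N(P_\pm) \in \Psi_{\calc}^{2,0,2,-1}$ and $P - P_0 \in \Psi_{\calctwo}^{2,0,1;0,0}$ acting on the $\mathrm{pf}_\pm$-trivial $u_0$, matching the paper's derivation. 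One small bookkeeping remark: the paper establishes $Pw \in H_{\calctwo}^{m-2,\mathsf{s}-2\varepsilon,\ell-2;1,1}$ (with the construction's $\varepsilon$), so the shifted order fed into \Cref{thm:inhomog} is really $\mathsf{s}-1-2\varepsilon$ rather than $\mathsf{s}-1-\varepsilon$; both the paper and your proposal elide this, but since $\varepsilon$ may be taken arbitrarily small the discrepancy is a harmless relabeling and not a gap.
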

\begin{proof}
    In the argument above, we only required that $F\in H_{\calctwo}^{m-1,\mathsf{s}+1,\ell-1;0,0}$. So, going through the same argument, quantitatively, we get \cref{eq:misc_yu1}.
\end{proof}

\begin{remark}
    As far as getting an estimate of the form 
    \begin{equation}
	\lVert u - v \rVert_{H_{\calctwo}^{m-1,\mathsf{s}-1-\varepsilon,\ell-K;0,0 } } \lesssim c^{-k}\lVert F \rVert_{H_{\calctwo}^{m-1,\mathsf{s}+1,\ell-1;0,0}}, \quad k,K>0
	\end{equation}
    is concerned, if we do not care what $K>0$ is, then the only factor limiting us to $k\leq 1$ is the order of $P_\pm - N(P_\pm)$ at $\mathrm{pf}$. In particular, if this difference is order $-2$, i.e.\ $O(c^{-2})$, at $\mathrm{pf}$, for whatever reason, then we get an estimate with $k=2$.
\end{remark}

\begin{remark}[An alternative splitting]  
Let $t_{\calczero}=c^2t$ be as in \Cref{subsec:natural} and suppose that the temporal Fourier transform $\calF_{t_{\calczero} \to \tau_{\calczero}} F$ is $C^1$ in $\tau_{\calczero}$. Then, we can split $\calF_{t_{\calczero} \to \tau_{\calczero}} F$ into two parts, 
	\begin{equation}
	\calF_{t_{\calczero}\to\tau_{\calczero}} F = \scrF_1 + \tau_{\calczero} \scrF_2, 
	\end{equation}
	where 
	\begin{align}
	\scrF_1 &= 2^{-1} (\calF_{t_{\calczero} \to \tau_{\calczero}} F(\tau_{\calczero}) + \calF_{t_{\calczero}\to\tau_{\calczero}} F(-\tau_{\calczero}) ) \\
	\scrF_2 &= 2^{-1} \tau_{\calczero}^{-1} (\calF_{t_{\calczero}\to\tau_{\calczero}} F(\tau_{\calczero}) - \calF_{t_{\calczero}\to\tau_{\calczero}} F(-\tau_{\calczero}) )
	\end{align}
	are the $\tau_{\calczero}$-even and $\tau_{\calczero}$-odd parts of $\calF_{t_{\calczero} \to \tau_{\calczero}} F$, respectively.  Now define 
	\begin{equation}
	F_+ = \frac{e^{-ic^2 t}}{2}\mathcal{F}^{-1}_{\tau_{\calczero} \to t_{\calczero}}(\mathscr{F}_1 + \mathscr{F}_2), \quad 
	F_- = \frac{e^{ic^2 t}}{2} \mathcal{F}^{-1}_{\tau_{\calczero} \to t_{\calczero}}(\mathscr{F}_1 - \mathscr{F}_2).
	\end{equation}
	Then, 
	\begin{equation} 
	f  - \frac{1}{2}(h^2D_t+1)(e^{it/h^2}F_+) - \frac{1}{2}(-h^2D_t+1)(e^{-ic^2t}F_-) \in H_{\calctwo}^{m-1,\mathsf{s}+1,\ell-1;1,1}.
	\end{equation}
	which is roughly how we decompose the delta type forcing in the advanced, retarded and Cauchy problems in the following section.
    The point of this decomposition is that $(\pm h^2D_t+1)$ are precisely those operators with symbols vanishing at $\mathrm{pf}_\mp$ to the first order and equals to $1$ to the first order at $\mathrm{pf}_\pm$.
\end{remark}

\section{Asymptotics for the Cauchy problem}\label{sec:Cauchy}
Next, we turn to the analysis of the Cauchy problem. We begin in \S\ref{subsec:advanced/retarded} with the analysis of the advanced/retarded problem with $\delta(t)$-type forcing 
\begin{equation}
F = \delta(t) f(x,c) + c^{-2} \delta'(t) g(x,c) 
\label{eq:delta-forcing}
\end{equation}
for $f(-,c),g(-,c)\in \calS(\bbR^d)$. \Cref{prop:main_advanced/retarded} can be considered our main result there.  Then, in \S\ref{subsec:Cauchy}, we turn to the Cauchy problem proper. Our main result there is a proof of \Cref{thm:Cauchy}.

We discuss the advanced/retarded problem first because our treatment of the Cauchy problem involves a reduction to the advanced/retarded problem. This reduction is done in the usual way: if $u$ satisfies the Cauchy problem, then, letting $\Theta(t)$ denote a Heaviside function, $\Theta u, (1-\Theta)u$ will satisfy the advanced/retarded problems with a $\delta(t)$-type forcing. The reason why this reduction to the advanced/retarded problem is necessary (or at least useful) is that, in order to control $w=u-v$ in the Cauchy problem, it is natural to propagate control forwards and backwards from a neighborhood of the Cauchy surface $\mathrm{cl}_\bbM\{t=0\}$ in $\bbM$, but this requires \emph{already} having control in spacetime cones of the form $\{|t|<\varepsilon r\}$ for some $\varepsilon>0$. At the outset, we do not have such control (unlike in \S\ref{sec:inhomogeneous}, where we did have a priori control). In contrast, the natural way to control $\Theta w ,(1-\Theta)w$ is to propagate control from the far past/future, in one of which $\Theta w,(1-\Theta)w$ vanishes identically, by construction. Indeed, this is the propagation argument in \S\ref{subsec:advanced/retarded}.
There are a few natural concerns arising from the fact that the Schr\"odinger radial sets contain points over the equator of $\partial \bbM$, which intersects the support of $\Theta,1-\Theta$, but the argument goes through.

\begin{remark}[Aforementioned concerns]\hfill
	\begin{itemize} 
		\item The fact that $\calR$ has, at $c=\infty$, points over the equator means that it is not a priori obvious that $\Theta w,(1-\Theta)w$ are controlled as $c\to\infty$ on any component of the radial set $\calR$, even though this holds for each individual $c<\infty$. So, where are we propagating control from? Fortunately, the ``above threshold'' radial point estimates in \cite{NRL_I} just require that, for each \emph{individual} $c<\infty$, the above-threshold condition is satisfied on the speed-$c$ Klein--Gordon radial set. Since $\Theta w,(1-\Theta)w$ automatically vanish near half of the components of the Klein--Gordon radial set for every individual $c<\infty$, this suffices to verify the hypotheses of the above-threshold estimate. We can therefore actually \emph{conclude} that each of $\Theta w,(1-\Theta) w$ is controlled on the relevant components of $\calR$, even though it was not initially obvious.

		\begin{figure}[h!]
			\begin{center}
				\includegraphics{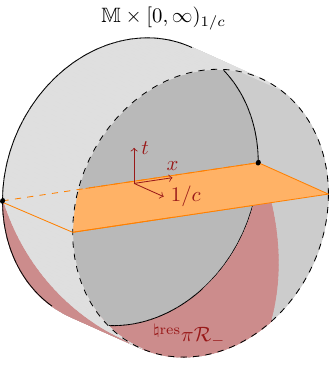}
			\end{center}
			\caption{The projection ${}^{\calc}\pi \calR_-$, in red, of $\calR_-$ onto the base $\bbM\times [0,\infty)_{1/c}$, and $\mathrm{cl}_\bbM\{t=0\}\times [0,\infty)_{1/c}$, in orange. The two sets intersect over the equator of $\partial \bbM$ (black dots). As explained in \Cref{rem:worries}, this intersection might cause some worries, but the argument ends up going through.}
		\end{figure}

		\item Another worry might be whether the wavefront set of the $\delta(t)$-type forcing intersects the characteristic set, or worse, the radial sets. This does actually happen  in the $\calc$-cotangent bundle, showing that the concern is justified, but it does not happen in the $\calctwo$-cotangent bundle. See \Cref{fig:Bfig}. That $\operatorname{WF}_{\calctwo}(\delta)$ intersects $\mathrm{pf}_\pm$ is the reason why, in \cite{NRL_I}, we made sure to prove an estimate with microlocalizers --- they are required to mollify the $\delta$-function. 
	\end{itemize}
	\label{rem:worries}
\end{remark}

So, what we will do to control the Cauchy problem is use the results in \S\ref{subsec:advanced/retarded} to control $\Theta w,(1-\Theta) w$ and then assemble those pieces to control $w$. 

It should be stressed that once the advanced/retarded problem is under control, controlling the Cauchy problem is totally straightforward. The only thing that needs much elaboration is how the Schr\"odinger initial data is related to the Klein--Gordon initial data. This comes from combining the following:
\begin{enumerate}
    \item the relation between the Schr\"odinger initial data and the forcing of the advanced/retarded problem to which it is reduced,
    \item the relation between the Klein--Gordon initial data and the forcing of the advanced/retarded problem to which it is reduced, 
    \item the relation between the Schr\"odinger forcing and Klein--Gordon forcing in the non-relativistic limit of the advanced/retarded problem. (We will discuss this below; see \cref{eq:advanced/retarded_fdecomp}.)
\end{enumerate}
We will have to check that this is essentially equivalent to matching derivatives.

A key point below, in the discussion of the advanced/retarded problem with $\delta$-type forcing, will be to split the forcing $F = \delta(t) f(x) + c^{-2}\delta'(t) g(x)$ into two parts,
\begin{equation}
F= e^{-ic^2 t} F_- + e^{ic^2 t} F_+ 
\label{eq:misc_176}
\end{equation}
such that, if we define $v_\pm$ as the solutions to 
\begin{equation} 
	N(P_\pm) v_\pm = F_\pm
	\label{eq:misc_17a}
\end{equation} 
in some appropriate space, then $u\approx v$ holds for $v$ given by the usual formula $v=e^{-ic^2 t} v_-+e^{ic^2 t} v_+$.
Unless we get $F_\pm$ \emph{exactly} right at $c=\infty$, $u\approx v$ will not hold. So, we must choose $F_\pm$ well. The natural guess is to write 
\begin{equation}
F_\pm = \delta(t)  f_{\pm}(x) + c^{-2} \delta'(t) g_{\pm}(x) 
\label{eq:misc_175}
\end{equation}
for some $f_\pm(x),g_\pm(x)$. Indeed, given $f_\pm,g_\pm$, if we define $F_\pm$ by \cref{eq:misc_175}, then \cref{eq:misc_176} holds if and only if 
\begin{align}
g(x) &= g_{-}(x) + g_{+}(x) \\
f(x) &= f_{-}(x) + f_{+}(x) + i g_{-}(x) - i g_{+}(x).
\label{eq:misc_177}
\end{align}
Of course, this system under-determines $f_{\pm},g_{\pm}$, but suppose we impose the additional constraint that $\exp(\pm 2i c^2 t) F_\pm$ have $\calc$-Sobolev regularity at $\mathrm{pf}$. This is essentially saying that we split $F$ correctly: the two terms in \cref{eq:misc_176} do not secretly harbor a term that oscillates like the other of the two terms.
Concretely, 
\begin{equation}
e^{\pm 2i c^2 t} F_\pm = \delta(t) (f_{\pm}(x) \mp 2i g_{\pm}(x) ) + c^{-2} \delta'(t) g_{\pm}(x).
\label{eq:misc_5k1}
\end{equation}
and what we want is that this is $O(c^{-1})$ \emph{at $\mathrm{pf}$}.
The second term, $c^{-2} \delta'(t) g_{\pm}(x)$ is suppressed by $c^{-2}$ at $\mathrm{pf}$. It should be subleading there, so ignore it. The leading term should therefore be $\delta(t) (f_{\pm}(x) \mp 2i g_{\pm}(x) )$. 
Requiring this to vanish (remember that we are free to specify $f_\pm,g_\pm$) gives two more equations: 
\begin{equation}
f_{\pm}(x) = \pm 2i g_{\pm}(x).
\label{eq:misc_179}
\end{equation}
The system \cref{eq:misc_176}, \cref{eq:misc_179} now determines $f_\pm(t,x)$: 
\begin{equation}
f_\pm = f\pm i g\text{ and }g_\pm = 2^{-1}(g\mp i f).
\label{eq:advanced/retarded_fdecomp}
\end{equation}
So, 
\begin{equation} 
    F_\pm = \delta(t) (f\pm i g) + 2^{-1}c^{-2} \delta'(t) (g\mp i f).
    \label{eq:Fpm}
\end{equation}

Note that $g$, despite being suppressed by $O(c^{-2})$ in the definition of $F$, contributes to the leading term in $F_\pm$. Thus, a $c^{-2}\delta'(t)$ term in $F$ is ``leading order'' in some sense. This might seem to contradict our throwing out of  $c^{-2} \delta'(t) g_{\pm}(x)$ above, in $F_\pm$. The difference is that the relevant phase space to analyze $\smash{e^{\pm 2ic^2 t}F_\pm}$ is the $\calc$-phase space, whereas the relevant phase space to analyze $F$ is the $\calctwo$-phase space. While the $c^{-2} \delta'(t) $ term in $F$ might be lower-order than $\delta(t)$ \emph{at $\mathrm{pf}$}, this is not what matters --- what matters is that it is the same order \emph{at $\mathrm{pf}_\pm$}. This is what multiplying by $\smash{e^{\pm i c^2 t}}$ brings out; that $\delta'(t)$ is secretly $\Omega(c^2)$ relative to $\delta(t)$ at $\mathrm{pf}_\pm$ corresponds to the fact that 
\begin{equation} 
e^{\pm ic^2 t} \delta'(t) = \delta'(t)\mp i c^2 \delta(t) 
\end{equation} 
has a manifest $c^2 \delta(t)$ term in it. Alternatively one can just reason that $\partial_t = O(c^2)$ as a $\natural$-operator, so $\delta'(t)$ should be considered $O(c^2)$  relative to $\delta(t)$ at $\natural\mathrm{f}$, which contains the loci blown up to get $\mathrm{pf}_\pm$.

However, we \emph{will} be able to ignore a $c^{-2} \delta'(t)$ term in $F_\pm$, since $F_\pm$ is the forcing fed to a $c$-\emph{independent} PDE, $N(P_\pm)$. Thus, rather than define $v_\pm$ 
as the solution of the Schr\"odinger equation \cref{eq:misc_17a} 
with forcing $F_\pm$, as defined by \cref{eq:Fpm},  we can instead define $v_\pm$ by the Schr\"odinger equation with simpler forcing
\begin{equation}
N(P_\pm)v_\pm = \delta(t) (f\pm i g)
\end{equation}
--- notice the lack of a  $\delta'(t)$ term ---
and this suffices for constructing an approximation to $u$. This observation will be used in the application to the Cauchy problem, because, whereas the Cauchy problem for Klein--Gordon can be reduced to the advanced/retarded problem with forcing of the form \cref{eq:delta-forcing}, having both a $\delta(t)$ term and a $\delta'(t)$ term, the initial-value problem for Schr\"odinger is reduced to the advanced/retarded problem with simple $\delta(t)$ forcing. Thus, the $v_\pm$ above will be relatable to the solutions of a suitable initial-value problem problem.

We will give a second argument for \cref{eq:advanced/retarded_fdecomp} below.

\subsection{\texorpdfstring{The advanced/retarded problem with $\delta$-type forcing}{The advanced/retarded problem with delta-type forcing}}
\label{subsec:advanced/retarded}

The advanced/retarded problem with $\delta$-type forcing (allowed to depend on $c$) reads
\begin{equation}
\begin{cases}
Pu = \delta(t) f (x,c) +  c^{-2} \delta'(t) g (x,c) \\ 
u(t,x)=0\text{ whenever }\varsigma t<0,
\end{cases}
\label{eq:advanced/retarded}
\end{equation}
where the sign $\varsigma \in \{-,+\}$ toggles between the advanced and retarded problems. 
For forcings, we assume 
\begin{equation}
   f,g\in  C^\infty((1,\infty]_c;\calS(\bbR^d))  \label{eq:advanced/retarded_forcing_generic}
\end{equation}
and some statements below will be strengthened when $f,g$ have no $O(1/c^2)$ in their $c\to\infty$ expansions and \cref{eq:quadratic-converge-1} and \cref{eq:quadratic-converge-2} are satisfied as well.

The well-posedness of the advanced/retarded problems for the Klein--Gordon equation tells us that there exist unique $u=u^\varsigma \in \calS'$ solving \cref{eq:advanced/retarded}.

The forcing in \cref{eq:advanced/retarded} is a sum of distributions of the form 
\begin{equation}
F(t,x,c) = \delta^{(k)}(t) f(x,c) 
\label{eq:misc_403}
\end{equation}
for some $k\in \bbN$, $f\in C^\infty((1,\infty]_c;\calS(\bbR^d))$. Let us study the $\calctwo$-wavefront set $\operatorname{WF}'_{\calctwo}(F)$ defined in \cref{eq:calctwo-WF-def}. 

Unless there are some cancellations between the contributions to $u$ from the different $\delta$-functions in the forcing, we should expect that the $\calctwo$-wavefront sets of the $F$'s have to be present in the $\calctwo$-wavefront sets of $u$. 
What matters for us is that this wavefront set is disjoint from the $\calctwo$-characteristic set of $P$ and therefore does not obstruct propagation estimates. But, $\operatorname{WF}_{\calctwo}(\delta(t))$ \emph{will} intersect the characteristic set. Indeed, $\delta(t)$ will have $\operatorname{WF}_{\calctwo}$ at the $\xi=0$ part of $\mathrm{pf}_\pm$ over the equator of $\partial \bbM$, because $\delta(t)$ is not rapidly decaying as $x\to\infty$, and the $\mathrm{par}$-wavefront set detects this. The next lemma shows that this portion of the wavefront set of $\smash{\delta^{(k)}}$ does not enter $F$, because when we form $F=\smash{\delta^{(k)} f}$, the multiplication by $f$ (which is Schwartz for each $c>0$, with control as $c\to \infty$) kills it.

To be more precise, let $\calB$ denote the portion  of the closure $\operatorname{cl} \{\xi=0\}$ (in the $\calctwo$-phase space) of $\{\xi=0\} \subset T^* \bbR^{1,d} \times \bbR^+_c$ contained in $ \mathrm{df}\cup \natural\mathrm{f}$ and over $ \mathrm{cl}_\bbM\{t=0\}$:
\begin{equation} \label{eq:calB-def}
    \calB \overset{\mathrm{def}}{=} 
    \operatorname{cl} \{\xi=0\} \cap (\mathrm{df} \cup \natural \mathrm{f}) \cap \mathrm{cl}\{t=0\}.
\end{equation}

Then, $\calB$ is disjoint from the $\calctwo$-characteristic set $\Sigma$ of $P$:
\begin{equation} \label{eq:calB-property}
\calB\cap \Sigma =\varnothing.
\end{equation} 
See \Cref{fig:Bfig}. (Because the closure is \emph{in} ${}^{\calctwo}\overline{T}^* \bbM$ and not in the topology induced by the blowdown map ${}^{\calctwo}\overline{T}^* \bbM \to {}^{\calczero}\overline{T}^* \bbM$, $\mathrm{cl}\{\xi=0\}$ does not include all of $\mathrm{pf}_\pm $, as can be seen from \Cref{fig:Bfig}.\footnote{In contrast, $\{\xi_\natural=0\}$ makes prima facie sense as a subset of ${}^{\calctwo}T^* \bbM$ (note that we are excluding $\mathrm{df}$)  and contains all of $\mathrm{pf}_\pm$. But, because $\mathrm{cl}\{\xi=0\}$ is defined instead by starting with $\{\xi=0\}$ \emph{inside the interior} of the $\calctwo$-phase space and \emph{only then} taking a closure, the set $\calB$ ends up only containing a subset of $\mathrm{pf}_\pm$.})
The key point is:

\begin{proposition}
	For $F$ defined by \cref{eq:misc_403}, we have $\operatorname{WF}_{\calctwo}(F) \subseteq \calB$. 
    \label{prop:delta_WF}
\end{proposition}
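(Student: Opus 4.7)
The approach is to directly verify the containment by checking microlocal regularity at every point of $\partial{}^{\calctwo}\overline{T}^*\bbM \setminus \calB$, using two structural features of the distribution $F=\delta^{(k)}(t)f(x,c)$: its spacetime support lies in $\{t=0\}$, and its spatial Fourier transform $\hat F(\tau,\xi,c)=(i\tau)^k\hat f(\xi,c)$ is Schwartz in $\xi$, uniformly in $c$ near $c=\infty$. These two features control the base projection and fiber direction of the $\calctwo$-wavefront set respectively. The remaining work is to exclude the front faces $\mathrm{pf}_\pm$, where the definition of $\calctwo$-regularity is more subtle.

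\textit{Step 1 (base projection).} For any $p$ whose base projection lies outside $\operatorname{cl}_{\bbM}\{t=0\}$, pick a cutoff $\chi\in C^\infty(\bbM)$ that is supported away from $\operatorname{cl}_{\bbM}\{t=0\}$ and equal to $1$ near the base projection of $p$. Multiplication by $\chi$ lies in $\Psi_{\calctwo}^{0,0,0;0,0}$, is elliptic at $p$, and $\chi F\equiv 0$ since $\operatorname{supp}F\subset\{t=0\}$. This also disposes of any candidate wavefront on $\mathrm{bf}$ over the poles or on $\mathrm{df}/\natural\mathrm{f}$ over the interior of $\bbM\setminus\{t=0\}$.

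\textit{Step 2 (frequency localization).} For any $p$ outside $\operatorname{cl}\{\xi=0\}$, construct a $\calctwo$-symbol $a$ elliptic at $p$ and supported where the $\xi$-component is bounded below in the relevant coordinates ($|\xi_\natural|$ bounded below at $\natural\mathrm{f}$, $|\xi|/\aang{\zeta}$ bounded below at $\mathrm{df}$, etc.). The Fourier representation of $\operatorname{Op}(a)F$ involves the product of $a$ with $\hat F(\tau,\xi,c)=(i\tau)^k\hat f(\xi,c)$; the Schwartz decay of $\hat f$ in $\xi$---uniformly in $c$---delivers the Schwartz-uniform bound needed to conclude $p\notin\operatorname{WF}_{\calctwo}(F)$. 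Moreover, at points of $\mathrm{bf}$, Schwartz decay of $f$ in $x$ similarly yields rapid decay of $F$ at spatial infinity, so no wavefront survives on $\mathrm{bf}$.

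\textit{Step 3 (exclusion of $\mathrm{pf}_\pm$).} This is the key step. Using the definition via the alternative norm \cref{eq:definition_2res norm}, $\calctwo$-regularity at a point of $\mathrm{pf}_\pm$ is equivalent to $\calc$-regularity of $e^{\mp ic^2 t}Q_\pm F$ at the corresponding point of $\mathrm{pf}$ in the $\calc$-phase space. Expanding with Leibniz's rule yields the identity
\begin{equation*}
e^{\mp ic^2 t}\delta^{(k)}(t)=\sum_{j=0}^k\binom{k}{j}(\pm ic^2)^j\delta^{(k-j)}(t),
\end{equation*}
so $e^{\mp ic^2 t}F$ is a finite sum of terms $(\pm ic^2)^j\delta^{(k-j)}(t)f(x,c)$. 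Each $\delta^{(k-j)}(t)f(x,c)$ has Fourier transform $(i\tau)^{k-j}\hat f(\xi,c)$, polynomial in $\tau$ and Schwartz in $\xi$; hence it lies in $H_{\mathrm{par}}^{l,r}$ for all $l<-(k-j)-1/2$ and any $r<-d/2$, uniformly in $c$. After absorbing the $c^{2j}$ factors into the $\mathrm{pf}$-order, each term is $\calc$-Sobolev regular (at sufficiently negative order) on the interior of $\mathrm{pf}$, so its $\calc$-wavefront set lies in $\mathrm{df}\cup\mathrm{bf}$, disjoint from the interior of $\mathrm{pf}$. Translating back, $\operatorname{WF}_{\calctwo}(F)\cap\mathrm{pf}_\pm=\varnothing$.

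\textit{Main obstacle.} The delicate point is Step 3: one must carefully balance the polynomial $c^{2j}$-growth introduced by conjugation by $e^{\mp ic^2 t}$ against the $\mathrm{par}$-Sobolev regularity of $\delta^{(k-j)}(t)f$, choosing the $\mathrm{pf}$-orders in the $\calc$-Sobolev spaces accordingly. The mechanism that makes this work is that $\hat f(\xi,c)$ is Schwartz in $\xi$ \emph{uniformly in $c$}, so all the $c$-growth is confined to the explicit polynomial prefactor, not to any hidden singular behavior of $f$; the remaining $\tau$-dependence is polynomial and thus handled by picking a sufficiently negative $\mathrm{par}$-differential order, which is permitted because the $\calctwo$-wavefront set is defined as the union over positive-integer Sobolev indices.
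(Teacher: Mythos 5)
Your Steps 1 and 2 are essentially correct and broadly parallel the paper's proof, though via different mechanisms. Step 1 (finite propagation of supports) matches the paper exactly. Step 2 takes a direct Fourier route: you exploit the Schwartz decay of $\hat{f}$ in $\xi$, uniformly in $c$, to beat the polynomial $\tau^k$ growth. The paper instead localizes frequency via elliptic regularity, observing that $\triangle \in \operatorname{Diff}_{\calctwo}^{2,0,2;0,0}$ kills $\delta^{(k)}(t)$ and has characteristic set exactly $\operatorname{cl}\{\xi=0\}\cap(\mathrm{df}\cup\mathrm{bf}\cup\natural\mathrm{f})$, so elliptic regularity immediately localizes $\operatorname{WF}_{\calctwo}(\delta^{(k)})$ to $\operatorname{cl}\{\xi=0\}$. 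Both localizations are correct; the paper's is more structural, and for the exclusion of the interior of $\mathrm{bf}$ the paper is sharper: it factors $F=\delta^{(k)}(t)f_{\mathrm{ext}}(t,x,c)$ and uses that $M_{f_{\mathrm{ext}}}\in\operatorname{Diff}_{\calctwo}^{0,-\infty,0;0,0}$ has $\operatorname{WF}'_{\calctwo}(M_{f_{\mathrm{ext}}})\subseteq\mathrm{df}\cup\natural\mathrm{f}$, which is cleaner than your position-space ``rapid decay at spatial infinity'' remark, and automatically preserves the corners $\mathrm{bf}\cap(\mathrm{df}\cup\natural\mathrm{f})$ rather than overclaiming ``no wavefront on $\mathrm{bf}$'' (parts of $\calB$ can lie there).

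Step 3 is where the proposal goes wrong, and this is the step you flag as ``the key step'' and ``the main obstacle.'' It is neither necessary nor correct. By the definition in \cref{eq:calctwo-WF-def}, $\operatorname{WF}_{\calctwo}^{m,\mathsf{s},\ell}(u)$ is a closed subset of $\mathrm{df}\cup\mathrm{bf}\cup\natural\mathrm{f}$; points in the \emph{interior} of $\mathrm{pf}_\pm$ are simply not in the domain of the wavefront set, and the orders at $\mathrm{pf}_\pm$ are frozen at $(0,0)$. There is nothing to exclude. Worse, the conclusion $\operatorname{WF}_{\calctwo}(F)\cap\mathrm{pf}_\pm=\varnothing$ is generically \emph{false}: the corners $\mathrm{pf}_\pm\cap\natural\mathrm{f}$ lie inside $\natural\mathrm{f}$, they are the very locus where the $c^{2j}$ prefactors from $e^{\mp ic^2 t}\delta^{(k)}(t)=\sum_j\binom{k}{j}(\pm ic^2)^j\delta^{(k-j)}(t)$ manifest, and they are contained in $\calB$ (which is why the proposition's conclusion is $\operatorname{WF}_{\calctwo}(F)\subseteq\calB$ and not something disjoint from $\mathrm{pf}_\pm$). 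Your device of ``absorbing the $c^{2j}$ factors into the $\mathrm{pf}$-order'' is not available in the definition of $\operatorname{WF}_{\calctwo}$, precisely because $q_\pm=0$ is fixed. In short: delete Step 3, interpret Step 2's $\mathrm{bf}$ remark as only excluding $\mathrm{bf}\setminus(\mathrm{df}\cup\natural\mathrm{f})$, and make that exclusion precise --- the paper's multiplication-operator argument is a clean way to do so --- and the proof is sound.
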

In other words, if $B\in \Psi_{\calctwo}^{0,0,0;0,0}$ satisfies $\operatorname{WF}^{\prime}_{\calctwo}(B)\cap \calB = \varnothing$, then, for any $m,\ell\in \bbR$ and for any variable $\calctwo$-order $\mathsf{s}$, 
	\begin{equation}
	\lVert B F \rVert_{H_{\calctwo}^{m,\mathsf{s},\ell;0,0} }= O(1) 
	\end{equation}
	as $c\to\infty$.
\begin{proof}
    Because $F$ is supported at $t=0$, we know that $\operatorname{WF}_{\calctwo}(F)$ lies only over $ \mathrm{cl}_\bbM\{t=0\}$.

	In order to see that $\operatorname{WF}_{\calctwo}(F)$ is contained inside the closure $\calC = \mathrm{cl} \{\xi=0\}$ of the set $\{\xi=0\}$ (again, this is the closure in the $\calctwo$-phase space of a subset of the interior), we can use e.g.\ $\triangle \delta^k(t) = 0$, where $\triangle$ is the spatial Laplacian. 
    Since 
    \begin{equation} 
		\partial_{x_j} \in \operatorname{Diff}_{\calc}^{1,0,1,0}
	\end{equation} 
	and does not change when conjugated by $\exp(i\alpha t)$ for $\alpha\in \bbR$, we have $\partial_{x_j} \in \operatorname{Diff}_{\calctwo}^{1,0,1;0,0}$, so
	\begin{equation} 
	\triangle \in \operatorname{Diff}_{\calctwo}^{2,0,2;0,0},
	\end{equation} 
	and its $\calctwo$-characteristic set is precisely $\calC \cap (\mathrm{df}\cup\mathrm{bf}\cup\natural\mathrm{f})$. (Recall that all characteristic sets are contained in $\mathrm{df}\cup\mathrm{bf}\cup\natural\mathrm{f}$.) So, elliptic regularity  in the $\calctwo$-calculus suffices to give that the wavefront set \begin{equation} 
    \operatorname{WF}_{\calctwo}(\delta^{(k)}(t))\subset \calC .
    \end{equation}
    It easily follows that $\operatorname{WF}_{\calctwo}(F)$ is contained in $\calC$. (For example, this follows from \cref{eq:misc_523} and the fact that $\calctwo$-pseudodifferential operators do not spread $\calctwo$-wavefront set.)

    The last thing we need to check is that $\operatorname{WF}_{\calctwo}(F)$ is contained in $\mathrm{df}\cup \natural\mathrm{f}$, thus ruling out wavefront set in $\mathrm{bf} \backslash (\mathrm{df}\cup \natural\mathrm{f})$. This is where we use that multiplying $\delta^k(t)$ by $f$ kills some of the former's $\calctwo$-wavefront set. 
    To prove this, note that we can write
	\begin{equation} 
		F(t,x,c)=\delta^{(k)}(t) f_{\mathrm{ext}}(t,x,c)
	\end{equation} 
	for some\footnote{Any Schwartz $f_{\mathrm{ext}}$ agreeing with $f$ at $t=0$ and whose first $k-1$ temporal derivatives of $f_{\mathrm{ext}}$ identically vanish near $\operatorname{cl}_\bbM\{t=0\}$ works.}   $f_{\mathrm{ext}} \in \calS(\bbR^{1,d})$. (The ``ext'' stands for ``extension.'') The multiplication map $M_{f_{\mathrm{ext}}}$ satisfies 
	\begin{equation}
	M_{f_{\mathrm{ext}}} \in \operatorname{Diff}_{\calctwo}^{0,-\infty,0;0,0}. 
    \label{eq:misc_523}
	\end{equation}
	The full left symbol of $M_{f_{\mathrm{ext}}}$  is just $f_{\mathrm{ext}}$, which, when viewed as a function on ${}^{\calctwo}\overline{T}^* \bbM$ not depending on the frequency variables or on the speed of light, lacks essential support at $\mathrm{bf}$ except where $\mathrm{bf}$ intersects $\natural\mathrm{f} \cup \mathrm{df}$. 
    (No $\mathrm{pf}_\pm$ is required in the previous sentence. Again, this is because essential support is defined to be in $\mathrm{df}\cup\mathrm{bf}\cup\natural\mathrm{f}$.) Thus:
    \begin{equation}
        \operatorname{WF}'_{\calctwo}(M_{f_{\mathrm{ext}}}) \subseteq (\mathrm{df}\cup\natural\mathrm{f})
    \end{equation}     
    So, we can conclude that $\operatorname{WF}_{\calctwo}(F)\subseteq (\mathrm{df}\cup\natural\mathrm{f})$,
	which completes the proof. 
\end{proof}

\begin{figure}
	\begin{center}
		\includegraphics{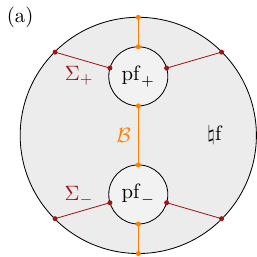}
		\qquad 
		\includegraphics{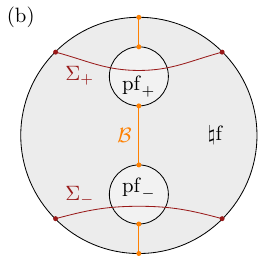}
        \hspace{2em}
        \includegraphics{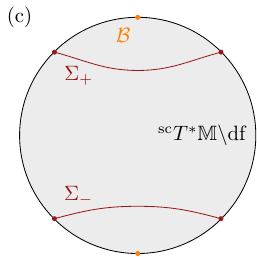}
	\end{center}
	\caption{(a) The set $\calB$ as it sits inside ${}^{\calctwo}\overline{T}^*\bbM$ over a point in $\{t=0\}\subset \bbM^\circ=\bbR^{1,d}$ and at $c=\infty$. 
	(b) The situation over 	$\partial \bbM\cap \operatorname{cl}_\bbM\{t=0\}$ is similar, except over $\partial \bbM $ the characteristic set continues through $\mathrm{pf}_\pm\backslash \natural\mathrm{f}$ rather than stopping at $\mathrm{pf}_\pm \cap \natural\mathrm{f}$. The key point is that $\calB$ does not intersect the characteristic set. (c) The same as (b), but for $h>0$. Note that the only portion of $\calB$ here is at fiber infinity.}
	\label{fig:Bfig}
\end{figure}

We want to exhibit the solution $u=u^\varsigma$ of \cref{eq:advanced/retarded} in the form $u= e^{-ic^2 t} u_- + e^{ic^2 t} u_+$ for $u_\pm\approx v_\pm$, where $v_{\pm}=v_\pm^\varsigma$ solve the advanced/retarded problem for the Schr\"odinger equation:
\begin{equation}
\begin{cases}
N(P_\pm) v_\pm = \delta(t) \phi_\pm(x) \\
v_\pm(t,x) = 0\text{ whenever } \varsigma t<0
\end{cases}
\label{eq:misc_407}
\end{equation}
for some $\phi_\pm=\phi^\varsigma_\pm \in  \calS(\bbR^d)$ which will need to be determined later. The $v_\pm$ so-defined satisfy 
\begin{equation}
v_\pm \in H_{\mathrm{par}}^{-N,\overline{\mathsf{s}}_\pm}
\end{equation}
for variable orders $\overline{\mathsf{s}}_\pm$ as above and $N\in \bbR$ sufficiently large.
Then, let $v= e^{-ic^2 t} v_- + e^{ic^2 t} v_+$. 
If $u_\pm \approx v_\pm$, then we should expect $u\approx v$. Proving this will take a bit of work. In \S\ref{subsub:vpm_props}, we will discuss the properties of $v_\pm$. In \S\ref{subsub:init_data}, we will discuss further the choice of initial data $\phi_\pm$ fed to \cref{eq:misc_407}. Finally, the estimates on $u-v$ are found in \S\ref{subsub:main}.

\begin{figure}
	\includegraphics{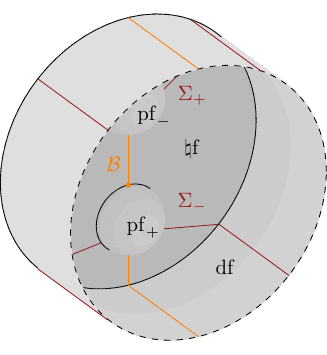}
	\caption{The same sets depicted in \Cref{fig:Bfig}, $\Sigma$ and $\calB$, shown over a point $(t,x)\notin \partial \bbM$ (so this corresponds to \Cref{fig:Bfig}(a)), except we are now including the $h$ coordinate.}
\end{figure}

\subsubsection{ \texorpdfstring{Properties of $v_\pm,v$}{Properties of v+,v-,v }}
\label{subsub:vpm_props}
The $\mathrm{par,I,res}$-wavefront set $\operatorname{WF}_{\mathrm{par,I,res}}(v_\pm)$ of $v_\pm$ receives contributions from three places: 
\begin{enumerate}
    \item In the elliptic region of $N(P_\pm)$ within the $\mathrm{par,I,res}$-phase space, it lies precisely where the $\mathrm{par,I,res}$-wavefront set of $\delta(t)\phi_\pm$ is.  \label{item:v-pm-WF-1}
    \item Inside the $\mathrm{par,I,res}$-characteristic set of $N(P_\pm)$, except for the outgoing radial set (where what ``outgoing'' means depends on whether we are studying the advanced problem or the retarded problem), $\operatorname{WF}_{\mathrm{par,I,res}}(v_\pm)$ lies where the $\mathrm{par,I,res}$-wavefront set of $\delta(t)\phi_\pm$ can flow to along $N(P_\pm)$'s Hamiltonian flow, 
    \item the incoming radial set, at which $v_\pm$ has as much below-threshold regularity as the forcing allows.
\end{enumerate}
We discussed this in \S\ref{sec:inhomogeneous}. The difference between that discussion and this discussion is that the forcing here, $\delta(t)\phi_\pm$, has 
\begin{equation} 
    \operatorname{WF}_{\mathrm{par,I,res}}(\delta(t)\phi_\pm)\neq \varnothing
\end{equation}
(generically). In fact, 
\begin{equation} 
\operatorname{WF}_{\mathrm{par,I,res}}(\delta(t)\phi_\pm)\subseteq \calB_{\mathrm{par}}\overset{\mathrm{def}}{=}\operatorname{cl}_{{}^{\mathrm{par,I,res}}\overline{T}^*\bbM } \big(\{\xi = 0, t=0\} \backslash \mathrm{df}_1 \big)
\end{equation} 
(with equality generically), as the argument used to prove \Cref{prop:delta_WF} shows. Consequently, $\operatorname{WF}_{\mathrm{par,I,res}}(\delta(t)\phi_\pm)$ is completely disjoint from the $\mathrm{par,I,res}$-characteristic set of $N(P_\pm)$. So, within the characteristic set, we have empty wavefront set:
\begin{equation}
\operatorname{WF}_{\mathrm{par,I,res}}^{m,s_0,\ell,0}(v_\pm) \cap \operatorname{char}_{\mathrm{par,I,res}}^{2,0,2,0}(N(P_\pm)) =\varnothing
\end{equation}
for all $s_0<-1/2$ and $m,\ell\in \bbR$. In fact, we have \begin{equation} 
\operatorname{WF}_{\mathrm{par,I,res}}^{m,s_0,\ell,0}(v_\pm) \cap \calB_{\mathrm{par}} = \varnothing.
\label{eq:misc_529}
\end{equation}

Given that $N(P_\pm)$ is the leading part of $P$ at $\mathrm{pf}_\pm$, we really only have a right to expect $u\approx v$ near $\mathrm{pf}_-\cup \mathrm{pf}_+$. If we want a more global approximation, we should presumably write $u\approx Ov$ for $O$ such that:
\begin{align}  \label{eq:O-def-1}
\begin{split}
& O\in \Psi_{\calczero}^{-\infty,0,0}
\text{ has essential support close to } \mathrm{pf}_-\cup \mathrm{pf}_+\text{ and }
\\& 1-O \text{ has essential support away from } \mathrm{pf}_-\cup \mathrm{pf}_+.
\end{split}
\end{align} 
  Fortunately, the difference between $v$ and $Ov$, for $v_\pm$ given by \cref{eq:misc_407} for Schwartz $\phi_\pm$, is unimportant, owing to the following:
\begin{proposition}
	Let $v =v^\varsigma_\pm$ be defined by \cref{eq:misc_407} for $\phi_\pm=\phi^\varsigma_\pm \in \calS(\bbR^d)$, $O$ be as above, and $\chi \in C^\infty(\bbM)$ have support disjoint from  $\operatorname{cl}_\bbM\{t=0\}$, then, 
	\begin{equation}
	\lVert M_\chi (1-O) v \rVert_{H_{\calctwo}^{m,s_0,\ell;0,0} } = O\Big( \frac{1}{c^\infty} \Big)
	\end{equation}
	as $c\to\infty$, 
	for every $m,\ell\in \bbR$ and $s_0<-1/2$. Consequently, via the Sobolev embedding theorem \Cref{prop:Sobolev_embedding}, $M_\chi (1-O)v \to 0$ uniformly in compact subsets of $\bbR^{1,d}$ as $c\to\infty$, faster than any polynomial in $1/c$. 
	\label{prop:ansatz_insensitivity}
\end{proposition}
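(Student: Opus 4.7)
I will prove that $\operatorname{WF}_{\calctwo}(M_\chi(1-O)v) = \varnothing$ and then translate this microlocal emptiness into the claimed quantitative decay. The argument rests on the fact that $v$ is microlocalized (as a $\calctwo$-distribution) at $\mathrm{pf}_-\cup\mathrm{pf}_+$ together with a small residual set sitting over $\mathrm{cl}_\bbM\{t=0\}$; the first of these is killed by $(1-O)$, the second by $M_\chi$.

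First, I would characterize $\operatorname{WF}_{\calctwo}(v)$ for $v=e^{-ic^2 t}v_-+e^{ic^2 t}v_+$. Via the canonical identification $\mathrm{pf}_\pm\cong{}^{\mathrm{par}}\overline{T}^*\bbM$, the oscillatory factor $e^{\pm ic^2 t}$ translates the temporal-frequency content of $v_\pm$ to the ``correct'' half of the $\calctwo$-phase space, so the behavior of $v$ near $\mathrm{pf}_\pm$ is encoded by the $\mathrm{par,I,res}$-behavior of $v_\pm$. For $v_\pm$, I would use the analog of \Cref{prop:delta_WF} applied to the forcing $\delta(t)\phi_\pm$ (with $\phi_\pm\in\mathcal{S}(\bbR^d)$): the $\mathrm{par,I,res}$-wavefront set of $\delta(t)\phi_\pm$ is contained in the parabolic analog $\calB_{\mathrm{par}}$ of $\calB$, a subset of $\{\xi=0\}\cap\mathrm{df}_1$ lying over $\mathrm{cl}\{t=0\}$. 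Since $\calB_{\mathrm{par}}$ is disjoint from the characteristic set of $N(P_\pm)$ (the latter is $\{2\tau+|\xi|^2=0\}$, which misses $\{\xi=0,|\tau|\to\infty\}$), elliptic regularity gives $\operatorname{WF}_{\mathrm{par,I,res}}(v_\pm)\subseteq\calB_{\mathrm{par}}$; any additional wavefront set that may appear at spacetime infinity through the Schr\"odinger asymptotics sits in $\mathrm{pf}_\pm$ in the $\calctwo$-picture. Consequently
\begin{equation*}
\operatorname{WF}_{\calctwo}(v)\subseteq (\mathrm{pf}_-\cup\mathrm{pf}_+)\cup \pi^{-1}(\mathrm{cl}_\bbM\{t=0\}),
\end{equation*}
where $\pi:{}^{\calctwo}\overline{T}^*\bbM\to\bbM$ is the base projection.

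Second, I would apply the two cutoffs in turn. By construction $\operatorname{WF}'_{\calczero}(1-O)\cap(\mathrm{pf}_-\cup\mathrm{pf}_+)=\varnothing$, so using the fact that $\calctwo$-pseudodifferential operators do not enlarge the $\calctwo$-wavefront set,
\begin{equation*}
\operatorname{WF}_{\calctwo}((1-O)v)\subseteq \operatorname{WF}_{\calctwo}(v)\setminus(\mathrm{pf}_-\cup\mathrm{pf}_+)\subseteq\pi^{-1}(\mathrm{cl}_\bbM\{t=0\}).
\end{equation*}
The operator $M_\chi$ is multiplication by a function vanishing identically on an open neighborhood of $\mathrm{cl}_\bbM\{t=0\}$, so the same non-enlargement property gives $\operatorname{WF}_{\calctwo}(M_\chi w)\cap\pi^{-1}(\mathrm{cl}_\bbM\{t=0\})=\varnothing$ for any $w$. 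Combining the two yields $\operatorname{WF}_{\calctwo}(M_\chi(1-O)v)=\varnothing$.

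Third, I would quantify this emptiness. Unwinding the definition (\ref{eq:calctwo-WF-def}), for each choice of orders $(m,\mathsf{s},\ell')$ one obtains an elliptic $A\in\Psi_{\calctwo}^{m,\mathsf{s},\ell',0,0}$ with $\|A\,M_\chi(1-O)v\|_{L^2}$ uniformly bounded in $c$, and then ellipticity/parametrix yields uniform bounds for $\|M_\chi(1-O)v\|_{H_{\calctwo}^{m,\mathsf{s},\ell';0,0}}$. The $\natural\mathrm{f}$-order $\ell'$ corresponds to a weight $\sim h^{-\ell'}=c^{\ell'}$ in the norm, so a uniform bound at order $\ell'=\ell+K$ is, by interpolation against the $\ell$-order norm, equivalent to an $O(c^{-K})$ bound at order $\ell$. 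Taking $K$ arbitrary yields $\|M_\chi(1-O)v\|_{H_{\calctwo}^{m,s_0,\ell;0,0}}=O(c^{-\infty})$. The pointwise statement then follows from the Sobolev embedding \Cref{prop:Sobolev_embedding}.

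The main technical obstacle is the first step: rigorously verifying that the radial-set contributions to the $\mathrm{par}$-wavefront set of $v_\pm$ (coming from the asymptotic Schr\"odinger wave-packet structure at spacetime infinity) lift only to $\mathrm{pf}_\pm$ in the $\calctwo$-phase space, rather than producing wavefront set in the interior of the natural face or on $\mathrm{bf}\setminus\pi^{-1}(\mathrm{cl}_\bbM\{t=0\})$. This can be done using the precise relation between $\mathrm{par,I,res}$- and $\calc$-wavefront sets near $\mathrm{pf}$ established in \cite{NRL_I}, together with the observation that Schr\"odinger propagation from Schwartz data stays bounded in spatial frequency, and hence concentrated at $\xi_\natural=0$ on $\natural\mathrm{f}$ as $c\to\infty$.
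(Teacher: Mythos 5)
Your proposal takes a genuinely different route from the paper's. The paper does not try to characterize $\operatorname{WF}_{\calctwo}(v)$ directly; it begins with the commutator split
\begin{equation*}
\lVert M_\chi (1-O) v \rVert_{H_{\calctwo}^{m,s_0,\ell;0,0}} \leq \lVert (1-O) M_\chi v \rVert_{H_{\calctwo}^{m,s_0,\ell;0,0}} + \lVert [M_\chi,O] v \rVert_{H_{\calctwo}^{m,s_0,\ell;0,0}},
\end{equation*}
and then estimates each term by showing, via \cref{eq:misc_529}, that the relevant $c^N$-weighted finite-order wavefront set of $e^{\pm ic^2 t}M_\chi v_\pm$ (respectively $e^{\pm ic^2 t}v_\pm$) is empty near the essential support of $1-O$ (respectively $[M_\chi,O]$). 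The key point of moving $M_\chi$ inside first is that $M_\chi v_\pm$ is a smooth (in $t,x$), $c$-independent function, so the translation between $\mathrm{par,I,res}$- and $\calctwo$-regularity that \S\ref{sec:relations} provides becomes essentially unconditional (infinite $\mathrm{df}_1$-regularity), and one never needs to confront the $\mathrm{par}$-wavefront set of $v_\pm$ at $\calB_{\mathrm{par}}$ or at its radial sets at the same time as the $1-O$ cutoff. Your route replaces this two-step split by a single global claim about $\operatorname{WF}_{\calctwo}(v)$, which, if it worked, would be cleaner; but the split is precisely what lets the paper avoid the difficulty you flag at the end.

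There is a genuine gap in the first step of your argument, and it is not merely a ``technical obstacle'' that is straightforward to fill. The containment $\operatorname{WF}_{\calctwo}(v) \subseteq (\mathrm{pf}_-\cup\mathrm{pf}_+)\cup\pi^{-1}(\operatorname{cl}_\bbM\{t=0\})$, stated for the infinite-order wavefront set $\operatorname{WF}_{\calctwo}$, is not true. The Schr\"odinger solutions $v_\pm$ genuinely have $\mathrm{par}$-wavefront set at the radial sets, including the \emph{outgoing} one, where only below-threshold regularity holds; this is contribution (3) listed in \S\ref{subsub:vpm_props}. The outgoing radial set contributes to $\operatorname{WF}_{\calctwo}^{m,\mathsf{s},\ell}$ as soon as $\mathsf{s}>-1/2$ there, and $\operatorname{WF}_{\calctwo}$ is by definition the closed union over all orders. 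Moreover, your claimed resolution — ``Schr\"odinger propagation from Schwartz data stays bounded in spatial frequency'' — does not apply: the forcing $\delta(t)\phi_\pm$ is not Schwartz, and $v_\pm$ does develop the usual spatial-frequency growth associated with the parabolic radial sets at spacetime infinity. What actually saves the proposition is not any boundedness of spatial frequency but the restriction $s_0<-1/2$: measuring in a below-threshold decay order makes the radial-set contribution invisible. Your proposal never uses this restriction, which is a sign the argument is incomplete. A second, smaller issue is the quantification step: the boundary defining function $\rho_{\natural\mathrm{f}}$ is $h+\aang{\zeta}^{-1}$, not $h$, so raising the $\natural\mathrm{f}$-order does not by itself ``correspond to a weight $\sim h^{-\ell'}$'' uniformly — near $\mathrm{df}$ it gives frequency decay, not $h$ decay — and one must simultaneously use that $m$ is arbitrary. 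The paper sidesteps this by proving the $c^N$-weighted wavefront set statement $\operatorname{WF}_{\calctwo}^{m,s_0,\ell}(c^N e^{\pm ic^2 t}M_\chi v_\pm)=\varnothing$ directly, rather than via interpolation in $\ell$.
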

Intuitively, the reason why this holds is that owing to the smoothness of $v_\pm$ away from $\{t=0\}$, it is suppressed by $O(1/c^\infty)$ at $\natural\mathrm{f}$ (which lies at high frequency!), as long as we are measuring it in a Sobolev space with below-threshold decay. 
\begin{proof}
We have 
	\begin{equation} 
	\lVert M_\chi (1-O) v \rVert_{H_{\calctwo}^{m,s_0,\ell;0,0} } \leq \lVert  (1-O) M_\chi v \rVert_{H_{\calctwo}^{m,s_0,\ell;0,0} }+\lVert [M_\chi, O] v \rVert_{H_{\calctwo}^{m,s_0,\ell;0,0} }.
    \label{eq:misc_532}
	\end{equation}

    \Cref{eq:misc_529} tells us that 
\begin{equation}
\operatorname{WF}_{\calctwo}^{m,s_0,\ell}(c^N e^{\pm i c^2 t}M_{\chi} v_\pm ) = \varnothing 
\end{equation}
near the essential support of $1-O$.\footnote{It is necessary to translate between $\mathrm{par,I,res}$-regularity and $\calctwo$-regularity, but this can be done e.g.\ using \S\ref{sec:relations}. We have an infinite amount of $\mathrm{par,I,res}$-regularity at $\mathrm{df}_1$, so this is straightforward.} This means that $\lVert  (1-O) M_\chi v \rVert_{H_{\calctwo}^{m,s_0,\ell;0,0} } = O(1/c^N)$. 

Likewise, $\operatorname{WF}_{\calctwo}^{m,s_0,\ell}(c^N e^{\pm i c^2 t}v_\pm ) = \varnothing$ near the essential support of $[M_\chi, O]$ (which is disjoint from both where $O=0$ essentially and where $O=1$ essentially, hence disjoint from $\mathrm{pf}_\pm$ and $\mathrm{df}$). This means that 
\begin{equation} 
    \lVert [M_\chi, O] v \rVert_{H_{\calctwo}^{m,s_0,\ell;0,0} } = O(1/c^N).
\end{equation}
\end{proof}

\begin{figure}[t]
	\centering 
	\begin{tikzpicture}[scale=.85]
		\filldraw[fill=lightgray!20] (0,0) circle (2.5);
		\filldraw[fill=lightgray!20] (0,1) circle (.6);
		\filldraw[fill=lightgray!20] (0,-1) circle (.6);
		\begin{scope}
			\clip (0,0) circle (2.5);
			\draw[darkblue] (0,2.5) to[out=180,in=135] (-.5,1.3) to[out=-45,in=180] (0,1) to[out=0,in=45] (.5,1.3) to[out=45,in=0] (0,2.5);
			\draw[darkblue] (.02,2.5) -- (.02,1.6);
			\draw[darkblue] (.02,.4) -- (.02,-.4);
			\draw[darkblue] (.02,-2.5) -- (.02,-1.6);
			\draw[darkred] (-.02,2.5) -- (-.02,1.6);
			\draw[darkred] (-.02,.4) -- (-.02,-.4);
			\draw[darkred] (-.02,-2.5) -- (-.02,-1.6);
			\draw[darkred] (0,-2.5) to[out=180,in=-135] (-.5,-1.3) to[out=45,in=180] (0,-1) to[out=0,in=-45] (.5,-1.3) to[out=-45,in=0] (0,-2.5);
			\node at (-1.5,0) {$\natural\mathrm{f}$};
			\draw[dotted] (0,.6) -- (-1,.8) node[left] {$\mathrm{pf}_+$};
			\draw[dotted] (0,-.6) -- (-1,-.8) node[left] {$\mathrm{pf}_-$};
		\end{scope}
		\begin{scope}[shift={(.25,-.5)}] 
			\node[darkred, right] at (3.5,-.275) {$\operatorname{WF}_{\calctwo}(e^{-ic^2 t} v_-)$};
			\filldraw[fill=darkred] (3,-.525) rectangle (3.5,-.025);
			\node[darkblue, right] at (3.5,.5) {$\operatorname{WF}_{\calctwo}(e^{+ic^2 t} v_+)$};
			\filldraw[fill=darkblue] (3,.75) rectangle (3.5,.25);
		\end{scope} 
	\end{tikzpicture}
	\caption{The projection of the $\calctwo$-wavefront set of $v$ onto frequency space at $c=\infty$. The portion in $\natural\mathrm{f}$ is contained entirely over the closure of $\{t=0\}$. The $\calB$ portion is over the whole $\mathrm{cl}\{t=0\}$, while the rest is only over $\partial \bbM$. The portion of the wavefront set in $\mathrm{pf}_\pm$ is only over $\partial \bbM$. The $\mathrm{par,I,res}$-wavefront set looks similar, except the $\calB$ portion stays disjoint from the other portion also at fiber infinity. }
\end{figure}

\subsubsection{Discussion of initial data}
\label{subsub:init_data}
Let us now discuss the choice of $\phi_\pm$ in \cref{eq:misc_407}. We have already given one argument for the correct choice,  \cref{eq:advanced/retarded_fdecomp}. Now we give another.

Presumably, we should choose these such that, defining $v = e^{-ic^2 t} v_- + e^{ic^2 t} v_+$,
\begin{equation} 
Pv \approx \delta(t) f(x) +  c^{-2} \delta'(t) g(x),
\label{eq:misc_205}
\end{equation} 
where the $\approx$ means neglecting terms that look like they should be suppressed as $c\to\infty$, relative to the included terms, and where $f(x)=f(x,c=\infty)$, $g(x)=g(x,c=\infty)$. 
It will turn out (essentially because what matters in the following argument is the normal operator at $\natural\mathrm{f}$) that, to get the correct relationship between $f,g$ and $\phi_\pm$, it suffices to consider the case of the constant-coefficient (i.e.\ free) Klein--Gordon operator:

\begin{example}
	For the constant-coefficient case, 
	\begin{equation}
	P = - \frac{1}{c^2} \frac{\partial^2}{\partial t^2} - \triangle - c^2, \quad N(P_\pm) = \mp 2 i \frac{\partial}{\partial t} - \triangle , \quad
	P_\pm = -\frac{1}{c^2}\frac{\partial^2}{\partial t^2} \mp 2i \frac{\partial}{\partial t} - \triangle  = -\frac{1}{c^2} \frac{\partial^2}{\partial t^2} + N(P_\pm) .
	\label{eq:misc_369}
	\end{equation}
	We can now compute 
	\begin{align}
	Pv = e^{-ic^2 t} P_- v_- + e^{ic^2 t} P_+ v_+ & = \delta(t)( \phi_-(x) +  \phi_+(x)) - \frac{e^{-ic^2 t}}{c^2} \frac{\partial^2 v_-}{\partial t^2} - \frac{e^{ic^2 t}}{c^2} \frac{\partial^2 v_+}{\partial t^2}, \\
	\begin{split}  
	\frac{\partial^2 v_\pm }{\partial t^2} &= \pm \frac{i}{2 } \frac{\partial}{\partial t} (\delta(t) \phi_\pm  + \triangle v_\pm) =  \pm \frac{i}{2} \delta'(t) \phi_\pm \pm \frac{i}{2} \triangle \frac{\partial v_\pm}{\partial t} 
    \\ & =  \pm \frac{i}{2} \delta'(t) \phi_\pm - \frac{1}{4} \triangle ( \delta(t) \phi_\pm + \triangle v_\pm ),
	\end{split} \\ 
	\frac{e^{\pm i c^2 t}}{c^2} \frac{\partial^2 v_\pm}{\partial t^2} & = \frac{1}{c^2}\Big( \pm \frac{i}{2} \delta'(t) \phi_\pm - \frac{1}{4} \triangle ( \delta(t) \phi_\pm + e^{\pm i c^2 t}\triangle v_\pm )\Big) + \frac{1}{2} \delta(t) \phi_\pm , 
	\end{align}
	where the last term comes from $e^{\pm i c^2 t} \delta'(t) = \delta'(t) \mp i c^2 \delta(t)$. 
	So, 
	\begin{equation}
	Pv  \approx \frac{\delta(t)}{2} (\phi_- + \phi_+)  + \frac{i\delta'(t)}{2c^2}(\phi_--\phi_+) .
	\end{equation}
	So, if we want $Pv \approx \delta(t) f(x) +  c^{-2} \delta'(t) g(x)$, then we should take $\phi_\pm  = f \pm i g $. With this choice, $Pv$ given exactly by
	\begin{equation}
	Pv = \delta(t) f(x) + c^{-2} \delta'(t) g(x) + \frac{1}{2c^2} \Big( \delta(t) \triangle f + \frac{1}{2} \triangle^2 v   \Big).
	\end{equation}
	The final, unwanted term is multiplied by $1/c^2$ (and has no very irregular terms, such as $\delta'(t)$), so we should expect it to be negligible. (Remember that the regularity of $v$ is known from the analysis in \cite{Parabolicsc}, as described above, so the fact that $v$ appears on the right-hand side is not problematic.)
    \label{ex:free}
\end{example}

It may seem surprising that this computation requires keeping track of $c^{-2}\delta'(t)$ and $c^{-2}\partial_t^2 \in P_\pm$, even though these terms look like they should be suppressed as $c\to\infty$. We already explained why $c^{-2} \delta'(t)$ should be considered leading order (in the context of our previous derivation of $\phi_\pm = f\pm i g$), and the reasoning regarding the term $c^{-2}\partial_t^2$ in $P_\pm$ is similar.
All of this is just to say that, in \cref{eq:misc_205}, the $\approx$ should be interpreted as meaning up to terms which are lower order at both of $\mathrm{pf}_-,\mathrm{pf}_+$ (which is implied by being lower order at all of the $\calczero$-phase space's $\natural\mathrm{f}$).

Let us give an example of how this heuristic can be applied to systematically determine which terms in $P$ can be ignored; $P$ may have terms like $\smash{c^{-3} \partial_t \partial_{x_j}}$ in it. Despite this having fewer powers of $1/c$ than $\smash{c^{-4}} \partial_t^2$, and despite the two operators having \emph{the same} order at $\natural\mathrm{f}$ (they are both order zero, two orders lower than 
\begin{equation} 
\square\in \operatorname{Diff}_{\calczero}^{2,0,2}
\end{equation}
there), it is only the latter that enters our analysis (in the form of the coefficient we have been calling $\aleph$). The reason is that $c^{-3} \partial_t \partial_{x_j}$ is also suppressed at $\mathrm{pf}_\pm$, whereas $c^{-4} \partial_t^2$ is not: 
\begin{align}
e^{\mp i c^2 t} (c^{-3} \partial_t \partial_{x_j}) e^{\pm i c^2 t} &= c^{-3} \partial_t \partial_{x_j} \pm i c^{-1} \partial_{x_j} \in \operatorname{Diff}_{\calc}^{2,0,0,-1}, \\
e^{\mp i c^2 t} (c^{-4} \partial_t^2) e^{\pm i c^2 t} &= c^{-4} \partial_t^2 \pm 2i c^{-2} \partial_{t} - 1 \in \operatorname{Diff}_{\calc}^{2,0,0,0},
\end{align}
where the key point is the orders at $\mathrm{pf}$: $-1$ and $0$, respectively. So, $c^{-4} \partial_t^2$ is order 0 at $\mathrm{pf}_\pm$ while $c^{-3} \partial_t \partial_{x_j}$ is order $-1$ there:
\begin{equation}
c^{-4} \partial_t^2\in \Psi_{\calctwo}^{2,0,0;0,0}, \quad c^{-3} \partial_t \partial_{x_j} \in \Psi_{\calctwo}^{2,0,0;-1,-1}.
\end{equation} 
Since $\square$ is order $0$ at $\mathrm{pf}_\pm$, we conclude that $c^{-4} \partial_t^2$ has the same order as $\square$ there, whereas $c^{-3} \partial_t \partial_{x_j}$ is suppressed by one $O(1/c)$.

This is why the coefficient $\aleph$ of $c^{-4} \partial_t^2$ entered $N(P_\pm)$, whereas the coefficient of $c^{-3}\partial_t \partial_{x_j}$ did not. 

\subsubsection{ \texorpdfstring{Main estimates on $u-v$}{Main estimates on u-v}}
\label{subsub:main}

Recalling our discuss in \Cref{subsec:assumptions},  
\begin{equation}
\NPcalctwo = \square + \frac{\aleph}{c^4}\frac{\partial^2}{\partial t^2}  + c^2 +\Big(\frac{i\beta}{c^2}\frac{\partial}{\partial t} + i\bfB\cdot \nabla + W \Big) |_{c=\infty}
\end{equation}
agrees with $P$ modulo terms which are lower order at \emph{all three} of $\natural\mathrm{f},\mathrm{pf}_\pm$. That is, 
\begin{equation}
    P- \NPcalctwo \in \operatorname{Diff}_{\calctwo}^{2,-1,1;-1,-1}. 
\label{eq:P-P1-membership_0}
\end{equation}
(For comparison, recall that $P,P_1\in \operatorname{Diff}_{\calctwo}^{2,-1,2;0,0}$.)
Recall that when $P$ satisfied \cref{eq:def-general-quadratic-convergence},
$O(1/c)$ terms in the coefficients of $P$ are missing, then this can be improved to 
\begin{equation}
    P- \NPcalctwo \in \operatorname{Diff}_{\calctwo}^{2,-1,0;-2,-2},
\label{eq:P-P1-membership_1}
\end{equation}
and then all of the estimates below will be improved by a $c^{-1}$-factor.

Notice that we even get one order of spacetime decay: $P,P_1$ are order $0$ at $\mathrm{bf}$, but $P-P_1$ is order $-1$.

We can now go through a computation similar to that in \Cref{ex:free} with $P\approx P_1$ in place of the free Klein--Gordon operator $P_0$, but the conclusion is actually unchanged, as long as we use the correct normal operator $N(P_\pm)$, which takes into account the terms in $P_1-P_0$.\footnote{The $\aleph c^{-4}\partial_t^2$ and $\beta c^{-2} \partial_t$ terms are not in $N(P_\pm)$, but the important terms they contribute when applied to $e^{\pm ic^2 t}$ are.} The formula for $N(P_\pm)$ can be found in \Cref{prop:normal}.

Here is the precise proposition we need:
\begin{proposition} Let $O,v$ be as above. In particular, $\phi_\pm = f \pm i g$.
	For any $P$ as in \S\ref{sec:true_intro},
	\begin{equation}
	P(Ov) = O(\delta(t) (f(x)+c^{-2} r(x,c)) + c^{-2} \delta'(t) g(x) ) +  R_-(e^{-ic^2 t}  v_-) + R_+(e^{ic^2 t} v_+),
	\label{eq:misc_433}
	\end{equation}
	for some 
	\begin{itemize}
		\item $r\in C^\infty([0,1)_{1/c};\calS(\bbR^d_x))$;
		\item $R_\pm \in \Psi_{\calctwo}^{-\infty,0,2;-1,-1}$ having wavefront set disjoint from $\mathrm{df}$. 
	\end{itemize}
    For $w=u-Ov$, we have
	\begin{multline}
	P w = (1-O)(\delta(t) f(x) + c^{-2} \delta'(t) g(x) ) + c^{-2}\delta(t) \tilde{f}(x) + c^{-4} \delta'(t) \tilde{g}(x) - c^{-2} O(\delta(t) r(x,c) )  \\ -R_-(e^{-ic^2 t}  v_-) - R_+(e^{ic^2 t} v_+)
	\label{eq:misc_43o}
	\end{multline}
	for some $\tilde{f},\tilde{g}\in C^\infty((1,\infty]_{c};\calS(\bbR^d_x))$

    In addition, if $f,g$ and $P$ satisfy \cref{eq:def-general-quadratic-convergence} (for $P$, this means \cref{eq:P-P1-membership_1}), then the conclusions above can be strengthened to
	\begin{itemize}
		\item  $r,\tilde{f},\tilde{g}$ have no $O(1/c^2)$ term in their $c\to\infty$ expansions; 
		\item $R_\pm \in \Psi_{\calctwo}^{-\infty,0,2;-2,-2}$.
	\end{itemize}
	\label{prop:Pv_error}
\end{proposition}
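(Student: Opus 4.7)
The plan is to compute $P(Ov)=OPv+[P,O]v$ and then subtract the result from $Pu=\delta(t)f(x,c)+c^{-2}\delta'(t)g(x,c)$ to obtain the equation for $w$. For the first term, I would use the conjugation identity $Pv=e^{-ic^2 t}P_-v_-+e^{ic^2 t}P_+v_+$ and split each $P_\pm=N(P_\pm)+E_\pm$, where $E_\pm:=P_\pm-N(P_\pm)\in\Psi_{\calc}^{2,0,2,-1}$ by the definition of the normal operator (improved to order $-2$ at $\mathrm{pf}_\pm$ when \cref{eq:def-general-quadratic-convergence} holds, via \cref{eq:P-P1-membership_1}). The defining identity $N(P_\pm)v_\pm=\delta(t)\phi_\pm$ together with $e^{\pm ic^2 t}\delta(t)=\delta(t)$ and the substitution $\phi_\pm=f\pm ig$ produces the leading $\delta(t)f$ portion of the forcing; the $c^{-2}\delta'(t)g$ portion arises from the dominant piece $-c^{-2}\partial_t^2$ of $P_\pm-N(P_\pm)$, combined across $\pm$ with $\phi_--\phi_+=-2ig$, exactly as in the model calculation of \Cref{ex:free}.

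The key step is the analysis of $E_\pm v_\pm$ beyond this leading contribution. Since $v_\pm$ is smooth off $\{t=0\}$ (it solves a homogeneous Schr\"odinger equation there) but has a jump discontinuity across $\{t=0\}$ of magnitude $\phi_\pm$ encoded by $N(P_\pm)v_\pm=\delta(t)\phi_\pm$, applying the differential error $E_\pm$ produces two types of contributions. First, distributional terms supported at $\{t=0\}$, of the form $\sum_k c^{-k}\delta^{(k)}(t)h_k(x,c)$, are extracted by explicit calculation: iterating the identity $N(P_\pm)v_\pm=\delta(t)\phi_\pm$ lets one trade each time derivative of $v_\pm$ for spatial derivatives of $v_\pm$ plus further $\delta^{(j)}$-sources; after extracting the $c^{-2}\delta'(t)g$ already accounted for, the remaining distributional pieces are absorbed into $c^{-2}\delta(t)r(x,c)$ with $r\in C^\infty([0,1)_{1/c};\calS)$. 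Second, a remainder which vanishes to all orders in a neighborhood of $\{t=0\}$ after the distributional parts are subtracted; using the regularity of $v_\pm$ recorded in \S\ref{subsub:vpm_props}, I would realize this remainder as the output of an operator $R_\pm\in\Psi_{\calctwo}^{-\infty,0,2;-1,-1}$ (microsupported away from $\mathrm{df}$) acting on $e^{\pm ic^2 t}v_\pm$, with the $-\infty$ differential order inherited from the smoothing cutoff that localizes away from $\{t=0\}$. For the commutator $[P,O]v$: since $O\in\Psi_{\calczero}^{-\infty,0,0}$ is microlocally the identity on a neighborhood of $\mathrm{pf}_-\cup\mathrm{pf}_+$ with essential support disjoint from $\mathrm{df}$, the commutator $[P,O]$ is microlocally trivial to infinite order at $\mathrm{pf}_\pm$ and smoothing at $\mathrm{df}$; decomposing $v$ into its two modulated pieces contributes further terms of the stated form that are absorbed into $R_\pm$.

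Equation \cref{eq:misc_43o} for $w$ then follows by subtracting \cref{eq:misc_433} from $Pu$: writing $Pu=O\cdot Pu+(1-O)\cdot Pu$ produces the $(1-O)(\delta(t)f(x)+c^{-2}\delta'(t)g(x))$ contribution, while the discrepancies $f(x,c)-f(x)$ and $g(x,c)-g(x)$, controlled using smoothness of $f(\cdot,c),g(\cdot,c)$ in $1/c$, appear as $c^{-2}\delta(t)\tilde{f}(x)$ and $c^{-4}\delta'(t)\tilde{g}(x)$ (with any $O(c^{-1})$ contribution absorbed into the $r$ term). The improvements under \cref{eq:def-general-quadratic-convergence} follow by tracking the extra $c^{-1}$ factor gained in $E_\pm\in\Psi_{\calc}^{2,0,2,-2}$ and in the quadratic-convergence splits of $f,g$ through each of these contributions. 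The principal obstacle is the reorganization in paragraph two: carefully disentangling the distributional contributions of $E_\pm v_\pm$ at $\{t=0\}$ from the smooth-off-$\{t=0\}$ part, and certifying that the latter can be realized as the image of a $\calctwo$-pseudodifferential operator with the asserted orders at $\mathrm{pf}_\pm$ rather than merely as a function, requires a systematic inductive use of the identity $N(P_\pm)v_\pm=\delta(t)\phi_\pm$ to commute time derivatives out until they act on something smooth.
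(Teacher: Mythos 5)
Your proposal identifies the right approach: write $P(Ov) = OPv + [P,O]v$, substitute $Pv = e^{-ic^2 t}P_-v_- + e^{ic^2 t}P_+v_+$, use $N(P_\pm)v_\pm = \delta(t)\phi_\pm$ with $\phi_\pm = f\pm ig$ for the leading contribution, and deal with the error $P_\pm - N(P_\pm)$ by iteratively trading time derivatives of $v_\pm$ for spatial derivatives plus $\delta$-sources via the Schr\"odinger equation. This last idea is exactly what the paper makes precise by introducing $L_\pm = N(P_\pm) \pm 2i\partial_t$, a differential operator with \emph{no} time derivatives (so that it commutes with $e^{\pm ic^2 t}$) satisfying $\partial_t v_\pm = \pm\tfrac{i}{2}(\delta(t)\phi_\pm - L_\pm v_\pm)$.

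Two places where the paper's execution is sharper and where your formulation, as stated, has gaps. First, the paper reduces at the outset to the model operator $P_1$ (using $P - P_1 \in \Diff_{\calctwo}^{2,-1,1;-1,-1}$, improved under \quadratic), which makes the error after conjugation a completely explicit differential operator $P_{1,\pm} - N(P_\pm)$ involving only $\aleph,\beta$; working with $P_\pm - N(P_\pm)$ abstractly, as you propose, is possible but obscures the needed fact that the remainder operator $\tilde R_\pm$ can be arranged to contain \emph{no} time derivatives, which is essential for pushing it through the oscillatory factors. Second, and more substantively, you attribute the $-\infty$ differential order of $R_\pm$ to ``the smoothing cutoff that localizes away from $\{t=0\}$.'' That is not correct: a position-space cutoff near $t=0$ is a multiplication operator and does not smooth. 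In the paper, the remainder $\tilde R_\pm$ is an honest differential operator (order $4$ at $\mathrm{df}$), and the $-\infty$ order of $R_\pm$ arises only because the final $R_\pm$ carries a factor of the frequency-microlocalizer $O \in \Psi_{\calczero}^{-\infty,0,0}$, whose essential support is disjoint from $\mathrm{df}$. The correct structure is: first prove the identity \emph{without} the $O$ and with a finite order at $\mathrm{df}$, then wrap in $O$ (and absorb $[P,O]$, which genuinely is microsupported off $\mathrm{df}$ and $\mathrm{pf}_\pm$) to upgrade to $-\infty$. If you try to produce $R_\pm$ with $-\infty$ order directly, as you indicate, you are over-claiming: $E_\pm v_\pm$ itself has no better $\mathrm{df}$-regularity than $v_\pm$, so the $R_\pm$-realization is not available without the composition with $O$.
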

\begin{proof}
	We have $P(Ov) = OP(e^{-ic^2 t} v_- + e^{ic^2 t} v_+ ) +[P,O](e^{-ic^2 t} v_- + e^{ic^2 t} v_+ )$. Since 
	\begin{equation} 
	[P,O] \in \Psi_{\calczero}^{-\infty,-1,1}
	\end{equation} 
	has essential support disjoint from $\mathrm{df}$, \emph{as well as $\mathrm{pf}_-,\mathrm{pf}_+$}, this term can be included in $R_\pm$. So, it suffices to verify that $P(\exp(-ic^2 t) v_- + \exp(ic^2 t) v_+ )$ has the same form as the right-hand side of \cref{eq:misc_433}, except without the $O$ in front, and with the condition on $R_\pm$ weakened so as to only state finite order at $\mathrm{df}$.

	Moreover, $P-P_1$ can be included in $R_\pm$, by \cref{eq:P-P1-membership_0}, \cref{eq:P-P1-membership_1}, so it suffices to consider $P_1$. 
    Setting
    \begin{equation}
        P_{1,\pm} = e^{\mp ic^2t}P_1 e^{\pm ic^2t},
    \end{equation}
    we have
	\begin{equation}
	P_{1,\pm} = -c^{-2}\frac{\partial^2}{\partial t^2} + \frac{\aleph}{c^4} \frac{\partial^2}{\partial t^2} \pm 2i \frac{\aleph}{c^2}\frac{\partial}{\partial t} + \frac{i\beta|_{c=\infty}}{c^2}\frac{\partial}{\partial t} + N(P_\pm) .
	\end{equation}
	So, letting $\beth_\pm = \pm 2 \aleph + \beta|_{c=\infty}$, 
	we compute that $P_1v = e^{-ic^2 t} P_{1,-} v_- + e^{ic^2 t} P_{1,+} v_+$ is given by 
	\begin{multline}
	P_1v = \delta(t)( \phi_-(x) +  \phi_+(x)) 
    +  \frac{1}{c^2} \Big(-1+\frac{\aleph}{c^2} \Big)\Big[ e^{-ic^2 t}\frac{\partial ^2v_-}{\partial t^2} +  e^{ic^2 t} \frac{\partial^2 v_+}{\partial t^2} \Big] \\ 
    + \frac{1}{c^2} \Big( e^{-ic^2 t} i\beth_-\frac{\partial v_-}{\partial t} + e^{ic^2 t}i\beth_+ \frac{\partial v_+}{\partial t} \Big).
	\label{eq:misc_443}
	\end{multline}
	Let 
    \begin{equation} \label{eq:def-Lpm}
        L_\pm= N(P_{\pm}) \pm 2 i \partial_t \in \mathrm{Diff}_{\calctwo}^{2,0,2;0,0}.
    \end{equation}  
    This is a second-order differential operator and it has no time derivatives (cancelling with those in $N(P_\pm)$ is what the $\pm 2i\partial_t$ is doing), so it commutes with multiplication by $\exp(-i c^2 t),\exp(i c^2 t)$. 
    We now use this to work out: 
	\begin{align}
	\frac{\partial v_\pm}{\partial t} &= \pm \frac{i}{2} (N(P_{\pm}) - L_\pm )v_\pm = \pm \frac{i}{2} ( \delta(t) \phi_\pm - L_\pm v_\pm ), \\ 
	\begin{split}  
	\frac{\partial^2 v_\pm }{\partial t^2} &= \pm \frac{i}{2 } \frac{\partial}{\partial t} (\delta(t) \phi_\pm - L_\pm v_\pm) =  \pm \frac{i}{2} \delta'(t) \phi_\pm \mp \frac{i}{2} L_\pm \frac{\partial v_\pm}{\partial t} \mp \frac{i}{2} [\partial_t,L_\pm] v_\pm  
    \\ &=  \pm \frac{i}{2} \delta'(t) \phi_\pm + \frac{1}{4} \Big( \delta(t) L_\pm \phi_\pm - L_\pm^2 v_\pm \Big) \mp \frac{i}{2} (\partial_t L_\pm) v_\pm, 
	\end{split} 
	\end{align}
	where $(\partial_t L_\pm)= [\partial_t,L_\pm]$ is the first-order differential operator that results from differentiating the coefficients of $L_\pm$ in $t$. So, 
	\begin{equation}
	e^{\pm ic^2 t}i\beth_\pm \frac{\partial v_\pm}{\partial t} = \mp \frac{1}{2} \delta(t) \beth_\pm|_{t=0} \phi_\pm \pm \frac{1}{2} e^{\pm ic^2 t} \beth_\pm L_\pm v_\pm 
	\end{equation}
	and 
	\begin{multline}
	\Big( -1 + \frac{\aleph}{c^2}\Big) e^{\pm ic^2 t} \frac{\partial^2 v_\pm}{\partial t^2} = \Big( -1 + \frac{\aleph|_{t=0} }{c^2} \Big)\Big( \pm \frac{i}{2} \delta'(t) \phi_\pm+  \delta(t)  \frac{L_\pm \phi_\pm}{4}\Big) \\ 
    +\frac{1}{2} \Big( c^2 \Big( -1 + \frac{\aleph|_{t=0}}{c^2} \Big) \mp i\frac{\partial_t \aleph|_{t=0}}{c^2} \Big) \delta(t) \phi_\pm  
     + \Big(-1+\frac{\aleph}{c^2} \Big) e^{\pm i c^2 t} \Big( - \frac{1}{4} L^2_\pm v_\pm 
    \mp \frac{i}{2} (\partial_t L_\pm) v_\pm \Big).
	\end{multline}
	So, summing up, we can write 
	\begin{equation} \label{eq:OPv}
	P_1 v = \delta(t) (f(x)+c^{-2} r(x,c)) + c^{-2} \delta'(t) (g(x) + c^{-2} \tilde{g}(x))   +  e^{-ic^2 t} \tilde{R}_{-}  v_- +  e^{ic^2 t} \tilde{R}_{+}v_+ ,
	\end{equation}
	for 
	\begin{align}
	r(x) &= \sum_\pm \Big[ \pm \frac{1}{2} \beth_\pm|_{t=0} +\frac{1}{4} \Big(- 1+\frac{\aleph|_{t=0}}{c^2}\Big) L_\pm  +\frac{\aleph|_{t=0}}{2}\mp \frac{i}{2c^2} \frac{\partial_t \aleph|_{t=0}}{\partial t}   \Big]\phi_\pm, \\
	\tilde{R}_{\pm} &= c^{-2}\Big( \pm\frac{1}{2}\beth_\pm L_\pm + \Big(-1+\frac{\aleph}{c^2} \Big) \Big( - \frac{1}{4} L_\pm^2 \mp \frac{i}{2} (\partial_t L_{\pm}) \Big) \Big),
	\end{align}
    and $\tilde{g}  = -\aleph|_{t=0} \cdot g(x)$. 
	Note that $\tilde{R}_\pm$ has no time derivatives in it.
    Consequently, 
    \begin{equation} 
	\tilde{R}_\pm \in \operatorname{Diff}_{\calctwo}^{4,0,2;-2,-2},
	\end{equation} 
   and $\tilde{R}_\pm$ commutes with multiplication by $\exp(\pm i c^2 t)$. So, we can move the $\tilde{R}_\pm$ through $e^{\pm ic^2 t}$ to get the desired formula. 

   The improvements when $f,g$ have no $O(1/c)$ terms and $P$ satisfies \cref{eq:P-P1-membership_1} follows by observing that all error terms are improved by $c^{-1}$ in the process above.
    
\end{proof}

Next we give the following version of \cite[Prop.\ C.3]{NRL_I} restated in terms of the $\calctwo$-norm.

\begin{lemma} \label{lemma:NRLI-PropC.3-calctwo-version}
Suppose $B,G \in \Psi_{\calctwo}^{0,0,0;0,0}$ \footnote{In our application below, $B$ will be elliptic on the characteristic set $\Sigma$, but this is not needed for this estimate to be true.} with $G$ being elliptic on $\WF'_{\calctwo}(B)$ and let $\mathsf{s}\in C^\infty({}^{\calctwo}\overline{T}^* \bbM)$ be a variable order, monotonic under the Hamiltonian flow near the essential support of $B$, such that $\mathsf{s} |_{\calR_{-\varsigma} } > -1/2$ and $\mathsf{s} |_{\calR_{\varsigma} } < -1/2$.

Suppose $w \in \calS'$ satisfies the ``incoming/outgoing'' condition 
	\begin{equation}
		\operatorname{WF}_{\mathrm{sc}}^{m_0,s_0}(w) \cap (\calR^{\mathrm{KG},-}_{-\varsigma}\cup \calR^{\mathrm{KG},+}_{-\varsigma}) = \varnothing
		\label{eq:above-threshold-condition-advanced/retarted}
	\end{equation}
    for each individual $h$, where $\calR^{\mathrm{KG},\pm}_\varsigma$ is the radial set of the Klein--Gordon equation in the component of the characteristic set of energy $\pm \tau>0$ and over the hemisphere of $\partial \bbM$ with $\varsigma t>0$. 
Then we have
\begin{align}
	\begin{split} 
	\lVert  B w \rVert_{H_{\calctwo}^{m,\mathsf{s},\ell;0,0}}  &\lesssim \lVert GPw \rVert_{H_{\calctwo}^{m-1,\mathsf{s}+1,\ell-1;0,0}} + \lVert  Pw \rVert_{H_{\calctwo}^{-N,\mathsf{s}+1,-N;0,0}} ,
	\end{split}
	\label{eq:est-NRLI-C.3}
	\end{align}
holds (in the usual strong sense) for $h \in (0,h_0)$ for some fixed $h_0>0$.
\end{lemma}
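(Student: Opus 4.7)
The plan is to reduce the $\calctwo$-inequality to the $\calc$-version of the estimate established in \cite[Prop.\ C.3]{NRL_I} by using the splitting definition \cref{eq:definition_2res norm} of the $\calctwo$-Sobolev norm. First I would fix a partition $Q_+ + Q_- = \mathrm{Id}$ as in \S\ref{subsec:calctwo}, with $Q_\pm \in \Psi_{\calczero}^{0,0,0}$ and $Q_\mp$ microlocally trivial at $\mathrm{pf}_\pm$. By \cref{eq:definition_2res norm},
\begin{equation*}
\lVert Bw \rVert_{H_{\calctwo}^{m,\mathsf{s},\ell;0,0}} \lesssim \lVert e^{-it/h^2} Q_+ B w \rVert_{H_{\calc}^{m,\mathsf{s}_+,\ell,0}} + \lVert e^{it/h^2} Q_- B w \rVert_{H_{\calc}^{m,\mathsf{s}_-,\ell,0}},
\end{equation*}
and upon writing $e^{\mp it/h^2} Q_\pm B w = (e^{\mp it/h^2} Q_\pm B e^{\pm it/h^2})(e^{\mp it/h^2} w)$, the operator in parentheses becomes an element of $\Psi_{\calc}^{0,0,0,0}$ whose microsupport sits inside (the image in the $\calc$-phase space of) $\WF'_{\calctwo}(B) \cap \{\pm \tau_{\natural} > 0\}$. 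Similarly, $G$ is translated into a $\calc$-microlocalizer which is elliptic over the image of $\WF'_{\calctwo}(B)$ on each side.

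Next, I would apply the $\calc$-analogue of the estimate from \cite[Prop.\ C.3]{NRL_I} to each of $w_\pm := e^{\mp it/h^2} Q_\pm w$ with respect to the conjugated operator $P_\pm$ defined by \cref{eq:P conjugated}. The forcing on the right-hand side is
\begin{equation*}
P_\pm w_\pm \;=\; e^{\mp it/h^2} P Q_\pm w \;=\; e^{\mp it/h^2} Q_\pm P w + e^{\mp it/h^2} [P, Q_\pm] w,
\end{equation*}
and the commutator term $[P, Q_\pm] \in \Psi_{\calczero}^{2,-1,1}$ has essential support disjoint from both $\mathrm{pf}_+$ and $\mathrm{pf}_-$ (since $Q_\pm$ is microlocally constant near each of them). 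Consequently this commutator contribution is microsupported in a region where $w$ carries only the background control coming from the residual term, and can be absorbed into $\lVert Pw \rVert_{H_{\calctwo}^{-N,\mathsf{s}+1,-N;0,0}}$. The main piece $e^{\mp it/h^2} Q_\pm Pw$ is controlled by $\lVert G P w \rVert_{H_{\calctwo}^{m-1,\mathsf{s}+1,\ell-1;0,0}}$ after again invoking \cref{eq:definition_2res norm}.

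The main obstacle will be verifying that the incoming/outgoing hypothesis \cref{eq:above-threshold-condition-advanced/retarted}, stated in the sc-calculus for each individual $h$, translates into the above-threshold regularity at the appropriate $\calc$-radial sets required as input to \cite[Prop.\ C.3]{NRL_I}. The point is that the $\calc$-radial sets of $P_\pm$ on $\mathrm{pf} \cong {}^{\mathrm{par}}\overline{T}^*\bbM$ are precisely the limits as $h \to 0^+$ of the sc-Klein--Gordon radial sets $\calR^{\mathrm{KG},\pm}_{-\varsigma}$ in the component of $\{\pm\tau>0\}$ over the hemisphere $\{-\varsigma t > 0\}$, so the emptiness of the finite-$h$ sc-wavefront set there provides the ``incoming side'' control required to start propagation, while the threshold split $\mathsf{s}|_{\calR_{\mp\varsigma}} > -1/2 > \mathsf{s}|_{\calR_{\varsigma}}$ in the $\calctwo$-order transfers to the analogous split for $\mathsf{s}_\pm$ on the $\calc$-radial sets via their construction near $\mathrm{pf}_\pm$. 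Once the hypotheses are lined up, summing the two $\calc$-estimates and using \cref{eq:definition_2res norm} in reverse recovers the stated $\calctwo$-inequality.
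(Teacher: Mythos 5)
Your overall strategy — decompose via the partition $Q_+ + Q_- = \mathrm{Id}$ defining the $\calctwo$-norm, conjugate by $e^{\mp it/h^2}$, and reduce to the $\calc$-level estimate of \cite[Prop.\ C.3]{NRL_I} applied to the conjugated operators $P_\pm$ — is exactly what the paper does. The paper's own proof is a pure citation: it invokes Prop.\ C.3 of \cite{NRL_I} with the parameter choices $Z_\pm = \mathrm{Id}$, $\varsigma_+ = \varsigma_- = \varsigma$, together with the $\calc$/$\calctwo$-norm relationship from \cref{eq:definition_2res norm}. You are reconstructing the internals of that reduction.

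The reconstruction has a gap, however, in the treatment of the commutator error. You write $P_\pm w_\pm = e^{\mp it/h^2}Q_\pm P w + e^{\mp it/h^2}[P, Q_\pm] w$ and then assert that the commutator contribution ``can be absorbed into $\lVert Pw \rVert_{H_{\calctwo}^{-N,\mathsf{s}+1,-N;0,0}}$.'' But $[P,Q_\pm]w$ is an operator applied to $w$, not to $Pw$: the fact that $[P,Q_\pm]$ is microsupported away from $\mathrm{pf}_\pm$ and gains orders at $\mathrm{bf}$ and $\natural\mathrm{f}$ tells you the commutator term is bounded by a weak $\calctwo$-Sobolev norm \emph{of $w$}, not of $Pw$. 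Converting that into a bound by $\lVert Pw \rVert_{\text{weak}}$ requires a preliminary global (weak) estimate on $w$ in terms of $Pw$ — which is precisely the kind of thing Prop.\ C.3 of \cite{NRL_I} is engineered to deliver. In other words, the absorption you claim is true but not free; it presupposes the very propagation/radial-point estimate you are trying to assemble, and the paper's proof avoids the issue by citing the proposition that already packages it. If you insist on re-deriving the step, you need to explicitly establish the weak global bound on $w$ first (using the incoming/outgoing hypothesis plus the $Pw$ data), and only then feed it into the commutator term. Relatedly, your phrasing of the radial-set translation via ``the limit as $h \to 0^+$'' is slightly off-focus: as the paper stresses elsewhere (cf.\ \Cref{rem:worries}), the incoming/outgoing hypothesis is used at each fixed $h$ to seed the fixed-$h$ propagation, and the uniformity comes from the $\calctwo$-symbolic structure, not from taking a limit of radial sets.
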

 \begin{proof}
      This follows from \cite[Prop.\ C.3]{NRL_I} by taking (in terms of notations there) $Z_\pm$ there to be $\mathrm{Id}$, $\varsigma_+=\varsigma_-=\varsigma$,
      and variable orders
      $\mathsf{s}_{\pm} \in C^\infty({}^{\calc}\overline{T}^* \bbM)$ 
		are chosen so that they coincide with $\mathsf{s}$ translated by one unit in $\tau_\natural$ with $\mathrm{pf}_\pm$ moved to $\mathrm{pf}$ away from $\tau_{\natural} = \mp 2$ (that is, where the blow down of $\mathrm{pf}_\mp$ lives).
      
      Then \cite[Prop.\ C.3]{NRL_I}, which is stated in terms of the $\calc$-norm, implies \cref{eq:est-NRLI-C.3}, which uses the $\calctwo$-norm. 
      More concretely, this follows from the relationship between the $\calc$-norm and the $\calctwo$-norm explained after \cref{eq:def-calcatwo-Sobolev} (with details in \cite[Section~2.10]{NRL_I}, which is referred there as well).
 \end{proof}

We now turn to the main estimate of this section. 

\begin{proposition}
	Let $m,\ell\in \bbR$, and let $\calB$ be as in \cref{eq:calB-def},
	\begin{itemize}
		\item $B \in \Psi_{\calctwo}^{0,0,0;0,0}$ satisfy $\operatorname{WF}^{\prime}_{\calctwo}(B)\cap \calB= \varnothing$,
		\item $\mathsf{s}\in C^\infty({}^{\calctwo}\overline{T}^* \bbM)$ be a variable order, monotonic under the Hamiltonian flow near the essential support of $B$, such that $\mathsf{s} |_{\calR_{-\varsigma} } > -1/2$ and $\mathsf{s} |_{\calR_{\varsigma} } < -3/2$ on the essential support $\operatorname{WF}^{\prime}_{\calc}(B)$ of $B$, 
		\item $O\in \Psi_{\calctwo}^{-\infty,0,0;0,0}$ equals to the identity microlocally in a neighborhood of $\mathrm{pf}_\pm$.
	\end{itemize}
        \begin{enumerate}[label=(\alph*)]
            \item As long as the essential support of $O$ is sufficiently small (depending on $\mathsf{s}$), then, defining $v$ as above, 
	\begin{equation}
	\lVert B (u- O  v) \rVert_{H_{\calctwo}^{m,\mathsf{s},\ell;0,0} } = O\Big( \frac{1}{c} \Big)
	\label{eq:misc_421}
	\end{equation}
	as $c\to\infty$. So, via \Cref{lem:sc_comp} and the Sobolev embedding theorem, $B(u-Ov) \to 0$ uniformly in compact subsets, at a $1/c$ rate.
    \item Moreover, if $m,\ell$ are sufficiently negative, then the same result holds for any $B\in \Psi_{\calctwo}^{0,0,0;0,0}$, in particular $B=1$. 
        \end{enumerate}
    In addition, if $f,g$ and $P$ satisfy \cref{eq:def-general-quadratic-convergence}, then the right hand side of \cref{eq:misc_421} can be improved to $O(1/c^2)$. 
    \label{prop:advanced/retarded_asymp_main}
\end{proposition}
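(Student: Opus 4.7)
My plan is to apply the propagation estimate \Cref{lemma:NRLI-PropC.3-calctwo-version} for part (a), and the global invertibility of \Cref{thm:inhomog} for part (b), to the difference $w=u-Ov$, using the explicit formula for $Pw$ from \Cref{prop:Pv_error}. In both parts, the incoming/outgoing condition \cref{eq:above-threshold-condition-advanced/retarted} on $w$ is verified as follows: since both $u$ and $v_\pm$ vanish on $\{\varsigma t<0\}$ and the $\calctwo$-operator $O$ is pseudolocal, $w$ is rapidly decaying on compact subsets of $\{\varsigma t<0\}$, so for each fixed $c$ the $\mathrm{sc}$-wavefront set of $w$ avoids $\calR^{\mathrm{KG},\pm}_{-\varsigma}$.

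For part (a), I would choose $G\in\Psi_{\calctwo}^{0,0,0;0,0}$ elliptic on $\operatorname{WF}'_{\calctwo}(B)$ with $\operatorname{WF}'_{\calctwo}(G)\cap\calB=\varnothing$ (possible by the hypothesis $\operatorname{WF}'_{\calctwo}(B)\cap\calB=\varnothing$), and shrink the essential support of $O$ sufficiently. Applying \Cref{lemma:NRLI-PropC.3-calctwo-version} reduces the estimate to bounding $\lVert GPw\rVert_{H_{\calctwo}^{m-1,\mathsf{s}+1,\ell-1;0,0}}$ by $O(c^{-1})$. Inspecting \cref{eq:misc_43o}, the four $\delta$-type terms $(1-O)F$, $c^{-2}\delta(t)\tilde f$, $c^{-4}\delta'(t)\tilde g$, and $c^{-2}O\delta(t)r$ all have $\calctwo$-wavefront set in $\calB$ by \Cref{prop:delta_WF}; since $G(1-O)$, $G$, and $GO$ all inherit the property that their wavefront set avoids $\calB$, \Cref{prop:delta_WF} supplies $O(c^{-N})$ bounds on each for every $N$, easily beating the desired $O(c^{-1})$. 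The remaining $R_\pm(e^{\mp ic^2 t}v_\pm)$ terms are handled by using \cref{lem:par_comp_better} to translate the a priori regularity $v_\pm\in H_{\mathrm{par}}^{\infty,\overline{\mathsf{s}}_\pm}$ into a $\calc$-Sobolev bound on $v_\pm$, hence a $\calctwo$-bound on $e^{\mp ic^2 t}v_\pm$; the $\mathrm{pf}_\pm$ order of $-1$ in $R_\pm\in\Psi_{\calctwo}^{-\infty,0,2;-1,-1}$ then produces the required $c^{-1}$ factor.

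For part (b), I would take $m,\ell$ sufficiently negative and apply \Cref{thm:inhomog} directly to $w$, giving
\begin{equation*}
\lVert w\rVert_{H_{\calctwo}^{m,\mathsf{s},\ell;0,0}} \lesssim \lVert Pw\rVert_{H_{\calctwo}^{m-1,\mathsf{s}+1,\ell-1;0,0}}.
\end{equation*}
The key observation is that $\delta(t)=c^2\delta(t_\natural)$ carries an intrinsic $\natural\mathrm{f}$-order of $+2$, so $\lVert \delta(t)f\rVert_{H_{\calctwo}^{m-1,\mathsf{s}+1,\ell-1;0,0}}=O(c^{\ell+1})$ for Schwartz $f$, which is $O(c^{-1})$ as soon as $\ell\leq -2$; similarly for $\delta'(t)$, and the extra $c^{-k}$ prefactors ($k=2,4$) give comfortable room for the remaining $\delta$-type pieces. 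The $R_\pm$ terms are bounded exactly as in part (a). This gives the conclusion for any $B\in\Psi_{\calctwo}^{0,0,0;0,0}$, in particular $B=1$.

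The improvement to $O(c^{-2})$ under \cref{eq:def-general-quadratic-convergence} is essentially automatic from the improved version of \Cref{prop:Pv_error}: one then has $R_\pm\in\Psi_{\calctwo}^{-\infty,0,2;-2,-2}$ and $r,\tilde f,\tilde g$ starting at order $c^{-2}$, gaining one extra power of $c^{-1}$ in every estimate above. I expect the main technical obstacle to lie in the careful bookkeeping of the $\mathrm{pf}_\pm$ and $\natural\mathrm{f}$ orders when estimating $R_\pm(e^{\mp ic^2 t}v_\pm)$: the a priori regularity of $v_\pm$ lives on the $\mathrm{par}$-phase space, and extracting a sharp $\calctwo$-bound after the $e^{\mp ic^2 t}$-modulation requires both \cref{lem:par_comp_better} and the gluing structure of $\Psi_{\calctwo}$ from \S\ref{subsec:calctwo}.
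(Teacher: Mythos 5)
Your overall strategy matches the paper's proof: apply the propagation estimate of \Cref{lemma:NRLI-PropC.3-calctwo-version} to $w = u - Ov$, decompose $Pw$ via \Cref{prop:Pv_error}, and handle each resulting term. A few points of friction:

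\textbf{The $O(c^{-N})$ claim for the $\delta$-type terms is not right as stated.} You assert that, because $G$ (or $G(1-O)$, $GO$) avoids $\calB$ and \Cref{prop:delta_WF} places the $\calctwo$-wavefront set of each $\delta$-type piece inside $\calB$, you obtain $O(c^{-N})$ for every $N$. But the $\calctwo$-wavefront set only lives in $\mathrm{df}\cup\mathrm{bf}\cup\natural\mathrm{f}$ and does not record the $\mathrm{pf}_\pm$ behaviour; microlocal triviality away from $\calB$ gives you $O(1)$ boundedness uniformly in $c$, not $O(c^{-\infty})$. The $O(c^{-\infty})$ decay for the first term $G(1-O)F$ comes from combining \emph{two} facts: $G$ avoids $\calB$ (controlling the $\mathrm{df},\mathrm{bf},\natural\mathrm{f}$ parts) \emph{and} $1-O$ avoids $\mathrm{pf}_\pm$ (improving the $\mathrm{pf}_\pm$ orders, hence allowing one to trade against $\natural\mathrm{f}$ orders). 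Since terms II, III, IV lack the $(1-O)$ factor, they are only $O(1)$ in the relevant norm; their $O(c^{-2})$ and $O(c^{-4})$ decay comes entirely from the explicit $c^{-2}$, $c^{-4}$ prefactors. Your final conclusion still holds because those prefactors easily beat $O(c^{-1})$, but the mechanism is different from what you describe, and the distinction matters if one ever needs to track the precise $\natural\mathrm{f}$ order loss.

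\textbf{Part (b) takes a different route.} The paper reuses the machinery of part (a): with $B=1$ and $G$ elliptic everywhere, the non-microlocal term on the right of the \Cref{lemma:NRLI-PropC.3-calctwo-version} estimate already controls $w$ when $m,\ell$ are very negative. You instead invoke \Cref{thm:inhomog} directly. This can be made to work, but it silently requires verifying that $w$ lies in the domain $H_{\mathrm{sc}}^{m,\mathsf{s}(c)}$ for the variable order $\mathsf{s}$ with $\mathsf{s}|_{\calR_{-\varsigma}}>-1/2$, for each fixed $c$. That above-threshold membership is not automatic from the support condition; it requires a (standard, but unstated) propagation argument at fixed $c$. \Cref{lemma:NRLI-PropC.3-calctwo-version} was built precisely so that its hypothesis is only the incoming/outgoing wavefront condition, which \emph{is} directly checkable. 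The paper's route thus avoids any regularity a priori assumption and is cleaner; your route is fine if you add the membership step. I would also not lean on the heuristic ``$\delta(t)=c^2\delta(t_\natural)$ has $\natural\mathrm{f}$-order $+2$'' without working out the numerology carefully; the paper sidesteps this by using the $(1-O)$ gain at $\mathrm{pf}_\pm$.

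\textbf{Minor:} your verification of the incoming/outgoing condition speaks of $w$ being ``rapidly decaying on compact subsets of $\{\varsigma t<0\}$.'' What is actually needed is that $w$ has no sc-wavefront set over the hemisphere of $\partial\bbM$ in $\{\varsigma t<0\}$, i.e.\ a decay statement at spacetime infinity, not on compact sets. The mechanism is that $u,v$ vanish identically there and $O$ at fixed $h$ is an $\Psi_{\mathrm{sc}}$-operator (hence Schwartz-preserving away from its microsupport).

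Your treatment of the $R_\pm$ terms via \Cref{lem:par_comp_better} and the $\mathrm{pf}_\pm$-order of $R_\pm$ matches the paper's and is correct.
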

\begin{proof}
    As above, we will only write the proof of the $O(1/c)$ estimate, with the upgrade to $O(1/c^2)$ straightforward.

	We will prove the result when $B$ is elliptic on the characteristic set $\Sigma$. Since $\calB$ is disjoint from $\Sigma$ (and these are both closed), such $B$ exist. Moreover, for \emph{arbitrary} $B$ satisfying the hypotheses of the lemma, we have an elliptic estimate
	\begin{equation}
	\lVert B (u-Ov) \rVert_{H_{\calctwo}^{m,\mathsf{s},\ell;0,0}} \lesssim \lVert B_0 (u-Ov) \rVert_{H_{\calctwo}^{m,\tilde{\mathsf{s}},\ell;0,0}} + \lVert u-O v \rVert_{H_{\calctwo}^{-N,\tilde{\mathsf{s}},-N;0,0}}
	\end{equation}
	for any $N\in \bbR$, where $B_0$ satisfies the same hypotheses of $B$, is elliptic on $\Sigma$, and is  elliptic on the essential support of $B$, and where $\tilde{\mathsf{s}}\geq \mathsf{s}$ near the essential support of $B$ and satisfies the same hypotheses as $\mathsf{s}$ (except everywhere rather than just on the essential support of $B$). We can then apply the proposition (with $B$ replaced by $B_0, \mathrm{Id}$) to get 
	\begin{equation}
	\lVert B_0 (u-Ov) \rVert_{H_{\calctwo}^{m,\mathsf{s}_1,\ell;0,0}} = O\Big(\frac{1}{c}\Big), \; \lVert u-O v \rVert_{H_{\calctwo}^{-N,\tilde{\mathsf{s}},-N;0,0}} = O\Big(\frac{1}{c}\Big) ,
	\end{equation} 
	at least if $N$ is sufficiently large.

	We will apply \Cref{lemma:NRLI-PropC.3-calctwo-version} with $w=u - Ov$. The fact that $w$ satisfies the hypotheses there follows from the fact that $\calR_{-\varsigma}(h)$ is contained, for each individual $h>0$, over $\partial \bbM$ on the north or south hemisphere, without the equator. So, the fact that $u,v$ are vanishing there guarantees that neither of them has sc-wavefront set there, and since $O$ restricted to each individual $h>0$ is in  $\Psi_{\mathrm{sc}}$, the same applies to $w=u-Ov$.
    So \Cref{lemma:NRLI-PropC.3-calctwo-version} gives 
	\begin{align}
	\begin{split} 
	\lVert  B w \rVert_{H_{\calctwo}^{m,\mathsf{s},\ell;0,0}}  &\lesssim \lVert GPw \rVert_{H_{\calctwo}^{m-1,\mathsf{s}+1,\ell-1;0,0}} + \lVert  Pw \rVert_{H_{\calctwo}^{-N,\mathsf{s}+1,-N;0,0}} ,
	\end{split}
	\label{eq:misc_422}
	\end{align}
    where $G$ is a microlocalizer to a small neighborhood of the essential support of $B$.
	We want to show that both terms on the right-hand side are $O(1/c)$ (if $N$ is sufficiently large). 
	\begin{itemize}
		\item First consider $\lVert  Pw \rVert_{H_{\calctwo}^{-N,\mathsf{s}+1,-N;0,0}}$, which is the easier of the two terms to estimate.  
		
		\Cref{eq:misc_43o} gives 
		\begin{equation}
		\lVert  Pw \rVert_{H_{\calctwo}^{-N,\mathsf{s}+1,-N;0,0}} \leq \mathrm{I} + \mathrm{II} + \mathrm{III} + \mathrm{IV} + \mathrm{V} + \mathrm{VI} , 
		\label{eq:misc_471}
		\end{equation}
		where the $j$th term on the right-hand side is the $H_{\calctwo}^{-N,\mathsf{s}+1,-N;0,0}$-norm of the $j$th term on the right-hand side of \cref{eq:misc_43o}. See below if it is not clear which terms are which.
		
		Beginning with $\mathrm{I}$, the fact that $1-O$ has essential support away from $\mathrm{pf}_-\cup \mathrm{pf}_+$ gives 
		\begin{multline}
		\mathrm{I} = \lVert (1-O) (\delta(t) f(x) + c^{-2} \delta'(t) g(x)) \rVert_{H_{\calctwo}^{-N,\mathsf{s}+1,-N;0,0 }} \\ \lesssim \lVert \delta(t) f(x) + c^{-2} \delta'(t) g(x) \rVert_{H_{\calctwo}^{-N,\mathsf{s}+1,-N;-N,-N }}. 
		\label{eq:misc_440} 
		\end{multline}
		If $N\geq 1$ is sufficiently large, then 
		\begin{equation} 
			\lVert \delta(t) f(x) + c^{-2} \delta'(t) g(x) \rVert_{H_{\calctwo}^{-N,\mathsf{s}+1,-N/2;0,0 }} = O(1),
		\end{equation} 
		in which case the right-hand side of \cref{eq:misc_440} is $O(1/c^{N/2})$. 
		Moving on to $\mathrm{II}$, 
		\begin{equation}
		\mathrm{II} = c^{-2} \lVert \delta(t) \tilde{f}(x) \rVert_{H_{\calctwo}^{-N,\mathsf{s}+1,-N;0,0 }} = O(1/c^2) 
		\end{equation}
		if $N$ is large enough. 
		
		Term $\mathrm{III} = c^{-4} \lVert \delta'(t)\tilde{g}(x) \rVert_{H_{\calctwo}^{-N,\mathsf{s}+1,-N;0,0 }}$ is similar, and in fact $O(1/c^4)$. 
		
		Term $\mathrm{IV}$ is 
		\begin{equation}
		c^{-2} \lVert O (\delta(t) r(x,c)) \rVert_{H_{\calctwo}^{-N,\mathsf{s}+1,-N;0,0 }} \lesssim 	c^{-2} \lVert \delta(t) r(x,c) \rVert_{H_{\calctwo}^{-N,\mathsf{s}+1,-N;0,0 }} = O(1/c^2) 
		\end{equation}
		if $N$ is sufficiently large.

		Finally, consider term $\mathrm{V}$ (the term $\mathrm{VI}$ is completely analogous, since it just involves switching $i$ with $-i$). 
		As long as the essential support of $O$ is sufficiently small,\footnote{Taking $O$ to be essentially supported close to $\mathrm{pf}_\pm$ is necessary because the pull-back of the Schr\"odinger radial set to ${}^{\mathrm{par,I,res}}\overline{T}^* \bbM$ diverges from the Klein--Gordon radial set $\calR$ away from $\mathrm{pf}_-\cup \mathrm{pf}_+$; cf.\ \cite[Fig.\ 11]{NRL_I}.
        } then we can find a variable order $\mathsf{s}_0\in C^\infty({}^{\mathrm{par}}\overline{T}^* \bbM )$ such that $\beta^* \mathsf{s}_0> \mathsf{s}$ on the essential support of $R_-$ (which depended on $O$) and such that 
		\begin{equation} 
			v_- \in H_{\mathrm{par}}^{m_0,\mathsf{s}_0}
		\end{equation} 
		for some $m_0\in \bbR$. Then, for $Q\in \Psi_{\calczero}^{0,0,0}$ with essential support disjoint from $\mathrm{df}$ and with $Q=1$ essentially on some sufficiently large neighborhood of $\mathrm{pf}_-\cup \mathrm{pf}_+$ (where what ``sufficiently large'' means can be taken arbitrarily close to $\mathrm{pf}_-\cup \mathrm{pf}_+$ by shrinking the essential support of $O$ closer to $\mathrm{pf}_-\cup\mathrm{pf}_+$), 
		\begin{equation}
		\mathrm{V} = \lVert R_- (e^{-ic^2 t}v_-) \rVert_{H_{\calctwo}^{-N,\mathsf{s}+1,-N;0,0 }}  \lesssim \lVert  R_- Q (e^{-ic^2 t}v_-) \rVert_{H_{\calctwo}^{-N,\mathsf{s}+1,-N;0,0 }} + \lVert v_- \rVert_{H_{\calczero}^{-N,-N,-N}}.
		\end{equation}
		Letting $\tilde{Q} = e^{ic^2t}Qe^{-ic^2 t} \in \Psi_{\calczero}^{-\infty,0,0}$, we have 
		\begin{multline}
		\qquad\lVert R_-  Q(e^{-ic^2 t}v_-) \rVert_{H_{\calctwo}^{-N,\mathsf{s}+1,-N;0,0 }}  \lesssim 	c^{-1} \lVert   Qe^{-ic^2 t}v_- \rVert_{H_{\calctwo}^{-N,\mathsf{s}+1,-N+3;0,0 }} \\ \lesssim 	c^{-1} \lVert \tilde{Q} v_- \rVert_{H_{\calc}^{-N,\mathsf{s}_{+}+1,-N+3,0 }} ,
		\end{multline} 
		where $\mathsf{s}_{+} \in C^\infty({}^{\calc}\overline{T}^* \bbM )$ 
		is a variable $\calc$-order bounded below by $\mathsf{s}$ translated by one unit in $\tau_\natural$, so that $\mathrm{pf}_-$ is moved to $\mathrm{pf}$.  
		Because $\mathsf{s}<-3/2$ on $\calR_\varsigma$, we can choose $\mathsf{s}_+$ such that $\mathsf{s}_++1$ satisfies the usual threshold hypotheses. So, as long as $N$ is large enough, and as long as $\tilde{Q}$ has support close enough to $\mathrm{pf}$ (which is arranged by taking $O$ to be essentially supported close enough to $\mathrm{pf}_-\cup \mathrm{pf}_+$),
		\begin{equation}
			\lVert \tilde{Q} v_- \rVert_{H_{\calc}^{-N,\mathsf{s}_{+}+1,-N+3,0 }} \lesssim 1.
		\end{equation}
		So, we have. 
		\begin{equation} 
		\mathrm{V} = O(1/c).
		\end{equation} 
		
		To summarize, each of terms I, II, III, IV are $O(1/c^2)$, and V, VI are $O(1/c)$. 
		So, all in all, we conclude that 
		\begin{equation} 
			\lVert  Pw \rVert_{H_{\calctwo}^{-N,\mathsf{s}+1,-N;0,0}} = O(1/c).
		\end{equation} 
		\item  Moving on, 
		\begin{equation}
			\lVert GPw \rVert_{H_{\calctwo}^{m-1,\mathsf{s}+1,\ell-1;0,0}}\leq \mathrm{I} + \mathrm{II} + \mathrm{III} + \mathrm{IV} + \mathrm{V} + \mathrm{VI},
		\end{equation}
		where $\mathrm{I},\dots,\mathrm{VI}$ are as above with the extra factor of $G$ inserted, and with the Sobolev orders $m-1,\mathsf{s}+1,\ell-1$ at $\mathrm{df}$, $\mathrm{bf}$, $\natural\mathrm{f}$ respectively.
		
		We then have 
		\begin{equation}
		\mathrm{I} = \lVert G(1-O) (\delta(t) f(x) + c^{-2} \delta'(t) g(x)) \rVert_{H_{\calctwo}^{m-1,\mathsf{s}+1,\ell-1;0,0 }}. 
		\end{equation}
		The product $G(1-O) \in \Psi_{\calczero}^{0,0,0}$ has essential support disjoint from $\mathrm{pf}_-,\mathrm{pf}_+$ \emph{and} $\calB$, which contains the $\calctwo$-wavefront set of $\delta(t) f(x)$, $\delta'(t) g(x)$. So, 
		\begin{equation}
		G(1-O) (\delta(t) f(x) + c^{-2} \delta'(t) g(x)) \in H_{\calczero}^{\infty,\infty,\infty}, 
		\end{equation}
		which means that every Schwartz seminorm of
		$\mathrm{I}$ is $O(1/c^\infty)$.

		Likewise, $\mathrm{II}$ (and $\mathrm{III}$, $\mathrm{IV}$ are both similar, as we saw above) satisfies 
		\begin{equation}
		\mathrm{II} = c^{-2} \lVert G \delta(t) \tilde{f}(x) \rVert_{H_{\calctwo}^{m-1,\mathsf{s}+1,\ell +1;0,0 }} = O(1/c^2) ,
		\end{equation}
		because $G \delta(t) \tilde{f}(x) \in H_{\calctwo}^{\infty,\infty,\infty;0,0} = L^\infty((1,\infty]_c;\calS(\bbR^{1,d}) )$. 
		
		Finally, we consider the $\mathrm{V}$-term, and $\mathrm{VI}$ can be treated in the same way.
		This satisfies
		\begin{equation}
		\mathrm{V}=\lVert GR_- (e^{-ic^2 t}v_-) \rVert_{H_{\calctwo}^{m-1,\mathsf{s}+1,\ell-1;0,0 }}  = 	c^{-1} \lVert  G (c R_-)( e^{-ic^2 t}v_-) \rVert_{H_{\calctwo}^{m-1,\mathsf{s}+1,\ell-1;0,0 }}.
		\end{equation}
		Using the description of $R_-$ in \Cref{prop:Pv_error}, we have $G (c R_-) \in \Psi_{\calctwo}^{-\infty,0,3;0,0}$, so 
		\begin{equation}
			\lVert  G (c R_-)( e^{-ic^2 t}v_-) \rVert_{H_{\calctwo}^{m-1,\mathsf{s}+1,\ell-1;0,0 }} \lesssim \lVert  e^{-ic^2 t} \tilde{A} v_- \rVert_{H_{\calctwo}^{m-1,\mathsf{s}+1,\ell+2;0,0 }} + \lVert e^{-ic^2 t} v_- \rVert_{H_{\calctwo}^{-N,-N,-N;0,0} } 
		\end{equation}
		for any 
		\begin{equation} 
			\tilde{A}\in \Psi_{\calctwo}^{-\infty,0,0;0,0}
		\end{equation} 
		with essential support disjoint from $\mathrm{df}$ and elliptic on the essential support of $G(cR_-)$. In particular, we can choose $\tilde{A}$ to have essential support disjoint from $\calB$. So, as long as the essential support of $O$ is sufficiently small (so that the essential support of $G$ can be taken in a region in which $\mathsf{s}$ obeys the relevant threshold conditions \emph{on the Schr\"odinger radial sets} in the $\mathrm{par,I,res}$-phase space), $\mathrm{V}$ is $O(1/c)$.  
	\end{itemize}
	This completes the proof of part (a) when $m,\ell$ are arbitrary. When $m,\ell$ are sufficiently negative, then the argument bounding $\lVert P w \rVert_{H_{\calctwo}^{-N,\mathsf{s},-N;0,0}}$ suffices to give part (b).
\end{proof} 

\begin{remark}
The microlocal cutoff $B$ played a central role in this argument, mollifying the singularity at $\calB$ introduced into $u,v,w$ by the $\delta$-type forcing. (Of course, only a finite amount of mollification is needed for any individual Sobolev estimate.)

Looking ahead to the application of \Cref{prop:advanced/retarded_asymp_main} to the Cauchy problem, the reader may be concerned about the necessity of $B$ above ---
when studying the solution of the (homogeneous) Cauchy problem, we will want global estimates, so without a microlocalizer such as $B$. However, because $\calB$ is in the \emph{elliptic} set of $P$, it is trivial to establish control there, in terms of the forcing. The Cauchy problem is \emph{homogeneous} (unlike the advanced/retarded problem with $\delta$-type forcing): its solution $u$ satisfies $Pu=0$. The approximant $v\approx u$ does not satisfy $Pv=0$, but it does satisfy $Pv\approx 0$, so $w=u-Ov$ satisfies $Pw\approx 0$. So, elliptic regularity gives a good bound on $w$ in $P$'s elliptic set.

Where we use
\Cref{prop:advanced/retarded_asymp_main} is to study the Cauchy problem's solution \emph{within the characteristic set}, which is disjoint from $\calB$.

\end{remark}

Combining \Cref{prop:ansatz_insensitivity} and \Cref{prop:advanced/retarded_asymp_main}, and using \Cref{prop:Sobolev_embedding}, we have
\begin{proposition}
	As $c\to\infty$, the difference $u-v$ between the solution $u$ of \cref{eq:advanced/retarded} and the ansatz $v$ converges to $0$ uniformly in compact subsets of spacetime not intersecting $\{t=0\}$. Indeed, if $\chi \in C^\infty(\partial \bbM)$ has support disjoint from $\operatorname{cl}_\bbM\{t=0\}$, then, for any $\varepsilon>0$, 
	\begin{equation} 
	\lVert \chi( u-v) \rVert_{(1+r^2+t^2)^{3/4+\varepsilon}L^\infty(\bbR^{1,d}) } =O(1/c)
	\end{equation} 
	as $c\to\infty$.  Under the same conditions under which \Cref{prop:advanced/retarded_asymp_main} can be improved to $O(1/c^2)$, the same is true here.
    \label{prop:main_advanced/retarded}
\end{proposition}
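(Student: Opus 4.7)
The plan is to deduce this proposition directly from \Cref{prop:ansatz_insensitivity}, \Cref{prop:advanced/retarded_asymp_main}, and the Sobolev embedding \Cref{prop:Sobolev_embedding}. First I would decompose
\begin{equation}
\chi(u-v) = \chi(u-Ov) - \chi(1-O)v,
\end{equation}
where $O$ is the microlocalizer concentrated near $\mathrm{pf}_-\cup\mathrm{pf}_+$ used in the definitions leading to both propositions. The boundary cutoff $\chi$ is interpreted as multiplication by a smooth extension to $\bbM$ (still denoted $\chi$) with support disjoint from $\operatorname{cl}_\bbM\{t=0\}$; the resulting multiplication operator $M_\chi$ is in $\Psi_{\calctwo}^{0,0,0;0,0}$ and its operator wavefront set, lying over $\operatorname{supp}\chi$, is disjoint from $\calB$.

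The ``smoothing'' error $\chi(1-O)v$ is disposed of immediately by \Cref{prop:ansatz_insensitivity}, which supplies $O(c^{-\infty})$-decay in every $H_{\calctwo}^{m,s_0,\ell;0,0}$ with $s_0<-1/2$. For the main term $\chi(u-Ov)$, I apply \Cref{prop:advanced/retarded_asymp_main} with $B=M_\chi$; the hypothesis $\operatorname{WF}'_{\calctwo}(B)\cap\calB=\varnothing$ is precisely the support condition on $\chi$ noted above. It remains to choose a variable order $\mathsf{s}\in C^\infty({}^{\calctwo}\overline{T}^*\bbM)$ monotone along the Hamiltonian flow and satisfying the threshold bounds $\mathsf{s}|_{\calR_{-\varsigma}}>-1/2$ and $\mathsf{s}|_{\calR_\varsigma}<-3/2$ on $\operatorname{WF}'_{\calctwo}(M_\chi)$; since these inequalities only need to hold on the essential support of $M_\chi$ (and the essential support of $O$ can be shrunk toward $\mathrm{pf}_\pm$ as required by the proposition), such $\mathsf{s}$ exists. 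The conclusion is $\|M_\chi(u-Ov)\|_{H_{\calctwo}^{m,\mathsf{s},\ell;0,0}}=O(1/c)$.

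Finally, taking $m$ and $\ell$ large and choosing $\mathsf{s}$ with a uniform lower bound slightly exceeding $-3/2$, the $\natural$-Sobolev embedding \Cref{prop:Sobolev_embedding} converts the Sobolev bound into the pointwise weighted $L^\infty$-estimate with weight $(1+r^2+t^2)^{3/4+\varepsilon}$. The $O(1/c^2)$ improvement under the hypotheses of \cref{eq:def-general-quadratic-convergence} is inherited verbatim from the corresponding clause of \Cref{prop:advanced/retarded_asymp_main}.

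There is no substantive obstacle: all genuine analytic content, namely the normal-operator approximation, the construction of $v_\pm$ via the parabolic advanced/retarded problem, the decomposition $\phi_\pm = f\pm i g$, and the microlocal propagation estimate invoked in the proof of \Cref{prop:advanced/retarded_asymp_main}, has already been carried out. The remaining work is bookkeeping: verifying that $M_\chi$ is an admissible microlocalizer $B$, selecting the variable order compatibly with the Sobolev embedding, and tracking how the weight $(1+r^2+t^2)^{3/4+\varepsilon}$ arises from the natural-face exponent under the embedding.
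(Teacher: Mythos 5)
Your proof is correct and follows exactly the route the paper intends: the decomposition $\chi(u-v)=\chi(u-Ov)-\chi(1-O)v$, Proposition~\ref{prop:ansatz_insensitivity} for the second term, Proposition~\ref{prop:advanced/retarded_asymp_main} with $B=M_\chi$ for the first, and then the $\natural$-Sobolev embedding Proposition~\ref{prop:Sobolev_embedding} (applied to each $e^{\mp it/h^2}Q_\pm$ factor in the $\calctwo$-norm) to obtain the weighted $L^\infty$ bound; the paper's own proof is exactly this one-line citation of the three propositions. One small wording slip: you want $\mathsf{s}$ bounded \emph{below} by something slightly less than $-3/2$ (so that $\mathsf{s}|_{\calR_\varsigma}<-3/2$ can be satisfied on $\operatorname{WF}'(M_\chi)$), not ``a lower bound slightly exceeding $-3/2$''; with $\inf\mathsf{s}=-3/2-2\varepsilon$ the embedding yields the stated weight $(1+r^2+t^2)^{3/4+\varepsilon}$.
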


\subsection{Back to the Cauchy problem}
\label{subsec:Cauchy}

We now turn to the Cauchy problem
\begin{equation}
\begin{cases}
Pu=0, \\ 
u|_{t=0} = \varphi, \\ 
u_t|_{t=0} = c^{2} \psi
\end{cases}
\label{eq:Cauchy}
\end{equation}
for $\varphi,\psi \in \calS(\bbR^d)$ (this being well-posed).

In order to prove \Cref{thm:Cauchy}, we want to approximate $u$ by a linear combination 
\begin{equation} 
v=e^{-ic^2 t} v_- + e^{ic^2 t} v_+
\label{eq:v_usual}
\end{equation} 
of solutions of the Schr\"odinger initial value problem
\begin{equation}
\begin{cases}
N(P_\pm) v_\pm = 0 \\
v_\pm(0,x) = \varphi_\pm 
\end{cases}
\label{eq:Schrodinger_Cauchy}
\end{equation}
for some $\varphi_\pm\in  \calS(\bbR^d)$. 
In analogy with our discussion for the advanced/retarded problem, the most reasonable course of action is to choice $\phi_\pm$ such that $v|_{t=0} = \varphi(x)$, $v_t|_{t=0} \approx c^{2} \psi$.
So, we should take 
\begin{equation} 
\varphi_\pm = (\varphi\mp i \psi)/2.
\label{eq:usual_id}
\end{equation} 
\Cref{eq:v_usual}, \cref{eq:Schrodinger_Cauchy}, and \cref{eq:usual_id} together define $v$. Our goal is to prove $u\approx v$, under the stated assumptions.

\subsubsection{Proof sketch}

As we stated in the introduction to this section, the Cauchy problem can be reduced to the advanced/retarded problem (which we studied in the previous subsection, \S\ref{subsec:advanced/retarded}) by setting 
$u^+ = \Theta(t)u,\; u^-=(1-\Theta(t))u$, where $\Theta(t)=1_{t>0}$ is the Heaviside function. 
Then, \cref{eq:Cauchy} is equivalent to 
\begin{equation} \label{eq:def-u-varsigma}
	P u^\varsigma = \delta(t) f_{\varsigma}(x,c) + \frac{\delta'(t)}{c^2} g_{\varsigma}(x,c)  
	\end{equation}
	for some $f_{\varsigma},g_{\varsigma} \in C^\infty((1,\infty]_c;\calS(\bbR^d_x))$ (which can be worked out explicitly); $f_\varsigma \approx \varsigma \psi$, $g_\varsigma\approx \varsigma\varphi$ to leading order in $1/c$. By construction, $u^\pm$ solve the advanced/retarded problem with $\delta$-type forcing.
    Then, by \Cref{prop:ansatz_insensitivity} and \Cref{prop:advanced/retarded_asymp_main}, we can approximate $u^\varsigma$ in the $c\to\infty$ limit by 
\begin{equation} 
v^\varsigma = e^{-ic^2 t} v_-^\varsigma + e^{ic^2 t} v_+^\varsigma,
\end{equation} 
where $v^\varsigma_\pm$ is the solution of the Schr\"odinger equation's advanced/retarded problem discussed in the previous section.\footnote{This means $N(P_\pm)v^\varsigma_\pm = \delta(t) \phi_{\varsigma,\pm}(x)$ for $\phi_{\varsigma,\pm}=\varsigma \psi \pm i \varsigma\varphi$.}
We write $u^\varsigma\approx v^\varsigma$. Consequently, if we let $v=v^-+v^+$ (mind the difference between $v^\pm$ and $v_\pm$ here), then, since $u=u^++u^-$, we have $u\approx v$.

The last step in the proof is to show that the $v$ just defined is the same as that defined by \cref{eq:v_usual}, \cref{eq:Schrodinger_Cauchy}, \cref{eq:usual_id}. This is a straightforward computation.\footnote{Indeed,  \cref{eq:Schrodinger_Cauchy} means that $v_\pm^\varsigma$ should satisfy $N(P_\pm)v^\varsigma_\pm = \pm 2 \varsigma i \varphi_\pm$. But, as recorded in the previous footnote, $N(P_\pm)v^\varsigma_\pm = \delta(t) \phi_{\varsigma,\pm}$ for $\phi_{\varsigma,\pm} = \varsigma \psi \pm \varsigma i \varphi$. These two are consistent, because   $ \varsigma \psi \pm \varsigma i \varphi= \pm 2 \varsigma i \varphi_\pm$, as plugging in \cref{eq:usual_id} yields.}

One small complication is that, in the previous section, we only controlled $u^\varsigma-v^\varsigma$ microlocally away from the ``bad set'' $\calB\subset {}^{\calctwo}\overline{T}^* \bbM$, at which $u^\varsigma$ acquires wavefront set from $Pu^\varsigma\ni \delta(t),\delta'(t)$. So, the argument above only controls $u-v$ away from $\calB$. 
However, we saw that $\calB$ lies in the $\calctwo$-elliptic region of $P$. Because $u$ (unlike $u^\varsigma$) solves the homogeneous problem $Pu=0$, elliptic regularity controls $u$ there. Actually, we want to prove that the difference $w=u-v$ is small, but this is shown similarly, once $Pv$ is known to be small. This will follow from the fact that $v_\pm$ (unlike $v_\pm^\varsigma$) solves the homogeneous problem $N(P_\pm)v_\pm = 0$.\footnote{Actually, we will stick in a microlocalizer: $w=u-Ov$, for $O$ as in \cref{eq:O-def-1}. This is an inessential point.}

\subsubsection{Details}

As before, we can use a microlocalizer to cut things off away from $\mathrm{pf}_-\cup \mathrm{pf}_+$ without worsening the ansatz:
\begin{proposition}
    Let $v$ be defined as above, and let $O \in \Psi_{\calczero}^{-\infty,0,0}$ be as in \cref{eq:O-def-1}.
	Then, 
	\begin{equation}
	\lVert  (1-O) v \rVert_{H_{\calctwo}^{m,s_0,\ell;0,0} } = O\Big( \frac{1}{c^\infty} \Big)
	\end{equation}
	as $c\to\infty$, 
	for every $m,\ell\in \bbR$ and $s_0<-1/2$. Consequently, via \Cref{lem:sc_comp} and the Sobolev embedding theorem, $ (1-O)v \to 0$ uniformly in compact subsets of $\bbR^{1,d}$ as $c\to\infty$, faster than any polynomial in $1/c$. 
	\label{prop:Cauchy_asymp_aux}
\end{proposition}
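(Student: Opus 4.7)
The plan is to adapt the argument of \Cref{prop:ansatz_insensitivity}, with two key simplifications afforded by the Cauchy setting: $v_\pm$ now solves the \emph{homogeneous} Schr\"odinger equation with Schwartz initial data (no forcing wavefront set appears), and consequently no spacetime cutoff $\chi$ vanishing near $\{t=0\}$ is required---so the commutator error $[M_\chi,O]v$ from that earlier proof is simply absent here.

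First I would establish that $v_\pm\in H_{\mathrm{par}}^{N,\overline{\mathsf{s}}_\pm}$ uniformly in $c$, for every $N\in\bbR$ and every variable parabolic order $\overline{\mathsf{s}}_\pm\in C^\infty({}^{\mathrm{par}}\overline{T}^*\bbM)$ satisfying the usual threshold conditions. Since $\varphi_\pm\in\calS(\bbR^d)$ and the Schr\"odinger Cauchy problem is globally well-posed with Schwartz-preserving evolution, we have $v_\pm\in C^\infty(\bbR_t;\calS(\bbR^d_x))$. The global propagation/radial point estimates on the parabolic phase space (as used in \S\ref{subsub:vpm_props}) convert this smoothness and spatial decay into the claimed parabolic Sobolev membership. \Cref{lem:par_comp_better} then upgrades this to
\begin{equation*}
v_\pm \in H_{\calc}^{N,\mathsf{s}_\pm,N,0}
\end{equation*}
for any $N\in\bbR$ and any suitable $\calc$-variable order $\mathsf{s}_\pm$ compatible with $\overline{\mathsf{s}}_\pm$ on $\mathrm{pf}$.

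Next I would pass from the $\calc$-calculus to the $\calctwo$-calculus via conjugation by the modulations $e^{\pm ic^2 t}$: recall from \S\ref{subsec:calctwo} that this conjugation identifies a neighborhood of $\mathrm{pf}$ in the $\calc$-phase space with a neighborhood of $\mathrm{pf}_\pm$ in the $\calctwo$-phase space. Hence $e^{\pm ic^2 t}v_\pm$ enjoys uniform-in-$c$ bounds in $H_{\calctwo}^{N,s_0,N;0,0}$ microlocally away from $\mathrm{pf}_\mp$, for any $N\in\bbR$ and any $s_0<-1/2$. Since $1-O\in\Psi_{\calczero}^{-\infty,0,0}\subset\Psi_{\calctwo}^{-\infty,0,0;0,0}$ has $\operatorname{WF}'_{\calczero}(1-O)$ disjoint from $\mathrm{pf}_-\cup\mathrm{pf}_+$, microlocal elliptic regularity in the $\calctwo$-calculus gives
\begin{equation*}
(1-O)\,e^{\pm ic^2 t}v_\pm \in H_{\calctwo}^{m,s_0,\ell;N,N}
\end{equation*}
with uniform-in-$c$ bounds for every $m,\ell,N\in\bbR$. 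Rewriting $H_{\calctwo}^{m,s_0,\ell;N,N}=c^{-N}H_{\calctwo}^{m,s_0,\ell;0,0}$ and summing over the two signs yields the claimed $O(c^{-\infty})$ estimate; the Sobolev embedding of \Cref{prop:Sobolev_embedding} (via \Cref{lem:sc_comp}) then delivers uniform convergence to zero on compact subsets of $\bbR^{1,d}$.

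I do not anticipate a serious obstacle. The only step requiring genuine care is the first, namely selecting the variable orders $\overline{\mathsf{s}}_\pm$ and $\mathsf{s}_\pm$ so that every invocation of the radial point estimates is valid and so that the below-threshold constraint $s_0<-1/2$ is respected after each change of calculus; but since $v_\pm$ is as regular and as rapidly spatially decaying as one pleases, and no $\delta$-type singularity on $\{t=0\}$ is present (unlike in \Cref{prop:ansatz_insensitivity}), there is ample room to make all these choices compatible.
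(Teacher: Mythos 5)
Your proposal tracks the paper's proof closely: the paper's own argument is just a one-line observation that the proof of Proposition \ref{prop:ansatz_insensitivity} carries over, simplified because the cutoff $M_\chi$ is unnecessary when $v_\pm$ solves the \emph{homogeneous} Schr\"odinger IVP with Schwartz data, so that $v \in H_{\mathrm{par}}^{\infty,s_0}$ for every constant $s_0 < -1/2$. Your outline fills in the same chain of implications (global parabolic regularity of $v_\pm$, pass to $\calc$-Sobolev via \Cref{lem:par_comp_better}, conjugate by $e^{\pm ic^2 t}$, exploit that $1-O$ is microsupported away from $\mathrm{pf}_\pm$ to harvest arbitrary powers of $c^{-1}$), so the strategy is the same.

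One imprecision worth flagging: you claim $v_\pm\in H_{\mathrm{par}}^{N,\overline{\mathsf{s}}_\pm}$ for \emph{every} variable order $\overline{\mathsf{s}}_\pm$ satisfying "the usual threshold conditions." This is not quite right: the Cauchy solution $v_\pm$ is a superposition of advanced and retarded pieces, so its parabolic wavefront set generically sits on \emph{both} outgoing radial sets (future and past). Any variable order that is above threshold on one of those (as the advanced/retarded/Feynman threshold conditions require) would therefore \emph{fail} for $v_\pm$. What actually holds, and what the paper uses, is membership in $H_{\mathrm{par}}^{\infty,s_0}$ for \emph{constant} $s_0 < -1/2$, i.e., below threshold everywhere. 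This is all you need for the rest of the argument, so the slip is harmless, but the way you phrased step one overclaims.
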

\begin{proof}
	The proof is the same as the proof of \Cref{prop:ansatz_insensitivity}, except now $v\in H_{\mathrm{par}}^{\infty,s_0}$ for any $s_0<-1/2$ without needing to cut-off near the equator, so we do not need the multiplier $M_\chi$. 
\end{proof}

Our goal is to bound $w=u-Ov$ for $O$ as in \cref{eq:O-def-1}. Away from $\calB$, this follows immediately from  \Cref{prop:advanced/retarded_asymp_main} (after splitting $w=w^+=w^-$ into the advanced/retarded parts $w^+=\Theta(t) w$ and $w^-=(1-\Theta(t))w$, applying the cited proposition to each). So, only the situation near $\calB$ needs investigation. 

Suppose that $A\in \Psi_{\calctwo}^{0,0,0;0,0}$ has essential support in $P$'s elliptic region (such as near $\calB$). Then, we have an estimate 
\begin{equation} \label{eq:est-w-near-calB}
    \lVert A w \rVert_{H_{\calctwo}^{m,\mathsf{s},\ell;0,0}} \lesssim \lVert G P w \rVert_{H_{\calctwo}^{m-2,\mathsf{s},\ell-2;0,0}} + \lVert w \rVert_{H_{\calctwo}^{-N,-N,-N;0,0}},  \quad G \in \Psi_{\calctwo}^{0,0,0;0,0}
\end{equation}
where $G$ is elliptic on the essential support of $A$. The global result in \Cref{prop:advanced/retarded_asymp_main} (part (b)) shows that the final term is $O(1/c)$ (and is improved to $O(1/c^2)$ when the estimates in that proposition are also so improved).\footnote{That proposition was stated for the advanced/retarded problem, meaning the $w$ there is really $\Theta w$ or $(1-\Theta)w$. But since $w=\Theta w + (1-\Theta) w$, the same global estimate applies to what we are calling $w$ here.} In the estimate \cref{eq:est-w-near-calB}, $\mathsf{s}$ is any variable order, including the possibility of constant order.

So, it suffices to estimate the $H_{\calctwo}^{m-2,s,\ell-2}$-norm of $Pw$; then, we have an upper bound on the right-hand side of \cref{eq:est-w-near-calB}, and therefore on $w$. 
\begin{proposition}
	$Pw = -R_-(e^{-ic^2 t}v_-) - R_+(e^{ic^2 t} v_+)$ where $R_\pm \in \Psi_{\calctwo}^{-\infty,0,2;-1,-1}$ and have essential support disjoint from $\mathrm{df}$.
    In addition, if $\varphi,\psi$ and $P$ satisfy \cref{eq:def-general-quadratic-convergence}, 
    this can be improved to 
    $R_\pm \in \Psi_{\calctwo}^{-\infty,0,2;-2,-2}$.
	\label{prop:Pv_error2}
\end{proposition}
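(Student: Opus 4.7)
Since $Pu = 0$, we have $Pw = -P(Ov)$, and the plan is to split $P(Ov)$ into pieces that each take the form of $R_\pm$ applied to $e^{\pm ic^2 t} v_\pm$. Write
\[
P(Ov) = O P v + [P,O] v = O P_1 v + O(P - P_1) v + [P,O] v,
\]
where $P_1 = \NPcalctwo$ is the operator appearing in \cref{eq:P-P1-membership_0}. The commutator $[P,O] \in \Psi_{\calczero}^{-\infty,-1,1}$ has essential support in the transition region of $O$, hence disjoint from $\mathrm{df}$ and (crucially) from $\mathrm{pf}_\pm$, where $O$ is microlocally the identity. Likewise, $O(P-P_1) \in \Psi_{\calctwo}^{-\infty,-1,1;-1,-1}$ by \cref{eq:P-P1-membership_0}, and it too has essential support disjoint from $\mathrm{df}$. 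Each of these two contributions will immediately fit inside the claimed $R_\pm$ after the usual conjugation by $e^{\pm ic^2 t}$ that turns $\calc$/$\calczero$ operators into $\calctwo$ operators acting on $e^{\pm ic^2 t} v_\pm$, with the necessary splitting between $R_-$ and $R_+$ governed by the two factors of $v$.

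The main term $OP_1 v$ is handled by repeating the manipulations in the proof of \Cref{prop:Pv_error}, with the essential simplification that $v_\pm$ now solves the \emph{homogeneous} Schr\"odinger equation $N(P_\pm) v_\pm = 0$. Using $P_1 e^{\pm ic^2 t} = e^{\pm ic^2 t} P_{1,\pm}$, one obtains
\[
OP_1 v = O\bigl(e^{-ic^2 t}\bigl(P_{1,-} - N(P_-)\bigr) v_- + e^{ic^2 t}\bigl(P_{1,+} - N(P_+)\bigr) v_+\bigr),
\]
since the $N(P_\pm) v_\pm$ terms vanish by construction. With $L_\pm$ as in \cref{eq:def-Lpm}, the identity $\partial_t v_\pm = \mp\tfrac{i}{2} L_\pm v_\pm$ (and its consequence for $\partial_t^2 v_\pm$) lets us rewrite $(P_{1,\pm} - N(P_\pm)) v_\pm$ as a spatial differential operator applied to $v_\pm$, with no time derivatives. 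As in the proof of \Cref{prop:Pv_error}, the resulting operator, after conjugation back by $e^{\pm ic^2 t}$ and multiplication by $O$, lies in $\mathrm{Diff}_{\calctwo}^{4,0,2;-2,-2}$ (in particular, order $-2$ at $\mathrm{pf}_\pm$, one better than required).

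Combining the three contributions gives $R_\pm \in \Psi_{\calctwo}^{-\infty,0,2;-1,-1}$ with essential supports disjoint from $\mathrm{df}$, as asserted: the $\mathrm{pf}_\pm$-order $-1$ is saturated by the $O(P-P_1) v$ piece, while $[P,O]v$ and $OP_1 v$ are actually of higher (more negative) order there. Under the strengthened assumption \cref{eq:def-general-quadratic-convergence}, the membership \cref{eq:P-P1-membership_1} upgrades $O(P-P_1)$ to $\Psi_{\calctwo}^{-\infty,-1,0;-2,-2}$, and all three contributions are now order $-2$ at $\mathrm{pf}_\pm$, giving $R_\pm \in \Psi_{\calctwo}^{-\infty,0,2;-2,-2}$. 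The only real point of care in the argument is the order-bookkeeping as one passes between the $\calc$ and $\calctwo$ pictures via conjugation by $e^{\pm i c^2 t}$ (which moves $\mathrm{pf}$ of the $\calc$-phase space to $\mathrm{pf}_\pm$ of the $\calctwo$-phase space); once the algebra from \Cref{prop:Pv_error} is invoked verbatim, the verification is routine.
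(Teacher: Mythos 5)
Your proof is correct, but it is a genuinely different (and arguably cleaner) route than the one the paper takes. The paper's proof of \Cref{prop:Pv_error2} is a one-liner: it splits $w = \Theta(t)w + (1-\Theta(t))w = w^+ + w^-$, applies \Cref{prop:Pv_error} to each $w^\varsigma$, and sums, relying on the fact that $f_+ + f_- = 0$, $g_+ + g_- = 0$ (since $Pu=0$) and that the auxiliary terms $r,\tilde f,\tilde g$ are linear in $\phi_{\varsigma,\pm}$ and therefore also cancel across $\varsigma$. You instead work directly with the Cauchy ansatz $v_\pm$, decomposing $P(Ov) = OP_1 v + O(P-P_1)v + [P,O]v$ and exploiting the homogeneous equation $N(P_\pm) v_\pm = 0$ to kill all $\delta$-type terms at the source, then re-running the algebra from the proof of \Cref{prop:Pv_error} with $\partial_t v_\pm = \mp \tfrac{i}{2}L_\pm v_\pm$ to reduce $(P_{1,\pm}-N(P_\pm))v_\pm$ to a spatial operator $\tilde R_\pm v_\pm$. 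Your order bookkeeping is accurate: $O\tilde R_\pm \in \Psi_{\calctwo}^{-\infty,0,2;-2,-2}$, $[P,O]$ has $-\infty$ order at $\mathrm{pf}_\pm$ by essential-support considerations, and $O(P-P_1)$ saturates the $\mathrm{pf}_\pm$-order at $-1$ generically (improving to $-2$ under \quadratic{} via \cref{eq:P-P1-membership_1}). What your approach buys is that you never need to introduce $\Theta(t)$, never produce the forcing terms $r,\tilde f,\tilde g$, and never need to verify (implicitly or otherwise) that they cancel on summing over $\varsigma$. What it costs is that you effectively redo the algebra of \Cref{prop:Pv_error} rather than citing it as a black box, which is what makes the paper's version so short.
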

\begin{proof}
	This is the result of summing the two right-hand sides of \Cref{prop:Pv_error} (for $w^\pm$). 
\end{proof}
So, $Pw$ is small, as claimed.

What this yields for $w$ is:
\begin{proposition}
	Let $m,\ell\in \bbR$, $s<-3/2$, and $O$ be as in \cref{eq:O-def-1}.
	Defining $v$ as above, 
	\begin{equation}
	\lVert u- O  v \rVert_{H_{\calctwo}^{m,s,\ell;0,0} } = O\Big( \frac{1}{c} \Big)
	\label{eq:misc_42c}
	\end{equation}
	as $c\to\infty$. So, via the Sobolev embedding lemma \Cref{prop:Sobolev_embedding}, $u-Ov \to 0$ uniformly in compact subsets of spacetime. 

    In addition, if $\varphi,\psi$ and $P$ satisfy \cref{eq:def-general-quadratic-convergence}, the right hand side of \cref{eq:misc_42c} can be improved to $O(1/c^2)$.
	\label{prop:Cauchy_main_asymp}
\end{proposition}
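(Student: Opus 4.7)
The plan is to reduce the Cauchy problem to the advanced/retarded problem with $\delta$-type forcing, for which the companion \Cref{prop:advanced/retarded_asymp_main} already provides global $O(1/c)$ control, and then clean up what remains by an elliptic argument near the bad set $\calB$. First I would set $u^\varsigma = \Theta(\varsigma t) u$ and note that each $u^\varsigma$ solves $P u^\varsigma = \delta(t) f_\varsigma(x,c) + c^{-2}\delta'(t) g_\varsigma(x,c)$ for explicit $f_\varsigma, g_\varsigma \in C^\infty((1,\infty]_c; \calS(\bbR^d))$ derived from the Cauchy data of $u$, with $f_\varsigma \to \varsigma\psi$ and $g_\varsigma \to \varsigma\varphi$ as $c \to \infty$. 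Correspondingly I would split $v = v^- + v^+$ where $v^\varsigma = e^{-ic^2 t} v^\varsigma_- + e^{ic^2 t} v^\varsigma_+$ and $v^\varsigma_\pm$ solve \cref{eq:misc_407} with initial data $\phi^\varsigma_\pm = f_\varsigma \pm i g_\varsigma$ evaluated at $c = \infty$; a direct algebraic check using \cref{eq:advanced/retarded_fdecomp} reconciles this with the ansatz built from $\varphi_\pm = (\varphi \mp i\psi)/2$ in \cref{eq:Schrodinger_Cauchy} and \cref{eq:usual_id}.

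Next I would apply \Cref{prop:advanced/retarded_asymp_main}(b) to each $u^\varsigma - O v^\varsigma$, using a constant order $s < -3/2$ (which fits the theorem's threshold hypothesis since one can take the same $s$ below $-1/2$ on $\calR_{-\varsigma}$ and below $-3/2$ on $\calR_\varsigma$ simultaneously on the essential support of any $B$ disjoint from $\calB$). Summing the two contributions gives an $O(1/c)$ bound for $B w$ in $H_{\calctwo}^{m,s,\ell;0,0}$ for all such $B$, as well as a global weak bound $\lVert w \rVert_{H_{\calctwo}^{-N,-N,-N;0,0}} = O(1/c)$ coming from part (b) of the same proposition applied with sufficiently negative orders.

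It remains to control $A w$ for a cutoff $A \in \Psi_{\calctwo}^{0,0,0;0,0}$ microsupported in the elliptic region of $P$ (so containing a neighborhood of $\calB$) chosen so that $A + B$ is elliptic on the full phase space for a $B$ as above. For this I invoke the elliptic estimate \cref{eq:est-w-near-calB}, which bounds $\lVert A w \rVert_{H_{\calctwo}^{m,s,\ell;0,0}}$ by $\lVert G P w \rVert_{H_{\calctwo}^{m-2,s,\ell-2;0,0}} + \lVert w \rVert_{H_{\calctwo}^{-N,-N,-N;0,0}}$. The second term has just been shown to be $O(1/c)$, and for the first I use \Cref{prop:Pv_error2}, which gives $P w = -R_-(e^{-ic^2 t} v_-) - R_+(e^{ic^2 t} v_+)$ with $R_\pm \in \Psi_{\calctwo}^{-\infty,0,2;-1,-1}$ microsupported away from $\mathrm{df}$. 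Since the homogeneous Schr\"odinger solutions $v_\pm$ lie in $H_{\mathrm{par}}^{\infty,\bar{\mathsf{s}}_\pm}$ for any $\bar{\mathsf{s}}_\pm < -1/2$, \Cref{lem:par_comp_better} converts this into the desired $O(1/c)$ bound in $H_{\calctwo}^{m-2,s,\ell-2;0,0}$.

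Finally, the improvement to $O(1/c^2)$ under the quadratic convergence hypothesis \cref{eq:def-general-quadratic-convergence} comes for free: \Cref{prop:Pv_error2} then delivers $R_\pm \in \Psi_{\calctwo}^{-\infty,0,2;-2,-2}$, and the corresponding improvement is already present in \Cref{prop:advanced/retarded_asymp_main}. The main technical obstacle I expect is the bookkeeping around the essential supports: I must verify that the constant order $s < -3/2$ is compatible with the monotonicity/threshold conditions for both choices $\varsigma = \pm$ simultaneously, and that the cutoffs $A, B, G$ can be chosen so that $B$ avoids $\calB$ while $G$ remains elliptic on $\operatorname{WF}'_{\calctwo}(A)$ without crossing the characteristic set of $P$; both points, however, follow from \cref{eq:calB-property} that $\calB \cap \Sigma = \varnothing$, together with the fact that $\calB$ is disjoint from $\mathrm{pf}_\pm$.
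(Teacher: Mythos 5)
Your overall strategy is identical to the paper's: split $w=u-Ov$ into advanced/retarded halves, control the piece microlocalized away from the bad set $\calB$ via \Cref{prop:advanced/retarded_asymp_main}, control the piece near $\calB$ via the elliptic estimate \cref{eq:est-w-near-calB}, and feed in \Cref{prop:Pv_error2} together with \Cref{lem:par_comp_better} to get the $O(1/c)$ bound on $Pw$. The recombination of the advanced/retarded Schr\"odinger data with the Cauchy ansatz, and the $O(1/c^2)$ upgrade, are both treated correctly.

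However, there is a concrete error in your threshold-condition bookkeeping. \Cref{prop:advanced/retarded_asymp_main} requires the variable order to satisfy $\mathsf{s}|_{\calR_{-\varsigma}}>-1/2$ and $\mathsf{s}|_{\calR_{\varsigma}}<-3/2$ on $\operatorname{WF}'_{\calctwo}(B)$ --- the first is an \emph{above}-threshold condition at the incoming radial set (this is where propagation is seeded, using that $w^\varsigma$ vanishes for $\varsigma t<0$). You assert the opposite inequality (``below $-1/2$ on $\calR_{-\varsigma}$'') and conclude that a single constant $s<-3/2$ ``fits the theorem's threshold hypothesis.'' It does not: a constant below $-3/2$ cannot be above $-1/2$ anywhere, so the constant order $s$ is not itself an admissible $\mathsf{s}$ for \Cref{prop:advanced/retarded_asymp_main}. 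The correct mechanism is to apply that proposition with a \emph{variable} order $\mathsf{s}\geq s$ that rises above $-1/2$ near $\calR_{-\varsigma}$, agrees with $s$ near $\calR_{\varsigma}$ and elsewhere, and is monotone in between; one then concludes the constant-order estimate \cref{eq:misc_42c} from the containment $H_{\calctwo}^{m,\mathsf{s},\ell;0,0}\subseteq H_{\calctwo}^{m,s,\ell;0,0}$. This is the step the paper leaves implicit when it cites \Cref{prop:advanced/retarded_asymp_main} directly with a constant $s<-3/2$; your version makes the (incorrect) claim explicit instead of performing the embedding. Fix this one sentence and the argument goes through.
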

\begin{proof}
We prove only the $O(1/c)$ bound, with $O(1/c^2)$ being similar.

    As we explained above, we can write $w=u-Ov$ as $w=w^++w^-$, and then, for any $B\in \Psi_{\calctwo}^{0,0,0;0,0}$, 
    \begin{equation}
        \lVert w \rVert_{H_{\calctwo}^{m,s,\ell;0,0}}  \leq \lVert Bw^+ \rVert_{H_{\calctwo}^{m,s,\ell;0,0}} +\lVert B w^- \rVert_{H_{\calctwo}^{m,s,\ell;0,0}}  +\lVert (1-B) w \rVert_{H_{\calctwo}^{m,s,\ell;0,0}} .
    \end{equation}
    Choose $B$ to be essentially supported away from $\calB$ and such that $1-B$ is essentially supported away from the $\calctwo$-characteristic set of $P$. \Cref{prop:advanced/retarded_asymp_main} tells us that the first two terms on the right-hand side are $O(1/c)$. The last term is estimated using the elliptic estimate \cref{eq:est-w-near-calB}, in which the final term on the right-hand side is already known to be $O(1/c)$.
    So, as stated above, it suffices to prove that the other term on the right-hand side of \cref{eq:est-w-near-calB} is $O(1/c)$:
    \begin{equation}
        \lVert Pw \rVert_{H_{\calctwo}^{m-2,s,\ell-2;0,0}} = O\Big(\frac{1}{c}\Big).
    \end{equation}
    For this, we use \Cref{prop:Pv_error2}, which gives 
	\begin{align} \label{eq:Pw-est-1}
	\begin{split} 
	\lVert Pw \rVert_{H_{\calctwo}^{m-1,s+1,\ell-1;0,0} } &\lesssim \lVert R_-(e^{-ic^2 t} v_-) \rVert_{H_{\calctwo}^{m-1,s+1,\ell-1;0,0} }  + \lVert R_+(e^{+ic^2 t} v_-) \rVert_{H_{\calctwo}^{m-1,s+1,\ell-1;0,0} } \\
	& \lesssim \frac{1}{c} \big( \lVert (c R_-)(e^{-ic^2 t} v_-) \rVert_{H_{\calctwo}^{m-1,s+1,\ell-1;0,0} }  + \lVert e^{+ic^2 t} (cR_+)v_- \rVert_{H_{\calctwo}^{m-1,s+1,\ell-1;0,0} } \big),
	\end{split} 
	\end{align}
	where $c R_\pm \in \Psi_{\calctwo}^{-\infty,0,3;0,0}$.
	Because we are assuming that $s<-3/2$, 
	\begin{equation} 
		v_\pm \in H_{\mathrm{par}}^{\infty,1+s }
	\end{equation} 
	(this is not true for $v^\varsigma_\pm$, since these have ordinary wavefront set at $\calB$, as we saw in the previous subsection). So, \Cref{lem:par_comp_better} implies that, as long as $O$ is essentially supported sufficiently close to $\mathrm{pf}_-\cup\mathrm{pf}_+$, 
	\begin{equation}
	\lVert (c R_-)(e^{\pm ic^2 t} v_\pm ) \rVert_{H_{\calctwo}^{m-1,s+1,\ell-1;0,0} }  = O(1)
	\end{equation}
	as $c\to\infty$. So, all in all, we get \cref{eq:misc_42c}.
\end{proof}

Combining \Cref{prop:Cauchy_asymp_aux} and \Cref{prop:Cauchy_main_asymp} with the $\natural$-Sobolev embedding theorem (say, \Cref{prop:Sobolev_embedding}), we have \Cref{thm:Cauchy}, which we restate here:
\begin{proposition}
	As $c\to\infty$, the difference $u-v$ between the solution $u$ of the Cauchy problem, \cref{eq:Cauchy-intro}, and the ansatz $v = e^{-ic^2 t} v_- +  e^{ic^2 t} v_+$ defined above converges to $0$ in the following sense: for any $\varepsilon>0$, $p\in [2,\infty]$,
	\begin{equation} 
		\lVert u-v \rVert_{(1+r^2+t^2)^{3/4+\varepsilon}L^p(\bbR^{1,d}) } =O(1/c)
		\label{eq:misc_ab7c}
	\end{equation} 
	as $c\to\infty$. 
    Moreover, the same estimate applies to $L(u-v)$ for $L$ any (constant coefficient) differential operator formed out of $c^{-2}\partial_t,\partial_{x_j}$.

    In addition, if $\varphi,\psi$ and $P$ satisfy \cref{eq:def-general-quadratic-convergence}, the right hand side of \cref{eq:misc_ab7c} can be improved to $O(1/c^2)$.
\end{proposition}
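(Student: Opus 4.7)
The plan is to assemble the theorem from the pieces that have already been laid out in \S\ref{subsec:Cauchy}. The theorem as stated is essentially a corollary of \Cref{prop:Cauchy_main_asymp} combined with \Cref{prop:Cauchy_asymp_aux} and the Sobolev embedding result (\Cref{prop:Sobolev_embedding}). Specifically, I would first fix $\varepsilon > 0$, choose $s < -3/2$ close enough to $-3/2$ so that the $\natural$-Sobolev embedding yields the desired $(1+r^2+t^2)^{3/4+\varepsilon}$-weighted $L^p$ bound (for $p \in [2,\infty]$), and pick $m, \ell \in \bbR$ sufficiently large so that the embedding is available.

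With those choices made, \Cref{prop:Cauchy_main_asymp} delivers
\begin{equation}
    \lVert u - O v \rVert_{H_{\calctwo}^{m,s,\ell;0,0}} = O(1/c),
\end{equation}
while \Cref{prop:Cauchy_asymp_aux} gives
\begin{equation}
    \lVert (1-O)v \rVert_{H_{\calctwo}^{m,s,\ell;0,0}} = O(1/c^\infty).
\end{equation}
Adding these yields $\lVert u - v \rVert_{H_{\calctwo}^{m,s,\ell;0,0}} = O(1/c)$. The $\natural$-Sobolev embedding (\Cref{prop:Sobolev_embedding}, whose proof sits in \S\ref{sec:relations}) then translates this into the claimed $L^p$-estimate with the weight $(1+r^2+t^2)^{3/4+\varepsilon}$. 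This handles the first assertion.

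For the derivative statement, I would use that the generators $c^{-2}\partial_t$ and $\partial_{x_j}$ of the differential operators $L$ under consideration lie in $\mathrm{Diff}_{\calctwo}^{1,0,1;0,0}$ (indeed, each is an order-one $\calczero$-operator, and conjugation by $e^{\pm ic^2 t}$ leaves this property intact, so neither picks up any order at $\mathrm{pf}_\pm$). Hence, if $L$ has total order $k$, the boundedness
\begin{equation}
    L: H_{\calctwo}^{m+k,s,\ell+k;0,0} \to H_{\calctwo}^{m,s,\ell;0,0}
\end{equation}
is uniform in $c$. Re-running \Cref{prop:Cauchy_main_asymp} and \Cref{prop:Cauchy_asymp_aux} with $m$ replaced by $m+k$ and $\ell$ by $\ell+k$ (both of which are still arbitrarily large, so all hypotheses remain satisfied) gives $\lVert L(u-v) \rVert_{H_{\calctwo}^{m,s,\ell;0,0}} = O(1/c)$, and a second application of \Cref{prop:Sobolev_embedding} converts this to the weighted $L^p$ estimate for $L(u-v)$.

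Finally, for the $O(1/c^2)$ improvement under \cref{eq:def-general-quadratic-convergence}, the key observation is that this improvement is already tracked through every intermediate step: it was stated in \Cref{prop:Pv_error2} (two orders of decay at $\mathrm{pf}_\pm$ for $R_\pm$ instead of one), propagated in \Cref{prop:Cauchy_main_asymp} via the estimate \eqref{eq:Pw-est-1} (replacing $cR_\pm$ by $c^2 R_\pm$), and hence automatically inherited by the current theorem. I do not anticipate any serious obstacle; the main care required is simply to verify that the $L^p$-weight $(1+r^2+t^2)^{3/4+\varepsilon}$ is the correct one produced by the $\natural$-Sobolev embedding applied at spacetime order $s < -3/2$, which is essentially the content of the embedding lemma cited.
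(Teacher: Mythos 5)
Your proposal is correct and follows essentially the same route as the paper: combine \Cref{prop:Cauchy_asymp_aux} and \Cref{prop:Cauchy_main_asymp} to bound $\lVert u-v \rVert_{H_{\calctwo}^{m,s,\ell;0,0}}$ for $s<-3/2$ close to $-3/2$ and $m,\ell$ arbitrary, then invoke the $\natural$-Sobolev embedding, and handle derivatives by noting that $c^{-2}\partial_t$ and $\partial_{x_j}$ only cost orders at $\mathrm{df},\natural\mathrm{f}$ where $u-v$ is controlled to infinite order. The only cosmetic difference is that you phrase the derivative step as ``re-run the propositions with $m+k,\ell+k$'' whereas the paper simply observes that $u-v$ is already infinitely regular at $\mathrm{df},\natural\mathrm{f}$; these are the same argument.
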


The statement on derivatives follows from the observation that $c^{-2}\partial_t \in \operatorname{Diff}_{\calczero}^{1,0,0}$ and \begin{equation} 
\partial_{x_j}\in \operatorname{Diff}_{\calc}^{1,0,1;0,0},
\end{equation} 
so, applying these to $u-v$, we only worsen the $\mathrm{df},\natural\mathrm{f}$ orders, in terms of which $u-v$ is infinitely regular.

\section*{Acknowledgements}

This work began at the \href{https://www.matrix-inst.org.au/}{MATRIX} workshop ``\href{https://www.matrix-inst.org.au/events/hyperbolic-pdes-and-nonlinear-evolution-problems/}{Hyperbolic PDEs and Nonlinear Evolution Problems}.'' We thank MATRIX for the hospitality and inspiring environment. We are particularly grateful to Timothy Candy, whose talk at MATRIX, on \cite{CaHe}, served as the immediate impetus for this project. A.H. and Q.J. are supported by the Australian Research Council through grant FL220100072. E.S.\ is supported by the National Science Foundation under grant number DMS-2401636.
A.V. gratefully acknowledges support from the National Science Foundation under grant number DMS-2247004 and from a Simons Fellowship.

\appendix

\section{Index of notation}
\label{sec:notation}
\noindent Some standard notation:
\begin{itemize}
	\item $\triangle = -\sum_{j=1}^d \partial_{x_j}^2$ is the positive semi-definite Euclidean Laplacian.
	\item $D_{\bullet} = -i\partial_{\bullet}$, with $\bullet$ being one of the coordinates. An exception is in \S\ref{sec:intro} (in particular  \S\ref{subsec:time_independent}, \S\ref{subsec:time_dependent}), where we use $D_\pm$ to $D$ to denote the ``gauge covariant derivative'' $i \nabla + \bfA(x)$ and $D_\pm$ to denote the half-Klein--Gordon operator \cref{eq:Dpm_def}.
	\item  $r=(x_1^2+\dots+x_d^2)^{1/2}= \lVert x \rVert$ is the usual Euclidean radial coordinate, and $\langle r\rangle = (1+r^2)^{1/2}$ is the ``Japanese bracket'' of $r$.
	\item $\calF$ is the spacetime Fourier transform, defined using the convention 
	\begin{equation}
	\calF u(\tau,\xi) = \int_{\bbR^{1+d}} e^{-it \tau -ix\cdot \xi } u(t,x) \dd t \dd^d x .
	\end{equation}
	\item $H_p$ is the Hamiltonian vector field associated to the symbol $p\in C^\infty(T^* \bbR^{1,d})$. The sign convention is unimportant.
\end{itemize}
Operators:
\begin{itemize}
	\item $P$ the Klein--Gordon operator considered; see \S\ref{subsec:assumptions}. 
	\item $P_0$ is usually the free Klein--Gordon equation, but sometimes includes some other terms. 
	\item $P_\pm = e^{\mp ic^2 t} P e^{\pm i c^2 t}$. 
\end{itemize}
Manifolds-with-corners (mwc):
\begin{itemize}
	\item $\overline{\bbR^d}=\bbR^d\sqcup \infty\bbS^{d-1}$ is the radial compactification of $\bbR^d$. As a special case, $\bbM = \overline{\bbR^{1,d}}$. 
	\item  ${}^\bullet\overline{T}^* \bbM$ is a compactification of $T^* \bbR^{1,d}$ associated to the $\bullet$-calculus, the \emph{$\bullet$-phase space}. We use the label `$\mathrm{df}$' (or variants thereof) to refer to the closure of fiber infinity in whatever phase space we are considering. Also, 
	\begin{equation}
	{}^\bullet T^* \bbM={}^\bullet\overline{T}^* \bbM\backslash \mathrm{df}. 
	\end{equation}
	\item $\mathrm{df},\mathrm{bf},\natural\mathrm{f},\mathrm{pf}$ are the boundary hypersurfaces of the $\calc$-phase space. We also use `$\natural\mathrm{f}$' to refer to the analogous boundary hypersurfaces of the $\calczero$- and $\calctwo$- phase spaces.  
	\item $\mathrm{pf}_\pm$ the analogues of $\mathrm{pf}$ in the $\calctwo$-phase space. 
	\item $\rho_{\mathrm{f}}$ is, for each boundary hypersurface $\mathrm{f}$ of a mwc, a boundary-defining-function of that face.
	\item $\calV_\bullet$ is the space of sections of the dual bundle ${}^\bullet T \bbM$ to ${}^{\bullet}T^* \bbM$.
\end{itemize}
Pseudodifferential calculi: 
\begin{itemize}
	\item $\Psi_{\mathrm{sc}}(\bbR^N)$ the scattering- (sc-) calculus on $\overline{\bbR^N}$. 
	\item $\Psi_{\mathrm{par}}=\Psi_{\mathrm{par}}(\bbR^{1,d})$ is the parabolic scattering calculus on $\bbR^{1+d}$, defined in \cite{Parabolicsc} in order to study the Schr\"odinger equation. 
	\item $\Psi_{\mathrm{par,I,res}}$ is the result of second-microlocalizing the calculus of smooth families of elements of $\Psi_{\mathrm{par}}$, specifically second-microlocalizing at the corner of 
	\begin{equation} 
		{}^{\mathrm{par,I}}\overline{T}^* \bbM = [0,\infty)\times {}^{\mathrm{par}}\overline{T}^* \bbM.
	\end{equation}
	\item $\Psi_{\calczero}$ is the natural calculus, defined in \S\ref{subsec:calczero}.
	\item $\Psi_{\calc}$ is the $\calc$-calculus, defined in \S\ref{subsec:calc}.
	\item $\Psi_{\calctwo}$ is the $\calctwo$-calculus, defined in \S\ref{subsec:calctwo}.
	\item $\Psi_\bullet^{m,s,\dots}$ is the subset of $\Psi_\bullet$ consisting of operators of orders $m,s,\dots$. 
	\item $\sigma_\bullet^{m,s,\dots}$ is the principal symbol map associated to the $\bullet$-calculus and orders $m,s,\dots$.
	\item $N(A)$ is the leading-order part of $A\in \Psi_{\calc}$ at $\mathrm{pf}$. 
\end{itemize}
Function spaces: 
\begin{itemize}
		\item If $X$ is a manifold-with-corners and $I\subseteq \bbC$, then $C^\infty(X;I)$ denotes the set of smooth functions $f:X^\circ \to I$ such that, if $X_0\supseteq X$ is a closed manifold in which $X$ is embedded, then $f$ extends to an element of $C^\infty(X_0)$. 
	\item If $X$ is a manifold-with-boundary, then $\calS(X)$ denotes the space of Schwartz functions on $X$, i.e.\ smooth functions on $X^\circ$ decaying rapidly, together with all derivatives, at $\partial X$. 
	\item $\calA^{\calE,\calF,\dots}(X)$ is the space of polyhomogeneous functions on $X$ with index sets $\calE,\calF,\dots$ at the boundary hypersurfaces of $X$. Replacing $\calE,\calF,\dots$ by a real number $\alpha$ means that only $O(\rho_{\mathrm{f}}^\alpha)$ conormality is required at that face. Also, 
	\begin{equation}
	\calA^{(\calE,\alpha),\calF,\dots}(X) = \calA^{\calE,\calF,\dots}(X) + \calA^{\alpha,\calF,\dots}(X), 
	\end{equation}
	and so on. 
	\item We use $(j,k)$ as an abbreviation for the index set $\{(j+n,k):n\in \bbN \}$.
\end{itemize}
Miscellaneous:
\begin{itemize}
	\item ${}^\bullet\mathsf{H}_p$ is $H_p$ multiplied by just enough boundary-defining-functions to make a b-vector field on the $\bullet$-phase space.
	\item $\Sigma,\Sigma_{\pm},\Sigma_{\mathrm{bad}}$ all denote portions of various characteristic sets. In this paper, it is mainly the $\calctwo$-characteristic set of $P$ which matters. We will usually denote this $\Sigma$.
	\item $\calR^\bullet_\bullet$ denotes the radial sets. The subscript specifies whether we are looking over the future (+) or past ($-$) hemisphere of spacetime infinity, and the superscript specifies whether we are looking at positive/negative energy (i.e.\ particles vs. anti-particles). 
	\item $\calB \subseteq {}^{\calctwo}\overline{T}^* \bbM$ is the set containing the portion of the wavefront set of $f(x)\delta(t)$ for Schwartz $f$.  See \Cref{fig:Bfig}.
	\item $\aleph$ is used for the leading-order relativistic correction to the coefficient of $\partial_t^2$ in $P$.
\end{itemize}

\section{Example: the free Klein--Gordon equation}
\label{sec:free}
In this appendix, we discuss the free Klein--Gordon equation, both as an exactly solvable example of the non-relativistic limit and as an illustration of the frequency-space ideas which go into its proof. 
Our microlocal tools are meant to ``microlocalize'' what can be done using the spacetime Fourier transform. It is  therefore important to understand how the non-relativistic limit works in frequency space.

Consider the $V,\bfA,W=0$ case  of \cref{eq:P_intro}, i.e.\ the free Klein--Gordon equation. 
Then, the properties of the $c\to\infty$ limit are most easily brought to the fore by using either 
\begin{enumerate}[label=(\roman*)] 
	\item the spacetime Fourier transform, or 
	\item the advanced/retarded Green's functions (a.k.a.\ propagators) 
	\begin{equation} 
		G_\pm(t,x;c)\in C^\infty(\bbR^+_c;\calS'(\bbR^{1,d}_{t,x})),
	\end{equation} 
	which are determined by 
	\begin{equation}
		\begin{cases}
			(c^{-2} \partial_t^2 + \triangle +  c^2 ) G_\pm  = \delta(t)\delta^d(x)\\ 
			G_\pm(t,x;c)=0\text{ in }\{\pm t<0\}. 
		\end{cases}
		\label{eq:Greens} 
	\end{equation} 
    These can be used to solve the Cauchy problem (cf.\ \cref{eq:def-u-varsigma}).
\end{enumerate}
The former is discussed in \S\ref{subsub:Fourier}.  Green's functions are discussed briefly in \S\ref{subsec:Greens}.

\subsection{Spacetime Fourier transform}
\label{subsub:Fourier}
Let us begin with (i). Letting 
\begin{equation}
	\calF u(\tau,\xi) = \int_{\bbR^{1+d}}e^{-it \tau-ix\cdot \xi} u(t,x;c)\dd t\dd^d x \in \calS'(\bbR^{1,d}_{\tau,\xi})
\end{equation}
denote the spacetime Fourier transform of $u$, the Klein--Gordon equation becomes $(-\tau^2+c^2\xi^2+ c^4) \calF u=0$. This implies that $\calF u$ is supported on the ``mass shell'' 
\begin{equation}
	\Sigma(c)=\{(\tau,\xi)\in \bbR^{1,d}: \tau^2 = c^2\xi^2 +c^4\}, 
\end{equation} 
which is, for each $c>0$, a hyperboloid in frequency space $\bbR^{1,d}_{\tau,\xi}$. In line with later terminology, let us call this the \emph{characteristic} hyperboloid. 

\subsubsection{The non-relativistic limit at the level of the characteristic set}

The effect of $c$ is to dilate $\Sigma(c)$ and stretch it in one direction. Indeed, if we let 
\begin{equation}
	\tau_\natural = \tau/c^2, \qquad \xi_\natural = \xi/c
\end{equation}
denote ``natural frequencies'' (these being the frequency coordinates dual to the natural units $t_\natural,x_\natural$ in \cref{eq:nat}), then $\Sigma(c)$ is just defined by $\{\tau_\natural^2 = \xi_\natural^2 + 1\}$. So, as $c\to\infty$, $\Sigma(c)$ stretches faster along the $\tau$-axis than along the $\xi$-axes.

We can begin to understand the non-relativistic limit just by examining the  behavior of $\Sigma(c)$ in the $c\to\infty$ limit. The foci of $\Sigma(c)$ move along the $\tau$-axis as $c\to\infty$, so let us reposition: let 
\begin{align}
	\Sigma_{\pm}(c) &= \{(\tau\mp c^2,\xi) : (\tau,\xi) \in \Sigma(c)  \} =\Big\{ (\tau,\xi): \tau \pm c^2 = \pm \sqrt{c^2\xi^2 +c^4}\Big\} \\ 
	\Sigma_{\mathrm{bad}, \pm}(c) &= \{(\tau\mp c^2,\xi) : (\tau,\xi) \in \Sigma(c)  \} =\Big\{ (\tau,\xi): \tau \pm c^2 =\mp  \sqrt{c^2\xi^2 +c^4}\Big\}
\end{align}
denote the two components of the result of translating the characteristic hyperboloid up/down by $c^2$ units, so that $\Sigma_\pm$ passes through the origin. The effect of a translation $\tau\mapsto \tau \mp c^2$ in the frequency-space is to multiply by an oscillatory factor in position-space; if we compare a function whose Fourier transform is supported on $\Sigma(c)$ vs. one whose Fourier transform is supported on $\Sigma_\pm(c)\cup \Sigma_{\mathrm{bad},\pm}$, the former is oscillating with an extra $\exp(\pm i c^2 t)$. The component $\Sigma_{\mathrm{bad},\pm}(c)$ is contained outside of the large ball $\{(\tau,\xi) : \tau^2+\xi^2 <2c^2\}$ and we will ignore it here; see also the comments after \cref{eq:misc_B10}. Instead, focus on $\Sigma_\pm(c)$, which, passing through the origin in frequency space, should contribute some non-oscillatory term to our solution.

\begin{figure}
	\begin{center}
		\includegraphics[scale=.8]{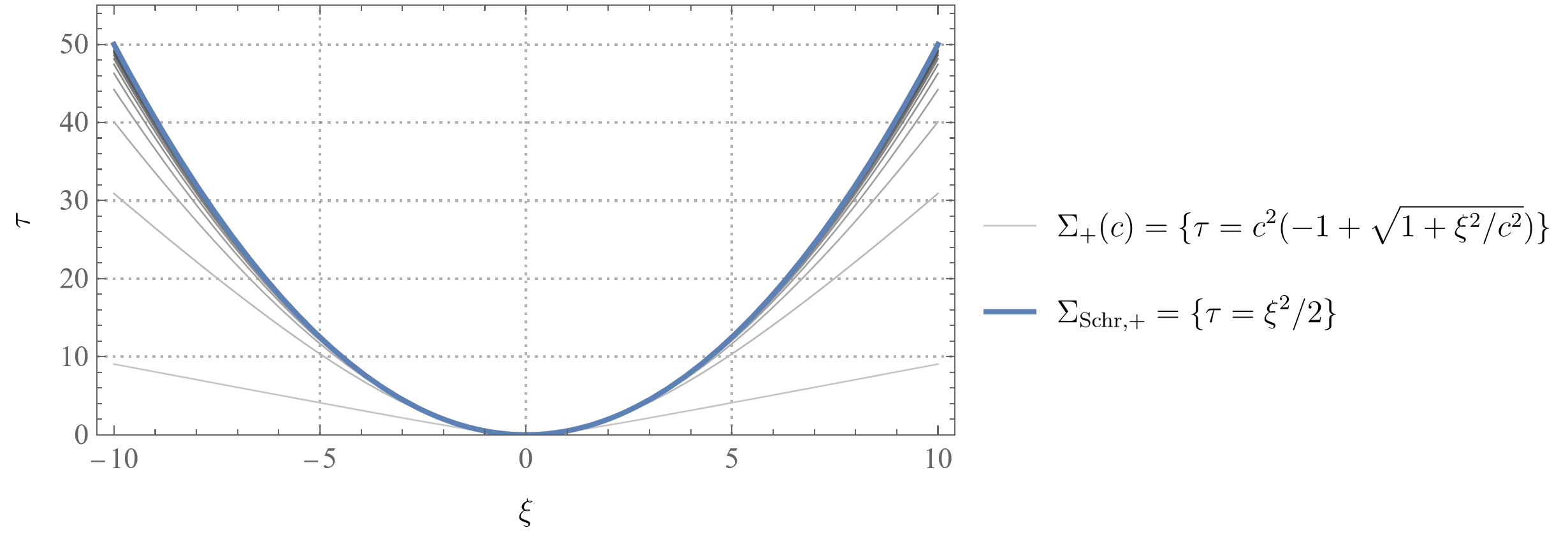}
	\end{center}
	\caption{The frequency sets $\Sigma_{\mathrm{Schr},+}$ (in blue) and $\Sigma_+(c)$ (in gray), for different values of $c$. As $c\to\infty$, the hyperboloid sheet $\Sigma_+(c)$ converges up to the paraboloid $\Sigma_{\mathrm{Schr},+}$. }
	\label{fig:char_convergence}
\end{figure}

Taylor expanding, 
\begin{equation}
	\sqrt{c^2\xi^2+c^4} =c^2 + \frac{ \xi^2}{2 }  +O\Big( \frac{\xi^{4}}{c^2} \Big).
	\label{eq:misc_k42}
\end{equation} 
So, 
\begin{equation}
	\Sigma_\pm(c) = \Big\{ (\tau,\xi) \in \bbR^{1,d}: \tau = \pm \frac{\xi^2}{2} \Big( 1 + O\Big( \frac{\xi^2}{c^2} \Big) \Big)  \Big\}.
\end{equation}
This means that, if we focus on the the portion of $\Sigma_\pm(c)$ with $\lVert \xi \rVert \ll c$, then $\Sigma_\pm(c)$ will be very close to the set
\begin{equation} 
	\label{eq:misc_B10}
    \Sigma_{\mathrm{Schr},\pm}=\Big\{(\tau,\xi)\in \bbR^{1,d} : \tau =\pm \frac{\xi^2}{2} \Big\}.
\end{equation} 
See \Cref{fig:char_convergence}.
As the subscript indicates, $\Sigma_{\mathrm{Schr}}$ is the set of frequencies on which the Fourier transform of a solution of the Schr\"odinger equation is supported, the \emph{characteristic paraboloid}. So, the set of frequencies on which a positive/negative-energy solution of the Klein--Gordon is supported converges, after a translation, to the set of frequencies on which a solution of the positive/negative-energy Schr\"odinger equation is supported. So, as long as we can ignore frequencies with $\lVert \xi \rVert \not \ll c$ (this also including $\Sigma_{\mathrm{bad},\pm}$), which will ultimately be justified based on the regularity of the prescribed initial data, then the non-relativistic limit seems in good order.

\subsubsection{Sketch of rigorous argument}
Let us see how this works in a bit more detail.
The Fourier transform $\calF u$ of a solution $u$ of the Klein--Gordon equation has the form $ u = u_-+u_+$ for $u_\pm \in \calS'(\bbR^{1,d})$ such that $\calF u_\pm$ is supported on the $\{\pm \tau>0\}$ sheet of the mass shell. Then, $u_\pm$ has the form 
\begin{equation}
	u_\pm = \frac{1}{(2\pi)^d}\int_{\bbR^d} e^{\pm i c t \sqrt{\xi^2+ c^2} + i x\cdot \xi } \hat{\phi}_\pm(\xi;c)  \dd^d \xi 
	\label{eq:misc_a13}
\end{equation}
for some $\phi_\pm(x;c) \in \calS'(\bbR^d_x)$. Thus, the restriction of $u_\pm$ to $t=0$ makes sense and is given by $u_\pm(0,x;c) = \phi_\pm(x;c)$.

The two $\phi_\pm$ can be solved for in terms of the prescribed initial data $\varphi,\psi\in \calS(\bbR^d)$. The result is 
\begin{equation} 
	\phi_\pm = 2^{-1} (\varphi \mp ic ( \triangle+ c^2)^{-1/2} \psi ).
	\label{eq:misc_019}
\end{equation} 
It can be checked that $\phi_\pm\in C^\infty([0,\infty)_{1/c^2}; \calS(\bbR^d))$. Note that, if $c\gg 1$, then $\phi_\pm \approx 2^{-1}(\varphi \mp i  \psi)$, in accordance with \Cref{thm:simplest}. 

\begin{figure}[t!]
	\begin{subfigure}{\textwidth}
		\includegraphics[scale=.75]{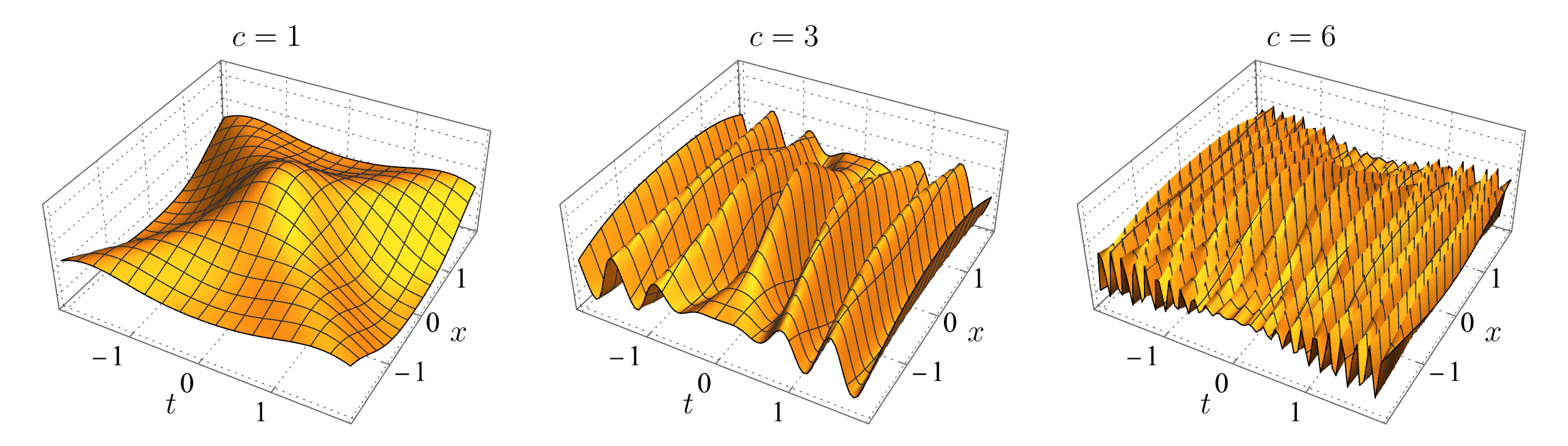}
		\caption{Plots of $\Re[u_+]$. Note the rapid $\exp(+ic^2 t)$ oscillations.}
	\end{subfigure}
	\begin{subfigure}{\textwidth}
		\includegraphics[scale=.75]{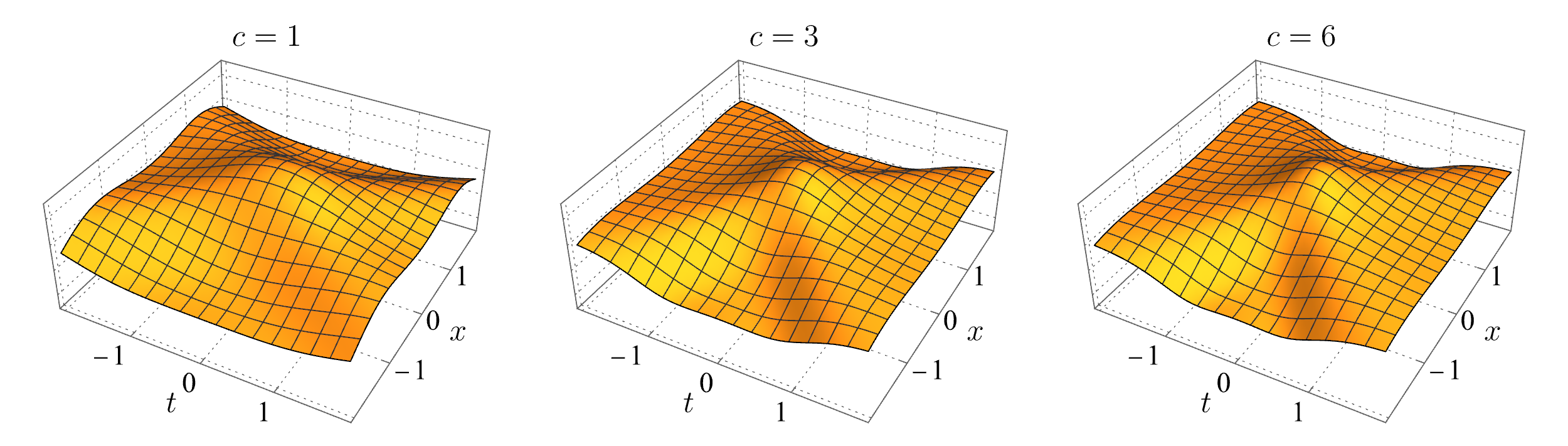}
		\caption{Plots of $\Re[e^{-ic^2 t} u_+]$. Already for $c\geq 3$ it is difficult to discern changes in $e^{-ic^2 t}u_+$ as $c$ varies.}
	\end{subfigure}
	\caption{Numerically calculated plots of the real parts of $u_+$ and $e^{-ic^2 t}u_+$ for $u_+$ given by \cref{eq:misc_a13}, for $\varphi$ Gaussian and $\psi=0$. Different values of $c$ are shown in the three columns.}
	\label{fig:free}
\end{figure}

Divide up the integral in \cref{eq:misc_a13} as  
\begin{equation}
	\int_{\bbR^d} e^{\pm i c t \sqrt{\xi^2+c^2} + i x\cdot \xi } \hat{\phi}_\pm(\xi;c)  \dd^d \xi  = \Big( \int_{\lVert \xi \rVert \leq \Xi(c) } + \int_{\lVert \xi \rVert \geq \Xi(c)} \Big)  e^{\pm i c t \sqrt{\xi^2+ c^2} + i x\cdot \xi } \hat{\phi}_\pm(\xi;c)  \dd^d \xi, 
\end{equation}
into a contribution from small $\xi$ and a contribution from not so small $\xi$. The cutoff $\Xi(c)>0$ will have to be chosen appropriately.
Because $\hat{\phi}_\pm(\xi;c)\in \calS(\bbR^d_\xi)$ is decaying rapidly as $\xi\to\infty$, as long as $\Xi(c)\geq c^\varepsilon$ for some $\varepsilon>0$, 
the contribution coming from large $\xi$ is negligible as $c\to\infty$, and is in fact rapidly decaying:
\begin{equation}
	\int_{\lVert \xi \rVert \geq \Xi(c)} e^{\pm i c t \sqrt{\xi^2+c^2} + i x\cdot \xi } \hat{\phi}_\pm(\xi;c)  \dd^d \xi \approx 0
	\label{eq:misc_021}
\end{equation}
\begin{equation}
	\int_{\bbR^d} e^{\pm i c t \sqrt{\xi^2+ c^2} + i x\cdot \xi } \hat{\phi}_\pm(\xi;c)  \dd^d \xi  \approx  \int_{\lVert \xi \rVert \leq \Xi(c) }   e^{\pm i c t \sqrt{\xi^2+c^2} + i x\cdot \xi } \hat{\phi}_\pm(\xi;c)  \dd^d \xi, 
	\label{eq:misc_02a}
\end{equation}
to all orders in $1/c^2$. Thus, the $c\to\infty$ asymptotics of $u_\pm$ entirely come from the region where $\xi$ is small. 
(This is closely related to the discussion in the previous few subsections. Because $\phi_\pm$ are varying on scales large with respect to natural units, the Fourier transform is concentrated on $\lVert \xi \rVert \ll c$.
If, instead, the dependence of $\phi_\pm$ on $c$ was via 
\begin{equation} 
	\phi_\pm(x;c) = \phi_\pm(cx;1)
\end{equation} 
(cf.\ \cref{eq:IVP_2}), 
then $\hat{\phi}_\pm(\xi;c)$ would be large on scales of order $\xi\sim c$, so \cref{eq:misc_021} would have no reason to hold.)

In order to compute the $c\to\infty$ asymptotics of the right-hand side of \cref{eq:misc_02a}, we need the expansion of $\smash{\hat{\phi}_\pm}(-;c)$ in powers of $1/c^2$.  
Using \cref{eq:misc_019}, this is given by formally expanding $(\triangle+ c^2)^{-1/2}$ in powers of $\triangle/c^2$:
\begin{equation}
	\phi_\pm = \frac{\varphi}{2} \mp \frac{i}{2} \Big(1- \frac{\triangle}{2 c^2} + \frac{3\triangle^2}{8c^4} - \frac{5 \triangle^3}{16 c^6}+\cdots  \Big)  \psi,
	\label{eq:misc_k41}
\end{equation}
where the error from throwing away the terms past $\triangle^k/c^{2k}$ is in $c^{-2k-2} C^\infty([0,\infty)_{1/c^2}; \calS(\bbR^d))$. Taking the Fourier transform of \cref{eq:misc_k41},  
\begin{equation}
	\hat{\phi}_\pm = \frac{\hat{\varphi}}{2} \mp \frac{i}{2} \Big(1- \frac{\xi^2}{2c^2} + \frac{3\xi^4}{8c^4}- \frac{5 \xi^6}{16 c^6}+\cdots  \Big)  \hat{\psi}.
	\label{eq:misc_i3k}
\end{equation}

Notice that \cref{eq:misc_i3k}, \cref{eq:misc_k42} are expansions in powers of $\xi^2/c^2$. Consequently, as long as $\Xi(c) \leq c^{1-\epsilon}$ for some $\epsilon>0$, in the region $\lVert \xi \rVert \leq \Xi(c)$ successive terms are suppressed by some positive power of $1/c$ as $c\to\infty$. 
So,
\begin{equation}
	\int_{\lVert \xi \rVert\leq \Xi(c)}e^{\pm i c t \sqrt{\xi^2+c^2} + i x\cdot \xi } \hat{\phi}_\pm(\xi;c)  \dd^d \xi \approx e^{\pm i  c^2 t} \int_{\bbR^d} e^{\pm i t \xi^2 / 2 +ix\cdot \xi} \hat{\phi}_\pm(\xi;\infty) \dd^d \xi 
\end{equation}
to leading order, as follows from keeping only the leading order terms in \cref{eq:misc_k41}, \cref{eq:misc_k42}. So, 
\begin{equation}
	\lim_{c\to\infty}  e^{\mp i  c^2 t} u_\pm  = \frac{1}{(2\pi)^d}\int_{\bbR^d} e^{\pm i  t \xi^2 / 2 +ix\cdot \xi} \hat{\phi}_\pm(\xi;\infty) \dd^d \xi .
\end{equation}
The right-hand side is just the solution $v_\pm$ of the (free) time-dependent Schr\"odinger equation with initial data $\phi_\pm $.
So, letting 
\begin{equation} 
	v=\exp(-ic^2 t) v_- + \exp(ic^2 t) v_+
\end{equation} 
as in \Cref{thm:simplest}, $u\approx v$. More precisely, the above argument shows that $u-v$ is $O(1/c^{2-\epsilon})$ for every $\epsilon>0$, and the $L^\infty$ estimate \cref{eq:misc_a06} holds. So,  
\Cref{thm:simplest} holds in the free case, up to an $\epsilon$ loss that we have not ruled out yet (but which will be ruled out once subleading terms are computed).

\subsubsection{Error term}
\label{subsub:freeerror}

Now let us discuss the error term $u-v$. Each of $E^\pm(t,x,c)=e^{\mp i c^2 t} c^{-2}(u_\pm - v_\pm)$ has a full asymptotic expansion
\begin{equation}
	e^{\mp i c^2 t}c^{-2}(u_\pm - v_\pm) = E_1^\pm (t,x) + c^{-2} E_2^\pm (t,x) + c^{-4} E_3^\pm (t,x)+\dots 
	\label{eq:misc_671}
\end{equation}
in powers of $1/c^2$, the first few of which we will compute.

\begin{figure}[t]
	\includegraphics[scale=.75]{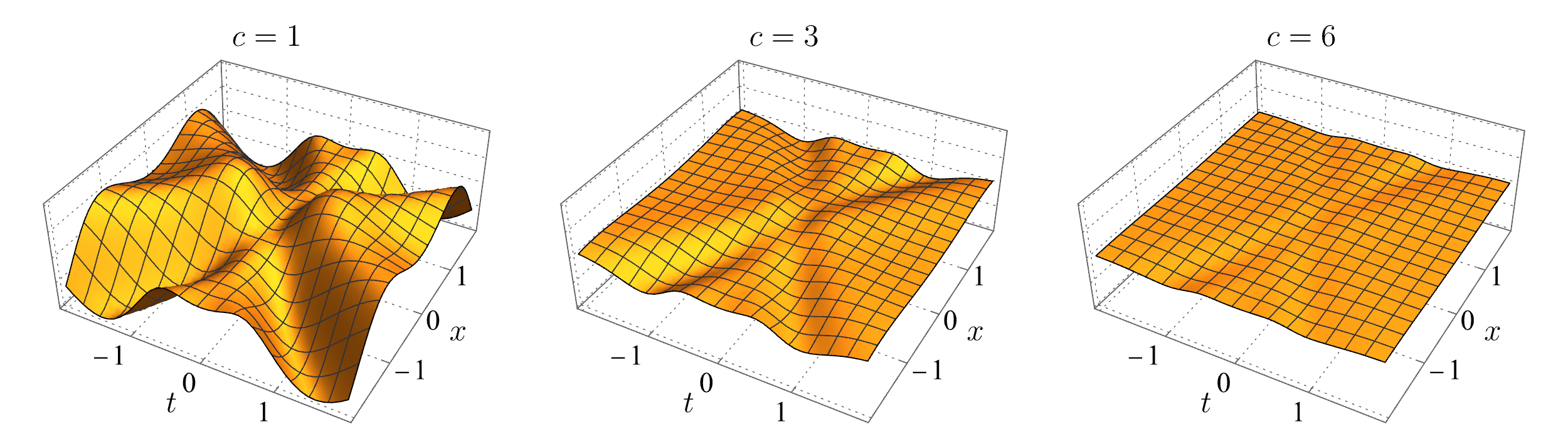}
	\caption{Plots of the error $\Re[e^{-ic^2 t} u_+-v_+]$. The vertical axis has been rescaled compared to \Cref{fig:free}, but these three plots share a single scale. We can see how $\exp(-ic^2 t)u_+ \to v_+$ as $c\to\infty$.}
\end{figure}

The terms of \cref{eq:misc_671} come from the sub-principal terms in \cref{eq:misc_k41} and the sub-sub-principal terms in \cref{eq:misc_k42}, which we have yet to work out. Expanding further, 
\begin{equation}
	c  \sqrt{\xi^2+c^2} =c^2 + \frac{ \xi^2}{2 } - \frac{\xi^4}{8c^2 } + \frac{\xi^6}{16c^4} - \frac{5 \xi^8}{128 c^6}+ O\Big( \frac{\xi^{10}}{c^8} \Big).
\end{equation}

This, with \cref{eq:misc_k41}, is all that is needed to work out the first few $E_\bullet^\pm$'s:
\begin{equation}
	E_1^\pm =  \pm \frac{ i}{2^{d+2} \pi^d} \int_{\bbR^d} e^{\pm i t \xi^2 / 2 +ix\cdot \xi} \xi^2 \Big(   \hat{\psi}(\xi)  - \frac{t \xi^2}{2} \hat{\phi}_\pm(\xi;\infty)  \Big) \dd^d \xi, 
\end{equation}
\begin{equation}
	E_2^\pm = \frac{1}{2^{d+1}\pi^d} \int_{\bbR^d} e^{\pm i t \xi^2 / 2 +ix\cdot \xi} \xi^4 \Big[ t \xi^2\Big( \pm \frac{i}{8} - \frac{t\xi^2}{64} \Big) \hat{\phi}_\pm(\xi;\infty) 	+\frac{1}{8} \Big( \frac{t\xi^2}{2} \mp 3i \Big) \hat{\psi}(\xi) \Big] \dd^d \xi, 
\end{equation}
and 
\begin{multline}
	E_3^\pm = \frac{1}{2^{d+5} \pi^d} \int_{\bbR^d} e^{\pm i t \xi^2 / 2 +ix\cdot \xi} \xi^6 \Big[ t \xi^2 \Big( \mp \frac{5i}{4} + \frac{t\xi^2 }{4} \pm \frac{i t^2 \xi^4}{96} \Big) \hat{\phi}(\xi;\infty) 
	\\ + \Big( \pm 5 i - \frac{5t \xi^2}{4} \mp \frac{i t^2\xi^4}{16}  \Big) \hat{\psi}(\xi)\Big] \dd^d \xi. 
\end{multline}
Note that, as $j$ increases, the well-definedness of $E_j^\pm$ requires that $\hat{\phi}(\xi;\infty)$, $\hat{\psi}(\xi)$ decay more rapidly as $\xi\to
\infty$; producing more terms in the $c\to\infty$ asymptotic expansion requires control on more derivatives of the initial data.

\subsection{The Green's function}
\label{subsec:Greens}

The Green's function $G_\pm$, defined by \cref{eq:Greens} can be solved for using the Fourier transform:
\begin{align}
\begin{split} 
    G_\pm(t,x;c) &= \frac{1}{(2\pi)^{1+d}} \lim_{\varepsilon \to 0^+} \int_{\mathbb{R}^{1+d}} \frac{e^{i t \tau +i x \cdot \xi} e^{- \varepsilon \lVert \xi \rVert}  \dd \tau \dd^d \xi}{ -c^{-2}(\tau\mp i\varepsilon)^2 + \xi^2 + c^2  } \\
    &=  \lim_{\varepsilon \to 0^+} \int_0^\infty \bigg(  \int_{\bbS^{d-1}}  e^{i\Xi x\cdot \theta} \dd \Omega(\theta) \bigg) \bigg( \int_{-\infty}^\infty  \frac{e^{i t \tau}  \dd \tau}{-c^{-2}(\tau\mp i\varepsilon)^2 + \Xi^2 + c^2  }\bigg) \frac{e^{-\varepsilon \Xi} \Xi^{d-1}  \dd \Xi }{(2\pi)^{1+d}}  .
    \end{split} 
    \label{eq:Greens_formula}
\end{align}
The first expression in the parentheses is a standard form of the Bessel $J$-function: for $\Xi>0$, 
\begin{equation}
     \int_{\bbS^{d-1}}  e^{i\Xi x\cdot \theta} \dd \Omega(\theta) = \begin{cases}
       (2\pi)^{\nu+1}  (\Xi |x|)^{-\nu} J_\nu(\Xi |x| ) & (x \neq 0) \\ 
         \operatorname{Area}(\bbS^{d-1}) & (x = 0), \\ 
     \end{cases}
     \label{eq:Bessel_form}
\end{equation}
where $\nu=d/2-1$.\footnote{The singularity of $z^{-\nu} J_\nu(z)$ at $z=0$ is removable, with $z^{-\nu} J_\nu(z)|_{z=0} = 2^{-\nu} /\Gamma(d/2)$; see \cite[\href{http://dlmf.nist.gov/10.2.E2}{Eq. 10.2.2}]{NIST}. Since the surface area of a hypersphere is 
\[
\operatorname{Area}(\bbS^{d-1}) = 2 \pi^{d/2} / \Gamma(d/2),
\]
the formula in the top line of \cref{eq:Bessel_form} agrees with that in the bottom line after removing the singularity in the former.}
The second factor in parentheses in \cref{eq:Greens_formula} can be calculated using the Cauchy integral formula:
\begin{equation}
    \int_{-\infty}^\infty  \frac{e^{i t \tau}  \dd \tau}{-c^{-2}(\tau\mp i\varepsilon)^2 + \Xi^2 + c^2  } = \pm \frac{2 c\pi}{\sqrt{c^2+\Xi^2}} \sin (c t \sqrt{c^2+\Xi^2}) \Theta(\pm t),
\end{equation}
where $\Theta(\cdot)$ is the Heaviside step function. So, 
\begin{equation}
    G_\pm(t,x;c) = \pm \frac{c}{(2\pi)^{d/2} |x|^\nu} \Theta(\pm t) \int_0^\infty \frac{\Xi^{d/2} }{ \sqrt{c^2+\Xi^2}} \sin(ct \sqrt{c^2+\Xi^2}) J_\nu(\Xi |x|)  \dd \Xi  .
\end{equation}

\subsubsection{\texorpdfstring{$d=1$ case}{d=1 case}}
When $d=1$, the Bessel function $J_\nu=J_{-1/2}$ is just $J_{-1/2}(z)= (2/\pi z)^{1/2}\cos z$, so 
\begin{equation}
    G_\pm (t,x;c) = \pm \frac{ c \Theta(\pm t) }{\pi}  \int_{0}^\infty \frac{1}{\sqrt{c^2+\Xi^2}}  \sin( ct \sqrt{c^2+\Xi^2}) \cos(\Xi |x|) \dd \Xi . 
\end{equation}
The integral can be evaluated explicitly (it does not converge when $|t|=|x|$, but the singularity is mild enough that we cannot get a $\delta$-function contribution there):
\begin{equation}
    \int_{0}^\infty \frac{1}{\sqrt{c^2+\Xi^2}}  \sin( ct \sqrt{c^2+\Xi^2}) \cos(\Xi |x|) \dd \Xi=  \frac{\pi}{2}\operatorname{sgn}(t)  \Theta(c^2 t^2-x^2)J_0(c \sqrt{c^2 t^2-x^2}); 
\end{equation}
see [Scharf \S2.3]. So, we can conclude
\begin{equation}
    G_\pm(t,x;c) = \frac{c }{2} \Theta(\pm t)\Theta(c^2 t^2-x^2)J_0(c \sqrt{c^2 t^2-x^2}). 
\end{equation}
When $z=c(c^2 t^2 -x^2)^{1/2} \gg 1$, then it is reasonable to approximate $J_0(z) = (2/\pi z)^{1/2} \cos(z+\pi/4)+O(1/z)$, yielding, for $t>0$,  
\begin{equation}
     G_\pm(t,x;c) \approx \frac{1}{\sqrt{2\pi  t}} \cos \Big(  c \sqrt{c^2 t^2-x^2}  + \frac{\pi}{4}\Big) \approx \frac{1}{\sqrt{2\pi  t}} \cos \Big(   c^2 t - \frac{x^2}{2 t}  + \frac{\pi}{4}\Big).
\end{equation}
The right-hand side is closely related to the Schr\"odinger propagator on the line: $ (2\pi i t)^{-1/2} e^{ix^2/2t}$.

\section{Supplement to numerics}

The numerical solutions of \cref{eq:IVP}, \cref{eq:Schrodinger_initial} were carried out using Mathematica 13.3's \texttt{NDSolve} algorithm \cite{NDSolve}, with artificial Dirichlet boundary conditions enforced at $x=\pm 10$ (overriding initial conditions beyond). 

The initial data is highly concentrated near the origin. So, based on the amount of time it takes a wave to travel from the origin to $x=\pm 10$ and back, we can be confident that the artificial boundary conditions have negligible effect on the solution in the plotted region, for the sampled values $c=1,3,6$ of $c$. Larger $c$ require a more distant artificial boundary.

\Cref{fig:FEMIm} shows the imaginary parts of the functions whose real parts are shown in \Cref{fig:FEM}. The same phenomena are seen in the two figures. 

\begin{figure}[h!]
	\begin{subfigure}{\textwidth}
		\centering
		\includegraphics[width=.99\textwidth]{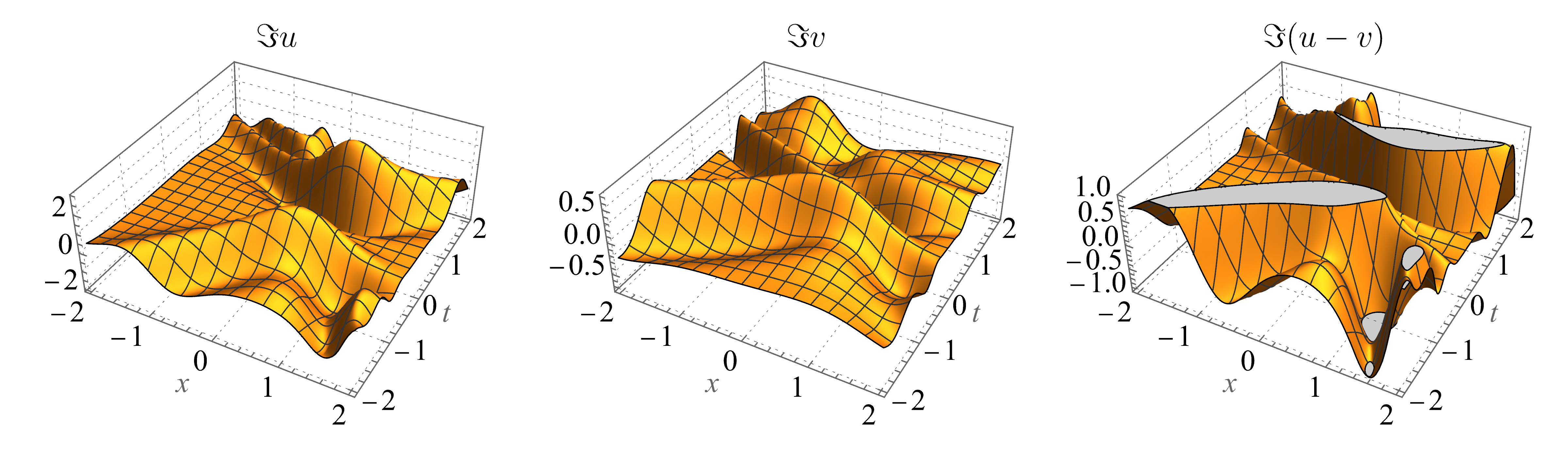}
		\caption{$c=1$. }\vspace{1em}
	\end{subfigure}
	\begin{subfigure}{\textwidth}
		\centering
		\includegraphics[width=.99\textwidth]{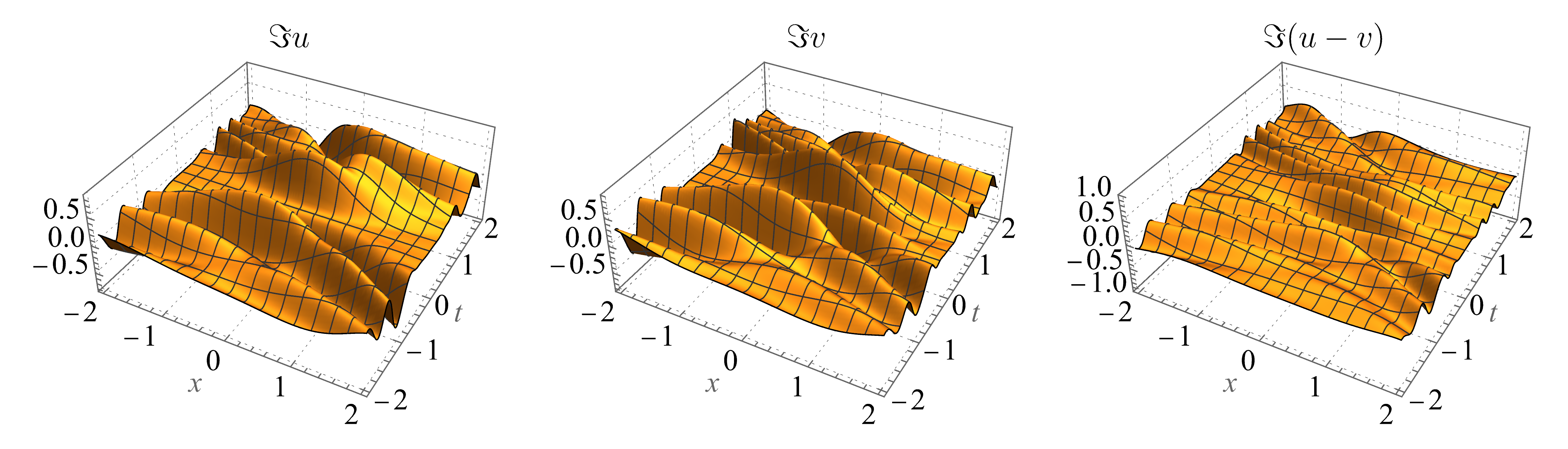}
		\caption{$c=3$.}\vspace{1em}
	\end{subfigure}
	\begin{subfigure}{\textwidth}
		\centering
		\includegraphics[width=.99\textwidth]{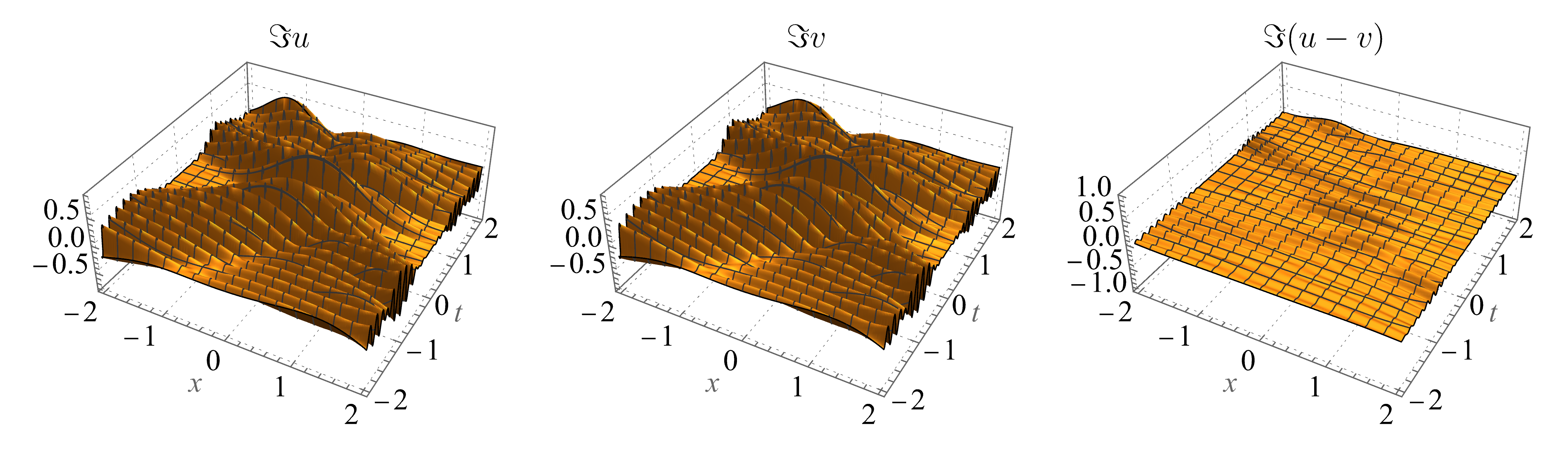}
		\caption{$c=6$.}
	\end{subfigure}
	\caption{The same functions plotted \Cref{fig:FEM}, but now the imaginary parts.} 
	\label{fig:FEMIm}
\end{figure}

\subsection{\texorpdfstring{A remark about \Cref{fig:FEM2}}{A remark about Figure 2}}

\label{sec:fig_disc}
We noted in the caption of \Cref{fig:FEM2} that the solutions of Schr\"odinger's equation with potential \cref{eq:V_ex} (and the prescribed Gaussian initial data) closely follow classical trajectories. This can be understood as a ``semiclassical'' effect, and more precisely a $\natural$-effect. In this brief appendix, we explain how. 

\begin{figure}[h!]
    \centering
    \includegraphics[width=.45\textwidth]{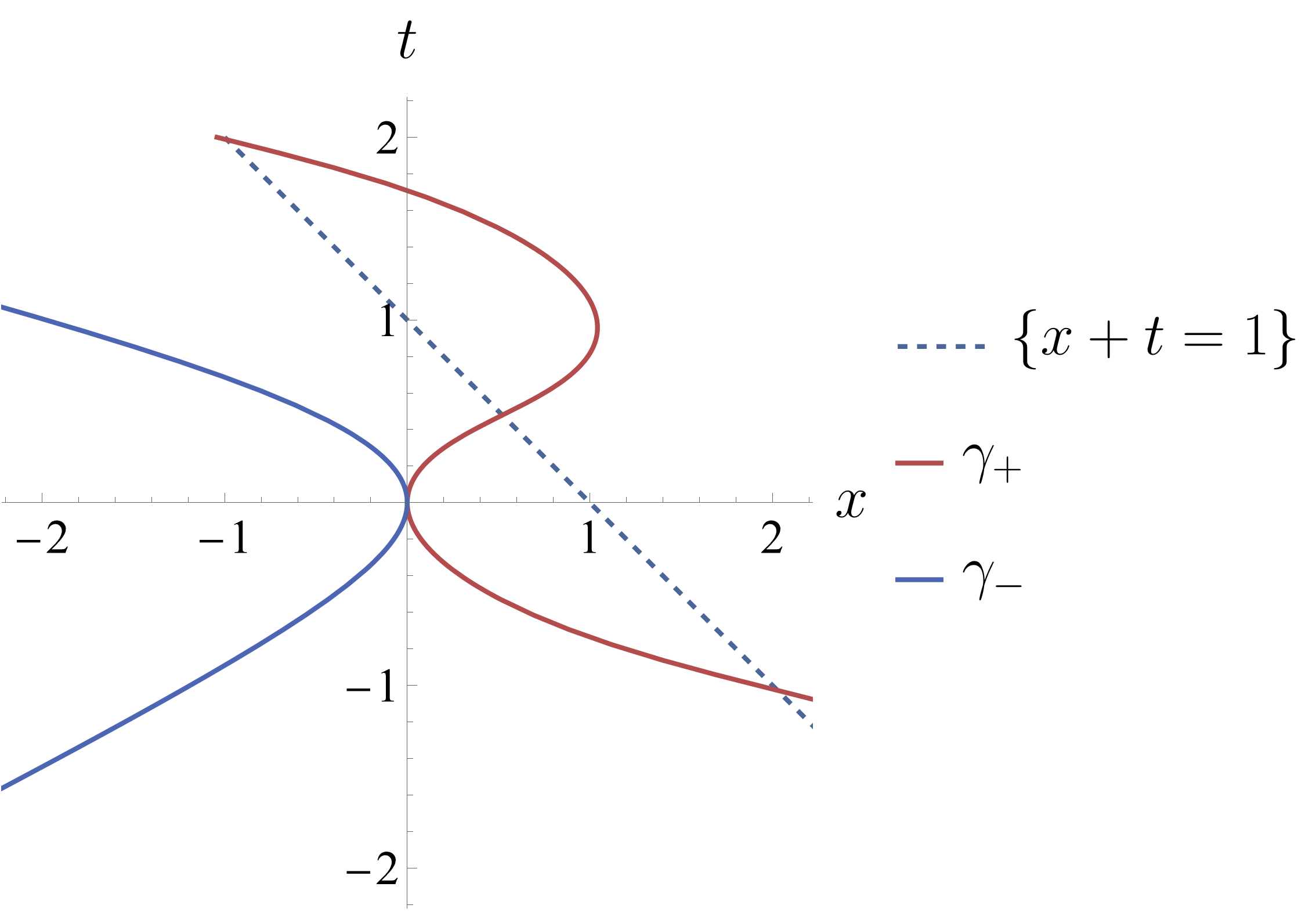}
    \caption{The paths $\gamma_\pm$ followed by a classical particle moving in the force-field $F=\mp \nabla V$ generated by the potential $V$ defined by \cref{eq:V_ex}. The dashed line shows the position of the center of the potential well/barrier $V$ as it moves over time. Compare these paths to the plots in \Cref{fig:FEM2}.}
    \label{fig:paths}
\end{figure}

In \Cref{fig:FEM2}, we were plotting the $L^2$-mass density $|v_\pm|^2$ of the solutions $v_\pm \in C^\infty(\bbR^{1,1})$ of Schr\"odinger's equation 
\begin{equation}
    (\pm i \partial_t - 2^{-1} \triangle \mp  \mathsf{Z} W )v_\pm = 0,\quad W = 1/(1+(x-1+t)^2),\quad \mathsf{Z}>0
    \label{eq:Schr_ap}
\end{equation}
with the prescribed initial data. What we called $V$ there are now called $V=\mathsf{Z} W$, because we are going to want to talk about what happens when $\mathsf{Z}$ varies. We had taken $\mathsf{Z}=8$, but now let $\mathsf{Z}$ be a parameter.

Dividing Schr\"odinger's equation \cref{eq:Schr_ap} through by $\mathsf{Z}$, and letting $h=1/\mathsf{Z}^{1/2}$, the resulting PDE is 
\begin{equation}
     (\pm i h^2  \partial_t - 2^{-1}  (ih\nabla)^2  \mp  W )v_\pm = 0.
\end{equation}
This lies in $\operatorname{Diff}_{\natural}^{2,0,0}$ and is of real principal type at finite $\natural$-frequency, i.e.\ away from fiber infinity. We can therefore apply $\natural$-analysis to understand the $h\to 0$ limit. This corresponds to the $\mathsf{Z}\to \infty$ limit. Thus, $\mathsf{Z}$ is a large parameter, playing a role analogous to that played by the speed-of-light $c$ in the non-relativistic limit of the Klein--Gordon equation. For our current numerical purpose, $8\gg 1$ is large enough (for the specified initial data) to apply to \Cref{fig:FEM2}.

To be more precise, we can apply propagation estimates in $\Psi_{\natural}$. This explains why, as seen in the figure, the $L^2$-mass of $v_\pm$ closely follows classical trajectories. However, a fully rigorous proof requires controlling large $\natural$-frequencies.
Because our initial data is smooth, contributions to $v_\pm$ at large $\natural$-frequency are highly suppressed when $\mathsf{Z}\gg 1$. (A basic estimate suffices for this purpose. For example, conservation of $L^2$-mass for the Schr\"odinger equation can be cited.)

The only reason we needed to stay away from fiber infinity above is that the Schr\"odinger equation is parabolic, not hyperbolic, so the $\natural$-analysis degenerates at fiber infinity. A full analysis of the $\mathsf{Z}\to\infty$ limit of \cref{eq:Schr_ap} would define a parabolic analogue of 
$\operatorname{Diff}_{\natural}$, in which $h^2 \partial_t$ is considered zeroth order at $\natural \mathrm{f}$ and \emph{second} order at fiber infinity, like $h^2 \triangle$.

\section{Some lemmas relating various calculi}
\label{sec:relations}
We prove some inequalities between various Sobolev scales used in this article. 

\begin{lemma}
	For any $m,s\in \bbR$ and $\ell\geq \max\{0,2m\}$, 
	\begin{equation}
		\lVert v \rVert_{H_{\calczero}^{m,s,0 }}\lesssim \lVert v \rVert_{H_{\mathrm{par}}^{\ell,s}} 
	\end{equation}
	holds, in the usual strong sense, uniformly for $h \in (0, 1]$. 
	\label{lem:par_comp}
\end{lemma}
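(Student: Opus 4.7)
The plan is to reduce the claimed inequality to a pointwise symbol comparison and Plancherel's theorem. I would proceed in three steps.

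First, I would establish the pointwise inequality
\[
\langle \zeta_\natural \rangle^m = (1 + h^4\tau^2 + h^2|\xi|^2)^{m/2} \leq C\,(1 + \tau^2 + |\xi|^4)^{\ell/4} = C\,\aang{\zeta}^\ell
\]
for all $(\tau,\xi)\in \mathbb{R}^{1+d}$ and all $h\in(0,1]$, with $C$ independent of $h$, under the assumption $\ell \geq \max\{0,2m\}$. This is elementary: when $m\leq 0$ the left-hand side is at most $1$; when $m>0$, the bound $h\leq 1$ gives $\langle \zeta_\natural \rangle^m \leq (1+\tau^2+|\xi|^2)^{m/2}$, and splitting into $|\xi|\leq 1$ versus $|\xi|\geq 1$ (so that $|\xi|^2\leq 1+|\xi|^4$ in either regime) yields $(1+\tau^2+|\xi|^2)^{m/2}\lesssim (1+\tau^2+|\xi|^4)^{m/2} \leq \aang{\zeta}^{\ell}$ since $\ell\geq 2m$.

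Next, I would apply Plancherel's theorem to this pointwise bound to obtain the $L^2$ Fourier multiplier inequality
\[
\|\langle h^2 D_t, hD_x\rangle^m w\|_{L^2(\mathbb{R}^{1+d})} \leq C\,\|\aang{D}^\ell w\|_{L^2(\mathbb{R}^{1+d})},
\]
uniformly in $h\in(0,1]$, for every tempered distribution $w$ for which the right-hand side is finite. This already encodes the essential content of the lemma.

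Finally, I would specialize to $w = \langle z\rangle^s v$. The right-hand side equals $\|\aang{D}^\ell \langle z\rangle^s v\|_{L^2}$, which is comparable to $\|v\|_{H_{\mathrm{par}}^{\ell,s}}$ by \eqref{eq:def-par-norm-const-order}: indeed $\langle z\rangle^s$ and $\aang{D}^\ell$ commute modulo an operator in $\Psi_{\mathrm{par}}^{\ell-1,s-1}$ bounded from $H_{\mathrm{par}}^{\ell,s}$ into $L^2$, so the two orderings of weight and Fourier multiplier yield equivalent norms. An analogous commutator argument in the $\natural$-calculus (via the Moyal formula \eqref{eq:moyal_explicit_calczero}, whose subprincipal terms in fact carry explicit factors of $h$) shows that the left-hand side is comparable, uniformly in $h$, to $\|v\|_{H_{\calczero}^{m,s,0}}$, since $\langle h^2 D_t,hD_x\rangle^m$ is an elliptic element of $\Psi_{\calczero}^{m,0,0}$ which together with multiplication by $\langle z\rangle^s$ can be used to define the $\natural$-norm. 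Chaining these three equivalences with the Fourier multiplier inequality produces the claim.

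The main obstacle, such as it is, lies in the last step: verifying that both orderings of spatial weight and Fourier multiplier yield equivalent norms in each calculus with constants uniform in $h$. This is however a routine consequence of the composition formulas for $\Psi_{\mathrm{par}}$ and $\Psi_{\calczero}$, so the real content of the inequality is the elementary pointwise symbol bound of the first step.
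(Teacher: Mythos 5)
Your proposal is correct and follows essentially the same route as the paper: the lemma reduces to the $L^\infty$-boundedness in $(h,\tau,\xi)$ of the ratio $\langle \zeta_\natural\rangle^m/\aang{\zeta}^\ell$, applied via Plancherel, with the weight $\langle z\rangle^s$ handled by replacing $v$ by $\langle z\rangle^s v$ and absorbing the resulting commutators. You are somewhat more explicit about the symbol bound and the commutator step (which the paper dispatches in a single sentence), but the mechanism is the same.
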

\begin{proof}
	For simplicity, we prove the $s=0$ case, with the general case following by replacing $v$ with $\ang{z}^s v$. We have
	\begin{multline}
		\lVert v \rVert_{H_{\calczero}^{m,0,0}}\lesssim \big\lVert  (-h^4\partial_t^2 + h^2\triangle + 1 )^{m/2}  v \big\rVert_{L^2} \\ = \Big\lVert (-h^4\partial_t^2 + h^2\triangle + 1 )^{m/2}  \frac{1}{(-\partial_t^2 + \triangle^2+1)^{\ell/4}} (-\partial_t^2 + \triangle^2+1)^{\ell/4} v \Big\rVert_{L^2} .
		\label{eq:misc_482}
	\end{multline}
	Letting $w = (\partial_t^2 + \triangle^2+1)^{\ell/4} v$,
	\begin{equation}
		\lVert w \rVert_{L^2} \lesssim \lVert v \rVert_{H_{\mathrm{par}}^{\ell,0}}, 
	\end{equation}
	by definition. Also, as long as $\ell \geq \max\{0,2m\}$,
	\begin{equation}
		\frac{(h^4 \tau^2 + h^2 |\xi|^2 +1 )^{m/2}}{(\tau^2 + |\xi|^4 + 1)^{\ell/4} }  \in L^\infty([0,1]_h\times \bbR^{1,d}_{\tau,\xi} ),
		\label{eq:misc_310}
	\end{equation}
	so $(-h^4\partial_t^2 + h^2\triangle + 1 )^{m/2}(-\partial_t^2 + \triangle^2+1)^{-\ell/4}$ is a uniformly bounded one-parameter family of operators on $L^2$. Then, \cref{eq:misc_482} gives $\lVert v \rVert_{H_{\calczero}^{m,0,0}}\lesssim \lVert w \rVert_{L^2} \lesssim \lVert v \rVert_{H_{\mathrm{par}}^{\ell,0}}$.
\end{proof}

\begin{lemma}  
For any $m,s \in \bbR$ and $\ell \geq \max\{0,2m\}$, and for any $O\in \Psi_{\calczero}^{-\infty,0,0}$ 
microlocally equal to the identity near $\mathrm{pf}$ (i.e.\ near $\{ \taun = 0, \xin = 0, h=0 \}$), we have 
	\begin{equation}
		\lVert (1-O) v \rVert_{H_{\calczero}^{m,s,\ell}} \lesssim \lVert v \rVert_{H_{\mathrm{par}}^{\ell,s}}
		\label{eq:misc_03g}
	\end{equation}
    uniformly for $h \in (0, h_0)$, $h_0$ sufficiently small. 
\label{lem:par_comp2}
\end{lemma}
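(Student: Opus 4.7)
The plan is to reduce the estimate to a mapping property of a single $\calczero$-pseudodifferential operator, where the microlocal support hypothesis on $1-O$ is what supplies the required order at $\natural\mathrm{f}$. Set $w = \aang{D}^\ell \ang{z}^s v \in L^2(\bbR^{1,d})$, so that $\lVert w \rVert_{L^2} = \lVert v \rVert_{H^{\ell,s}_{\mathrm{par}}}$ by \cref{eq:def-par-norm-const-order}. Since $v = \ang{z}^{-s}\aang{D}^{-\ell} w$, we may write $(1-O)v = Aw$, where
\begin{equation}
A := (1-O)\,\ang{z}^{-s}\,\aang{D}^{-\ell}.
\end{equation}
The key claim is that $A \in \Psi_{\calczero}^{-\ell/2,-s,-\ell}$ uniformly in $h \in (0,h_0)$ for some sufficiently small $h_0 > 0$. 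Granting this, the mapping property $A: L^2 = H_{\calczero}^{0,0,0} \to H_{\calczero}^{\ell/2,s,\ell}$ together with the hypothesis $\ell \geq \max\{0,2m\}$ (which gives $\ell/2 \geq m$ and hence $H_{\calczero}^{\ell/2,s,\ell} \hookrightarrow H_{\calczero}^{m,s,\ell}$) yields
\begin{equation}
\lVert (1-O)v \rVert_{H_{\calczero}^{m,s,\ell}} \lesssim \lVert Aw \rVert_{H_{\calczero}^{\ell/2,s,\ell}} \lesssim \lVert w \rVert_{L^2} = \lVert v \rVert_{H_{\mathrm{par}}^{\ell,s}},
\end{equation}
which is the desired inequality.

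The crux is the claim on $A$, which requires a direct symbolic analysis rather than a bare composition argument: indeed, $\aang{D}^{-\ell}$ on its own is \emph{not} in $\Psi_{\calczero}^{-\ell/2,0,-\ell}$, because its symbol $h^\ell(h^4+\taun^2+\xin^4)^{-\ell/4}$ equals $1$ near $\mathrm{pf}$ (the $h^\ell$ factor is cancelled by the divergence of $(h^4+\taun^2+\xin^4)^{-\ell/4}\sim h^{-\ell}$ at $\zetan=0$). The microsupport hypothesis on $O$ is precisely what repairs this: since $\sigma(O)=1$ microlocally near $\mathrm{pf}$, for $h \in (0,h_0)$ small we have $\sigma(1-O) \equiv 0$ on $\{|\zetan|\leq\epsilon\}$, while on the complement $\taun^2+\xin^4$ is bounded below by a positive constant, giving $\aang{\zeta}^{-\ell} \lesssim h^\ell$ on the essential support of $\sigma(1-O)$. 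Combined with the fiber-infinity decay $\aang{\zeta}^{-\ell} \lesssim \ang{\zetan}^{-\ell/2}$ (saturated in the $\taun$-direction), this furnishes the leading Moyal term of $\sigma(A) \equiv (1-\sigma(O))\ang{z}^{-s}\aang{\zeta}^{-\ell}$ with the required behavior. Subleading Moyal terms are uniformly lower order at $\mathrm{df}$, $\mathrm{bf}$, and $\natural\mathrm{f}$, since each $\partial_\zeta$ applied to $\sigma(1-O)$ translates via $\partial_\tau = h^2\partial_{\taun}$, $\partial_\xi = h\partial_{\xin}$ to powers of $h$ times $\partial_\zetan$, and each $D_z$ on $\ang{z}^{-s}$ gains a power of $\ang{z}^{-1}$.

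The main obstacle is the careful symbolic bookkeeping near $\mathrm{pf}$: one must verify that the microlocalization of $1-O$ regularizes $\aang{D}^{-\ell}$ in the manner claimed not only pointwise but for all $\zetan$- and $z$-derivatives (together with the $(h\partial_h)^j$ bounds), which amounts to a concrete but somewhat fiddly computation in natural coordinates. Once this is in hand, the conclusion follows mechanically as above.
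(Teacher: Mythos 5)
The overall reduction — setting $w=\aang{D}^\ell\ang{z}^s v$ and trying to control $A=(1-O)\ang{z}^{-s}\aang{D}^{-\ell}$ on $L^2$ — is the right instinct, and your pointwise symbol bound $\aang{\zeta}^{-\ell}\lesssim h^\ell\ang{\zetan}^{-\ell/2}$ on the microsupport of $1-O$ is correct. But the key claim $A\in\Psi_{\calczero}^{-\ell/2,-s,-\ell}$ is \emph{false}, and the place you defer to a ``fiddly computation'' is precisely where the argument breaks.

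The obstruction is anisotropy at fiber infinity, not behavior near $\mathrm{pf}$. Write $a=\aang{\zeta}^{-\ell}=h^\ell(h^4+\taun^2+|\xin|^4)^{-\ell/4}$. The $\calczero$-symbol class requires every $\pa_{\zetan}$ to gain a full power of $\ang{\zetan}^{-1}$. But differentiating in $\xin$,
\begin{equation*}
\pa_{\xin,j}a \;\sim\; h^\ell\,(h^4+\taun^2+|\xin|^4)^{-\ell/4-1}\,|\xin|^3,
\end{equation*}
and along the parabola $\taun\sim|\xin|^2\to\infty$ (which is entirely within the essential support of $1-O$ — at fiber infinity $\sigma(O)\to 0$, so $1-\sigma(O)\to 1$) this is $\sim h^\ell\ang{\zetan}^{-\ell/2-1/2}$, a gain of only $\ang{\zetan}^{-1/2}$. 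Concretely for $\ell=2$ the ratio of the actual derivative to the required bound $h^\ell\ang{\zetan}^{-\ell/2-1}$ is $\sim h^2|\xin|$, unbounded as $|\xin|\to\infty$ at fixed $h$. So $(1-\sigma(O))\ang{z}^{-s}\aang{\zeta}^{-\ell}$ is not in $S_{\calczero}^{-\ell/2,-s,-\ell}$, and the mapping property of $\calczero$-operators cannot be invoked. Note that the leading Moyal term is exactly where the failure lives, since $\aang{D}^{-\ell}$ is a Fourier multiplier and subleading Moyal terms do not differentiate $\aang{\zeta}^{-\ell}$ in $\zeta$; the issue is that the $\zetan$-derivative estimates on the leading term itself fail.

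The paper's proof never asserts that $A$ lies in any pseudodifferential calculus. It reduces to an $L^2\to L^2$ bound for the Fourier multiplier with symbol $(h^4\tau^2+h^2|\xi|^2+1)^{m/2}h^{-\ell}(\tau^2+|\xi|^4+1)^{-\ell/4}$, cut off to the region $\taun^2+\xin^2>\delta$, and proves only the $L^\infty$ bound on this symbol — which is exactly your pointwise estimate, and is all that is needed for $L^2$-boundedness of a multiplier. The remaining region near $\mathrm{pf}$ is killed because $(1-O)$ composed with a cutoff to a small neighborhood of $\mathrm{pf}$ is residual, hence controlled by $\lVert v\rVert_{H_{\calczero}^{-N,-N,-N}}\lesssim\lVert v\rVert_{H_{\mathrm{par}}^{\ell,s}}$. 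Your argument can be repaired by following this route — multiplier bound plus a disjoint-microsupport residual term — rather than attempting to put the parabolic weight into a $\calczero$-symbol class, which the anisotropy of $\aang{\zeta}$ forbids.
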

\begin{proof}
	We proceed as in the proof of \Cref{lem:par_comp}, except, instead of \cref{eq:misc_310}, what we want to show is that, for any $\delta>0$, there exists a $C>0$ such that 
	\begin{equation} \label{eq:compare_zerocalc_to par_calc}
		\frac{(h^4 \tau^2 + h^2 \xi^2 +1 )^{m/2}}{h^\ell(\tau^2 + \xi^4 + 1)^{\ell/4} } \leq C\text{ whenever }h^4 \tau^2 + h^2 \xi^2 >\delta\text{ and }0 < h \leq h_0.
	\end{equation} The two differences between \cref{eq:compare_zerocalc_to par_calc} and the corresponding goal, \cref{eq:misc_310}, in the proof of \Cref{lem:par_comp} are
	\begin{itemize}
		\item the factor of $h^\ell$ in the denominator of \cref{eq:compare_zerocalc_to par_calc}, which is present because we are trying to control the $H_{\calczero}^{m,s,\ell}$-norm instead of the $H_{\calczero}^{m,s,0}$-norm, and  
		\item the restriction to $h^4 \tau^2 + h^2 \xi^2 >\delta$, that is $\taun^2 + \xin^2 >\delta$,  and $0 < h \leq h_0$. This restriction suffices because the contribution to $\lVert (1-O) v \rVert_{H_{\calczero}^{m,s,\ell}}$ from near $\mathrm{pf}$ is controlled by 
		\begin{equation}
			\lVert v \rVert_{H_{\calczero}^{-N,-N,-N}},  
		\end{equation}
		for any $N\in \bbN$, and we already know (e.g.\ by \Cref{lem:par_comp}), that $\lVert v \rVert_{H_{\calczero}^{-N,-N,-N}} \lesssim \lVert v \rVert_{H_{\mathrm{par}}^{\ell,s}}$.
	\end{itemize} 
	Let us now prove \cref{eq:compare_zerocalc_to par_calc}, which can be rewritten 
	\begin{equation}
		(\tau_\natural^2 + \xi_\natural^2 +1 )^{m/2}  \lesssim (\tau_\natural^2 + \xi_\natural^4 + h^4)^{\ell/4}.
		\label{eq:misc_314}
	\end{equation}
    We will handle the $m\leq 0$ and $m>0$ cases separately.
	
	If $m\leq 0$, then the left-hand side is bounded above by $1$. If  $\tau_\natural^2+\xi_\natural^2>\delta$, then 
	\begin{equation}
		\delta_0 \lesssim \tau_\natural^2 + \xi_\natural^4 ,
		\label{eq:misc_z3g}
	\end{equation}
	for some $\delta_0>0$, so 
	\begin{equation}
		\delta_0^{\ell/4} \leq (\tau_\natural^2 + \xi_\natural^4 + h^4)^{\ell/4}, 
	\end{equation}
	using the assumption $\ell\geq 0$. So, \cref{eq:misc_314} holds in this case.
	
	If $m>0$, then \cref{eq:misc_314} is equivalent to 
	\begin{equation}
		\tau_\natural^2+ \xi_\natural^2 +1 \lesssim \tau_\natural^2+\xi_\natural^4 + h^4, 
	\end{equation}
	which is equivalent to the conjunction of 
	\begin{enumerate}[label=(\roman*)]
		\item $\tau_\natural^2 \lesssim \tau_\natural^2+\xi_\natural^4 + h^4$, which holds trivially,   
		\item \:\,$1 \lesssim \tau_\natural^2+\xi_\natural^4 + h^4$, which holds in the relevant region by \cref{eq:misc_z3g}, and \vspace{.3em}
		\item 
        \begin{equation} 
            \xi_\natural^2 \lesssim  \tau_\natural^2+\xi_\natural^4 + h^4;
        \end{equation}
        this last inequality holds in $\{ \xi_{\natural}^2 >\varepsilon\} $, for arbitrary $\varepsilon>0$ (with an $\varepsilon$-dependent constant). On the other hand, if $\varepsilon$ and $h$ are sufficiently small, then $\{ \xi_{\natural}^2 <2\varepsilon\}$, together with $\tau_\natural^2+\xi_\natural^2>\delta$, forces $\tau_\natural^2>\varepsilon$, so the desired inequality holds in this region too.
	\end{enumerate}

\end{proof}

\begin{lemma}
	For any $m,s,\ell \in \bbR$, there exists $h_0 > 0$ such that 
	\begin{equation} 
		\lVert v \rVert_{H_{\calc}^{m,s,\ell,0} } \lesssim \lVert v \rVert_{H_{\mathrm{par}}^{L,s}}
		\label{eq:misc_311}
	\end{equation} 
	holds, in the usual strong sense, whenever $L\geq \max\{0,\ell, 2m\}$ and $h \in (0, h_0]$. 
	
	A variable order analogue also holds: if $\mathsf{s}\in C^\infty({}^{\calc }\overline{T}^* \bbM )$ is a variable $\calc$-order and $\overline{\mathsf{s}}=\mathsf{s}|_{\mathrm{pf}}$, then for any $m,\ell\in \bbR$ and $L\geq \max\{0,\ell,2m\}$, and for any $\varepsilon>0$, there exists an open neighborhood $U$ of $\mathrm{pf}$ such that, whenever $Q\in \Psi_{\calczero}^{-\infty,0,0}$ satisfies $\operatorname{WF}_{\calczero}'(Q)\subseteq U$, then 
	we have 
	\begin{equation} 
	\lVert Q v \rVert_{H_{\calc}^{m,\mathsf{s},\ell,0} } \lesssim \lVert v \rVert_{H_{\mathrm{par}}^{L,\overline{\mathsf{s}+\varepsilon}}},
	\end{equation} 
	where the constant may depend on $Q$ (but not on $v$).
	\label{lem:par_comp_better}
\end{lemma}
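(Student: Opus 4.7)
The plan is to split $v$ microlocally into a piece near $\mathrm{pf}$ and a piece away from $\mathrm{pf}$, exploiting the structural description of $\Psi_{\calc}$ as agreeing with $\Psi_{\calczero}$ away from $\mathrm{pf}$ and with $\Psi_{\mathrm{par,I,res}}$ near $\mathrm{pf}$ (see \S\ref{subsec:calc}). To this end, I will fix an auxiliary cutoff $Q_{\mathrm{pf}} \in \Psi_{\calczero}^{-\infty,0,0}$ that equals the identity microlocally in some small neighborhood of $\mathrm{pf}$ and whose essential support lies inside the region where $\Psi_{\calczero}$-operators lift naturally to $\Psi_{\calc}$-operators. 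Write $v = Q_{\mathrm{pf}} v + (1-Q_{\mathrm{pf}}) v$.

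For the off-$\mathrm{pf}$ term, since $\operatorname{WF}'_{\calczero}(1-Q_{\mathrm{pf}})$ avoids $\mathrm{pf}$, the identification of $\Psi_{\calc}$ with $\Psi_{\calczero}$ there gives $\lVert (1-Q_{\mathrm{pf}}) v \rVert_{H_{\calc}^{m,s,\ell,0}} \lesssim \lVert (1-Q_{\mathrm{pf}}) v \rVert_{H_{\calczero}^{m,s,\ell}}$. Set $\ell_0 = \max\{0,\ell,2m\}$. Because the $\calczero$-Sobolev scale is monotone in the $\natural\mathrm{f}$-index (concretely $\lVert \cdot \rVert_{H_{\calczero}^{m,s,\ell}} \lesssim \lVert \cdot \rVert_{H_{\calczero}^{m,s,\ell_0}}$ for $h \in (0,1]$), Lemma \ref{lem:par_comp2}, applied with $\ell$ replaced by $\ell_0 \geq \max\{0,2m\}$, yields $\lVert (1-Q_{\mathrm{pf}}) v \rVert_{H_{\calczero}^{m,s,\ell_0}} \lesssim \lVert v \rVert_{H_{\mathrm{par}}^{\ell_0,s}} \leq \lVert v \rVert_{H_{\mathrm{par}}^{L,s}}$ whenever $L \geq \ell_0$.

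For the near-$\mathrm{pf}$ term, the canonical identification of $\Psi_{\calc}$ with $\Psi_{\mathrm{par,I,res}}$ on a neighborhood of $\mathrm{pf}$ (together with the fact that the residual pieces are lower order at $\mathrm{df},\mathrm{bf},\natural\mathrm{f}$) gives $\lVert Q_{\mathrm{pf}} v \rVert_{H_{\calc}^{m,s,\ell,0}} \lesssim \lVert Q_{\mathrm{pf}} v \rVert_{H_{\mathrm{par,I,res}}^{m,s,\ell,0}}$ modulo terms already controlled by $\lVert v \rVert_{H_{\mathrm{par}}^{L,s}}$. The symbol containment \eqref{eq:par-I-par-I-res-symbol-contain} dualizes to a continuous inclusion $H_{\mathrm{par,I}}^{M,s,0} \hookrightarrow H_{\mathrm{par,I,res}}^{m,s,\ell,0}$ for $M \geq \max\{m,\ell\}$, reducing matters to bounding $\lVert Q_{\mathrm{pf}} v \rVert_{H_{\mathrm{par,I}}^{M,s,0}}$ by $\lVert v \rVert_{H_{\mathrm{par}}^{M,s}}$, uniformly in $h$. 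Since $v$ is $h$-independent and $Q_{\mathrm{pf}}$ is a smooth family (in $h$) of sc-operators that are smoothing at fiber infinity, $Q_{\mathrm{pf}}$ restricts to a uniformly bounded family of $\mathrm{par}$-pseudodifferential operators of order $(0,0)$, so its action on $H_{\mathrm{par}}^{M,s}$ is bounded uniformly in $h$. Choosing $L \geq M$ and combining with the previous paragraph yields \eqref{eq:misc_311}.

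The variable-order analogue is handled by the same near-$\mathrm{pf}$ argument alone, since the hypothesis already restricts $\operatorname{WF}'_{\calczero}(Q) \subset U$. Using the continuity of $\mathsf{s}$ on ${}^{\calc}\overline{T}^*\bbM$ and the fact that $\mathsf{s}|_{\mathrm{pf}} = \overline{\mathsf{s}}$, we shrink $U$ so that $\mathsf{s} \leq \overline{\mathsf{s}} + \varepsilon$ on $U$; the $\mathrm{bf}$-order monotonicity of the $\calc$-Sobolev scale then gives $\lVert Q v \rVert_{H_{\calc}^{m,\mathsf{s},\ell,0}} \lesssim \lVert Q v \rVert_{H_{\calc}^{m,\overline{\mathsf{s}}+\varepsilon,\ell,0}}$ and the constant-order argument applied with spacetime order $\overline{\mathsf{s}}+\varepsilon$ concludes. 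The main obstacle in both parts is the final step of the near-$\mathrm{pf}$ argument: verifying the uniform-in-$h$ boundedness of $Q_{\mathrm{pf}}$ (respectively $Q$) as an operator from $H_{\mathrm{par}}^{L,s}$ into $H_{\mathrm{par,I}}^{L,s,0}$, which requires reconciling the sc-symbol structure of $\calczero$-operators with the anisotropic parabolic structure of the target space.
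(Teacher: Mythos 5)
Your proof follows essentially the same structure as the paper's: split $v$ microlocally with a $\Psi_{\calczero}^{-\infty,0,0}$ cutoff that is identity near $\mathrm{pf}$, estimate the off-$\mathrm{pf}$ piece via \Cref{lem:par_comp2}, and estimate the near-$\mathrm{pf}$ piece by comparing $\calc$- to $\mathrm{par,I,res}$- to $\mathrm{par}$-norms. The paper also handles the variable-order case by a parallel argument, as you do.

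Where the two arguments diverge is in the near-$\mathrm{pf}$ step. The paper exploits that the cutoff $O$ is microsupported away from $\mathrm{df}$, so that $Ov$ gains arbitrary $\mathrm{df}$-regularity, and then chains
\[
\lVert Ov\rVert_{H_{\mathrm{par,I,res}}^{m,s,\ell,0}}\lesssim \lVert v\rVert_{H_{\mathrm{par,I,res}}^{-N,s,\ell,0}}\lesssim \lVert v\rVert_{H_{\mathrm{par,I,res}}^{\ell,s,\ell,0}}\lesssim\lVert v\rVert_{H_{\mathrm{par}}^{\ell,s}},
\]
where the last step uses the \emph{equality} \eqref{eq:par-I-par-I-res-symbol-equal} (the blowdown identification applies precisely when the $\mathrm{df}_1$- and $\natural\mathrm{f}$-orders coincide). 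By staying inside $\mathrm{par,I,res}$-norms, the paper never needs to ask whether the cutoff is uniformly bounded as a raw $\mathrm{par}$-operator; the cutoff's sole role is to improve the $\mathrm{df}$-order. You instead invoke the \emph{containment} \eqref{eq:par-I-par-I-res-symbol-contain} directly, which gives $H_{\mathrm{par}}^{M,s}\hookrightarrow H_{\mathrm{par,I,res}}^{m,s,\ell,0}$ for $M\geq\max\{m,\ell\}$, and are then left needing to verify that $Q_{\mathrm{pf}}$ acts uniformly boundedly on $H_{\mathrm{par}}^{M,s}$.

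That last step, which you correctly flag as the crux, is a genuine gap in your write-up — you assert it but do not prove it. The verification is doable and not deep: for $a\in S_{\calczero}^{-\infty,0,0}$ one has $|\partial_{\zetan}^\gamma a|\lesssim_N \langle\zetan\rangle^{-N}$, hence $|\partial_\tau^k\partial_\xi^\beta a| = h^{2k+|\beta|}|\partial_{\taun}^k\partial_{\xin}^\beta a|\lesssim h^{2k+|\beta|}\langle\zetan\rangle^{-N}$, and combining with the elementary $\aang{\zeta}\lesssim h^{-1}\langle\zetan\rangle$ gives $|\partial_\tau^k\partial_\xi^\beta a|\lesssim \aang{\zeta}^{-2k-|\beta|}$ uniformly in $h\in(0,1]$, i.e.\ $Q_{\mathrm{pf}}$ is a uniformly bounded family in $\Psi_{\mathrm{par}}^{0,0}$. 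With that computation supplied, your argument closes and is correct; it just needed to be carried out rather than left as a comment. The paper's route avoids this particular calculation by design, which is its (modest) advantage.
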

\begin{proof} We will only prove the constant order version, as the variable order version can be proved in a similar way.
	Let $O\in \Psi_{\calczero}^{-\infty,0,0}$ have essential support disjoint from $\mathrm{df}$ and be such that $1-O$ has essential support disjoint from $\mathrm{pf}$. Then, 
	\begin{align}
    \begin{split} 
		\lVert v \rVert_{H_{\calc}^{m,s,\ell,0} } 
		&\leq \lVert O v \rVert_{H_{\mathrm{par,I,res}}^{m,s,\ell,0} } + \lVert (1-O)v \rVert_{H_{\calc}^{m,s,\ell,0} }  \\ 
		&\lesssim \lVert  v \rVert_{H_{\mathrm{par,I,res}}^{-N,s,\ell,0} } + \lVert (1-O)v \rVert_{H_{\calc}^{m,s,\ell,0} } + \lVert v \rVert_{H_{\calczero}^{-N,-N,-N} }
        \end{split} 
	\end{align}
	for any $N$. 
	Trivially, 
	\begin{equation} 
		\lVert v \rVert_{H_{\calczero}^{-N,-N,-N}, }\lesssim  \lVert  v \rVert_{H_{\mathrm{par}}^{L,s} }
	\end{equation} 
	if $N$ is sufficiently large, e.g.\ using \Cref{lem:par_comp}. Also, if $N$ is sufficiently large, 
	\begin{align}
		\lVert  v \rVert_{H_{\mathrm{par,I,res}}^{-N,s,\ell,0} } &\lesssim \lVert  v \rVert_{H_{\mathrm{par,I,res}}^{\ell,s,\ell,0} } \lesssim \lVert  v \rVert_{H_{\mathrm{par}}^{\ell,s} } \lesssim \lVert  v \rVert_{H_{\mathrm{par}}^{L,s} }, \\ 
        \intertext{and, using \Cref{lem:par_comp2}}
		\lVert (1-O) v \rVert_{H_{\calc}^{m,s,\ell,0} } &\lesssim \lVert  v \rVert_{H_{\mathrm{par}}^{L,s} } .
	\end{align}
	So, all in all, we get \cref{eq:misc_311}.
\end{proof}

\begin{lemma}
	For $m,s,\ell \in \bbR$, we have 
 \begin{align*}
     \| u \|_{ H_{\calc}^{m,s,\ell,\ell} } &\approx   \| u \|_{ H_{\calczero}^{m,s,\ell} },  \\
    \lVert u \rVert_{H_{\calctwo}^{m,s,\ell;\ell,\ell}}  &\approx   \| u \|_{ H_{\calczero}^{m,s,\ell} }.
 \end{align*}   
	\label{lem:blowdown_equivalence}
\end{lemma}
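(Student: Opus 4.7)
The plan is to establish both equivalences by reducing everything to the identification of symbol classes at the blowup and the action of multiplication operators $e^{\pm it/h^2}$ on $\calczero$-Sobolev spaces.

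For the first equivalence, $\lVert u \rVert_{H_{\calc}^{m,s,\ell,\ell}} \approx \lVert u \rVert_{H_{\calczero}^{m,s,\ell}}$, the key observation is that the blowdown map $\beta\colon {}^{\calc}\overline{T}^* \bbM \to {}^{\calczero}\overline{T}^* \bbM$ collapses $\mathrm{pf}$ onto the submanifold (in the natural face of the $\calczero$-phase space) whose blowup produced $\mathrm{pf}$. Consequently boundary defining functions factor as $\beta^* \rho_{\natural\mathrm{f},\calczero} = \rho_{\natural\mathrm{f},\calc} \rho_{\mathrm{pf}} \cdot v$ for some nonvanishing smooth $v$, while $\beta^*\rho_{\mathrm{df}} = \rho_{\mathrm{df}}$ and $\beta^*\rho_{\mathrm{bf}}=\rho_{\mathrm{bf}}$ (up to such factors). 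The first step is therefore to show, at the level of symbols, that pullback by $\beta$ gives an equality of classes $S_{\calczero}^{m,s,\ell} = S_{\calc}^{m,s,\ell,\ell}$: on the one hand, pulling back preserves conormality and the factorization above shows that an $\ell$-th order singularity at $\natural\mathrm{f}_\calczero$ becomes an $\ell$-th order singularity at both $\natural\mathrm{f}_\calc$ and $\mathrm{pf}$; conversely, any conormal $\calc$-symbol that is smooth at $\mathrm{pf}$ with matching orders $(\ell,\ell)$ at $\natural\mathrm{f}_\calc$ and $\mathrm{pf}$ pushes forward to a conormal function on ${}^{\calczero}\overline{T}^* \bbM$ of order $\ell$ at $\natural\mathrm{f}_\calczero$. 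This gives $\Psi_{\calczero}^{m,s,\ell} = \Psi_{\calc}^{m,s,\ell,\ell}$ as sets.

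The next step is to observe that ellipticity is preserved: an elliptic $A \in \Psi_{\calczero}^{m,s,\ell}$ satisfies $|\sigma(A)| \gtrsim \rho_{\mathrm{df}}^{-m}\rho_{\mathrm{bf}}^{-s}\rho_{\natural\mathrm{f},\calczero}^{-\ell}$ near $\mathrm{df}\cup\mathrm{bf}\cup\natural\mathrm{f}_\calczero$, which on pullback gives $|\sigma(\beta^*A)| \gtrsim \rho_{\mathrm{df}}^{-m}\rho_{\mathrm{bf}}^{-s}\rho_{\natural\mathrm{f},\calc}^{-\ell}\rho_{\mathrm{pf}}^{-\ell}$, verifying the symbolic ellipticity condition \cref{eq:elliptic-calc} at order $(m,s,\ell,\ell)$; this also makes $N(A)$ elliptic in $\Psi_{\mathrm{par}}^{\ell,s|_{\mathrm{pf}}}$, hence invertible for small enough $h$ by the $\mathrm{par,I,res}$-calculus theory. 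Since both norms $\lVert -\rVert_{H_{\calczero}^{m,s,\ell}}$ and $\lVert -\rVert_{H_{\calc}^{m,s,\ell,\ell}}$ are defined (up to equivalent norms) by $\lVert A u \rVert_{L^2}$ plus a residual Sobolev term of arbitrarily negative order, and we may use the \emph{same} elliptic operator $A$ to define both, the two norms are equivalent.

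For the second equivalence, $\lVert u \rVert_{H_{\calctwo}^{m,s,\ell;\ell,\ell}} \approx \lVert u \rVert_{H_{\calczero}^{m,s,\ell}}$, we unpack the $\calctwo$-norm definition \cref{eq:definition_2res norm} with the partition $\mathrm{Id} = Q_+ + Q_-$, $Q_\pm \in \Psi_{\calczero}^{0,0,0}$, obtaining
\begin{equation*}
\lVert u \rVert_{H_{\calctwo}^{m,s,\ell;\ell,\ell}} = \lVert e^{-it/h^2}Q_+u\rVert_{H_{\calc}^{m,s,\ell,\ell}} + \lVert e^{it/h^2}Q_-u\rVert_{H_{\calc}^{m,s,\ell,\ell}}.
\end{equation*}
Applying the first equivalence to each term reduces this to the sum of $\lVert e^{\mp it/h^2}Q_\pm u \rVert_{H_{\calczero}^{m,s,\ell}}$. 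The multiplication operators $e^{\pm it/h^2}$ translate $\tau_\natural$ by $\pm 1$, a bounded translation on the $\calczero$-phase space under which $\calczero$-symbols of constant orders are preserved with equivalent seminorms, so these multipliers (and their inverses) act uniformly boundedly on $H_{\calczero}^{m,s,\ell}$. Therefore the sum becomes $\lVert Q_+u\rVert_{H_{\calczero}^{m,s,\ell}} + \lVert Q_-u\rVert_{H_{\calczero}^{m,s,\ell}}$, which is equivalent to $\lVert u \rVert_{H_{\calczero}^{m,s,\ell}}$ by the triangle inequality and the fact that $Q_\pm \in \Psi_{\calczero}^{0,0,0}$ are bounded on this space while summing to the identity.

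The main obstacle is carefully checking the symbol identification at the blowup, in particular verifying that the smoothness requirement at $\mathrm{pf}$ built into the $\calc$-symbol class is automatic for pullbacks from $\calczero$ and poses no obstruction in the reverse direction once the orders at $\natural\mathrm{f}$ and $\mathrm{pf}$ coincide; this is ultimately a statement about smooth functions being preserved under blowup and blowdown when no new singularities are introduced, which is clean but requires unraveling the conventions in \S\ref{subsec:calczero}--\S\ref{subsec:calc}.
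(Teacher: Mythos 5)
Your argument is essentially the same as the paper's: both rest on the symbol-class inclusion $S_{\calczero}^{m,s,\ell}\subseteq S_{\calc}^{m,s,\ell,\ell}\cap S_{\calctwo}^{m,s,\ell;\ell,\ell}$ together with the observation that an elliptic $\calczero$-symbol remains elliptic (including the normal-operator condition at $\mathrm{pf}$ or $\mathrm{pf}_\pm$) after being viewed in the blown-up calculi, so that a single quantization $A$ can be used to define all three norms, and these are then comparable. One small overclaim to fix: you assert $S_{\calczero}^{m,s,\ell} = S_{\calc}^{m,s,\ell,\ell}$ as an equality, but the reverse inclusion is false. The coordinates on the front face $\mathrm{pf}$ are the parabolic projective coordinates $(\tau,\xi)=(\tau_\natural/h^2,\xi_\natural/h)$, so a generic $\calc$-symbol can depend nontrivially on $(\tau,\xi)$ along $\mathrm{pf}$, whereas a pulled-back $\calczero$-symbol, restricted to $h=0$ with $(\tau_\natural,\xi_\natural)=(0,0)$, is constant in these directions; the blowup strictly refines the symbol class. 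Fortunately your subsequent reasoning only uses the inclusion (you need a common elliptic operator, not an identification of the full calculi), so this does not affect the validity of the proof.

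For the second equivalence, you take a slightly different and more explicit route than the paper: rather than re-invoking the inclusion-plus-ellipticity argument directly for $S_{\calctwo}^{m,s,\ell;\ell,\ell}$, you unpack the $\calctwo$-norm via the partition $\mathrm{Id}=Q_++Q_-$, apply the first equivalence to $e^{\mp it/h^2}Q_\pm u$, use that modulation by $e^{\pm it/h^2}$ is a bounded translation $\taun\mapsto\taun\mp 1$ on the $\calczero$-phase space preserving constant-order symbol classes, and recombine by the triangle inequality. This is correct for constant $s$ (the caveat about modifying the variable order $\mathsf{s}$ to $\mathsf{s}_\pm$ in \cref{eq:definition_2res norm} is vacuous when $s$ is constant). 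Your route has the pedagogical merit of making the mechanism of the $\calctwo$-norm visible, whereas the paper's one-line argument is more uniform across the two statements; both are sound.
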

\begin{proof}
Note that \begin{equation} 
    S_{\calczero}^{m,s,\ell}\subseteq S_{\calc}^{m,s,\ell,\ell}\cap S_{\calctwo}^{m,s,\ell;\ell,\ell}.
\end{equation}
A symbol in $S_{\calczero}^{m,s,\ell}$ which is elliptic is also elliptic in each of $S_{\calc}^{m,s,\ell,\ell}, S_{\calctwo}^{m,s,\ell;\ell,\ell}$. So, the lemma follows immediately from the definition of the $\calc$- and $\calczero$-Sobolev norms.
\end{proof}

\begin{lemma}
	For any $m\geq 0$, $\lVert u \rVert_{H_{\mathrm{sc}}^{m,s}} \lesssim \lVert u \rVert_{H_{\calc}^{m,s,2m,0} }$.
	\label{lem:sc_comp}
\end{lemma}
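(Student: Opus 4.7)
The plan is to reduce the lemma to verifying that a sufficiently rich supply of $\mathrm{sc}$-operators lies in $\Psi_{\calc}^{m,s,2m,0}$, then invoke the standard uniform boundedness of $\calc$-pseudodifferential operators between the corresponding $\calc$-Sobolev spaces. For integer $m \geq 0$, I would use the equivalent characterization
\[
\lVert u\rVert^2_{H^{m,s}_{\mathrm{sc}}} \approx \sum_{|\alpha|\leq m} \lVert \ang{z}^s \partial^\alpha u\rVert^2_{L^2},
\]
where $\alpha = (\alpha_0, \alpha')$ and $\partial^\alpha = \partial_t^{\alpha_0}\partial_x^{\alpha'}$, and bound each term on the right-hand side by $\lVert u\rVert_{H_{\calc}^{m,s,2m,0}}$. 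This reduces the task to establishing $\ang{z}^s\partial^\alpha \in \mathrm{Diff}_{\calc}^{m,s,2m,0}$ for all $|\alpha|\leq m$.

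The key observation is that
\[
\partial_t \in \mathrm{Diff}_{\calc}^{1,0,2,0}, \qquad \partial_{x_j} \in \mathrm{Diff}_{\calc}^{1,0,1,0},
\]
as one sees by inspecting their symbols $i\tau$ and $i\xi_j$: near $\mathrm{pf}$, $\tau$ and $\xi$ are smooth bounded coordinates (giving $\mathrm{pf}$-order $0$), while in the interior of $\natural\mathrm{f}$ the relations $\tau = \taun/h^2$ and $\xi = \xin/h$ yield $|\tau|\sim h^{-2} = \rho_{\natural\mathrm{f}}^{-2}$ and $|\xi_j|\sim h^{-1} = \rho_{\natural\mathrm{f}}^{-1}$. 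Composition then gives $\partial^\alpha \in \mathrm{Diff}_{\calc}^{|\alpha|,0,2\alpha_0+|\alpha'|,0}$, and since $2\alpha_0 + |\alpha'| = \alpha_0 + |\alpha| \leq 2m$ whenever $|\alpha|\leq m$, the order-monotone inclusion places $\partial^\alpha \in \mathrm{Diff}_{\calc}^{m,0,2m,0}$. Multiplying by $\ang{z}^s$ gives $\ang{z}^s\partial^\alpha \in \mathrm{Diff}_{\calc}^{m,s,2m,0}$, and the uniform-in-$h$ boundedness of $\calc$-operators yields $\lVert \ang{z}^s \partial^\alpha u\rVert_{L^2} \lesssim \lVert u\rVert_{H_{\calc}^{m,s,2m,0}}$; summing over $|\alpha|\leq m$ closes the integer case. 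For general $m \geq 0$, one may either interpolate between adjacent integer orders or argue directly by quantization: the symbol $\ang{z}^s\ang{\zeta}^m$ lies in $S_{\calc}^{m,s,2m,0}$ via the elementary estimate $h^2\ang{\zeta} \leq \ang{\zetan}$ for $h\in(0,1]$ (which follows from $(h^2\ang{\zeta})^2 = h^4+\taun^2+h^2|\xin|^2 \leq \ang{\zetan}^2$), and its quantization is $\mathrm{sc}$-elliptic of order $(m,s)$, closing the argument via an $\mathrm{sc}$-elliptic estimate combined with the uniform $\calc$-boundedness.

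The main obstacle I anticipate is the careful bookkeeping of boundary-defining-functions and derivative estimates near the corners $\mathrm{pf}\cap\natural\mathrm{f}$, $\mathrm{pf}\cap\mathrm{df}$, and $\natural\mathrm{f}\cap\mathrm{df}$ of the $\calc$-phase space, where one must invoke the explicit parabolic-blowup construction recalled in \S\ref{subsec:calc} to confirm that the symbol bounds and their $\calV_{\calc}$-derivatives are uniform in $h$. This is essentially routine once one writes down boundary defining functions in each coordinate patch, but it is the most technically involved step.
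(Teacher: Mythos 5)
Your route differs from the paper's: the paper splits $u = Ou + (1-O)u$ with a microlocal cutoff $O\in\Psi_{\calczero}^{-\infty,0,0}$ near $\mathrm{pf}$, handling $(1-O)u$ via the $\calczero$-scale (precisely the Fourier multiplier bound you re-derive) and $Ou$ via the parabolic chain $\|Ou\|_{H_{\mathrm{sc}}^{m,s}}\lesssim\|Ou\|_{H_{\mathrm{par}}^{2m,s}}\lesssim\|Ou\|_{H_{\mathrm{par,I,res}}^{2m,s,2m,0}}\lesssim\|u\|_{H_{\calc}^{m,s,2m,0}}$, whereas you aim to show directly that $\ang{z}^s\ang{\zeta}^m\in S_{\calc}^{m,s,2m,0}$ --- equivalently $\partial_t\in\mathrm{Diff}_{\calc}^{1,0,2,0}$, $\partial_{x_j}\in\mathrm{Diff}_{\calc}^{1,0,1,0}$ --- and invoke uniform $\calc$-boundedness. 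Those memberships are in fact correct (the second appears verbatim in the proof of \Cref{prop:delta_WF}), so your approach can be made rigorous and is arguably cleaner, packaging the lemma into a single symbol-class verification. But the justification you give does not close the argument at the corner $\mathrm{pf}\cap\natural\mathrm{f}$, and that corner is where the lemma's entire content resides.

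The elementary estimate $h^2\ang{\zeta}\leq\ang{\zetan}$ gives only $\ang{\zeta}^m\leq h^{-2m}\ang{\zetan}^m$. Since $h\sim\rho_{\natural\mathrm{f}}\rho_{\mathrm{pf}}$ and $\ang{\zetan}^{-1}\sim\rho_{\mathrm{df}}$ on $\smash{{}^{\calc}\overline{T}^*\bbM}$, this places the symbol in $S_{\calc}^{m,s,2m,2m}$ --- $\mathrm{pf}$-order $2m$, not $0$. In Sobolev-norm terms you have shown
\begin{equation*}
\lVert u\rVert_{H_{\mathrm{sc}}^{m,s}}\lesssim\lVert u\rVert_{H_{\calczero}^{m,s,2m}}\approx\lVert u\rVert_{H_{\calc}^{m,s,2m,2m}},
\end{equation*}
the last equivalence being \Cref{lem:blowdown_equivalence}; this is \emph{strictly weaker} than the lemma, since the $\calc$-norm increases in its $\mathrm{pf}$-order. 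Likewise your statement that ``near $\mathrm{pf}$, $\tau$ and $\xi$ are smooth bounded coordinates'' holds only in the interior of $\mathrm{pf}$ and fails at $\mathrm{pf}\cap\natural\mathrm{f}$, where both blow up (note also that $\mathrm{pf}\cap\mathrm{df}=\varnothing$: the parabolic fiber-infinity of $\mathrm{pf}$ is absorbed into $\natural\mathrm{f}$, not $\mathrm{df}$). The missing ingredient that brings the $\mathrm{pf}$-order from $2m$ down to $0$ is the \emph{parabolic} rescaling: near $\mathrm{pf}$, i.e.\ where $h\aang{\zeta}\lesssim 1$, the bounded quantities are $\tau/\aang{\zeta}^2$ and $\xi_j/\aang{\zeta}$ rather than $\taun,\xin$, and hence $\ang{\zeta}\lesssim\aang{\zeta}^2\sim\rho_{\natural\mathrm{f}}^{-2}$ with no power of $\rho_{\mathrm{pf}}$. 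That inequality is exactly what the paper's $Ou$-step $\|Ou\|_{H_{\mathrm{sc}}^{m,s}}\lesssim\|Ou\|_{H_{\mathrm{par}}^{2m,s}}$ encodes; a symbol-level version of it is what your argument needs and is not supplied by the elementary estimate alone.
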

\begin{proof}
	Again, we only discuss the $s=0$ case, the general case being similar. Then, $H_{\mathrm{sc}}^{m,s}=H_{\mathrm{sc}}^{m,0} = H^m$ is the ordinary $L^2$-based Sobolev space on $\bbR^{1,d}$, and so 
	\begin{equation} 
		\lVert u \rVert_{H_{\mathrm{sc}}^{m,0}} \lesssim \lVert (-\partial_t^2+\triangle+1)^{m/2} u \rVert_{L^2} = \Big\lVert \frac{h^{2m}(-\partial_t^2+\triangle+1)^{m/2}}{(-h^4 \partial_t^2 + h^2 \triangle+1)^{m/2}}  h^{-2m} (-h^4 \partial_t^2 + h^2 \triangle+1)^{m/2}  u \Big\rVert_{L^2}.
		\label{eq:misc_507}
	\end{equation} 
	We have 
	\begin{equation} 
		\frac{h^{2m} (\tau^2 + \xi^2 + 1)^{m/2} }{(h^4 \tau^2 + h^2 \xi^2 + 1)^{m/2} }  \leq 1
	\end{equation}  
	for $h\leq 1$, 
	so the right-hand side of \cref{eq:misc_507} is $\lesssim \lVert h^{-2m} (-h^4 \partial_t^2 + h^2 \triangle+1)^{m/2}  u \rVert_{L^2} \lesssim \lVert u \rVert_{H_{\calczero}^{m,0,2m}}$. 
	Applying this with $(1-O)u$ in place of $u$, where $O\in \Psi_{\calczero}^{-\infty,0,0}$ has essential support disjoint from $\mathrm{df}$ and where $1-O$ has essential support disjoint from $\mathrm{pf}$, yields 
	\begin{equation}
	\lVert (1-O) u \rVert_{H_{\mathrm{sc}}^{m,s}} \lesssim \lVert (1-O) u \rVert_{H_{\calczero}^{m,s,2m}} \lesssim \lVert u \rVert_{H_{\calc}^{m,s,2m,0}}.
	\end{equation} 
	
	On the other hand, 
	\begin{equation}
	\lVert O u \rVert_{H_{\mathrm{sc}}^{m,s}}\lesssim \lVert O u \rVert_{H_{\mathrm{par}}^{2m,s}} \lesssim \lVert Ou \rVert_{H_{\mathrm{par,I,res}}^{2m,s,2m,0} } \lesssim \lVert u \rVert_{H_{\calc}^{m,s,2m,0} }. 
	\end{equation}
	So, altogether, $\lVert u \rVert_{H_{\mathrm{sc}}^{m,s}} \leq \lVert (1-O) u \rVert_{H_{\mathrm{sc}}^{m,s}}+ \lVert O u \rVert_{H_{\mathrm{sc}}^{m,s}} \lesssim \lVert u \rVert_{H_{\calc}^{m,s,2m,0} }.$
\end{proof}

Next, a Sobolev embedding theorem:
\begin{proposition} Let $s\in \bbR$, $m > (d+1)/2$. 
	\begin{enumerate}[label=(\alph*)]
		\item If $\ell\geq (d+2)/2$, then $\| u \|_{(1+r^2+t^2)^{-s/2}L^{\infty}} \lesssim \| u \|_{ H^{m,s,\ell}_{\calczero} }$.
		\item If $\ell > d+1$, then $\| u \|_{(1+r^2+t^2)^{-s/2}L^{\infty}} \lesssim \| u \|_{ H^{m,s,\ell,0}_{\calc} }$. 
	\end{enumerate}
 \label{prop:Sobolev_embedding}
\end{proposition}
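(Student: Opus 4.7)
The plan is to reduce both parts to standard Sobolev embeddings. In both cases I would first reduce to $s=0$ by replacing $u$ with $\langle z \rangle^s u$: since $\langle z \rangle^s$ is a classical symbol of order $(0,s,0,0)$ in each of the relevant calculi, it induces a uniform-in-$h$ isomorphism between the spacetime-order-$s$ and spacetime-order-$0$ versions of the corresponding Sobolev spaces, while $\|\langle z \rangle^s u\|_{L^\infty} = \|u\|_{(1+r^2+t^2)^{-s/2}L^\infty}$.

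For part (a), the key step is a rescaling to natural units. Set $U(t_\natural, x_\natural) = u(h^2 t_\natural, h x_\natural)$. A direct Plancherel computation using the identity $\hat U(\sigma,\eta) = h^{-(d+2)} \hat u(\sigma/h^2, \eta/h)$ gives
\[
\|U\|_{H^m(\bbR^{1+d})} = h^{-(d+2)/2}\,\|u\|_{H^{m,0,0}_{\calczero}}, \qquad \|U\|_{L^\infty} = \|u\|_{L^\infty}.
\]
Applying the classical Euclidean Sobolev embedding $H^m(\bbR^{1+d}) \hookrightarrow L^\infty(\bbR^{1+d})$ (valid since $m > (d+1)/2$) then yields
\[
\|u\|_{L^\infty} \lesssim h^{-(d+2)/2}\|u\|_{H^{m,0,0}_{\calczero}} \asymp h^{\ell-(d+2)/2}\|u\|_{H^{m,0,\ell}_{\calczero}},
\]
where the last equivalence uses $\|u\|_{H^{m,0,\ell}_{\calczero}} \asymp h^{-\ell}\|u\|_{H^{m,0,0}_{\calczero}}$, obtained by choosing an elliptic representative of $\Psi_\calczero^{m,0,\ell}$ of the form $h^{-\ell}$ times an elliptic representative of $\Psi_\calczero^{m,0,0}$. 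For $h \in (0,1]$ the prefactor $h^{\ell-(d+2)/2}$ is bounded exactly when $\ell \geq (d+2)/2$, giving (a).

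For part (b), I would chain together three existing results. Pick an auxiliary index $m_0 \in ((d+1)/2, \min(m,\ell/2)]$, which is non-empty precisely because $m > (d+1)/2$ and $\ell > d+1$. \Cref{lem:sc_comp} then provides $\|u\|_{H^{m_0,0}_{\mathrm{sc}}} \lesssim \|u\|_{H^{m_0,0,2m_0,0}_{\calc}}$, and monotonicity of the $\calc$-norm in both the differential order ($m_0 \leq m$) and the $\natural\mathrm{f}$-order ($2m_0 \leq \ell$) yields $\|u\|_{H^{m_0,0,2m_0,0}_{\calc}} \lesssim \|u\|_{H^{m,0,\ell,0}_{\calc}}$ uniformly in $h \in (0,1]$. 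The standard scattering Sobolev embedding on $\bbR^{1,d}$ then gives $\|u\|_{L^\infty} \lesssim \|u\|_{H^{m_0,0}_{\mathrm{sc}}}$ since $m_0 > (d+1)/2$, completing the chain.

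The bookkeeping is essentially routine; the only slightly delicate points are justifying the two monotonicity statements (the identity $\|u\|_{H^{m,0,\ell}_\calczero} \asymp h^{-\ell}\|u\|_{H^{m,0,0}_\calczero}$ in (a), and the $\calc$-norm inequality in (b)). Both follow from explicit constructions of elliptic representatives using the boundary-defining-functions listed in \S\ref{sec:calculus}, together with care about $h$-powers arising from $\rho_{\natural\mathrm{f}} = h + \aang{\zeta}^{-1}$.
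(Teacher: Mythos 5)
Your proof is correct and follows essentially the same route as the paper: part (a) is a Fourier rescaling argument combined with Cauchy--Schwarz (your explicit rescaled function $U$ versus the paper's direct estimate $\|u\|_{L^\infty}\lesssim h^{-(d+2)}\|\hat u\|_{L^1}$ amount to the same computation), and part (b) chains $\|u\|_{L^\infty}\lesssim\|u\|_{H^{m_0,0}_{\mathrm{sc}}}$ with \Cref{lem:sc_comp}. The only difference is that your auxiliary index $m_0\in((d+1)/2,\min(m,\ell/2)]$ in part (b) is slightly more explicit than the paper's terse citation, and it correctly handles the full range $\ell>d+1$ (including $d+1<\ell<2m$, where a direct application of \Cref{lem:sc_comp} with $\ell=2m$ would give monotonicity in the wrong direction).
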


\begin{proof}
	It suffices to consider the $s=0$ case.
	\begin{enumerate}[label=(\alph*)]
		\item  The proof proceeds like the proof of the usual Sobolev embedding theorem. We write the usual Fourier transform of $u$, denoted $\hat u$, in the coordinates $\taun, \xin$:
\begin{equation}
    \hat u(\taun, \xin) = \int e^{-i(h^{-2} \taun t + h^{-1} \xin \cdot x)} u(t, x) \, dt \, dx. 
\end{equation}
It follows that we have the inversion formula 
\begin{equation}
    u(t, x) = (2\pi)^{-n} h^{-(d+2)} \int \hat u(\taun, \xin) \, d\taun \, d\xin.
\end{equation}
We then estimate 
		\begin{align}
			\begin{split} 
				\| u \|_{L^\infty} &\lesssim   h^{-(d+2)}\| \hat{u} \|_{L^1(\bbR^{1+d}_{\tau_{\calcshort},\xi_{\calcshort}})} 
				\\ 
				&\leq h^{-(d+2)}\| (1+\tau_{\calcshort}^2+|\xi_{\calcshort}|^2)^{m/2} \hat{u} \|_{L^2(\bbR^{1+d}_{\tau_{\calcshort},\xi_{\calcshort}})} 
				\| (1+\tau_{\calcshort}^2+|\xi_{\calcshort}|^2)^{-m/2} \|_{L^2(\bbR^{1+d}_{\tau_{\calcshort},\xi_{\calcshort}})}\\
				&\lesssim h^{-(d+2)}\| (1+\tau_{\calcshort}^2+|\xi_{\calcshort}|^2)^{m/2} \hat{u} \|_{L^2(\bbR^{1+d}_{\tau_{\calcshort},\xi_{\calcshort}})}  
				\\ & \lesssim h^{-(d+2)/2}\|u\|_{ H^{m,0,0}_{\calczero} }  \lesssim \lVert u \rVert_{H_{\calczero}^{m,0,(d+2)/2}},
			\end{split} 
			\label{eq:Sobolev_comp}
		\end{align}
		where we used (i)
		\begin{equation} 
			\| (1+\tau_{\calcshort}^2+|\xi_{\calcshort}|^2)^{-m/2} \|_{L^2(\bbR^{1+d}_{\tau_{\calcshort},\xi_{\calcshort}})}<\infty,
		\end{equation} 
		which is true if $m>(d+1)/2$, and also (ii)
		\begin{align}
			\begin{split} 
			\| (1+\tau_{\calcshort}^2+|\xi_{\calcshort}|^2)^{m/2} \hat{u} \|_{L^2(\bbR^{1+d}_{\tau_{\calcshort},\xi_{\calcshort}})} &= h^{d+2} \| \operatorname{Op}((1+\tau_{\calcshort}^2+|\xi_{\calcshort}|^2)^{m/2}) u \|_{L^2(\bbR^{1+d}_{t_\natural,x_\natural})} \\ 
			&= h^{(d+2)/2} \| \operatorname{Op}((1+\tau_{\calcshort}^2+|\xi_{\calcshort}|^2)^{m/2}) u \|_{L^2(\bbR^{1+d}_{t,x}) = H_{\calczero}^{0,0,0}} \\ 
			&\lesssim h^{(d+2)/2} \lVert u \rVert_{H_{\calczero}^{m,0,0}}.
			\end{split} 
		\end{align}

        \item Follows from combining the usual Sobolev embedding theorem $\lVert u \rVert_{L^\infty} \lesssim \lVert u \rVert_{H_{\mathrm{sc}}^{m,0}}$ and \Cref{lem:sc_comp}.
	\end{enumerate}
\end{proof}

\section{Asymptotics for the natural Cauchy problem} \label{sec:naturalCauchy}

In this appendix, we briefly address the asymptotics for the \emph{natural} (advanced/retarded) inhomogeneous problem and the \emph{natural} Cauchy problem, which we introduced in \S\ref{subsec:natural}, using some analogues of our $\natural$-calculus. 
We still consider operator $P$ take the form in in \cref{eq:P_def} with metric $g$ as in \cref{eq:metric_form}, but only regularity only on the natural scale is needed.
More concretely, fixing $x_0 \in \bbR^d$, setting
\begin{equation}
   t_{\natural} = t/h^2, \; x_{\natural} = (x-x_0)/h,
\end{equation}
then we require coefficients of $P$ to satisfy bounded geometry type estimates on the $\natural$-scale, which means that $\partial_x,\partial_t$ are replaced by $\partial_{x_{\natural}} = h\partial_x, \partial_{t_\natural} = h^2\partial_t$ and $x,t$ replaced by $x_{\natural},t_{\natural}$.
Specifically, denote the class of functions $a \in C^\infty([0,1)_h \times \bbR^{1+d}_{t,x})$ satisfying (for $h>0$, but uniformly as $h \to 0$)
\begin{equation} \label{eq:def-nat-infty}
    |(h\partial_x)^{\alpha}(h^2\partial_t)^\beta a(t,x,h)| \leq C_{\alpha,\beta}
\end{equation}
by $S_{\natural,\infty}^0$.
Then we denote the class of differential operators spanned by at most $m$-fold products of $h^2\partial_t,h\partial_x$ with coefficients in $S_{\natural,\infty}^0$ by $\Diffnatinf^{m}$ and set $\Diffnatinf^{m,\ell} = h^{-\ell} \Diffnatinf^m$. 
Let $P_0 = \Box - c^2$ with $\Box$ being the d'Alembertian of the Minkowski metric, then our assumption on our operator $P$ in this appendix is 
\begin{equation} \label{eq:assumption-P-appendix-E}
    P - P_0 \in \Diffnatinf^{1,1} = h^{-1}\Diffnatinf^{1}. 
\end{equation}

Ignoring potential further first order perturbations, in terms of the metric $g$, letting $\Box_g \in \Diffnatinf^{2,2}$ be the d'Alembertian of $g$, \cref{eq:assumption-P-appendix-E} means
\begin{equation} 
\Box_g - \Box \in \Diffnatinf^{1,1} = h^{-1}\Diffnatinf^{1},    
\end{equation}
which is, intuitively, $O(h)$ compared with either $\Box_g-c^2$ or $\Box - c^2$. In particular, this is satisfied by metrics of the form \cref{eq:metric_form} while the $C^\infty$-coefficients are replaced by $S_{\natural,\infty}^0$ ones.

In the present setting, the inhomogeneous problem reads:
\begin{equation}
	\begin{cases}
		Pu=f \\ 
		\mathrm{supp} \, u \subset \{ t \geq 0 \}; \\ 
	\end{cases}
	\label{eq:natural-inhomogeneous}
\end{equation}
with $f \in C^\infty([0,1)_h; C_c^\infty([0,T]_{t_\natural} \times \bbR_{x_\natural}^d) )$ for a fixed $T>0$; and the Cauchy problem reads
\begin{equation}
	\begin{cases}
		Pu=0 \\ 
		u(0,x_{\natural}) = \varphi(x_{\natural},h) \\ 
		\partial_t u|_{t=0} = h^{-2} \psi(x_{\natural},h)
	\end{cases}
	\label{eq:natural-Cauchy}
\end{equation}
for $\varphi,\psi \in C^\infty([0,1)_h; C_c^\infty([0,T]_{t_\natural} \times \bbR_{x_\natural}^d) )$. This probes the non-relativistic limit on the natural scale (a small length scale), as emphasized in \S\ref{subsec:natural}. 
\begin{remark}
    If one keeps track of the precise amounts of regularity and decay needed to carry out
the discussion below, in particular noticing that estimates we are deriving are uniform in terms of the spatial support of functions, conditions on $f,\varphi,\psi$ can be relaxed to requiring only finite amount of regularity and decay. 
\end{remark}

Our goal is to show that $u\approx u_0$, where $u_0$ is the solution of the natural Cauchy problem for the \emph{free} Klein--Gordon equation, i.e.\ replace $P$ by the free Klein--Gordon operator  $P_0 = \square + h^{-2}$ in \cref{eq:natural-Cauchy}, freezing the forcing $f(-,h)$ or the profiles 
$\varphi(-,h),\psi(-,h)$ at $h=0$ (or equivalently $c= h^{-1} = \infty$). 
That is, for the forcing problem, $u_0$ satisfies
\begin{equation} \label{eq:free-KG-inhomogeneous}
	\begin{cases}
		P_0u_0=f(-,0) \\ 
		\mathrm{supp} \, u_0 \subset \{ t \geq 0 \};
	\end{cases}
\end{equation}
while for the Cauchy problem, $u_0$ satisfies
\begin{equation} \label{eq:free-KG-Cauchy}
	\begin{cases}
		P_0u_0=0 \\ 
		u_0(0,x) = \varphi(x_{\natural},0) \\ 
		\partial_t u_0|_{t=0} = h^{-2} \psi(x_{\natural},0).
	\end{cases}
\end{equation}
Once we have proven that $u\approx u_0$ holds (in an appropriate sense), it 
will serve as yet another validation of the heuristic that the free Klein--Gordon equation governs the non-relativistic on the natural scale.
Precisely, in terms of $c=h^{-1}$, our approximation for the forced problem is:
\begin{theorem}
Let $u,u_0,P,P_0$ be as in \cref{eq:natural-Cauchy}\cref{eq:free-KG-Cauchy} and set $w=u-u_0$, then for any fixed $T > 0$, we have
\begin{equation}
\| w \|_{L^\infty} = O(c^{ -1 }), \quad t \in \Big[0, \frac{T}{c^2} \Big],
\end{equation}
as $c\to\infty$. 
Here the implicit constant on the right-hand side depends linearly on norms of the forcing $f$.
		\label{thm:natural-inhomogeneous}
\end{theorem}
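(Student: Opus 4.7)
The plan is to recast everything in natural coordinates $(t_\natural, x_\natural) = (c^2 t, c(x-x_0))$, where $P_0$ factors as $P_0 = -h^{-2} L_\natural$ with $L_\natural = \partial_{t_\natural}^2 + \triangle_\natural + 1$ the standard Klein--Gordon operator, and where the hypothesis $P - P_0 \in h^{-1} \Diffnatinf^1$ means $P = -h^{-2}(L_\natural - hQ)$ for some $Q \in \Diffnatinf^1$ whose natural-derivative order is at most one and whose coefficients lie in $S_{\natural,\infty}^0$. In these coordinates the Cauchy data $\varphi(x_\natural, h), h^{-2}\psi(x_\natural, h)$ turn into Cauchy data $\varphi, \psi$ that are $O(1)$ in $h$ uniformly (with compact support), and both $u$ and $u_0$ become $O(1)$ functions of $(t_\natural, x_\natural, h)$. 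The equations read $(L_\natural - hQ)u = 0$ and $L_\natural u_0 = 0$, with identical Cauchy data at $t_\natural = 0$.

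The first step is a uniform-in-$h$ energy estimate for $L_\natural - hQ$ on slabs $[0, T]_{t_\natural} \times \bbR^d_{x_\natural}$. Using the standard Klein--Gordon energy
\begin{equation}
    E[v](t_\natural) = \lVert \partial_{t_\natural} v \rVert_{L^2_{x_\natural}}^2 + \lVert \nabla_{x_\natural} v \rVert_{L^2_{x_\natural}}^2 + \lVert v \rVert_{L^2_{x_\natural}}^2,
\end{equation}
multiplying $(L_\natural - hQ) v = F$ by $\partial_{t_\natural} v$ and integrating by parts gives, via Cauchy--Schwarz and the $S_{\natural,\infty}^0$-boundedness of the coefficients of $Q$,
\begin{equation}
    \tfrac{d}{dt_\natural} E[v] \leq C(1 + h^2) E[v] + C\lVert F \rVert_{L^2_{x_\natural}}^2,
\end{equation}
with $C$ independent of $h \in [0, h_0)$. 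Grönwall closes the estimate on $[0, T]$, and commuting with natural derivatives iteratively yields uniform-in-$h$ bounds
\begin{equation}
    \lVert u \rVert_{C([0,T]; H^s)} + \lVert \partial_{t_\natural} u \rVert_{C([0,T]; H^{s-1})} \leq C_s\bigl(\lVert \varphi \rVert_{H^s} + \lVert \psi \rVert_{H^{s-1}}\bigr)
\end{equation}
for every $s$, with the right-hand side uniformly controlled because $\varphi, \psi$ are smooth in $h$ down to $h = 0$ with compact support.

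The error $w = u - u_0$ has vanishing Cauchy data at $t_\natural = 0$ by construction, and subtracting the two equations gives $L_\natural w = hQu$. The previous step provides uniform $H^{s-1}$-bounds on $Qu$. Applying the now $h$-independent energy estimate for $L_\natural$ alone to $w$ with forcing $hQu$ yields
\begin{equation}
    \lVert w \rVert_{C([0,T]_{t_\natural}; H^s)} \lesssim h \lVert Qu \rVert_{C([0,T]_{t_\natural}; H^{s-1})} \lesssim h
\end{equation}
for every $s$. Choosing $s > (d+1)/2$ and applying Sobolev embedding gives $\lVert w \rVert_{L^\infty([0,T]_{t_\natural} \times \bbR^d_{x_\natural})} \lesssim h = c^{-1}$, and since the value of $w$ at a point is coordinate-independent, the same bound holds in the original variables on $t \in [0, T/c^2]$.

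The main obstacle is the uniform energy estimate for $L_\natural - hQ$: the explicit factor of $h$ in the perturbation $hQ$ is exactly what allows Grönwall to close on the bounded natural time interval $[0, T]$, and this is where the hypothesis $P - P_0 \in h^{-1} \Diffnatinf^1$ is essential. A minor point to handle is the zeroth-order piece of $Q$ (coming from multiplication by $S_{\natural,\infty}^0$-functions), which contributes only a harmless term of order $h \lVert v \rVert_{L^2}$ to the energy inequality. Everything else is routine bookkeeping, with the linear dependence of the implicit constant on the forcing (or Cauchy data) inherited from the linearity of the energy estimates.
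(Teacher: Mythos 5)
Your proof takes essentially the same route as the paper: rescale to natural variables, prove a uniform-in-$h$ energy estimate, subtract the equations so that the difference is forced by an $O(h)$ term, run Gr\"onwall, and Sobolev-embed. The main stylistic difference is in the energy estimate: the paper does it geometrically via the stress-energy tensor with a forward-timelike multiplier $Z = \chi(\wavetime_\natural)W$ (and a carefully constructed cutoff $\chi$), applied directly to $P$ with $\Box_g$ intact; you instead use the flat Klein--Gordon energy for $L_\natural$ with multiplier $\partial_{t_\natural} v$ and absorb everything else into the first-order perturbation $hQ$. Under the appendix's hypothesis $P - P_0 \in h^{-1}\Diffnatinf^1$ the two are equivalent in scope; the paper's version has the advantage of also handling $\Box_g$ when the metric $g$ is not a perturbation of Minkowski, but that generality is not needed for this theorem. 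You also place the error forcing on $u$ (writing $L_\natural w = hQu$) where the paper places it on $u_0$ (writing $Pw = -(P-P_0)u_0$); both are fine since both $u$ and $u_0$ are uniformly bounded.

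One small slip: you write that $w = u - u_0$ ``has vanishing Cauchy data at $t_\natural = 0$ by construction,'' but \cref{eq:natural-Cauchy} has data $\varphi(\cdot,h),\psi(\cdot,h)$ while \cref{eq:free-KG-Cauchy} freezes these at $h=0$, so $w(0,\cdot) = \varphi(\cdot,h)-\varphi(\cdot,0) = O(h)$ and similarly for $\partial_{t_\natural} w(0,\cdot)$ --- not zero. This does not damage the argument, because the $O(h)$ initial energy is of the same size as the Duhamel term you already control, but the Gr\"onwall inequality should include the initial energy term $E[w](0) = O(h^2)$ rather than dropping it. Everything else is correct.
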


For the Cauchy problem, our approximation is:
\begin{theorem}
Let $u,u_0,P,P_0$ be as in \cref{eq:natural-inhomogeneous}\cref{eq:free-KG-inhomogeneous} and set $w=u-u_0$, then for any fixed $T > 0$, we have
\begin{equation}
\| w \|_{L^\infty} = O(c^{ -1 }), \quad t \in \Big[0, \frac{T}{c^2} \Big],
\end{equation}
as $c\to\infty$. 
Here the implicit constant on the right-hand side depends linearly on norms of the initial data $\varphi,\psi$.
\label{thm:naturalCauchy}
\end{theorem}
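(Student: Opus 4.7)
The plan is to pass entirely to natural coordinates $(t_\natural, x_\natural) = (t/h^2, (x-x_0)/h)$ and exploit the fact that, after the natural rescaling $\tilde P \overset{\mathrm{def}}{=} c^{-2} P$ and $\tilde P_0 \overset{\mathrm{def}}{=} c^{-2}P_0$, the difference $\tilde P - \tilde P_0 = h^2(P - P_0)$ lies in $h \cdot \mathrm{Diff}_{\natural,\infty}^1$, by virtue of the hypothesis \cref{eq:assumption-P-appendix-E}. That is, after the $c^{-2}$-rescaling the variable-coefficient Klein--Gordon operator becomes a genuinely $O(c^{-1})$ perturbation of the free operator, in the sense of operators with $S_{\natural,\infty}^0$ coefficients. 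Both $\tilde P$ and $\tilde P_0$ then sit on an equal footing: hyperbolic second-order operators in the natural time variable $t_\natural$, with $t_\natural$-independent strictly hyperbolic principal symbol plus $c^{-1}$-small corrections.

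Setting $w = u - u_0$, the function $w$ satisfies
\begin{equation}
\tilde P w \;=\; \tilde P u - \tilde P u_0 \;=\; -(\tilde P - \tilde P_0)u_0,
\end{equation}
together with initial data $w|_{t=0} = \varphi(x_\natural, h) - \varphi(x_\natural, 0)$ and $\partial_{t_\natural} w|_{t_\natural=0} = \psi(x_\natural, h) - \psi(x_\natural,0)$. Since by hypothesis $\varphi,\psi\in C^\infty([0,1)_h;C_c^\infty(\bbR^d_{x_\natural}))$, each of these differences is $O(h) = O(c^{-1})$ in every $C_c^k$-seminorm, uniformly in $c$. Furthermore, standard Cauchy theory for $\tilde P_0$ applied to \cref{eq:free-KG-Cauchy} (rewritten in natural coordinates) shows that $u_0$ is uniformly bounded in $C^\infty$ of the natural variables, with spatial support confined to a fixed ball $|x_\natural|\le R + T$ by finite speed of propagation. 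Consequently $(\tilde P - \tilde P_0) u_0 = O(c^{-1})$ in every $H^k(\bbR^{1+d}_{t_\natural,x_\natural})$-norm on the slab $\{0 \le t_\natural \le T\}$.

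Next I would apply the standard second-order hyperbolic energy estimate for $\tilde P$ on this slab, using the natural energy
\begin{equation}
E_k(t_\natural) \;=\; \sum_{|\alpha|\le k}\int_{\bbR^d} \big(|\partial_{t_\natural}\partial^\alpha w|^2 + |\nabla_{x_\natural}\partial^\alpha w|^2 + |\partial^\alpha w|^2\big)\,\mathrm{d}x_\natural.
\end{equation}
Because $\tilde P - \tilde P_0$ is a first-order operator with $S_{\natural,\infty}^0$ coefficients (times $c^{-1}$), the commutators of $\tilde P$ with $\partial^\alpha$ are lower-order and uniformly bounded as $c\to\infty$; a Gr\"onwall argument in $t_\natural$ then yields, for each $k$ and each fixed $T$,
\begin{equation}
\sup_{0\le t_\natural \le T} E_k(t_\natural)^{1/2} \;\lesssim\; E_k(0)^{1/2} + \int_0^T \|\tilde P w(s)\|_{H^k}\,\mathrm{d}s \;=\; O(c^{-1}).
\end{equation}
Sobolev embedding on $\bbR^d_{x_\natural}$ (taking $k>d/2$) then upgrades this to $\|w\|_{L^\infty(\{0\le t_\natural\le T\}\times \bbR^d)} = O(c^{-1})$, which, since the $L^\infty$-norm is coordinate-invariant, is precisely the claim of the theorem.

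The only mildly delicate point is the uniform constant in the energy estimate. The hard part is not the propagation of regularity itself but the verification that the sub-principal coefficients of $\tilde P$ (obtained after the $c^{-2}$-rescaling of the first-order terms $c^{-2} i\beta\partial_t$, $iB_j\partial_{x_j}$, $W$ in \cref{eq:P_def} and of the derivatives of the metric coefficients) remain bounded in the $S_{\natural,\infty}^0$ class uniformly as $c\to\infty$. This follows from the hypothesis on the metric, which, rewritten in natural coordinates, places each of the perturbation coefficients $\alpha, w_j, h_{j,k}, \beta, B_j, W$ in $S_{\natural,\infty}^0$ with an overall factor of $h^2$ or better, so that after the $c^{-2}$-rescaling Gr\"onwall proceeds with $c$-independent constants. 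No microlocal machinery beyond ordinary energy methods is required; the natural Cauchy problem is genuinely the easy regime.
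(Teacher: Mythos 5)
Your proposal is correct and follows essentially the same route as the paper: rescale to natural coordinates, observe that $\tilde P - \tilde P_0 = h^2(P-P_0) \in h\,\Diffnatinf^1$ is one order of $h$ better, prove a uniform-in-$h$ hyperbolic energy estimate on a fixed slab of $t_\natural$, apply it to $w$ using $Pw = -(P-P_0)u_0$ and the $O(h)$ difference in initial data, and upgrade to $L^\infty$ by Sobolev embedding in $x_\natural$. The paper's proof spells out the energy estimate more carefully via the geometric stress-energy-tensor commutator computation (to verify the uniformity in $h$ when the coefficients have only $S_{\natural,\infty}^0$ regularity, which you correctly flag as "the only mildly delicate point"), and handles the Cauchy data by inserting a Heaviside cutoff rather than by a direct Gr\"onwall argument, but these are implementations of the same estimate.
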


As discussed in \S\ref{subsec:natural}, $u_0$ is just given by rescaling the solution to the $h=1$ problem: 
\begin{equation}
	u_0(t,x;c) = u_0(t_\natural,x_\natural;1).
\end{equation}
Thus, $u_0$ is dilated down and simultaneously squished by an extra dilation in the time direction as $h \to 0$. Similar behavior is to be expected for $u$. 
Since the difference of their initial datum and operators happen at most at $O(h)$ level, it is natural to expect a local $L^\infty$ estimate for $u-u_0$, with right-hand side that is (at least) $o(1)$ as $h \to 0$.

We consider the forced problem \cref{eq:natural-inhomogeneous} first and show that on the natural scale one has lossless,
i.e.\ elliptic in terms of $h$-asymptotics, estimates.

In fact, due to the foliation structure that the $\natural$-algebra has, one can
even have longer time estimates with a loss of $h^{2-\delta}$ relative
to this if we proceed up to $O(h^\delta)$ times, which is an
$h^{2-\delta}$ gain relative to the natural scale. Concretely, this
holds for $\delta\geq 1$ without additional conditions even in fully
geometric settings, and for
$\delta\geq 0$ in the setting of the main body of this paper, where we require the regularity of coefficients of the operator on the laboratory scale, i.e. at the $(t,x)$-level. However, for simplicity, and as this appendix is illustrative, we proceed only with the above ``elliptic
in $h$'' estimate.

To see this, recall how energy estimates work in geometric settings, for instance
in the present setting but for fixed $h>0$. 
The following is essentially the standard proof of energy estimates,
see e.g.\ \cite[Section~2.8]{Taylor:Partial-I}. Here we phrase it as done in
\cite[Sections~3 and 4]{Vasy:AdS} and \cite[Section~3.3]{vasy2013asympthyp}. Consider $V=-i Z$,
$Z=\chi(\wavetime)W$, and let $W$ be given by $W=G(d\wavetime,.)$
($G=g^{-1}$ the dual metric), considered as a first order
differential operator on $M=\bbR^{1+d}$; where $\wavetime$ is a future oriented timelike
function (i.e.\ $d\wavetime$ is such), such as $t$ or $t-\ep(x-x_0)^2$
near $(0,x_0)$. 

As usual in energy estimates, we want to consider the `commutator'
\begin{equation}\label{eq:sigma-dep-comm}
-i(V^*\Box_g-\Box_g^* V),
\end{equation}
where the adjoint should be with respect to a positive definite inner
product (which the metric density provides on functions, and then $\Box_g^*=\Box_g$).

We first recall the computation of
$-i(V^*\Box_g-\Box_g V)$ with adjoints
taken using the Lorentzian density (so $\Box_g$ is formally
self-adjoint), see \cite[Section~3]{Vasy:AdS} for it written down in
this form. Using the standard summation convention, and the Lorentzian
inner product also on the fibers of the cotangent bundle, we have
\begin{equation}\begin{split}\label{eq:B_ij-formula}
&-i(V^*\Box_g-\Box_g V)=d^*Cd,\ C_i^j=g_{i\ell}B^{\ell j},\\
&B^{ij}=-J^{-1}\pa_k(JZ^kG^{ij})
+G^{ik}(\pa_k Z^j)+G^{jk}(\pa_k Z^i),
\end{split}\end{equation}
where $C_i^j$ are the matrix entries of $C$ relative to the basis $\{dz_\ell\}$
of the fibers of the cotangent bundle (rather than a rescaled cotangent
bundle), $z_\ell=x_\ell$ for $1\leq \ell\leq d$, $z_0=t$.
Expanding $B$ using $Z=\chi W$, and separating the terms
with $\chi$ derivatives, gives
\begin{equation}\begin{split}\label{eq:B_ij-exp-gen}
B^{ij}&=
G^{ik}(\pa_k Z^j)+G^{jk}(\pa_k Z^i)-J^{-1}\pa_k(JZ^kG^{ij})\\
&=(\pa_k \chi) \big(G^{ik}W^j+G^{jk}W^i-G^{ij}W^k\big)\\
&\qquad\qquad\qquad+\chi
\big(G^{ik}(\pa_k W^j)+G^{jk}(\pa_k W^i)-J^{-1}\pa_k(JW^kG^{ij})\big).
\end{split}\end{equation}
Multiplying the first term on the right hand side by
$\alpha_i\,\overline{\alpha_j}$ (and summing over $i,j$; here
$\alpha\in\Cx^n\simeq\Cx T^*_q(M)$, $q\in M$)
gives
\begin{equation}\begin{split}\label{eq:stress-energy}
E_{W,d\chi}(\alpha)& \overset{\mathrm{def}}{=}
(\pa_k \chi) (G^{ik}W^j+G^{jk}W^i-G^{ij}W^k)\alpha_i\,\overline{\alpha_j}\\
&=( \alpha,d\chi)_G \,\overline{\alpha(W)}
+\alpha(W)\,( d\chi,\alpha)_G-d\chi(W) (\alpha,\alpha)_G=\chi'(\wavetime)\,E_{W,d\wavetime},
\end{split}
\end{equation}
where 
\begin{equation}
E_{W,d\wavetime}=( \alpha,d\wavetime)_G \,\overline{\alpha(W)}
+\alpha(W)\,( d\wavetime,\alpha)_G-d\wavetime(W) (\alpha,\alpha)_G.
\end{equation}
Now, $E_{W,d\wavetime}$ is twice the sesquilinear stress-energy tensor
associated to $\alpha$, $W$ and $d\wavetime$ and is well-known to be positive definite in
$\alpha$, i.e.\ $E_{W,d\chi}(\alpha)\geq 0$, with vanishing if and only if $\alpha=0$,
when $W$ and $d\wavetime$ are both forward time-like for smooth Lorentz metrics,
see e.g.\ \cite[Section~2.7]{Taylor:Partial-I} or
\cite[Lemma~24.1.2]{Ho4}.

Since below we will consider the Klein-Gordon operator $\Box_g-h^{-2}$
(with $\Box_g$ itself of size $h^{-2}$, so in the present fixed $h$ setting
$\Box_g-1$ being the analogue), let
us remark the helpful contribution of the mass term (which is not
needed, except on the longer time scales that we do not address here,
but is convenient to use) by considering $\Box_g - 1$, i.e., the fixed $h=1$ setting:
$$
i(V^*1-1 V)=i(V^*-V)=J^{-1}\pa_k(JZ_k)=(\pa_k\chi)
W_k+\chi J^{-1}\pa_k(JW_k),
$$
and for $W$ and $d\wavetime$ both forward time-like, the coefficient
$d\wavetime(W)$ of $\chi'$
is positive, matching the sign of $E_{W,d\wavetime}$.

Correspondingly, for $P$\footnote{One can think of $P$ here as our $P$ restricted to $h=1$.} with $P-\Box_g+1\in\Diff^1$,
\begin{equation}\begin{split}\label{eq:perturbed-wave-comm}
&-i(V^* P-P^* V)\\
&=-i(V^*
(\Box_g-1)-(\Box_g-1) V) -i V^*
(P-\Box_g+1)+i (P-\Box_g+1)^* V\\
&=d_0^*
\tilde C_0 \,d_0+\hat E \chi d+d^*\chi\hat E^*,
\end{split}\end{equation}
where $d_0=(d,1)$, with $1$ taking care of the additional mass term, $\hat E\in\CI(M;TM)$,
and
$$
\tilde C_0=\chi'\tilde A_0+\chi \tilde R_0.
$$
Here the second order terms arising from the $P-\Box_g+1$ terms are
simply incorporated into $\chi \tilde R_0$ as they do not
involve derivatives of $\chi$, i.e.\ we are not taking advantage of
any potential drop in order, which in any case is generally not present except
under leading order symmetry assumptions on $P-\Box_g+1$.
A standard argument, making $\chi'$ large relative to
$\chi$, completes the proof of the energy estimate; here we have the
advantage of the mass term giving the positivity required for right
away estimating the $H^1$-norm (hence we do not need to use a
Poincar\'e inequality). 
%MARK
Thus, we obtain (taking $0$ initial data, or
equivalently working with distributions supported in $t\geq 0$) the
spacetime estimate, in a slab of $\wavetime$ level sets (or a tweaked
version, such as if $\wavetime=t-\ep(x-x_0)^2$ is used above, and the
region is $t\geq 0$, $\wavetime$ less than a small positive constant),
$$
\|u\|^2_{H^1}\leq C'|\langle Pu, Vu\rangle|+\gamma\|u\|^2_{H^1}\leq \frac{1}{2}C \|Pu\|^2_{L^2}+\frac{1}{2}\|u\|_{H^1}^2 +\gamma\|u\|^2_{H^1},
$$
with $\gamma>0$ term arising from the undifferentiated $\chi$ term in
the commutator and $\gamma$ is small if $\chi'$ is large relative to
$\chi$, in which case the last two terms can be absorbed in the left
hand side. 
Here all norms and pairings are over $[0,T_F] \times \bbR^d_{\natural}$ for some $T_F>0$ so that $\supp \chi \cap [0,\infty) \subset [0,T_F]$.
And the same choice regions of integration for $(t_\natural,x_\natural)$ applies below, which corresponds to $[0,h^2T_F] \times \bbR^d_{x}$ in terms our original $(t,x)$.

We now describe the argument for the estimate in full detail in
the $\natural$-setting; the main point for now is that
\eqref{eq:perturbed-wave-comm} leads to such an estimate, so the key
is to establish an analogue of \eqref{eq:perturbed-wave-comm} in the
$\natural$-setting. We already remark also that if one has a family of
Lorentzian metrics and corresponding operators smoothly (or indeed just continuously) depending on
a parameter, then the energy estimate, with the same $\wavetime$
remains valid for the perturbations of a metric and of the operator $P$.

Now, before proceeding with the detailed argument in the
$\natural$-setting we comment that the geometric nature of the energy
estimate means that it could be applied by rescaling the variables to
$x_\natural=(x-x_0)/h$, $t_\natural=t/h^2$. In the new variables
$h^2P$ is a standard differential operator $\tilde P_\natural$, or rather a family
depending on $h$, uniformly down to $h=0$. Correspondingly, the above
argument, using $\wavetime_\natural$ as the argument of $\chi$, for
instance $\wavetime_\natural=t_\natural=t/h^2$, provides uniform estimates
$$
\|u_\natural\|^2_{H^1}\leq C \|\tilde P_\natural u_\natural\|^2_{L^2},
$$
which means on the original scale that
$$
\|u\|_{H^1_\natural}\leq Ch^2\|Pu\|_{L^2}
$$
as claimed earlier, where the $H^1_\natural$-norm in this appendix is the sum of the $L^2$-norm of $h^2\partial_tu,\, h\partial_xu$ over $[0,h^2T_F]_t \times \R_x^d$.
Note that this {\em only} uses the
$\natural$-regularity of the coefficients of $P$.

We work this out in a more global manner by changing to a local basis of the $\natural$-cotangent bundle and using the
$\natural$-differential $\natdiff=(h^2 \pa_t,h\pa_x)$ and the local basis
$\{h^{-2}\,dt,h^{-1}\,dx_1,\ldots h^{-1}\,dx_d\}$ of the fibers of the b-cotangent
bundle, $\hat\pa_j=\delta_j\pa_j$, $\delta_j=h$ for $1\leq j\leq d$, $\delta_0=h^2$,
for the local basis of the fibers of the b-tangent bundle, write
$\hat G^{ij}$, $\hat g_{ij}$ for the corresponding metric entries, $\hat
Z^i$ for the vector field components. For clarity, we already mentioned that will take the cutoff $\chi$
to be of the form $\chi(\wavetime_\natural)$. This yields
\begin{equation}\begin{split}\label{eq:b-B_ij-formula}
&-i(V^*\Box_g-\Box_g V)=\natdiff^*\,\hat C\,\natdiff,\ \hat C_i^j=\hat g_{i\ell}\hat B^{\ell j}\\
&\hat B^{ij}=-J^{-1}\delta_k^{-1}\delta_i^{-1}\delta_j^{-1}\hat\pa_k(J\delta_k\hat Z^k\delta_i\delta_j\hat G^{ij})
+\hat G^{ik}(\delta_j^{-1}\hat\pa_k \delta_j\hat Z^j)+\hat G^{jk}(\delta_i^{-1}\hat\pa_k \delta_i\hat Z^i).
\end{split}\end{equation}
We conclude that
\begin{equation}\begin{split}\label{eq:natural-B}
\hat B^{ij}=&(\hat\pa_k \chi) \big(\hat G^{ik}\hat W^j+\hat G^{jk}\hat
W^i-\hat G^{ij}\hat W^k\big)\\
&\qquad+\chi
\big(\hat G^{ik}\delta_j^{-1}(\hat \pa_k \delta_j\hat W^j)+\hat G^{jk}\delta_i^{-1}(\hat\pa_k
\delta_i \hat W^i)\\
&\qquad\qquad\qquad-J^{-1}\delta_k^{-1}\delta_i^{-1}\delta_j^{-1}\hat\pa_k(J\delta_k\delta_i\delta_j\hat
W^k\hat G^{ij})\big)\\
=&(\hat\pa_k \chi) \big(\hat G^{ik}\hat W^j+\hat G^{jk}\hat
W^i-\hat G^{ij}\hat W^k\big)\\
&\qquad+\chi
\big(\hat G^{ik}(\hat \pa_k \hat W^j)+\hat G^{jk}(\hat\pa_k \hat
W^i)-J^{-1}\hat\pa_k(J \hat W^k\hat G^{ij})\big),
\end{split}\end{equation}
and so
\begin{equation*}
\hat C=\chi' A+\chi R,
\end{equation*}
with $A$ positive definite in the following sense (keeping in mind
that $G=h^{-2}G'$, $G'$ unweighted, i.e.\ a smooth non-degenerate
$\natural$-dual metric): fixing any positive definite
$\natural$-inner
product of order $0$ (again, this means smooth and non-degenerate)
$\tilde g$ on $\Tnat M$,
\begin{equation}\label{eq:pos-def-natural-Lorentz}
  \langle\hat C
\alpha,\alpha\rangle_g\geq ch^{-2}\|\alpha\|^2_{\tilde g},\qquad
\alpha\in {}^{\Cx}\Tnat^*M,
\end{equation}
while $R$ is in the analogous sense
$O(h^{-2})$. Here we wrote
$$
d(\chi(\wavetime_\natural))=\chi'(\wavetime_\natural)
d(\wavetime_\natural),
$$
including the latter factor in $A$. Notice that the
inner product on the left hand side of \eqref{eq:pos-def-natural-Lorentz} is with respect to $g$, and is
thus not positive definite. It is thus useful to rewrite
$$
\natdiff^*_g\,\hat C\,\natdiff=\natdiff^*_{\tilde g}\,\tilde C\,\natdiff,\
\tilde C=\chi' \tilde A+\chi \tilde R,
$$
where the subscript denotes the inner product with respect to which the
adjoint is taken. Also notice that $\hat\pa_k=\delta_k\pa_k$ is the relevant
derivative entering in the tensors above, so it suffices for the
operator coefficients to have $\natural$-regularity, but then
$\hat\pa_k \chi$ still needs to be large relative to $\chi$, which
means, as we already indicated, that the argument of $\chi$ needs to be on the $\natural$-scale as
well, hence our consideration of
$\chi(\wavetime_\natural)$. Note that here {\em any}
timelike replacement for $\wavetime_\natural$ works -- everything is on the $\natural$-scale,
so the structure of the time foliation defining the $\natural$-differential
operators is not entering in this choice {\em
  if phrased in the $\natural$-language}.

As above, it is helpful (though not necessary) to take advantage of
the mass term.
For $P$ with
$P-\Box_g+h^{-2}\in h^{-2}\Diffnatinf^1$ an argument as above in the
standard setting first yields
\begin{equation}\begin{split}\label{eq:perturbed-wave-comm-natural}
&-i(V^* P-P^* V)\\
&=-i(V^*
(\Box_g-h^{-2})-(\Box_g-h^{-2}) V) -i V^*
(P-\Box_g+h^{-2})+i (P-\Box_g+h^{-2})^* V\\
&=\natdiff_0^*
\tilde C_0 \,\natdiff_0+\hat E \chi \natdiff+\natdiff^*\chi\hat E^*,
\end{split}\end{equation}
where $d_0=(d,1)$, with $1$ taking care of the additional mass term,
$\hat E\in h^{-2}\CI(M\times[0,1)_h;\Tnat M)$,
and
$$
\tilde C_0=\chi'\tilde A_0+\chi \tilde R_0,
$$
with $\tilde A,\tilde R$ of similar properties as above. From this,
by an argument we recall below, we conclude
\begin{equation} \label{eq:app-E-est}
h^{-2}\|u\|^2_{H^1_\natural}\leq C'|\langle Pu, Vu\rangle|+\gamma
h^{-2}\|u\|^2_{H^1_\natural}\leq
\frac{h^2}{2}C\|Pu\|^2_{L^2}+\frac{1}{2h^2}\|u\|_{H^1_\natural}^2 +\gamma h^{-2}\|u\|^2_{H^1_\natural},
\end{equation}
and thus (with $\gamma>0$ small)
$$
\|u\|_{H^1_\natural}\leq Ch^2\|Pu\|_{L^2};
$$
note that while in the differential sense this is the usual real
principal type numerology, in the $\natural$-sense it is the elliptic
numerology. Keep in mind here that the estimate is on the
$\natural$-scale in $t$, i.e.\ for finite $t_\natural$, as noted above.

We finally recall the more detailed structure of the argument for the energy
estimate when one wants to make assumptions for $t_\natural$ in $[T_0,T_0']$ (or
instead turn this into initial conditions by letting $T_0=T_0'$, or
under a support assumption simply dropping this term) and
conclusions for $t_\natural$ in $[T_0',T_1]$. Let
$\chi_0(s)=e^{-1/s}$ for $s>0$, $\chi_0(s)=0$ for $s\leq 0$, $\chi_1\in\CI(\RR)$
identically 1 on $[1,\infty)$, vanishing on $(-\infty,0]$,
Thus,
$s^2\chi_0'(s)=\chi_0(s)$ for $s\in\RR$.
Now let $T_1'>T_1$, and
consider
% (see Figure~\ref{fig:bump-fn})
$$
\chi(s)=\chi_0(-\digamma^{-1}(s-T'_1))\chi_1((s-T_0)/(T_0'-T_0)).
$$ 
Then (the argument of $\chi$ is $s=\wavetime_\natural=t_\natural$ below)
\begin{itemize}
\item
$\supp\chi\subset [T_0,T'_1]$,
\item
$s\in [T'_0,T'_1]\Rightarrow \chi'=-\digamma^{-1}\chi_0'(-\digamma^{-1}(s-T'_1)),$
\end{itemize}
so
$$
s\in [T'_0,T'_1]\Rightarrow \chi=-\digamma^{-1} (s-T'_1)^2\chi',
$$
so for $\digamma>0$ sufficiently large, this is bounded by a small
multiple of $\chi'$, namely
\begin{equation}\label{eq:chip-gamma-est}
s\in [T'_0,T'_1]\Rightarrow \chi\leq -\gamma\chi',
\ \gamma=(T'_1-T'_0)^2\digamma^{-1}.
\end{equation}
In particular, for sufficiently large
$\digamma$,
$$
-(\chi' \tilde A_0+\chi \tilde R_0)\geq -\chi_0'\chi_1 \tilde A/2
$$
on $[T'_0,T'_1]$. Thus, without assuming the forward support property and
thus adding a control region,
$$
\langle {}^\natural d^*_0 \tilde C {}^\natural d_0 u,u\rangle\geq -\frac{1}{2}\langle \chi_0'\chi_1
\tilde A {}^\natural d_0 u,{}^\natural d_0 u\rangle-C'h^{-2}\|{}^\natural d_0 u\|^2_{L^2(\wavetime_\natural^{-1}([T_0,T'_0]))}.
$$
So for some $c_0>0$,  by \eqref{eq:pos-def-natural-Lorentz} (made more
positive by the mass term),
\begin{equation}\begin{split}\label{eq:Box-V-est}
c_0h^{-2}&\|(-\chi_0')^{1/2}\chi_1^{1/2}{}^\natural d_0 u\|^2\leq-\frac{1}{2}\langle \chi_0'\chi_1
\tilde A {}^\natural d_0 u,{}^\natural d_0 u\rangle\\
&\leq C'h^{-2}\|{}^\natural d_0
u\|^2_{L^2(\wavetime_\natural^{-1}([T_0,T'_0]))}+C'\|\chi^{1/2}P
u\|\|\chi^{1/2}{}^\natural d_0 u\|+C' h^{-2}\|\chi^{1/2} u\|\|\chi^{1/2}{}^\natural d_0 u\|\\
&\leq C'h^{-2}\|{}^\natural d_0
u\|^2_{L^2(\wavetime_\natural^{-1}([T_0,T'_0]))}+C'h^2\|\chi^{1/2}P
u\|^2+2C'\gamma h^{-2}\|(-\chi_0')^{1/2}\chi_1 {}^\natural d_0 u\|^2,
\end{split}\end{equation}
where we used $\|\chi^{1/2}u\|\leq \|\chi^{1/2}{}^\natural d_0
u\|$ (which holds in view of the last component of ${}^\natural d_0$).
Thus, choosing first $\digamma>0$ sufficiently large (thus $\gamma>0$
is sufficiently small), the last term on the right hand
side can be absorbed into the left hand side. This gives the desired result, except that
$\|P u\|_{L^2(\wavetime_\natural^{-1}([T_0,T_1]))}$ is replaced by
$\|P u\|_{L^2(\wavetime_\natural^{-1}([T_0,T'_1]))}$
(or $\|\chi^{1/2}P
u\|_{L^2(\wavetime_\natural^{-1}([T_0,T'_1]))}$). This however is easily
remedied by replacing $\chi$ by
$$
\chi(s)=H(T_1-s)\chi_0(-\digamma^{-1}(s-T'_1))\chi_1((s-T_0)/(T_0'-T_0)),
$$
where $H$ is the Heaviside step function (the characteristic function
of $[0,\infty)$) so $\supp\chi\subset [T_0,T_1]$. Now $\chi$ is not
smooth, but either approximating $H$ by smooth bump functions and
taking a limit, or indeed directly performing the calculation,
integrating on the domain with boundary $\wavetime_\natural\leq T_1$, the contribution
of the derivative of $H$ to $\chi'$ is a delta distribution at
$\wavetime_\natural=T_1$, corresponding to a boundary term on the domain, which has the
same sign as the derivative of $\chi_0$. Thus, with $\cS_{T_1}$ the
hypersurface $\{\wavetime_\natural=T_1\}$, \eqref{eq:Box-V-est}
holds in the form
\begin{equation*}\begin{split}
c_0h^{-2}&\|{}^\natural d_0 u\|^2_{L^2(\wavetime_\natural^{-1}([T'_0,T_1]))}+c_0h^{-2}\|{}^\natural d_0 u\|^2_{L^2(\cS_{T_1})}\\
&\leq C'h^{-2}\|{}^\natural d_0
u\|^2_{L^2(\wavetime_\natural^{-1}([T_0,T'_0]))}+C'h^2\|\chi^{1/2}P
u\|^2+2C'\gamma h^{-2}\|(-\chi_0')^{1/2}\chi_1 H {}^\natural d_0 u\|^2.
\end{split}\end{equation*}
Now one can simply drop the second term from the left hand side and
proceed as above.

One can easily add additional
derivatives to get
$$
\|u\|_{H^s_\natural}\leq Ch^2\|Pu\|_{H^{s-1}_\natural}.
$$
Recall that this works by commuting a spanning set of $\natural$-vector fields
through $P$. (An alternative is to systematically ``commute'' through
${}^\natural\nabla$ with respect to a Riemannian $\natural$-metric; here ``commute'' is
really intertwining since one goes to tensor bundles of one rank higher.) Denoting this collection by $X_j$, $j=0,1,\ldots,d$, in
order to estimate $X_ju$ we
write
$$
PX_ju=X_jPu+[P,X_j]u
$$
and expand the commutator as $\sum Q_{jk}X_k+Q_{-1k}$, where in our case
(where we even have the mass term to simplify things) it is
convenient to allow $X_{-1}=1$ in this, where
$Q_{jk}\in h^{-2}\Diffnatinf^{1}$, and the $h$ power is due to the
corresponding order of $P$. We then work with a system of equations for
$(u,X_0u,\ldots,X_du)$ which has first order terms $Q_{jk}$. Since one
proceeds inductively, i.e.\ one assumes that for $u$ one already has
the estimates that one is trying to prove for the $X_j u$, any
commutator term that is already sufficiently controlled given the
estimates for $P$ can be put on the right hand side alternatively;
here this would be $0$th order terms in $Q_{jk}$ and all of
$Q_{-1k}$. In any case, the first order terms $Q_{jk}$ do not make any difference
in the energy estimate argument above, so one initially obtains
$$
\|X_j u\|_{H^1_\natural}\leq Ch^2\|Pu\|_{H^1_\natural},
$$
and then one proceeds inductively for the full conclusion.

This in particular says that if $Pu=f$, $P_0u_0=f$ are
forward solutions, with $P,P_0$ operators with similar properties and
$P-P_0=h^{-\mu}\Diffnatinf^1$ then $w=u-u_0$ satisfies $Pw=-(P-P_0)u_0$,
i.e.\ $Pw$ is $O(h^{-\mu})$ worse in terms of $h$-decay than $u,u_0$, and
thus $w$ is $O(h^{2-\mu})$ times the size of $u,u_0$, hence the
solutions are close as long as $\mu<2$. This in particular is
satisfied with $P_0$ being the free Klein-Gordon operator and $\mu=1$
by our assumptions and concludes the proof of \Cref{thm:natural-inhomogeneous}.

This can be immediately translated to the Cauchy problem by further inserting a cut-off $H(t_\natural)$, where $H$ is still the Heaviside step function.
Now we have $Pu=0$ in process above but the derivative of this new factor will give contributions with a $\delta(t_\natural)$-factor, which will end up to be the energy associated to the initial data on the right hand side of \cref{eq:app-E-est} and gives \Cref{thm:naturalCauchy}.

%Returning to $c=h^{-1}$, and taking advantage of the $\natural$-Sobolev estimate, we conclude

\printbibliography

\end{document}